\documentclass[11pt,reqno]{amsart}
\usepackage[utf8]{inputenc}
\usepackage{pgfplots}
\usepackage{fancyhdr}
\usepackage{tikz}
\usepackage{systeme}
\usepackage{bbm}
\usepackage{graphicx}
\usepackage[rightcaption]{sidecap}
\graphicspath{ {images/} }
\usepackage[utf8]{inputenc}
\usepackage{mathtools}
\usepackage{amsmath,dsfont,amssymb,mathrsfs,amsthm,hyperref,color}
\usepackage{graphicx}
\usepackage{subcaption}
\usepackage[english]{babel}

\usepackage{pst-node}
\usepackage{tikz-cd} 
\usepackage{circuitikz}
\usepackage{amsthm}
\usepackage{hyperref}
\usepackage{cleveref} 

\newtheorem{theorem}{Theorem}[section]
\newtheorem{corollary}[theorem]{Corollary}
\newtheorem{lemma}[theorem]{Lemma}
\newtheorem{proposition}[theorem]{Proposition}
\theoremstyle{definition}
\newtheorem{definition}[theorem]{Definition}

\theoremstyle{remark}    
\newtheorem{remark}[theorem]{Remark}

\newtheoremstyle{boldcondition}
  {\topsep}   
  {\topsep}   
  {\bfseries} 
  {}          
  {\bfseries} 
  {.}         
  {.5em}      
  {}          

\newtheorem{condition}{Condition}

\title{A Minimax Bilinear Transport Problem and Nash-Monge-Kantorovich  Maps} 

     \author{Rene Cabrera and Edward Huynh}

     \thanks{RC was partially supported by the NSF DMS RTG 1840314 during Spring 2025 when this project began to take shape. EH was funded under the National Defense Science and Engineering Graduate (NDSEG) Fellowship administered through the Department of the Air Force (AFRL/Space Force).}
     \address{Department of Mathematics\\
The University of Texas at Austin\\
2515 Speedway Stop C1200\\
Austin, Texas 78712-1202}
\email{rene.cabrera@math.utexas.edu}
\address{
Oden Institute for Computational Engineering and Sciences\\
The University of Texas at Austin\\
201 East 24th Street\\
Austin, Texas 78712
}
\email{edhuynh@utexas.edu}
 
\DeclareMathOperator*{\argmax}{argmax} 
\DeclareMathOperator*{\argmin}{argmin} 
  \pgfplotsset{compat=1.18}

  \date{\today}

\begin{document}

\begin{abstract}
   We study a min–max bilinear transport problem arising from a two-player zero-sum game with quadratic kinetic and interaction costs. Starting from a dynamic path space formulation, we establish existence of minimax and maximin plans and prove a minimax theorem. We show that the equilibrium induces a finite-dimensional stationary problem via an endpoint cost on transport plans, which is well defined below a critical interaction strength and yields a Nash equilibrium over couplings. In the quadratic interaction case, we derive an explicit endpoint cost and a dual formulation. The resulting Nash–Monge–Kantorovich (NMK) plans admit Monge solutions, recovering classical structures in optimal transport, with optimal maps given by gradients of convex or concave functions when they exist. Our analysis highlights duality and cyclical (anti-)monotonicity for nonstandard costs and links the equilibrium maps to coupled nonlinear PDEs, bridging optimal transport, zero-sum games, and Monge–Ampère-type equations.
\end{abstract}
\maketitle 

\tableofcontents

\section{Introduction and Main Results}
Optimal transport theory (OT) provides a powerful framework for comparing probability measures by minimizing a transport cost. Since the foundational works of Monge and Kantorovich (MK), the theory has been extended in many directions, including dynamical formulations, multi-marginal problems, and interactions between transported masses. In this paper, we introduce a new class of optimal transport problems in which \emph{two transport plans interact through a bilinear cost} and the optimization is formulated as a \emph{minimax problem}. This naturally connects optimal transport with ideas from game theory, where competing agents optimize against each other.

More precisely, given probability measures $(\mu_1,\nu_1)$ on a domain $X$ and $(\mu_2,\nu_2)$ on a domain $Y$, we consider pairs of transport plans $(\pi_1,\pi_2)$ and study the \emph{minimax bilinear transport problem}
\begin{gather}\label{inf sup problem}
    \iint c(\gamma, \xi)\, d\pi_1(\gamma)\, d\pi_2(\xi)
    = \inf_{\widetilde{\pi}_{1}\in\Pi_{\text{path}}(\mu_{1}, \nu_{1})}
      \sup_{\widetilde{\pi}_{2}\in\Pi_{\text{path}}(\mu_{2},\nu_{2})}
      \iint c(\gamma, \xi)\, d\widetilde{\pi}_1(\gamma)\, d\widetilde{\pi}_2(\xi),
\end{gather}
where $\Pi_{\text{path}}(\mu_i,\nu_i)$ denotes admissible transport plans on a path space. The cost $c(\gamma,\xi)$ couples two trajectories, leading to an interacting transport problem that differs fundamentally from classical optimal transport.

The purpose of this paper is to develop a theory for \eqref{inf sup problem} and its stationary counterpart. Our main results can be summarized as follows.
Under mild assumptions on the cost function and the data, we prove that minimax solutions exist. Moreover, we establish a minimax theorem showing that, under suitable conditions, the minimax and maximin values coincide. The corresponding optimal pair of transport plans defines a \emph{Nash equilibrium transport plan}. We also make a reduction from paths to endpoints. For a class of path-dependent costs, we show that solutions of the path space problem project to solutions of a stationary problem with an effective endpoint cost.

In the quadratic-type setting, we prove that bilinear plans are induced by transport maps, which are uniquely characterized by gradients of convex or concave potentials, in analogy with Brenier's theorem. In addition,  we derive a dual formulation of the problem and show, formally, the equivalence between the dynamical (path space) formulation and the stationary one via a change of variables.

We now state the main results illustrating these contributions.
\begin{theorem}\label{thm:minimaxExistence}
Suppose $\mu_1,\nu_1 \in \mathcal{P}(X)$ and $\mu_2,\nu_2 \in \mathcal{P}(Y)$ have compact support and $c$ satisfies Condition \ref{condition 1}. Then there exist at least one minimax solution of (\ref{inf sup problem}).
\end{theorem}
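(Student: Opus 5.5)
We have not yet seen Condition \ref{condition 1}. We assume it makes $c$ lower semicontinuous and bounded below on the relevant path space, with a coercive kinetic part in $\gamma$ and control of the interaction part. Under that assumption, the proof follows the direct method. A minimax solution means a pair $(\pi_1,\pi_2)$ such that $\pi_1$ attains the outer infimum in \eqref{inf sup problem} and $\pi_2$ attains the inner supremum against $\pi_1$.

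\textbf{Step 1: compactness of the admissible sets.} Fix the path space, e.g.\ $\Gamma_X=C([0,1];X)$, or $H^1$ paths if the kinetic cost is quadratic. Because the marginals $\mu_i,\nu_i$ have compact support, the endpoint constraints confine the marginals at $t=0,1$ to compact sets. We then restrict to plans with finite (or bounded) kinetic energy, arguing that all other plans are irrelevant: for the inf player they give value $+\infty$, and for the sup player they either do not occur or are dominated by the interaction bound. On the restricted set, Arzelà–Ascoli shows that $\{\gamma:\gamma(0)\in\operatorname{supp}\mu_1,\ \int|\dot\gamma|^2\le R\}$ is compact in $C([0,1])$. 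Prokhorov's theorem then makes the relevant sublevel sets of $\Pi_{\text{path}}(\mu_i,\nu_i)$ tight, hence narrowly relatively compact. The sets are also closed, since the endpoint marginal constraints pass to narrow limits.

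\textbf{Step 2: semicontinuity of the value functional.} Set $J(\pi_1,\pi_2)=\iint c\,d\pi_1\,d\pi_2$ and $F(\pi_1)=\sup_{\pi_2}J(\pi_1,\pi_2)$. Narrow convergence $\pi_1^n\to\pi_1$ implies $\pi_1^n\otimes\pi_2\to\pi_1\otimes\pi_2$. Since $c$ is lower semicontinuous and bounded below, the portmanteau theorem gives that $J(\cdot,\pi_2)$ is l.s.c.\ for each fixed $\pi_2$. As a supremum of l.s.c.\ functions, $F$ is l.s.c. Next we check that $F\not\equiv+\infty$, by taking a competitor such as the plan of constant-speed straight segments built from any coupling, for which the interaction term is bounded. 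Then a minimizing sequence for $F$ has bounded kinetic energy. By Step 1 a subsequence converges narrowly to some $\pi_1$, and lower semicontinuity of $F$ makes $\pi_1$ a minimizer.

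\textbf{Step 3: attainment of the inner supremum.} With the optimal $\pi_1$ fixed, we maximize $J(\pi_1,\cdot)$. This needs \emph{upper} semicontinuity of $\pi_2\mapsto J(\pi_1,\pi_2)$ together with compactness of the relevant set. If $c$ contains $-\int|\dot\xi|^2$ (the maximizing player pays kinetic cost), then $J$ is u.s.c.\ in $\xi$, and a maximizing sequence has bounded kinetic energy because the interaction is controlled. Steps 1 and 2 then apply with reversed inequalities and produce $\pi_2$.

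\textbf{Main obstacle.} The delicate point is the mixed semicontinuity: $c$ must be l.s.c.\ in $\gamma$ and u.s.c.\ in $\xi$, while the interaction term (e.g.\ $\int\gamma\cdot\xi\,dt$ or $|\gamma-\xi|^2$) must be continuous on energy-bounded sets. We also need uniform energy bounds for both optimizing sequences. The key estimate is an inequality of the form $|\text{interaction}|\le \varepsilon\int(|\dot\gamma|^2+|\dot\xi|^2)+C$, which holds because the paths start in compact sets. This is where the interaction strength enters, and it is the place we expect to lean most on Condition \ref{condition 1}.
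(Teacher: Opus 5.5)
Your argument is correct in outline, and it takes a different and cleaner route than the paper. The paper first fixes an arbitrary $\pi_1$ and shows the inner supremum is attained. It passes to the nonnegative cost $-c+\sup|c|$, takes a minimizing sequence, gets tightness from a Markov-type bound on the sets $\Omega_\ell$ of Condition \ref{condition 1}, applies Prokhorov and Proposition \ref{prop:limit}, and uses semicontinuity in $\xi$. This yields a best response $\tau^{\pi_1}$. It then minimizes $\pi_1\mapsto\sigma_0(\pi_1,\tau^{\pi_1})$ by taking a narrow limit of the products $\eta_j\otimes\tau^{\eta_j}$ and identifying it as a product via Lemma \ref{lemma:limitproduct}.

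You instead show that the value function $F(\pi_1)=\sup_{\pi_2}J(\pi_1,\pi_2)$ is l.s.c.\ as a supremum of l.s.c.\ functionals. You then minimize $F$ directly on a compact set and maximize $J(\pi_1^*,\cdot)$ only at the optimizer. The gain is that $F(\pi_1)\le\liminf_j F(\eta_j)$ follows from $J(\pi_1,\pi_2)\le\liminf_j J(\eta_j,\pi_2)\le\liminf_j F(\eta_j)$ for each fixed $\pi_2$, i.e.\ from semicontinuity in $\pi_1$ alone. The paper's final step instead passes to the limit in $\sigma_0(\eta_j,\tau^{\eta_j})$ along a diagonal sequence, which needs something like joint continuity of $\sigma_0$ (asserted later in Lemma \ref{lemma: uplo continuity}).

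This difference matters. In the uniform topology, where energy-bounded path sets are compact, the model cost of Condition \ref{condition 2} is only separately semicontinuous. In the metric \eqref{eq:path-metric}, which includes $\|\dot\gamma-\dot\xi\|_{L^2}$, that cost is continuous but those sets are not compact. Your route uses exactly the pair of properties that hold simultaneously. The paper's ordering does produce a best response for every $\pi_1$ (reused for $\Phi(\pi_1)\neq\emptyset$ in Theorem \ref{thm:minimax=maximin}), but your Step 3 gives that for any fixed $\pi_1$ too.

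Two adjustments are needed once Condition \ref{condition 1} is in view.

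First, the condition gives only separate semicontinuity: $c(\cdot,\xi)$ is l.s.c.\ and $c(\gamma,\cdot)$ is u.s.c. So portmanteau applied to $\pi_1^n\otimes\pi_2$ does not justify Step 2. Instead, $g(\gamma)=\int c(\gamma,\xi)\,d\pi_2(\xi)$ is l.s.c.\ and bounded by Fatou's lemma, hence $J(\cdot,\pi_2)=\int g\,d\pi_1$ is narrowly l.s.c. Symmetrically, by reverse Fatou, $J(\pi_1^*,\cdot)$ is narrowly u.s.c.\ in Step 3.

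Second, the coercivity estimate you flag as the main obstacle is not needed. The cost $c$ is assumed bounded and all paths start in balls of radius $R$, so $\Omega_M=\Omega_1\times\Omega_2$ for $M$ large. Hence $\Omega_1\times\Omega_2$ is compact, each $\Pi_{\text{path}}(\mu_i,\nu_i)$ is narrowly compact outright (closed by the argument of Proposition \ref{prop:limit}), and $F$ is automatically finite.
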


Under certain conditions on the cost function, the plans obtained from Theorem \ref{thm:minimaxExistence} also yield \textit{strong duality}, which will be referred to when the minimax equals the maximin problem. 

\begin{theorem}\label{thm:minimax=maximin}
    Let $\pi_{1}\in \Pi_{\text{path}}(\mu_{1},\nu_{1})$ and $\pi_{2}\in \Pi_{\text{path}}(\mu_{2}, \nu_{2})$. Given the bilinear functional $\sigma_{0}:\mathcal{P}(\Omega_{1})\times \mathcal{P}(\Omega_{2})\to \mathbb{R}$ defined by (\ref{sigma functional}), there exists a Nash Equilibrium Transport Plan (NETP) (Definition \ref{def: NETP}) $(\pi_1^*,\pi_2^*)$ such that we have 
     \begin{align}\label{eq: minimax}
    \sigma_0(\pi_1^*,\pi_2^*) = \inf_{\pi_{1}\in \Pi_{\text{path}}(\mu_{1},\nu_{1})}\sup_{\pi_{2}\in \Pi_{\text{path}}(\mu_{2}, \nu_{2})}\sigma_{0}(\pi_{1}, \pi_{2})= \sup_{\pi_{2}\in \Pi_{\text{path}}(\mu_{2}, \nu_{2})}\inf_{\pi_{1}\in\Pi_{\text{path}}( \mu_{1}, \nu_{1})}\sigma_{0}(\pi_{1}, \pi_{2}).
\end{align}
\end{theorem}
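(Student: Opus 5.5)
The plan is to apply a topological minimax theorem of Sion type to the bilinear functional $\sigma_0$ on the product $\Pi_{\text{path}}(\mu_1,\nu_1)\times\Pi_{\text{path}}(\mu_2,\nu_2)$. Then we extract a saddle point from the existence result of Theorem \ref{thm:minimaxExistence}.

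\textbf{Step 1: convexity and compactness.} Each set $\Pi_{\text{path}}(\mu_i,\nu_i)$ is convex, since the endpoint marginal constraints are linear in the plan. We would show it is weakly compact in $\mathcal{P}(\Omega_i)$. Tightness follows because the marginals have compact support. Under Condition \ref{condition 1} (coercivity of the kinetic part of $c$), we may restrict without loss of generality to plans concentrated on sublevel sets of the action, which are compact in path space by Arzel\`a--Ascoli. Prokhorov's theorem then gives relative compactness, and the endpoint constraints pass to weak limits because the evaluation maps $\gamma\mapsto(\gamma(0),\gamma(1))$ are continuous. This is the same compactness input used for Theorem \ref{thm:minimaxExistence}, and we will reuse it.

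\textbf{Step 2: semicontinuity and bilinearity.} The functional $\sigma_0(\pi_1,\pi_2)=\iint c\,d\pi_1\,d\pi_2$ is affine in each argument separately, so the quasi-convexity and quasi-concavity hypotheses of Sion's theorem are immediate. The substantive requirement is that $\pi_1\mapsto\sigma_0(\pi_1,\pi_2)$ be lower semicontinuous for fixed $\pi_2$, and that $\pi_2\mapsto\sigma_0(\pi_1,\pi_2)$ be upper semicontinuous for fixed $\pi_1$.

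This is the main obstacle, since a kinetic-minus-interaction cost is typically lower semicontinuous in $\gamma$ but only upper semicontinuous in $\xi$. We would split $c(\gamma,\xi)=A_1(\gamma)-A_2(\xi)+I(\gamma,\xi)$, with $A_i$ the action functionals and $I$ continuous and bounded on the relevant compact sets. Integrating against a fixed probability measure preserves continuity of the $I$ term (by dominated convergence). The $A_1$ term is lower semicontinuous in $\pi_1$ by the portmanteau theorem for lower semicontinuous functions bounded below. The $-A_2$ term is upper semicontinuous in $\pi_2$ by the same argument, and it is independent of $\pi_1$. If Condition \ref{condition 1} does not give exactly this split, we would instead approximate $c$ monotonically by continuous costs and pass to the limit using the compactness from Step 1.

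\textbf{Step 3: conclusion.} With Steps 1 and 2 in hand, Sion's theorem yields $\inf_{\pi_1}\sup_{\pi_2}\sigma_0=\sup_{\pi_2}\inf_{\pi_1}\sigma_0$. Theorem \ref{thm:minimaxExistence} gives a $\pi_1^*$ attaining the outer infimum in the minimax problem. By symmetry, using upper semicontinuity and compactness in $\pi_2$, there is a $\pi_2^*$ attaining the outer supremum in the maximin problem. We then have the chain
\[
\sup_{\pi_2}\sigma_0(\pi_1^*,\pi_2)=\inf\sup=\sup\inf=\inf_{\pi_1}\sigma_0(\pi_1,\pi_2^*)\le\sigma_0(\pi_1^*,\pi_2^*)\le\sup_{\pi_2}\sigma_0(\pi_1^*,\pi_2).
\]
Hence all quantities in the chain are equal. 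This shows $(\pi_1^*,\pi_2^*)$ is a saddle point, that is, an NETP in the sense of Definition \ref{def: NETP}, and it establishes \eqref{eq: minimax}.
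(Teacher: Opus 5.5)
Your argument is correct, but it reaches the saddle point by a different route from the paper.

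\textbf{The paper's route.} It first records weak duality (Proposition \ref{prop: weak duality}). It then gets the reverse inequality from the Fan--Glicksberg fixed point theorem, applied to the best-response correspondences $\Phi(\pi_1)=\argmax_{\pi_2}\sigma_0(\pi_1,\pi_2)$ and $\Psi(\pi_2)=\argmin_{\pi_1}\sigma_0(\pi_1,\pi_2)$. After checking that these have nonempty, convex, compact values and closed graphs, a fixed point $(\bar\pi_1,\bar\pi_2)$ of mutual best responses is directly the NETP.

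\textbf{Your route.} You use Sion's theorem to get $\inf\sup=\sup\inf$, and then assemble the saddle point from the two outer optimizers by the standard chain.

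\textbf{What each buys.} The fixed-point route produces the equilibrium as a pair of mutual best responses. This matches the Nash interpretation and would survive non-zero-sum variants. However, it needs closed graphs, which the paper obtains from joint continuity of $\sigma_0$ (Lemma \ref{lemma: uplo continuity}). That step is delicate. In the uniform topology, where $\Omega_i$ is compact, the kinetic terms are only semicontinuous, so $\sigma_0$ is not jointly continuous. For the Condition \ref{condition 2} cost the closed graphs can still be recovered: the opponent's kinetic term drops out of each best-response problem, and what remains is jointly upper (resp.\ lower) semicontinuous and continuous in the opponent's plan. Your route uses only what is genuinely available --- affinity, separate lower/upper semicontinuity, and compactness of the strategy sets. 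For this zero-sum problem it is therefore the more economical and more robust argument.

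\textbf{Two small points.}
\begin{enumerate}
\item State explicitly that both the Arzel\`a--Ascoli compactness and the semicontinuity are taken in the uniform topology on $\Omega_i$. In the metric \eqref{eq:path-metric} the kinetic terms are continuous, but $\Omega_i$ is not compact. Also, no restriction to action sublevel sets is needed, since the action bound is already built into $\Omega_i$.
\item You do not need Theorem \ref{thm:minimaxExistence} to get $\pi_1^*$. The map $\pi_1\mapsto\sup_{\pi_2}\sigma_0(\pi_1,\pi_2)$ is lower semicontinuous on a compact set, so the minimum is attained (Sion's theorem in its $\min$ form already gives this). This is exactly parallel to how you obtain $\pi_2^*$.
\end{enumerate}
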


We specialize the results to a cost function involving the difference of kinetic energies plus a quadratic interaction term between paths, although our theory supports the more general costs that have Fourier transforms of finite, positive measure, which are functions of positive type. We show that the solution to the path space problem \eqref{inf sup problem} projects to a solution to a stationary formulation as was done by Cabrera in \cite{cabrera2022optimaltransportationprincipleinteracting}-\cite{Cabrera2022} (in the case of path and quadratic congestion) via an endpoint cost function defined similarly as to \cite{villani2008optimal}. 

\begin{theorem}\label{thm:pathtostationary}
    Let $c$ be as defined in Condition \ref{condition 2}. If $(\pi_{1}^{*}, \pi_{2}^{*})$ solves (\ref{inf sup problem}) on paths, then (1) $(\pi_{1}^{*}, \pi_{2}^{*})$ is supported on $c_{\pi_{1}^{*}, \pi_{2}^{*}}$-minimaximal paths and (2) the couple $(e_{0}, e_{1})_{\sharp}\pi_{1}^{*}, (e_{0}, e_{1})_{\sharp} \pi_{2}^{*}$ solve the Nash-Monge-Kantorovich problem for the pair $c_{e,\pi_{1}^{*}}$ and $c_{e,\pi_{2}^{*}}$, defined by \eqref{eqn: effective costs}. 
\end{theorem}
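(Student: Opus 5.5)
The plan is to use that $\sigma_0$ is bilinear: at a saddle point, each plan is an \emph{optimal} plan for a linear (classical) transport problem whose cost is obtained by integrating $c$ against the other player's plan. Concretely, since $(\pi_1^*,\pi_2^*)$ solves \eqref{inf sup problem}, we may use the minimax equality of Theorem \ref{thm:minimax=maximin} to get the two saddle inequalities
\[
\sigma_0(\pi_1^*,\pi_2)\le \sigma_0(\pi_1^*,\pi_2^*)\le \sigma_0(\pi_1,\pi_2^*)
\]
for all admissible $\pi_1,\pi_2$. Define the partial costs
\[
c_{\pi_2^*}(\gamma):=\int c(\gamma,\xi)\,d\pi_2^*(\xi),\qquad c_{\pi_1^*}(\xi):=\int c(\gamma,\xi)\,d\pi_1^*(\gamma).
\]
Then $\pi_1^*$ minimizes the linear functional $\pi_1\mapsto\int c_{\pi_2^*}\,d\pi_1$ over $\Pi_{\text{path}}(\mu_1,\nu_1)$, and $\pi_2^*$ maximizes $\pi_2\mapsto\int c_{\pi_1^*}\,d\pi_2$ over $\Pi_{\text{path}}(\mu_2,\nu_2)$. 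This reduces everything to two classical transport problems on path space with frozen costs.

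For (1), I would argue by disintegration along the endpoint map $(e_0,e_1)$. Write $\pi_1^*=\int \pi_1^{x,y}\,d\gamma_1^*(x,y)$ with $\gamma_1^*=(e_0,e_1)_\sharp\pi_1^*$. Suppose a positive-measure set of $(x,y)$ had conditional measures $\pi_1^{x,y}$ charging paths whose cost $c_{\pi_2^*}$ exceeds the infimum over paths from $x$ to $y$. Then we replace them by a measurable selection of (near-)minimizing paths; this exists by compactness, using Condition \ref{condition 2}, which gives lower semicontinuity and coercivity of the kinetic part. The replacement strictly lowers $\sigma_0(\cdot,\pi_2^*)$ while keeping the marginals, which contradicts the saddle inequality. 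The same argument with reversed inequalities applies to $\pi_2^*$ and maximal paths. This is exactly what being supported on $c_{\pi_1^*,\pi_2^*}$-minimaximal paths means.

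For (2), set the effective endpoint costs in \eqref{eqn: effective costs} to be
\[
c_{e,\pi_2^*}(x,y)=\inf\{c_{\pi_2^*}(\gamma):\gamma(0)=x,\ \gamma(1)=y\},
\]
with the analogous supremum for the second player. By (1),
\[
\sigma_0(\pi_1^*,\pi_2^*)=\int c_{e,\pi_2^*}\,d\gamma_1^*.
\]
Conversely, any endpoint coupling $\gamma_1$ lifts, via the measurable selection, to a path plan with the same value. Hence $\gamma_1^*$ minimizes $\int c_{e,\pi_2^*}\,d\gamma$ over $\Pi(\mu_1,\nu_1)$, and symmetrically $\gamma_2^*$ maximizes against $c_{e,\pi_1^*}$, which is the Nash--Monge--Kantorovich property.

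The main obstacle I expect is the measurable selection together with finiteness of the effective cost. For the maximizing player, the difference-of-kinetic-energies cost with a quadratic interaction makes the supremum over paths finite only below the critical interaction strength. I would invoke Condition \ref{condition 2}, with this smallness, to show that $\xi\mapsto c_{\pi_1^*}(\xi)$ is concave-coercive along paths with fixed endpoints, so the extremal path exists, is unique, and depends continuously on $(x,y)$. This gives a Borel selection and closes both steps.
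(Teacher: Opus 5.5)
Your proof works once the hypothesis is corrected, and for part (2) it takes a genuinely different and shorter route than the paper.

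\textbf{The saddle inequalities must be assumed.} They do not follow from the value equality in Theorem \ref{thm:minimax=maximin}. A minimax-optimal $\pi_1^*$ paired with an arbitrary best response $\pi_2^*$ need not be a saddle pair, even in a bilinear game. For $f(x,y)=xy$ on $[-1,1]^2$, $x^*=0$ is minimax-optimal and $y^*=1$ is a best response to it, yet $f(x,1)<f(0,1)$ for $x<0$.

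The saddle property cannot be avoided. Conclusions (1)--(2) for both players already imply the saddle inequalities, because $c_{\pi_2^*}(\gamma)\ge c_{e,\pi_2^*}(\gamma(0),\gamma(1))$ pointwise, and symmetrically for $\xi$. So take $(\pi_1^*,\pi_2^*)$ to be a NETP, which is exactly what the paper's Lemmas \ref{lem:support_saddle} and \ref{lemma: monotonicity} assume. Alternatively, show that the best response to $\pi_1^*$ is unique; it then coincides with the maximin-optimal plan of the saddle pair that Theorem \ref{thm:minimax=maximin} provides.

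\textbf{Comparison of routes.} The paper proves (1) by local tube surgery. It removes $\varepsilon_0$ of mass near a non-extremal path in the support and reroutes it along extremal paths with the same endpoints. It proves (2) indirectly, in two steps:
\begin{itemize}
\item it first establishes cyclical monotonicity of $\operatorname{spt}\check\pi_1$ and cyclical anti-monotonicity of $\operatorname{spt}\check\pi_2$, via $n$-cycle perturbations with continuity estimates;
\item it then invokes the fundamental theorem of optimal transport, with integrable bounds on the costs, to upgrade this to optimality.
\end{itemize}

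You obtain both parts globally from one object: the Borel map $h$ sending endpoints to the extremal path of the frozen cost. No disintegration is needed, because for every $\gamma_1\in\Pi(\mu_1,\nu_1)$ one has
\[
\int c_{e,\pi_2^*}\,d\check\pi_1\le\int c_{\pi_2^*}\,d\pi_1^*\le\int c_{\pi_2^*}\,d(h_\sharp\gamma_1)=\int c_{e,\pi_2^*}\,d\gamma_1 .
\]
Taking $\gamma_1=\check\pi_1$ forces equality throughout, which is (1). A general $\gamma_1$ then gives (2), with no cyclical monotonicity or duality needed.

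Your $h$ is also built from the correct functional. Its extremal paths solve $\ddot\gamma=2\alpha(\gamma-\bar\xi)$, with $\bar\xi$ the mean path of $\pi_2^*$, and the concave analogue for $\xi$, which is uniquely solvable for $\alpha<\pi^2/2$. The paper instead takes its $h_t$ from Proposition \ref{boundary value problem}, which concerns $c(\cdot,\xi)$ against a single opponent path rather than the averaged cost. What the paper's route adds is the cyclically (anti-)monotone structure reused for Theorem \ref{thm:NashBrenier}. That structure also follows from your optimality statement by standard optimal transport for continuous costs.

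\textbf{Two small points.}
\begin{itemize}
\item Your argument gives concentration on extremal paths $\pi_1^*$-a.e. To state (1) for the support, note that $\{\gamma: c_{\pi_2^*}(\gamma)\le c_{e,\pi_2^*}(\gamma(0),\gamma(1))\}$ is closed, since $c_{\pi_2^*}$ is lower semicontinuous and $c_{e,\pi_2^*}$ is continuous; argue symmetrically for $\xi$.
\item The smallness condition $\alpha<\pi^2/2$ is not part of Condition \ref{condition 2}. It must be added as a hypothesis, as the paper also does ($\alpha<\pi^2/4$ in Lemma \ref{lem:support_saddle}).
\end{itemize}
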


In particular, for quadratic type interactions, the problem exhibits strong structural properties analogous to classical optimal transport.

\begin{theorem}\label{thm:NashBrenier}
    Let the cost function $c$ be given by Condition \ref{condition 2}. Then \[c(x,y,x',y') = \frac12|x-y|^2 - \frac12|x'-y'|^2 + \frac\alpha3\left[(x-x')^2 + (x-x')(y-y') + (y-y')^2 \right],\]
    where $\alpha < \frac{\pi^{2}}{2}.$ Suppose $\mu_1,\nu_1,\mu_2,\nu_2 \in \mathcal{P}_2(\mathbb{R}^d)$ are absolutely continuous measures with respect to the Lebesgue measure. Then for $\alpha \neq 3$ the solution to the Nash equilibrium stationary problem is unique and admits transport maps $T_1(x)$ and $T_2(x')$. In particular when $\alpha < 3$, then the maps are given as gradients of convex potentials. Similarly, if $\alpha > 3$, then the maps are given as gradients of concave potentials. 
    
    Furthermore, if $(\pi_{1}^{*}, \pi_{2}^{*})$ is a pair of NETPs (Def. \ref{def: NETP}) for the end point cost functions $c_{e, \pi_{i}^{*}}$ \eqref{eqn: effective costs}, then the unique coupled solution $(\pi_{1}^{*},\pi_{2}^{*})$ to \eqref{inf sup problem} is given by coupled minimaximal maps of paths $\Gamma^{i}:\text{spt}\left(\mu_{i}\right)\to \Omega_{i}$, for $i=1,2.$
\end{theorem}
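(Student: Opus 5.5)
We work directly with the explicit endpoint cost. Writing $u=x-x'$ and $v=y-y'$, the interaction term is $\frac{\alpha}{3}(u^2+uv+v^2)$. Expanding, the only terms coupling a point of the first plan to a point of the second are bilinear:
\[
-\tfrac{\alpha}{3}\Big(2x\cdot x' + x\cdot y' + y\cdot x' + 2y\cdot y'\Big).
\]
The remaining terms are $\frac12|x-y|^2+\frac{\alpha}{3}\big(|x|^2+x\cdot y+|y|^2\big)$, minus the analogous expression in $(x',y')$ with the sign of the kinetic part reversed. Every term depending on only one variable integrates against a fixed marginal, so it is a constant and can be dropped.

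After integrating against a fixed opponent $\pi_2$, player $1$ faces an ordinary Kantorovich problem whose cost has cross term in $(x,y)$ equal to $\left(\frac{\alpha}{3}-1\right)x\cdot y$, modulo functions of $x$ alone and of $y$ alone. The opponent-dependent pieces are linear in $x$ and $y$, e.g. $-\frac{\alpha}{3}\,x\cdot(2\bar x'+\bar y')$, where $\bar x',\bar y'$ are barycenters of $\mu_2,\nu_2$. These pieces are fixed by the marginals as well, so they do not affect which plan is optimal. Player $2$'s maximization reduces in the same way to a problem with cross term $\left(1-\frac{\alpha}{3}\right)x'\cdot y'$ to be maximized.

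Hence the best-response problem of each player is independent of the opponent's plan. Up to additive constants it is
\[
\inf_{\pi_1\in\Pi(\mu_1,\nu_1)} \Big(\tfrac{\alpha}{3}-1\Big)\int x\cdot y\, d\pi_1,
\]
and similarly for player 2. For $\alpha<3$ the coefficient is negative, so player 1 maximizes correlation. For $\alpha>3$ it minimizes correlation, i.e. maximizes $\int x\cdot(-y)\,d\pi_1$. For $\alpha=3$ the functional is constant and uniqueness fails, which explains the exclusion $\alpha\neq 3$.

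The key steps, in order, are the following.
\begin{enumerate}
\item Establish this decoupling carefully, checking the signs, including those coming from the minus sign on the second kinetic energy.
\item Note that a pair is a Nash equilibrium if and only if each component solves its decoupled problem. The saddle inequalities separate because $\sigma$ is (constant) $+\,f(\pi_1)+g(\pi_2)$.
\item Apply Brenier's theorem, using absolute continuity of $\mu_i$ and finite second moments. For $\alpha<3$ this gives a unique optimal plan $(\mathrm{id},\nabla\varphi_1)_\sharp\mu_1$ with $\varphi_1$ convex.
\item For $\alpha>3$, apply Brenier to $\mu_1$ and the reflected target $(-\mathrm{id})_\sharp\nu_1$, obtaining $T_1=-\nabla\psi=\nabla(-\psi)$ with $-\psi$ concave. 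Player 2's case is symmetric.
\end{enumerate}
Uniqueness of the equilibrium pair then follows from uniqueness in each Brenier problem. The constraint $\alpha<\pi^2/2$ enters only to guarantee that the endpoint cost is well defined, via the earlier results.

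For the path statement, we invoke Theorem \ref{thm:pathtostationary}: any NETP on paths projects to a stationary NMK solution, so its endpoint plans are $(\mathrm{id},T_i)_\sharp\mu_i$. By part (1) of that theorem, $\pi_i^*$ is concentrated on $c_{\pi_1^*,\pi_2^*}$-minimaximal paths. It remains to show that the minimaximal path between given endpoints is unique. For $\alpha<\pi^2/2$, the Euler–Lagrange system for the quadratic action with the averaged interaction is a linear second-order ODE, and its quadratic form is positive definite; this is exactly the Wirtinger/Poincaré threshold $\pi^2$ scaled appropriately. Hence it has a unique solution for each pair of endpoints. Composing this solution with $T_i$ gives $\Gamma^i(x)$, so $\pi_i^*=\Gamma^i_\sharp\mu_i$, and uniqueness of the path solution follows.

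The main obstacle is this last step. One must justify that minimaximal paths exist and are unique given only the opponent's averaged path. The averaged interaction makes the Euler–Lagrange equation depend on the mean path of the opponent, which couples the two systems. I would handle this by noting that the coupling enters only through the barycenter curves. Solving the barycenter ODE first gives a unique affine-in-time correction, after which each path is determined by its endpoints.
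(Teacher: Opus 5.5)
Your stationary argument is essentially the paper's: expand $c_e$, reduce each best response to a quadratic Kantorovich problem, apply Brenier, and use an affine reflection when $\alpha>3$. Your remark that the cross-player terms integrate to products of barycenters fixed by the marginals, so that $\sigma=C+f(\pi_1)+g(\pi_2)$, is a cleaner way to get the saddle property and uniqueness at once.

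\textbf{Step 1 fails at exactly the sign you flagged.} In $c_e$ the coefficient of $x'\cdot y'$ is $+1$ from $-\tfrac12|x'-y'|^2$ and $+\tfrac{\alpha}{3}$ from $\tfrac{\alpha}{3}(x-x')\cdot(y-y')$. So for any fixed $\pi_1$ the $\pi_2$-dependence of $\sigma$ is $(1+\tfrac{\alpha}{3})\int x'\cdot y'\,d\pi_2$, not $(1-\tfrac{\alpha}{3})\int x'\cdot y'\,d\pi_2$. The interaction adds $+\tfrac{\alpha}{3}$ to both players' cross coefficients, while the kinetic terms add $-1$ and $+1$, so the players are not symmetric. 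As a check, for $\mu_2=\nu_2$ uniform on $[0,1]$ the identity coupling beats $y'=1-x'$ by $\tfrac{\alpha+3}{18}>0$, whatever $\pi_1$ is.

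Since player 2 maximizes and $1+\tfrac{\alpha}{3}>0$, $T_2$ is the Brenier map, the gradient of a convex function, for every $0<\alpha<\pi^2/2$. Player 2's plan is still unique at $\alpha=3$; only $T_1$ switches from convex to concave gradients there. So ``player 2's case is symmetric'' cannot be repaired. For $3<\alpha<\pi^2/2$ the claim that $T_2$ is the gradient of a concave potential is false: take $\mu_2=\nu_2$, where $T_2=\mathrm{id}$. The paper's proof has the same slip. Its simplified $\tilde c_2$ reuses player 1's coefficients $\tfrac{3+2\alpha}{6}$ and $\tfrac{\alpha-3}{3}$, whereas its own displayed expansion gives $\tfrac{3-2\alpha}{6}$ and $-\tfrac{3+\alpha}{3}$.

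\textbf{The path part has a second gap.} You identify the endpoint plans of a path NETP with $(\mathrm{id},T_i)_\sharp\mu_i$ for the stationary $T_i$. Theorem \ref{thm:pathtostationary} only gives optimality for the costs $c_{e,\pi_i^*}$ of \eqref{eqn: effective costs}, and these are not $\int c_e\,d\check{\pi}_j$.

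The opponent enters $\int c(\gamma,\xi)\,d\pi_2^*(\xi)$ only through the mean path $\bar\xi(t)=\int\xi(t)\,d\pi_2^*(\xi)$. Hence, up to terms linear in $(x,y)$ and constants, $c_{e,\pi_2^*}(x,y)=\min_{\gamma:x\to y}\int_0^1\tfrac12|\dot\gamma|^2+\alpha|\gamma|^2\,dt=\tfrac{k}{2\sinh k}\left(\cosh k\,(|x|^2+|y|^2)-2x\cdot y\right)$ with $k=\sqrt{2\alpha}$. Likewise $c_{e,\pi_1^*}$ has cross term $\tfrac{\omega}{\sin\omega}\,x'\cdot y'$ with $\omega=\sqrt{2\alpha}<\pi$.

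These coefficients agree with $\tfrac{\alpha}{3}-1$ and $1+\tfrac{\alpha}{3}$ only to first order in $\alpha$. In particular, on paths player 1 always maximizes correlation. So for $3<\alpha<\pi^2/2$ the projected $T_1$ is the monotone Brenier map, not your concave-gradient stationary map. The paper's identity \eqref{euc integral} has the same defect.

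The conclusion of the second part is still reachable along your own lines:
\begin{enumerate}
\item Your barycenter ODE pins down $\bar\gamma,\bar\xi$ at any equilibrium.
\item The effective endpoint costs above have nonvanishing cross terms, so they satisfy the twist condition.
\item Brenier then gives unique endpoint maps.
\item The unique minimal (resp.\ maximal, using $\alpha<\pi^2/2$) path between given endpoints yields $\Gamma^i$ and uniqueness.
\end{enumerate}
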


In standard optimal transport theory, one automatically gets a \emph{dual problem}. Analogously, we get a \emph{coupled dual problem} as well corresponding to the bilinear transport problem.

\begin{theorem}\label{thm:dualproblm}
    If $(\pi_1^*,\pi_2^*)$ solves \eqref{inf sup problem}, then \eqref{inf sup problem} can be characterized by a coupled dual problem.
\end{theorem}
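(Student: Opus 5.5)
The plan is to reduce the path-space problem to the stationary Nash--Monge--Kantorovich problem and then dualize each player's problem separately, with the opponent's plan frozen. By Theorem \ref{thm:minimax=maximin} and Theorem \ref{thm:pathtostationary}, a solution $(\pi_1^*,\pi_2^*)$ of \eqref{inf sup problem} is a saddle point. Its endpoint projections $\gamma_i^*:=(e_0,e_1)_\sharp\pi_i^*$ form a Nash equilibrium for the effective costs $c_{e,\pi_1^*},c_{e,\pi_2^*}$ of \eqref{eqn: effective costs}. Concretely, $\gamma_1^*$ minimizes
\[
\gamma_1\mapsto \int \bar c_1(x,y)\,d\gamma_1,\qquad \bar c_1(x,y):=\int c(x,y,x',y')\,d\gamma_2^*(x',y'),
\]
over $\Pi(\mu_1,\nu_1)$. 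Symmetrically, $\gamma_2^*$ maximizes $\int \bar c_2\,d\gamma_2$ over $\Pi(\mu_2,\nu_2)$, where $\bar c_2(x',y'):=\int c\,d\gamma_1^*$. Because $\sigma_0$ is bilinear, fixing one plan leaves a \emph{linear} Kantorovich problem in the other, with a continuous cost. On compact supports, or under the $\mathcal P_2$ moment assumptions in the quadratic case, this cost is bounded below by an integrable function.

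The first step is to apply classical Kantorovich duality to each frozen problem. This gives potentials $(\varphi_1,\psi_1)$ with $\varphi_1(x)+\psi_1(y)\le \bar c_1(x,y)$ and
\[
\int \bar c_1\,d\gamma_1^*=\sup_{\varphi\oplus\psi\le \bar c_1}\Big(\int\varphi\,d\mu_1+\int\psi\,d\nu_1\Big).
\]
For the maximizing player it gives $(\varphi_2,\psi_2)$ with $\varphi_2\oplus\psi_2\ge \bar c_2$, attaining the infimum. The potentials can be taken $\bar c_i$-concave (resp. $\bar c_2$-convex). Complementary slackness then says $\gamma_i^*$ is supported on the set where the constraint is an equality, which is the cyclical monotonicity (resp. anti-monotonicity) of Theorem \ref{thm:pathtostationary}(1).

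The second step assembles these into a coupled dual problem. Its unknowns are $(\varphi_1,\psi_1,\varphi_2,\psi_2,\gamma_1,\gamma_2)$, subject to the constraints
\[
\varphi_1\oplus\psi_1\le\int c\,d\gamma_2,\qquad \varphi_2\oplus\psi_2\ge\int c\,d\gamma_1,
\]
together with the fixed-point requirement that $\gamma_i$ be concentrated on the equality sets. The characterization to prove is that $(\gamma_1,\gamma_2)$ is a NETP if and only if such potentials exist with equal primal and dual values:
\[
\int\varphi_1\,d\mu_1+\int\psi_1\,d\nu_1=\sigma_0=\int\varphi_2\,d\mu_2+\int\psi_2\,d\nu_2 .
\]
The forward direction follows from the first step. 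For the converse, integrate the constraint for player 1 against an arbitrary $\gamma_1\in\Pi(\mu_1,\nu_1)$ to get $\sigma_0(\gamma_1,\gamma_2^*)\ge\sigma_0(\gamma_1^*,\gamma_2^*)$, and do the same for player 2; together these give the saddle inequalities. In the quadratic case of Condition \ref{condition 2}, I would expand $\bar c_1$ explicitly. It reduces to $(\tfrac12+\tfrac\alpha3)|x|^2$ and $(\tfrac12+\tfrac\alpha3)|y|^2$ terms, plus a cross term $(\tfrac\alpha6-1)\,x\cdot y$, plus terms linear in $x,y$ involving the moments of $\gamma_2$. After absorbing the separable terms into the potentials, the dual becomes a Brenier-type dual with cost $\pm x\cdot y$. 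This makes the convex/concave dichotomy of Theorem \ref{thm:NashBrenier} visible.

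I expect the main obstacle to be making the coupled dual a genuine self-contained problem rather than a tautology restating the equilibrium. The constraint sets depend on the opponent's plan, so this is a fixed-point (quasi-variational) dual, not a sup or inf of a single functional. To handle it, I would first exploit bilinearity: in the quadratic case, $\gamma_2$ enters $\bar c_1$ only through a finite set of moments (its barycenters $\int x'\,d\gamma_2$ and $\int y'\,d\gamma_2$). So the coupling can be parametrized by finitely many vectors, and the dual becomes a sup-inf over potentials together with these parameters, with the parameters determined by first-order conditions. A secondary technical point is integrability of the potentials on $\mathcal P_2$, which I would handle with the standard bound $|\varphi_i(x)|\le C(1+|x|^2)$.
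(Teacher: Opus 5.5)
Your route is the paper's: freeze the opponent's plan, apply classical Kantorovich duality to each player's linear problem (Theorem \ref{thm:coupled duality}), and combine the two at the equilibrium. Your equal-value condition is the zero-sum identity of Corollary \ref{corollary: zero sum dual} after the sign change $(\varphi_2,\psi_2)\mapsto(-\varphi_2,-\psi_2)$. Your converse, which integrates each constraint against an arbitrary competitor plan, supplies the ``if'' direction; the paper's Lagrange-multiplier sketch for Corollary \ref{thm: dual on paths} never actually proves it.

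\textbf{A step that fails as written.} Your ``concretely, $\bar c_1=\int c\,d\gamma_2^*$'' is the identity $c_{e,\pi_2^*}(x,y)=\int c_e(x,y,x',y')\,d\check\pi_2$ of Lemma \ref{lemma:saddlepoint-endpoint}, and it does not hold for a path-level NETP. In $c_{e,\pi_2^*}(x,y)=\inf_\gamma\int c(\gamma,\xi)\,d\pi_2^*(\xi)$ the opponent's paths are frozen, whereas in $c_e$ the path $\xi$ re-optimizes for each $(x,y)$.

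A concrete case: take all four marginals equal to $\delta_0$. Then the pair of constant paths is a NETP. With $\omega=\sqrt{2\alpha}$ one gets $c_{e,\pi_2^*}(x,y)=\frac{\omega}{2\sinh\omega}\big[(|x|^2+|y|^2)\cosh\omega-2x\cdot y\big]$. By contrast, $c_e(x,y,0,0)=\frac12|x-y|^2+\frac{\alpha}{3}\big(|x|^2+x\cdot y+|y|^2\big)$. The two agree only to first order in $\alpha$. More generally, for every $\pi_2^*$ the $x\cdot y$ coefficient of $c_{e,\pi_2^*}$ is $-\omega/\sinh\omega$, while that of $\int c_e\,d\check\pi_2$ is $\tfrac{\alpha}{3}-1$.

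The fix costs nothing for the duality statement. Take $\bar c_1:=c_{e,\pi_2^*}$ and $\bar c_2:=c_{e,\pi_1^*}$ exactly as defined in \eqref{eqn: effective costs}. For $\alpha<\pi^2/2$ these are continuous and quadratic in the endpoints, so your Kantorovich step and your converse go through unchanged. However, your explicit quadratic expansion then describes the stationary game \eqref{eqn:inf sup euclidean} with cost \eqref{eqn:new cost}, not the endpoint projection of the path problem.

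\textbf{Smaller corrections.}

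\emph{The cross term.} In that stationary expansion the cross term is $(\tfrac{\alpha}{3}-1)\,x\cdot y$, not $(\tfrac{\alpha}{6}-1)\,x\cdot y$. It comes from $-x\cdot y$ in $\tfrac12|x-y|^2$ and $\tfrac{\alpha}{3}x\cdot y$ in $\tfrac{\alpha}{3}(x-x')\cdot(y-y')$. Only the correct coefficient puts the convex/concave switch at $\alpha=3$; yours would put it at $\alpha=6>\pi^2/2$, outside the admissible range.

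\emph{The anticipated obstacle.} It is milder than you fear. For the cost \eqref{eqn:new cost}, the $(x,y)$-dependent part of $\bar c_1$ involves $\gamma_2$ only through $\int x'\,d\gamma_2=\int x'\,d\mu_2$ and $\int y'\,d\gamma_2=\int y'\,d\nu_2$. The marginal constraints fix both. So $\bar c_1$ depends on $\gamma_2$ only through an additive constant, and the coupled dual splits, up to constants, into two independent classical duals with nothing left to solve for.

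With the true path-level costs, the $(x,y)$-dependent part of $c_{e,\pi_2^*}$ involves $\pi_2^*$ only through the mean path $t\mapsto\int\xi(t)\,d\pi_2^*(\xi)$, which contributes terms linear in $(x,y)$. Averaging the Euler--Lagrange equations over the supports shows that, at a NETP, the mean paths are the saddle paths of Lemma \ref{thm:explicit cost} joining the barycenters. So your finite-parameter reduction works there.

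\emph{The hypothesis.} Take the saddle-point property as your hypothesis rather than deriving it from Theorem \ref{thm:minimax=maximin}. That theorem gives equality of values and the existence of some saddle point. But a pair that merely attains the inf--sup value need not be a saddle point. For example, in the stationary game $\sigma_0=F_1(\gamma_1)+F_2(\gamma_2)+C$: take $F_1$ slightly above its minimum and $F_2$ equally below its maximum.
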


\subsection{Relation to Existing Work.}
Our formulation is closely related to several directions in the literature. The bilinear structure is reminiscent of Gromov--Wasserstein distances \cite{zhang2024gradient}, which involve quadratic interactions in the transport plan. Unlike those formulations, however, our problem is posed as a minimax optimization over two distinct plans---\emph{bilinear plans}. The quadratic optimal transport problem studied by Jeong in \cite{jeong2025quadraticoptimaltransportationproblem} corresponds to a single-plan bilinear cost and exhibits non-uniqueness phenomena that we extend to the minimax setting. 

Our work is also inspired by path-dependent and congestion formulations of optimal transport. In \cite{cabrera2022optimaltransportationprincipleinteracting} interacting path costs are introduced in a purely minimizing framework. Here we incorporate such ideas into a minimax setting, leading to new connections with differential games and saddle-point problems.

The functional \eqref{inf sup problem} can be interpreted as a two-player zero-sum game, where each player selects a transport plan. A minimax solution corresponds to a strategy that minimizes the worst-case cost, while a maximin solution maximizes the best guaranteed payoff. When these coincide, the solution defines a Nash equilibrium. This perspective naturally explains the emergence of mixed strategies (transport plans) and, in certain regimes, pure strategies (transport maps).

A key feature of our approach is the incorporation of path-dependent costs. Given trajectories $\gamma$ and $\xi$, we consider costs of the form
\begin{align}\label{Lagrangian cost}
c(\gamma,\xi) := \int_0^1 L(\gamma(t),\dot\gamma(t),\xi(t),\dot\xi(t),t)\, dt,
\end{align}
which gives rise to the functional
\begin{align}\label{sigma functional}
\sigma_0(\pi_1,\pi_2) := \iint c(\gamma,\xi)\, d\pi_1(\gamma)\, d\pi_2(\xi).
\end{align}
This formulation connects optimal transport with optimal control \cite{villani2021topics}, \cite{santambrogio2015optimal} and differential games \cite{isaacs1965differential}, and allows us to capture interactions along trajectories rather than only at endpoints.

\medskip

\noindent\textbf{Organization of the paper.}
The remainder of the paper is organized as follows. In Section~\ref{prelims} we provide a technical account of optimal transport theory and how it is associated to both a quadratic and bilinear transport setting. Furthermore, we give the definitions of a solution to \eqref{inf sup problem} and a \emph{Nash Equilibrium Transport Plan} and \emph{maps}, and apply them to elementary calculations. In Section \ref{section: path formulation}  we develop the path space formulation and prove existence results ending the section with a proof of a minimax value coinciding with a maximin one. Section \ref{NMK maps} establishes a stationary calculation of the cost in order to obtain the Nash-Monge-Kantorovich transport maps. In particular, Section~\ref{section: Nash-Monge-Kantorovich Transport Map} analyzes the stationary problem and proves the Brenier-type structure result. Lastly, Section~\ref{section: dual} establishes duality. In addition, we prove a coupled dual problem corresponding to \eqref{inf sup problem} in which we use to construct the NMK maps which come from the path space.  Furthermore, Section \ref{sec: effective cost and cyclical monotonicity} exhibits monotonicity to acquire bilinear plans given my maps which are uniquely determined.   Additional technical details and comparisons are provided throughout.

\section{Preliminaries}\label{prelims}
In standard optimal transport theory, we transport commodities in the most efficient way from one location onto another. The problem is famously known as the \textbf{Monge-Kantorovich problem}. First for the \textbf{Monge Problem}, a \textit{transport map} between two probability measures $\mu$ and $\nu$, both in $X\subset\mathbb{R}^{d}$ and in $Y\subset \mathbb{R}^{d}$, respectively, is a map $T: X \to Y$ such that it is the \textit{push forward} of $\mu$ onto $\nu$, $T_{\sharp}\mu=\nu$. That is, $T:X\to Y$ is such that for any measurable subset $E$ of $\mathbb{R}^{d}$, $\nu(E):=T_{\sharp}\mu(E)=\mu\left(T^{-1}(E)\right)$. Given a measurable \textit{cost function} $c:X\times Y\to\mathbb{R}$, the \textit{optimal transport problem} or the \textit{Monge problem} transporting $\mu$ onto $\nu$ is to find a measurable map $T$ which satisfies 
\begin{align}\label{monge problem}
    \int_{\mathbb{R}^{d}}c(x, T(x))d\mu(x)=\inf_{\widetilde{T}_{\sharp}\mu=\nu}\int_{\mathbb{R}^{d}}c(x, \widetilde{T}(x))d\mu(x).
\end{align}
However, this problem may not be feasible. To wit, no admissible mapping $T$ may exist; simply take a Dirac delta $\delta = \mu$ for the source measure while the target measure $\nu$ is not \cite{ambrosio2013user}. Hence, $T$ is not measure preserving. Essentially, in the \emph{Monge problem} the mass is not allowed to split. In contrast, the Kantorovich relaxation allows for splitting of the masses, making the problem feasible. 
   
In the \textbf{Kantorovich Problem}, we instead look for a \textit{transport plan}, $\pi$, which is a probability measure $\pi$ on $X\times Y\subset\mathbb{R}^{d}\times\mathbb{R}^{d}$ whose marginals are given as $\mu$ and $\nu$ through the following characterization. For all bounded and continuous functions $h:\mathbb{R}^{d}\to\mathbb{R}$,
\begin{align}\label{transport plan}
    \int_{\mathbb{R}^{d}\times \mathbb{R}^{d}}h(x)d\pi(x,y)=\int_{\mathbb{R}^{d}}h(x)d\mu(x),\quad \int_{\mathbb{R}^{d}\times \mathbb{R}^{d}}h(y)d\pi(x,y)=\int_{\mathbb{R}^{d}}h(y)d\nu(y).
\end{align}
The set of all such transport plans with source and target marginals, respectively, $\mu$ and $\nu$ is denoted by $\Pi(\mu, \nu)$.

The \textit{Kantorovich problem} is the following. We seek a transport plan $\pi \in \Pi(\mu, \nu)$ that satisfies
\begin{align}\label{Kantorovich problem}
\int_{\mathbb{R}^{d}\times\mathbb{R}^{d}}c(x,y)d\pi(x,y)=\inf_{\widetilde{\pi}\in\Pi(\mu,\nu)}\int_{\mathbb{R^{d}\times\mathbb{R}^{d}}}c(x,y)d\widetilde{\pi}(x,y).
\end{align}
It is well known in the literature, such as in Ambrosio and Gigli's guide \cite{ambrosio2013user}, that if $c(x,y)=|x-y|^{2}$, then Kantorovich's problem is a relaxation of Monge's. In addition, the connection between Monge's and Kantorovich's problem is that any transport map $T$ induces the transport plan defined by $(\text{Id}\times T)_{\sharp}\mu$, and which is concentrated in the graph of $T$. On the other hand, if a transport plan is concentrated on the graph of a measurable mapping $T$, then it is induced by this map. Moreover, if the marginals $\mu, \nu$ are absolutely continuous with respect to Lebesgue measure, then Brenier \cite{brenier1991polar} showed that the \textbf{Monge-Kantorovich problem} (MKP), (\ref{monge problem}), (\ref{Kantorovich problem}), has a solution $T$ which is given by the gradient of a convex function and this function is uniquely determined. Bilinear transport theory shares many application of classical optimal transport theory such as ``game theory".

The result of Theorem \ref{thm:NashBrenier} suggests from the perspective of game theory that the ``game" admits pure Nash strategies as opposed to simply mixed strategies. This furthermore can be easily pulled back into path space to show existence of pure strategies over paths using the results of Theorem \ref{thm:pathtostationary}.

The minimax \emph{stationary} primal problem of (\ref{inf sup problem}) is thus
\begin{align}\label{eqn:inf sup euclidean}
    \iint c(x,y,x',y')d\pi_{1}(x,y)d\pi_{2}(x',y')=\inf_{\widetilde{\pi}_{1}\in \Pi(\mu_{1}, \nu_{1})}\sup_{\widetilde{\pi}_{2}\in \Pi(\mu_{2}, \nu_{2})}\iint c(x,y,x', y')d\widetilde{\pi}_{1}(x,y)d\widetilde{\pi}_{2}(x',y'),
\end{align}
which can be viewed as a relaxation of the \textit{bilinear minimax Monge problem},
\begin{align}\label{min max monge}
    \iint c(x, T(x), y, T(y))d\mu_{1}(x)d\mu_{2}(y)=\inf_{\overline{T}_{\sharp}\mu_{1}=\nu_{1}}\sup_{\overline{T}_{\sharp}\mu_{2}=\nu_{2}}\iint c(x,\overline{T}(x), y, \overline{T}(y))d\mu_{1}(x)d\mu_{2}(y).
\end{align}

In this article, we will be interested in a variant of an optimal transport  problem, namely a \emph{bilinear transport problem}. That is, instead of merely minimizing, we \textit{mini-maximize}, (\ref{inf sup problem}). But first we set the stage and introduce the \textit{bilinear transport problem} \eqref{inf sup problem} through (\ref{qot monge problem})-(\ref{quadratic ot}). Let $X\subset \mathbb{R}^{d}$ be a simply connected, bounded domain. In particular, let $X:=\overline{B}_{R}(0):=\{x\in \mathbb{R}^{d}:\|x\|\leq R\}$ denote the closed ball centered at the origin of radius $R>0$. Fix a cost function $c:(X\times X)^{2}\to \mathbb{R}$, assume $\mu$ and $\nu$ are probability measures on $X\subset\mathbb{R}^{d}$, and $T: X\to X$ a measurable map satisfying the push-forward condition between probability measures $\mu\in \mathcal{P}(X), \nu\in\mathcal{P}(X)$. Then the \textit{quadratic Monge problem} is to find a measurable map $T$ satisfying 
\begin{align}\label{qot monge problem}
    \iint c(x,T(x),y, T(y))d\mu(x)d\mu(y)=\inf_{\widetilde{T}_{\sharp}\mu=\nu}\iint c(x,\widetilde{T}(x),y,\widetilde{T}(y))d\mu(x)d\mu(y).
\end{align}

Just as in the classical theory of optimal transport in which there is a relaxation version of (\ref{monge problem}), which is referred to as the primal problem, (\ref{Kantorovich problem}), there is also a relaxation version of the quadratic Monge problem (\ref{qot monge problem}); also referred as the \textit{primal problem}. 

Indeed, the \textit{Kantorovich bilinear transport  problem} (\ref{quadratic ot}), which, again, will be referred to as the primal problem, is to find a plan $\pi$ that satisfies

\begin{gather}\label{quadratic ot}
    \iint c(x,y,x^{\prime},y^{\prime})\ d\pi(x,y)\ d\pi(x^{\prime},y^{\prime}) = \inf_{\widetilde{\pi}\in\Pi(\mu, \nu)}\iint c(x,y,x^{\prime},y^{\prime})\ d\widetilde{\pi}(x,y)\ d\widetilde{\pi}(x^{\prime},y^{\prime}).
\end{gather}
The above relaxation serves as the analogue of relaxing the Monge problem (\ref{monge problem}) in classical optimal transport to the \textit{Kantorovich problem} (\ref{Kantorovich problem}).

In this novel problem, we will consider the minimization of the maximization, or \textit{mini-maximization}, of the \textit{stationary bilinear transport problem}. To that end, the \textit{minimax primal problem} is to find a pair of transport plans $\pi_{1}, \pi_{2}$ that satisfy the following:
\begin{align}\label{eqn: infsup stationary}
\iint c(x,y,x^{\prime},y^{\prime})\ d\pi_1(x,y)\ d\pi_2(x^{\prime},y^{\prime}) = \inf_{\widetilde{\pi}_{2}\in\Pi(\mu_{2}, \nu_{2})}\sup_{\widetilde{\pi}_{1}\in\Pi(\mu_{1},\nu_{1})} \iint c(x,y,x^{\prime},y^{\prime})\ d\widetilde{\pi}_1(x,y)\ d\widetilde{\pi}_2(x^{\prime},y^{\prime}).
\end{align}
Notice that we are minimizing/maximizing over the transport plans, $\pi_{1}$ and $\pi_{2}$ which have marginals, $\mu_{1}, \nu_{1}$ and $\mu_{2}, \nu_{2}$, respectively. We can similarly formulate a maximin problem. In either case, we study the path space equivalent of these above problems. 

We define a solution to (\ref{inf sup problem}).

\begin{definition}\label{def: MinimaxTP}
    If $(\pi_1^*,\pi_2^*)$ is a pair that solves the minimax problem (\ref{inf sup problem}), we call it a \textit{Minimax Transport Plan}. Similarly, a pair that solves the maximin problem will be called a \textit{Maximin Transport Plan}. Collectively, when a solution is either a minimax solution or a maximin solution, we will refer to them as \emph{bilinear transport plans}.
\end{definition}
When the value of the minimax problem equals the maximin, then we give a special name to this pair:
\begin{definition}\label{def: NETP}
    If $(\pi_1^*,\pi_2^*)$ is a pair that solves the minimax problem (\ref{inf sup problem}) and also the maximin problem, we call it a \textit{Nash Equilibrium Transport Plan (NETP)}. 
\end{definition}
The reason we refer to a solution of (\ref{inf sup problem}) as well as the maximin problem as a NETP is that, in \textit{game theory}, the functional (\ref{inf sup problem}) models a particular instance of a \textit{zero-sum game}. When strong duality holds, the solution characterizes a saddle point equilibrium and is oftentimes referred to as a \textit{Nash equilibrium}. 

A \textit{Nash equilibrium} is a strategy profile in which no player can improve their own payoff by unilaterally deviating from their chosen strategy while the other players' strategies remain fixed. In the context of a two-player zero-sum game, the \textit{minimax} problem of (\ref{inf sup problem}) represents the fact that if the other player plays their best response strategy (which may potentially be a mixed strategy) then one player attempts to minimize the \textbf{worst-case outcome}. Conversely, a solution to the maximin problem is a symmetric scenario where one player attempts to maximize for the \textbf{best-possible profit} assuming the other player plays their best response strategy. Note that a mixed strategy as opposed to a pure strategy represents a probability distribution.

If each player has chosen a strategy – an action plan based on what has happened so far in the game – and no one can increase one's own expected payoff by changing one's strategy while the other players keep theirs unchanged, then the current set of strategy choices constitutes a Nash equilibrium.

\begin{definition}\label{def: NMK}
   A \textit{Nash-Monge-Kantorovich map} (NMK) is a mapping $(T_{1}, T_{2})$ which induces a NETP $(\pi_{1}, \pi_{2})$ through push-forward on maps $T_{1}: X \to Y, \;T_{2}: X'\to Y'$, and the coupled map $(\text{Id}, T_{1}):X\to X\times Y$ such that $\pi_{1}:=(\text{Id}, T_{1})_{\#}\mu_{1}$ and $\pi_{2}:= (\text{Id}, T_{2})_{\#}\mu_{2}$.
\end{definition}
Definition \ref{def: NMK} refers precisely to a pure Nash equilibrium for the game \eqref{eqn: infsup stationary} in the following sense. Suppose $(T_1^*,T_2^*)$ is an NMK map and consider $(T_1,T_2^*)$ where $T_1$ is any other admissible map such that $(T_1)_\#\mu_1 = \nu_1$. Form the measures $\pi_1^*(x,y) = (Id\otimes T_1^*)_{\#}\mu_{1}(x)$, $\pi_2^*(x',y') = (Id\otimes T_2^*)_{\#}\mu_{2}(x')$ and $\pi_1 = (Id \otimes T_1)_{\#}\mu_{1}(x)$. Then by definition of \eqref{eqn: infsup stationary} we have
\begin{align*}
    \iint c(x,y,x',y')\ d\pi_1^*(x,y)d\pi_2^*(x',y') &\leq \iint c(x,y,x',y')\ d\pi_1(x,y)d\pi_2^*(x',y')\\
    &= \iint c(x,y,x',y')\ d\left((Id \otimes T_1)_{\#}\mu_{1}(x) \right)d\pi_2^*(x',y')\\
    &= \iint c(x,T_1(x), x',y')\ d\mu_1(x)d\pi_2^*(x',y').
\end{align*}
On the other hand, we have
\begin{gather*}
    \iint c(x,y,x',y')\ d\pi_1^*(x,y)d\pi_2^*(x',y') = \iint c(x,T_1^*(x), x',y')\ d\mu_1(x)d\pi_2^*(x',y')
\end{gather*}
and hence
\begin{gather*}
    \iint c(x,T_1^*(x), x',y')\ d\mu_1(x)d\pi_2^*(x',y') \leq \iint c(x,T_1(x), x',y')\ d\mu_1(x)d\pi_2^*(x',y').
\end{gather*}
A similar argument holds for
\begin{gather*}
    \iint c(x,y, x',T_2(x'))\ d\mu_2(x')d\pi_1^*(x,y) \leq \iint c(x,y, x',T_2^*(x'))\ d\mu_2(x')d\pi_1^*(x,y).
\end{gather*}
for all admissible maps $T_2$ such that $(T_2)_\#\mu_2 = \nu_2$. Thus, we have proved the following proposition:
\begin{proposition}\label{prop:NE}
    Suppose $(T_1,T_2)$ is a Nash-Monge-Kantorovich mapping for the problem \eqref{eqn: infsup stationary}. Then $(T_1,T_2)$ corresponds to a pure Nash equilibrium and solves the minimax Monge problem \eqref{min max monge}
    \begin{gather*}
        \inf\sup\left\{\iint c(x,\widetilde{T}_1(x), x', \widetilde{T}_2(x') d\mu_1(x)d\mu_2(x')\;\Big| \left(\widetilde{T}_{1}\right)_{\#}\mu_1 = \nu_1,\ \left(\widetilde{T}_2\right)_{\#}\mu_2 = \nu_2 \right\}
    \end{gather*}
    or equivalently
    \begin{gather*}
        \sup\inf\left\{\iint c(x,\widetilde{T}_1(x), x', \widetilde{T}_2(x') d\mu_1(x)d\mu_2(x')\;\Big| \left(\widetilde{T}_1\right)_{\#}\mu_1 = \nu_1,\ \left(\widetilde{T}_2\right)_{\#}\mu_2 = \nu_2 \right\}.
    \end{gather*}
\end{proposition}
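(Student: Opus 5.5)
The plan is to show that the NMK pair $(T_1,T_2)$ is a saddle point of the bilinear functional restricted to admissible maps. From a saddle point, both the minimax and maximin Monge values follow by the standard chain of inequalities. Write
\[
J(\widetilde T_1,\widetilde T_2):=\iint c(x,\widetilde T_1(x),x',\widetilde T_2(x'))\,d\mu_1(x)\,d\mu_2(x'),
\]
and set $\pi_i=(\mathrm{Id},T_i)_\#\mu_i$. For any admissible maps, the change of variables for push-forwards gives $J(\widetilde T_1,\widetilde T_2)=\sigma(\widetilde\pi_1,\widetilde\pi_2)$ with $\widetilde\pi_i=(\mathrm{Id},\widetilde T_i)_\#\mu_i$. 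Here $\sigma$ denotes the stationary bilinear functional in \eqref{eqn: infsup stationary}. So the Monge problem \eqref{min max monge} is the restriction of the Kantorovich game to plans induced by maps.

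\textbf{Step 1 (saddle inequalities).} By Definition \ref{def: NMK}, $(\pi_1,\pi_2)$ is a NETP, so it solves both the minimax and the maximin problems with a common value $V$. I will first check that this gives a saddle point over plans:
\[
\sigma(\pi_1,\widetilde\pi_2)\le\sigma(\pi_1,\pi_2)\le\sigma(\widetilde\pi_1,\pi_2)
\]
for all admissible plans. Since $\pi_1$ attains the outer infimum, $\sup_{\widetilde\pi_2}\sigma(\pi_1,\widetilde\pi_2)=V$. Since $\pi_2$ attains the outer supremum, $\inf_{\widetilde\pi_1}\sigma(\widetilde\pi_1,\pi_2)=V$. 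Together these give $\sigma(\pi_1,\widetilde\pi_2)\le V\le\sigma(\widetilde\pi_1,\pi_2)$. Taking $\widetilde\pi_1=\pi_1$ and $\widetilde\pi_2=\pi_2$ then shows $\sigma(\pi_1,\pi_2)=V$. This is the only point where one must be careful about which variable is inf and which is sup; the computation preceding the proposition already does the first inequality. Specializing to map-induced plans yields
\[
J(T_1,\widetilde T_2)\le J(T_1,T_2)\le J(\widetilde T_1,T_2)
\]
for all admissible $\widetilde T_1,\widetilde T_2$. This is precisely the statement that $(T_1,T_2)$ is a pure Nash equilibrium, since neither player gains by deviating unilaterally.

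\textbf{Step 2 (values).} From the saddle inequalities,
\[
\inf_{\widetilde T_1}\sup_{\widetilde T_2}J\le\sup_{\widetilde T_2}J(T_1,\widetilde T_2)\le J(T_1,T_2)\le\inf_{\widetilde T_1}J(\widetilde T_1,T_2)\le\sup_{\widetilde T_2}\inf_{\widetilde T_1}J.
\]
Combined with the always-true weak duality $\sup\inf J\le\inf\sup J$, all these quantities are equal. Hence $(T_1,T_2)$ attains both the minimax and the maximin Monge problems, and the two formulations are equivalent.

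The only mild obstacle is integrability, namely that $J$ is well defined and finite so that the inequalities are meaningful. Under the compact support or $\mathcal P_2$ assumptions and the continuity of $c$ from Conditions \ref{condition 1}--\ref{condition 2}, $c$ is bounded on the relevant supports, so this is routine.
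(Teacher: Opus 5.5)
Your proposal is correct and follows essentially the same route as the paper. In the computation just before the proposition, the paper takes the saddle inequality $\iint c\,d\pi_1^*\,d\pi_2^*\le\iint c\,d\pi_1\,d\pi_2^*$ directly from the NETP property, specializes it to map-induced plans via the push-forward change of variables, and handles $T_2$ symmetrically. You do the same, but you also derive the two saddle inequalities explicitly from the NETP definition and write out the standard chain showing that the inf-sup and sup-inf Monge values coincide and are attained at $(T_1,T_2)$, which the paper leaves implicit.
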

However, this map may not exist in general for the same reasons as in standard optimal transport. Therefore, we consider the Kantorovich relaxation \eqref{eqn: infsup stationary} and the existence of NETP solutions which is the same as the existence of a mixed Nash equilibrium.   
We will explore this further and determine how we can construct such maps. See Section \ref{section: dual} for the construction of these maps through paths.

We do not stop there, however. Instead of looking at a cost function $c(x,y, x^{\prime}, y^{\prime})$, representing how much it costs to send a unit of mass at point $x$ to point $y$, and then at $x^{\prime}$ to $y^{\prime}$, we  consider continuous paths $\gamma$ and $\xi$ such that $\gamma(0)=x$ indicates the initial point along $\gamma$ and $\gamma(1)=y$ the arrival point along $\gamma$, and $\xi(0)=x^{\prime}$ which indicates the initial point along $\xi$ and $\xi(1)=y^{\prime}$ the arrival point along $\xi$. Then associate to such paths a cost \eqref{Lagrangian cost} indicating \emph{how much transportation along those paths costs}. Here, $L$ is some Lagrangian. More concretely, if $\Omega$ denotes a path space of continuous rectifiable paths, then the total cost of transporting along all paths in the plan is the functional \eqref{sigma functional}.
 
 This can be thought of as an explicit path-dependent version of the MKP, and it is reminiscent of the optimal transport principle on paths in \cite{cabrera2022optimaltransportationprincipleinteracting}. Indeed, the use of a Lagrangian to model the cost function over paths is quite natural in the context of optimal control problems and dynamic game theory. In particular, several interesting problems in game theory can be reformulated or relaxed to problems of the form \eqref{eqn: infsup stationary}; see for instance problems that arise in the theory of zero-sum games in \cite{isaacs1965differential}. For example, Figure \ref{fig:MotivatingExample} depicts a model problem based on the classic pursuit-evasion problem. In this problem, the players (or agents) are generalized to the level of distributions and the strategies are chosen on the level of paths. We note that this figure shows the potential for mass splitting to occur and hence the question of (NMK) transport maps becomes quite subtle. In that case, the relaxation problem \eqref{eqn: infsup stationary} allows for the existence of \textit{distributions over paths} which corresponds precisely to the notion of the existence of a mixed Nash equilibrium.

 \begin{figure}[!ht]
\centering
\resizebox{1.5\textwidth}{!}{%
\begin{circuitikz}
\tikzstyle{every node}=[font=\Huge]
\draw (-8.75,19.25) to[short, -o] (-8.75,19.25) ;
\draw [ line width=2pt ] (-9.75,11.25) circle (0cm);
\draw [ line width=2pt ] (-8.25,10.75) circle (0.25cm);
\draw [ line width=2pt ] (-10,10.75) circle (0.25cm);
\draw [ line width=2pt ] (-9,9.5) circle (0.25cm);
\draw [ line width=2pt ] (-5.25,19.5) circle (0cm);
\draw [ line width=2pt ] (-10.75,13) circle (0cm);
\draw [ color={rgb,255:red,255; green,38; blue,0} , line width=2pt ] (-9,10.5) circle (2cm);
\draw [ line width=2pt ] (-9.5,23.25) circle (0cm);
\draw [ line width=2pt ] (-8.5,25.25) circle (0.25cm);
\draw [ line width=2pt ] (-5.5,25.5) circle (0.25cm);
\draw [ line width=2pt ] (-6.5,24) circle (0.25cm);
\draw [ line width=2pt ] (-7.75,22.75) circle (0.25cm);
\draw [ color={rgb,255:red,255; green,38; blue,0} , line width=2pt ] (-7,24.25) circle (2.75cm);
\draw [ line width=2pt ] (-6.5,18.5) circle (0cm);
\draw [ line width=2pt ] (-6.75,18.5) circle (0cm);
\draw [ line width=2pt ] (-3.75,16) circle (0cm);
\draw [line width=2pt, ->, >=Stealth] (-7.5,12) .. controls (3,16.75) and (-7,16.75) .. (-6.75,21.25) ;
\draw [ line width=2pt ] (-1.75,11.25) circle (0.25cm);
\draw [ line width=2pt ] (-0.75,11.5) circle (0.25cm);
\draw [ line width=2pt ] (-1,10.75) circle (0.25cm);
\draw [ line width=2pt ] (0,11) circle (0.25cm);
\draw [ color={rgb,255:red,4; green,51; blue,255} , line width=2pt ] (-1,11.25) circle (1.5cm);
\draw [ line width=2pt ] (-0.75,21.75) circle (0.25cm);
\draw [ line width=2pt ] (0.75,21.25) circle (0.25cm);
\draw [ line width=2pt ] (-6.75,24) circle (0cm);
\draw [ color={rgb,255:red,4; green,51; blue,255} , line width=2pt ] (0,21.5) circle (1.5cm);
\draw [ line width=2pt ] (0,20.25) circle (0cm);
\draw [ line width=2pt ] (0,20) circle (0cm);
\draw [line width=2pt, ->, >=Stealth] (-0.75,12.75) .. controls (7.25,9) and (-0.5,16.25) .. (-0.25,19.75) ;
\node [font=\LARGE, color={rgb,255:red,255; green,38; blue,0}] at (-9,7.75) {Player 1 Source};
\node [font=\LARGE, color={rgb,255:red,255; green,38; blue,0}] at (-10.75,21.5) {Player 1 Target};
\node [font=\LARGE, color={rgb,255:red,4; green,51; blue,255}] at (-1,8.75) {Player 2 Source};
\node [font=\LARGE, color={rgb,255:red,4; green,51; blue,255}] at (3.75,20) {Player 2 Target};
\node [font=\LARGE] at (-11,17.25) {};
\node [font=\LARGE] at (-11,17.25) {};
\node [font=\Huge, color={rgb,255:red,4; green,51; blue,255}] at (4.75,16.75) {\textbf{}};
\node [font=\LARGE] at (-11,17.25) {};
\node [font=\Huge, color={rgb,255:red,255; green,38; blue,0}] at (-10,17.25) {\textbf{Pursuer}};
\draw [line width=2pt, ->, >=Stealth, dashed] (-8.25,11) .. controls (4.75,20.75) and (-7.25,17.25) .. (-6.5,23.25);
\draw [line width=2pt, ->, >=Stealth, dashed] (-8.75,9.5) .. controls (8,19) and (-7,17.25) .. (-5.5,25);
\draw [line width=2pt, ->, >=Stealth, dashed] (-10,11.25) .. controls (-2.5,16.5) and (-8.75,16.75) .. (-7.75,22.25);
\draw [line width=2pt, ->, >=Stealth, dashed] (-1.75,11.75) .. controls (5.5,16.25) and (-1.25,16.5) .. (-0.75,21.25);
\draw [line width=2pt, ->, >=Stealth, dashed] (0.5,11) .. controls (10.75,19) and (0.75,15.75) .. (0.75,20.5);
\draw [line width=2pt, ->, >=Stealth, dashed] (-0.75,12) .. controls (7.5,16) and (-0.5,16.5) .. (-0.5,21);
\draw [line width=2pt, ->, >=Stealth, dashed] (-0.75,10.75) .. controls (20.25,8.5) and (0.25,15.75) .. (1,20.75);
\draw [line width=2pt, ->, >=Stealth, dashed] (-10,11.5) .. controls (-5.25,16.25) and (-9.25,18.25) .. (-8.5,24.75);
\node [font=\Huge, color={rgb,255:red,4; green,51; blue,255}] at (7,17.25) {\textbf{Evader}};
\end{circuitikz}
}%
\caption{Pursuit-Evasion Example of Minimax Bilinear Transport}
\label{fig:MotivatingExample}
\end{figure}

\section{Path Formulation for a Minimax Bilinear Problem}\label{section: path formulation} 
Let $X,Y,X',Y'\subset \mathbb{R}^{d}$ be simply connected, bounded domains. In particular, $X,Y,X',Y'$ denote  closed balls centered at the origin of radius $R>0$. Assume $\mu_{1}$ and $\nu_{1}$ are probability measures on $X$ and $Y$, respectively, and $\mu_{2}$ and $\nu_{2}$ are probability measures on $X^{\prime}$ and $Y^{\prime}$, respectively. Suppose $T_{1}: X\to Y$ and $T_{2}: X^{\prime}\to Y^{\prime}$ are measurable maps satisfying the push-forward condition between probability measures.

We now incorporate continuous paths to the bilinear minimax transport problem. Define the path spaces
\begin{align*}
\Omega_1 &= \left\{\gamma\in W^{1,2}([0,1];X):
\int_0^1|\dot\gamma(t)|^2\,dt\leq K\right\},\\
\Omega_2 &= \left\{\xi\in W^{1,2}([0,1];Y):
\int_0^1|\dot\xi(t)|^2\,dt\leq L\right\},
\end{align*}
where $K,L > 0$.

Let $\Omega:=\Omega_{1}\times \Omega_{2}$. We endow $\Omega_i$, $i=1,2$, with the metric
\begin{equation}\label{eq:path-metric}
\|\gamma-\xi\|_{\Omega_i}
:= \|\gamma-\xi\|_{L^\infty([0,1])}
+ \|\dot\gamma-\dot{\xi}\|_{L^2([0,1])}.
\end{equation}

We equip  $\Omega := \Omega_1 \times \Omega_2$ with the product metric
\[
d\big((\gamma_1,\xi_1),(\gamma_2,\xi_2)\big)
:= \sqrt{
\|\gamma_1-\gamma_2\|_{\Omega_1}^2
+
\|\xi_1-\xi_2\|_{\Omega_2}^2 }.
\]

\noindent We now introduce a cost function defined on a path space. Let $c:\Omega \to \mathbb{R}$ be a cost function defined on the joint path space. Costs that enjoy the following conditions will prove essential.


\begin{condition}\label{condition 1}
(Coercivity) Suppose $c: \Omega_1 \times \Omega_2 \to \mathbb{R}$ is bounded, $c(\cdot, \xi)$ is lower semicontinuous, $c(\gamma, \cdot)$ is upper semicontinuous, and given any real number $M > 0$ the set
\begin{gather*}
        \Omega_{M} := \left\{(\gamma,\xi) \in \Omega_1\times \Omega_2: \sqrt{|\gamma(0)|^2 + |\xi(0)|^2} \leq M,\ |c(\gamma,\xi)| \leq M\right\}.
    \end{gather*}
is compact in $\Omega$.
\end{condition}

\begin{condition}\label{condition 2}
 (Quadratic Interaction)\\Suppose the function $c: \Omega_1 \times \Omega_2 \to \mathbb{R}$ is of the form 
\[
c(\gamma, \xi) = \begin{cases}
 \int_0^1 \frac12 |\dot{\gamma}|^2 - \frac12|\dot{\xi}|^2 + \alpha |\gamma - \xi|^2 dt, & \text{if } \dot{\gamma}, \dot{\xi} \in L^{2}(dt) \\
+\infty, & \text{otherwise}\;,
\end{cases}
\]
where $\alpha > 0$.
\end{condition}

We will consider a Lagrangian $L(\gamma,\xi, \dot{\gamma}, \dot{\xi}, t)$ which induces a cost on paths, $c(\gamma,\xi)$ given by \eqref{Lagrangian cost}. For instance, when the Lagrangian is given by
\begin{align}\label{kinetic potential energy}
\begin{split}
L(\gamma(t), \xi(t), \dot{\gamma}(t), \dot{\xi}(t), t)&:=\tfrac{1}{2}|\dot{\gamma}|^{2}-\tfrac{1}{2}|\dot{\xi}|^{2}+\alpha |\gamma(t)-\xi(t)|^{2}
\end{split}
\end{align}
we recover the cost in Condition \ref{condition 2}.

 We will focus on Lagrangians of the form (\ref{kinetic potential energy})
provided $\gamma$ and $\xi$ are continuously differentiable. Everything that follows can be done for more general Lagrangians, for example for functions that have Fourier transforms of the finite, positive measures on $\mathbb{R}^{d}$ which
are exactly the cone of functions of
positive type, which is due to Bochner's theorem \cite{ReedSimon1972}. For example a class of functions that satisfy Bochner's condition of being of positive type are the following: the cost defined in Condition \ref{condition 2} with a Coulomb potential or a Gaussian distribution \cite[Appendix B]{Cabrera2022} instead of the quadratic interaction term, that is,
\begin{align*}
C(\gamma, \xi)&=\int_0^1 \frac12 |\dot{\gamma}|^2 - \frac12|\dot{\xi}|^2 + \alpha |\gamma - \xi|^{2-d} dt\quad (d\geq 3),\\
G(\gamma, \xi)&=\int_0^1 \frac12 |\dot{\gamma}|^2 - \frac12|\dot{\xi}|^2 + \alpha \exp\{\beta|\gamma - \xi|^{2}\} dt\quad(\beta>0).
\end{align*}
This gives a rich class of functions which are of positive type that our theory supports. The first author dealt with these class of background potentials in \cite{cabrera2022optimaltransportationprincipleinteracting}. But for concreteness, we will focus on the Lagrangian in (\ref{kinetic potential energy}). 

The quadratic term given in (\ref{kinetic potential energy}), can be thought of as a background potential $V,$ \[V(\gamma(t), \xi(t),t):=\alpha|\gamma(t)-\xi(t)|^{2}\] 
Then upon taking a gradient with respect to $\gamma$, we have
\begin{align}\label{eqn: separable cost}
   \nabla_{\gamma} V(\gamma, \xi, t)=2\alpha (\gamma-\xi), \nabla_{\xi}V(\gamma,\xi,t)=-2\alpha (\gamma-\xi).
\end{align}
Then the first and second derivatives with respect to $\gamma$ are bounded uniformly in $t$. Indeed, since Condition \ref{condition 1} ensures $\gamma$ and $\xi$ are bounded, namely $\|\gamma(t)\|_{L^{\infty}}\leq M_{1}$ and $\|\xi(t)\|_{L^{\infty}}\leq M_{2}$ for all $t\in [0, 1]$, by (\ref{eqn: separable cost}), we have, $\|\nabla_{\gamma}V(\gamma, \xi,t)\|_{L^{\infty}}\leq 4\alpha (M_{1}+M_{2})$. So $\|\nabla_{\gamma}V\|_{L^{\infty}}$ is uniformly bounded. 

For the second derivative, we look at the Hessian,
\[
D^{2}_{\gamma}V=2\alpha\mathbb{I}_{d}.
\]
As this is constant, it does not depend on $t, \gamma$ or $\xi$. Therefore, the second derivative is always bounded uniformly in $t$. 
\begin{proposition}\label{minimal paths plus potential}
Stationary points of $c(\gamma, \xi)$, with endpoints $x$ and $y$, $x^{\prime}$ and $y^{\prime}$  of $\gamma$ and $\xi$, respectively fixed, satisfy the equation
\[
\ddot{\gamma}(t)-\ddot{\xi}(t) = 0.
\]
provided they are of class $C^2$.
\end{proposition}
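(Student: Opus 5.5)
The plan is to compute the first variation of $c(\gamma,\xi)$ from Condition \ref{condition 2} separately in each argument, with endpoints fixed, and then subtract the two resulting Euler--Lagrange equations. By a stationary point I mean a pair $(\gamma,\xi)$ such that the Gateaux derivative of $c$ vanishes in every admissible direction. Here an admissible direction is $(\eta,\zeta)$ with $\eta,\zeta\in C_c^\infty((0,1);\mathbb{R}^d)$, so that the endpoints $x,y,x',y'$ are preserved. This is the notion relevant to a saddle point, since $c$ is convex in $\gamma$ and concave in $\xi$; note that $\alpha|\gamma-\xi|^2$ is convex in $\xi$ as well, so concavity in $\xi$ only holds in combination with the $-\frac12|\dot\xi|^2$ term. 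The proof needs only stationarity, not the saddle structure.

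\textbf{Step 1 (variation in $\gamma$).} Fix $\xi$ and set $f(\varepsilon)=c(\gamma+\varepsilon\eta,\xi)$. Expanding the quadratic integrand and differentiating at $\varepsilon=0$ gives
\begin{align*}
0=f'(0)=\int_0^1 \dot\gamma\cdot\dot\eta + 2\alpha(\gamma-\xi)\cdot\eta\,dt .
\end{align*}
Differentiation under the integral is justified because the integrand is a polynomial in $\varepsilon$ with coefficients in $L^1$, since $\gamma,\xi\in W^{1,2}$ are bounded. Since $\gamma\in C^2$ and $\eta$ vanishes at $0$ and $1$, integration by parts yields $\int_0^1\big(-\ddot\gamma+2\alpha(\gamma-\xi)\big)\cdot\eta\,dt=0$ for all such $\eta$. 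The fundamental lemma of the calculus of variations, together with continuity of the bracket, then gives $\ddot\gamma=2\alpha(\gamma-\xi)$ pointwise on $[0,1]$.

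\textbf{Step 2 (variation in $\xi$).} Similarly, set $g(\varepsilon)=c(\gamma,\xi+\varepsilon\zeta)$. This gives
\begin{align*}
0=g'(0)=\int_0^1 -\dot\xi\cdot\dot\zeta - 2\alpha(\gamma-\xi)\cdot\zeta\,dt .
\end{align*}
The sign of the interaction term follows from \eqref{eqn: separable cost}, namely $\nabla_\xi V=-2\alpha(\gamma-\xi)$. Integrating by parts gives $\int_0^1\big(\ddot\xi-2\alpha(\gamma-\xi)\big)\cdot\zeta\,dt=0$, hence $\ddot\xi=2\alpha(\gamma-\xi)$.

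\textbf{Step 3 (conclusion).} Subtracting the equation of Step 2 from that of Step 1 gives $\ddot\gamma-\ddot\xi=0$, as claimed.

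The only delicate point is the sign bookkeeping in Step 2. The kinetic term enters with a minus sign, and the interaction gradient in $\xi$ also carries a minus sign; the two signs combine so that both accelerations equal the same force $2\alpha(\gamma-\xi)$. That coincidence is exactly what makes the difference of the accelerations vanish. The $C^2$ hypothesis is used only to make the integration by parts and the pointwise conclusion legitimate. Without it, one would obtain the same identities in the weak (distributional) sense instead.
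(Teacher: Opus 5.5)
Your proof is correct and follows the paper's route: the paper simply writes the Euler--Lagrange equation in each argument and subtracts, and your signs $\ddot\gamma=\ddot\xi=2\alpha(\gamma-\xi)$ match the system the paper later writes in the proof of Lemma \ref{thm:explicit cost}. Your side remark that $c$ is concave in $\xi$ holds only when $\alpha\le\pi^{2}/2$ (by Wirtinger's inequality), but, as you say, the argument uses only stationarity, so nothing depends on it.
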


\begin{proof}
This is just the Euler-Lagrange equation applied to the cost functional, see Evans, \cite[Ch. 8]{evans2022partial} and \cite{cabrera2022optimaltransportationprincipleinteracting} followed by subtracting the resulting equations.
\end{proof}

\noindent Notice that when $V$ is quadratic, the paths are translates of each other. Namely, if $\ddot{\gamma}(t) - \ddot{\xi}(t) = 0$ for all $t \in [0,1]$, then
\[
\gamma(t) = \xi(t) + at + b, \quad a, b \in \mathbb{R}^d.
\]
That is, the paths differ by an affine function of $t$. If the interaction $V$ is sufficiently small, we can therefore expect that ``minimaximizing" paths $\gamma$ and $\xi$ have the same shape up to an affine $t$-dependent translation. Proposition 2.1 in \cite{cabrera2022optimaltransportationprincipleinteracting} quantifies this intuition.

To end the section, we put probabilities in the path space.
 As such, to include paths to the bilinear minimax (Kantorovich) transport problem \eqref{eqn: infsup stationary}; define the evaluation maps
\[
e_t : \Omega_i \to \mathbb{R}^d, \qquad e_t(\gamma) := \gamma(t),
\quad t \in [0,1], \; i=1,2.
\]
In particular, $e_0(\gamma)=\gamma(0)$ and $e_1(\gamma)=\gamma(1)$. This is so we can study the full problem on paths, \eqref{inf sup problem}.

First, let us recall that given a set $\Omega$ then $\mathcal{P}(\Omega)$ denotes the set of probability measures defined on $\Omega$. We also define
\begin{gather*}
    \mathcal{P}_2(\Omega) =: \left\{\mu \in \mathcal{P}(\Omega): \int_{\Omega} |x|^2d\mu(x) < +\infty \right\}.
\end{gather*}
Then the constraints on the bilinear transport plans that include paths, called \textit{bilinear path plans} are the following: Let $\mu_i,\nu_i \in \mathcal P_2(\mathbb{R}^d)$ for $i=1,2$; $\pi_1$ is a probability measure on paths $\gamma$ with $(e_0)_\# \pi_1 = \mu_1$ and $(e_1)_\# \pi_1 = \nu_1$ (i.e., $\pi_1$ describes paths from $\mu_1$ to $\nu_1$), $\pi_2$ is a probability measure on paths $\xi$ with $(e_0)_\# \pi_2 = \mu_2$ and $(e_1)_\# \pi_2 = \nu_2$ (i.e., $\pi_2$ describes paths from $\mu_2$ to $\nu_2$). We define the set of admissible path transport bilinear plans by 
\begin{align}\label{path plans}
    \Pi_{\text{path}}(\mu_{i},\nu_{i})&:=\left\{\pi_i\in \mathcal{P}(\Omega_{i}): (e_0)_\# \pi_i = \mu_i; (e_1)_\# \pi_i = \nu_i,\; i=1, 2\right\}.
\end{align}
Naturally, transport plans contained in this set will be referred to as \emph{admissible plans}, when it is clear from context. 

Then we seek to find a pair $(\pi_{1}, \pi_{2})$ satisfying \eqref{inf sup problem} over path space, $\Omega$. We will use $\sigma_{0}(\cdot, \cdot)$ given by (\ref{sigma functional}) to denote the bilinear transport functional. We will first show that there exist solutions to \eqref{inf sup problem} under the general hypothesis of Condition \ref{condition 1}.

\subsection{Analysis of the Cost Function}\label{section: analysis of cost} In this section we extend results from \cite{cabrera2022optimaltransportationprincipleinteracting} that will be applied to our setting. We generalize the Lipschitz bounds of the minimal and maximal curves $\gamma, \xi$ which minimaximize the trajectories of the cost $c(\gamma, \xi)$. Namely, 
\begin{proposition}\label{boundary value problem}
 Suppose $\gamma, \xi: [0, 1]\to \mathbb{R}^{d}$ are $L$-Lipschitz continuous, respectively and twice differentiable in $(0, 1)$. Let $c$ be as in Condition \ref{condition 2} with $\alpha<3$. Then for all $x, y, x', y'\in \mathbb{R}^{d}$ there is a unique pair of paths $\gamma_{x, y}(\cdot), \xi_{x^{\prime},y^{\prime}}(\cdot)$ which minimaximizes $c(\gamma, \xi)$ among all paths from $x$ to $y$ and $x^{\prime}$ to $y^{\prime}$. Namely, $\gamma_{x,y}$ minimizes $c(\cdot, \xi)$ while $\xi_{x',y'}$ maximizes $c(\gamma, \cdot)$. The pair of paths $\gamma_{x,y}, \xi_{x^{\prime}, y^{\prime}}$ are of class $C^2$ with respect to $t$. Furthermore the pairs of paths are Lipschitz continuous in $x_{i}, y_{i}$ and $x_{i}^{\prime}, y_{i}^{\prime}$, for $i=1,2$, respectively; and, we have the bounds
 \begin{align*}
     \|\gamma_{x_{1}, y_{1}}(t)-\gamma_{x_{2}, y_{2}}(t)\|_{\infty}\leq\frac{1}{1-\tfrac{\alpha}{3}}\left(|x_{1}-x_{2}|+|y_1-y_2|\right),
 \end{align*}
 \begin{align*}
     \|\xi_{x_{1}^{\prime}, y_{1}^{\prime}}(t)-\xi_{x_{2}^{\prime}, y_{2}^{\prime}}(t)\|_{\infty}\leq\frac{1}{1-\tfrac{\alpha}{3}}\left(|x_{1}^{\prime}-x_{2}^{\prime}|+|y_{1}^{\prime}-y_{2}^{\prime}|\right),
 \end{align*}
 
\end{proposition}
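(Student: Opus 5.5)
The plan is to reduce everything to the Euler--Lagrange system, solve it essentially in closed form, and get uniqueness from a convex--concave structure. Existence and uniqueness of the saddle pair come first; the Lipschitz bounds then follow from a Green's function representation.

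\textbf{Step 1: the Euler--Lagrange system.} For the Lagrangian \eqref{kinetic potential energy}, the Euler--Lagrange equations (as in Proposition \ref{minimal paths plus potential}) read
\[
\ddot\gamma = 2\alpha(\gamma-\xi),\qquad \ddot\xi = 2\alpha(\gamma-\xi).
\]
Subtracting them, $w:=\gamma-\xi$ satisfies $\ddot w=0$. Hence $w(t)=(1-t)(x-x')+t(y-y')$ is determined by the four endpoints alone. Substituting back, $\gamma$ solves the linear boundary value problem $\ddot\gamma=2\alpha w$ with $\gamma(0)=x$, $\gamma(1)=y$. Writing $G(t,s)\ge 0$ for the Green's function of $-\frac{d^2}{dt^2}$ on $[0,1]$ with Dirichlet conditions, this gives
\[
\gamma_{x,y}(t)=(1-t)x+ty-2\alpha\int_0^1 G(t,s)\,w(s)\,ds,
\qquad
\xi_{x',y'}=\gamma_{x,y}-w .
\]
Both paths are polynomials in $t$, so they are $C^2$, and $C^2$ regularity is immediate.

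\textbf{Step 2: the candidate is the unique saddle.} I would show $c(\cdot,\xi)$ is strictly convex and $c(\gamma,\cdot)$ is strictly concave on paths with fixed endpoints. Convexity in $\gamma$ is clear, since both $\frac12\int|\dot\gamma|^2$ and $\alpha\int|\gamma-\xi|^2$ are convex. For concavity in $\xi$, take a perturbation $h$ with $h(0)=h(1)=0$. The second variation is
\[
-\int_0^1|\dot h|^2\,dt+2\alpha\int_0^1|h|^2\,dt,
\]
and the Poincaré (Wirtinger) inequality $\int|\dot h|^2\ge\pi^2\int|h|^2$ makes this negative definite whenever $2\alpha<\pi^2$, which certainly holds for $\alpha<3$. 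A critical point of a strictly convex--concave functional is its unique saddle point. So $\gamma_{x,y}$ minimizes $c(\cdot,\xi_{x',y'})$ and $\xi_{x',y'}$ maximizes $c(\gamma_{x,y},\cdot)$, and no other pair does.

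\textbf{Step 3: Lipschitz bounds (the main obstacle).} The delicate point is bookkeeping: $\gamma$ depends on $x',y'$ through $w$. The stated bound should be read with the partner's endpoints held fixed, and similarly for $\xi$. Let $\delta=\gamma_{x_1,y_1}-\gamma_{x_2,y_2}$. From Step 1, $\delta$ equals the linear interpolation of $(x_1-x_2,\,y_1-y_2)$ plus $-2\alpha\int G(\cdot,s)\,(\delta-\eta)(s)\,ds$, where $\eta$ is the corresponding difference of the partner paths. I would first try to close this as a fixed-point estimate:
\[
\|\delta\|_\infty\le |x_1-x_2|+|y_1-y_2|+2\alpha\sup_t\int_0^1 G(t,s)\,ds\;\|\delta-\eta\|_\infty .
\]
Since $\sup_t\int_0^1 G(t,s)\,ds=\frac18$ and $\delta-\eta$ is affine with endpoint values controlled by the endpoint differences, this yields a bound with a factor like $(1+\frac{\alpha}{4})$. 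That is at most $\frac{1}{1-\alpha/3}$ for $0<\alpha<3$. If the bookkeeping instead forces an absorption of $\|\delta\|_\infty$ into the left side, I would use the cruder estimate with constant $\frac{\alpha}{3}<1$, which is where the hypothesis $\alpha<3$ enters, following Proposition 2.1 in \cite{cabrera2022optimaltransportationprincipleinteracting}. Getting a clean constant $\le\frac{1}{1-\alpha/3}$ from either route is the step I expect to require care. The estimate for $\xi$ is symmetric.
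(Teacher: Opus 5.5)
Your proof is correct, and it takes a genuinely different route from the paper.

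\textbf{The paper's route.} The paper freezes the partner path $\xi$ and treats only the best-response problem. It writes an integral operator $\mathcal{F}$ on paths from $x$ to $y$ and bounds its kernel by the Dirichlet Green's function, using $\sup_t\int_0^1G(t,s)\,ds=\tfrac18$. This gives a contraction with constant $\alpha/4\le\alpha/3$. Existence, uniqueness and the Lipschitz bound then follow from Banach's fixed point theorem plus absorption; the $\xi$-terms cancel because both endpoint pairs share the same frozen $\xi$.

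\textbf{Your route.} You use the fact that the quadratic interaction forces $\ddot w=0$. This decouples the Euler--Lagrange system into one explicit linear problem. You then get the saddle property and uniqueness of the pair from strict convexity in $\gamma$ and Wirtinger-based strict concavity in $\xi$.

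\textbf{What your route buys.} $C^2$ regularity comes out as a consequence; the paper instead writes ``assuming $\gamma_{x,y}\in C^2$''. The min/max property and uniqueness of the joint saddle are actually proved; the paper only shows that a best response to a frozen $\xi$ solves an Euler--Lagrange equation. It shows that only $\alpha<\pi^2/2$ is needed for the saddle to exist. Your cubics also coincide with those in Lemma \ref{thm:explicit cost}. Finally, your sign is the right one: the paper's $\mathcal{F}$ encodes $\ddot\gamma=-2\alpha(\gamma-\xi)$, whereas minimizing $c(\cdot,\xi)$ gives $\ddot\gamma=2\alpha(\gamma-\xi)$, as you and Lemma \ref{thm:explicit cost} have it.

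\textbf{What the paper's route buys.} It does not depend on a closed form. A contraction argument needs only a small Lipschitz bound on the interaction gradient, so it adapts to non-quadratic interactions, where $\ddot w=0$ is lost and no explicit solution exists.

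\textbf{Your Step 3.} The hedge there is unnecessary. With the partner's endpoints fixed, $\delta-\eta=w_1-w_2=(1-s)(x_1-x_2)+s(y_1-y_2)$ exactly. So your first estimate closes with no absorption and gives $\|\delta\|_\infty\le(1+\tfrac{\alpha}{4})(|x_1-x_2|+|y_1-y_2|)$. Since $(1+\tfrac{\alpha}{4})(1-\tfrac{\alpha}{3})=1-\tfrac{\alpha(1+\alpha)}{12}\le 1$, this is within the stated constant. Under the paper's frozen-$\xi$ reading, the difference solves $\ddot\delta=2\alpha\delta$, so $|\delta|^2$ is convex in $t$ and the constant is even $1$. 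The bound therefore holds under either interpretation.
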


\begin{proof} Consider the metric space 
    \[
    \Omega_{x,y}:=\{\gamma:[0, 1]\to \mathbb{R}^{d}\;|\;\gamma\;\text{is continuous and }\;\gamma(0)=x,  \gamma(1)=y\},
    \]
    and, according to \cite[Proposition 2.4]{cabrera2022optimaltransportationprincipleinteracting}, define the mapping $\mathcal{F}:\Omega_{x,y}\to \Omega_{x,y}$ by 
    \begin{align*}
    \mathcal{F}(\gamma)[t]&=(1-t)\gamma(0)+t\gamma(1)+2\alpha t\int_{0}^{1}(1-s)\left(\gamma(s)-\xi(s)\right)ds-2\alpha\int_{0}^{t}(t-s)\left(\gamma(s)-\xi(s)\right)ds.
    \end{align*}
    We will show that given two end-points $x,y$, then for all pairs $\eta_{1}, \eta_{2}\in \Omega_{x,y}$, we have 
    \[
    \|\mathcal{F}(\gamma_{1})-\mathcal{F}(\gamma_{2})\|_{\infty}\leq L\|\gamma_{1}-\gamma_{2}\|_{\infty},
    \]
    for $L:=\frac{\alpha}{3}<1$ to be determined later. Indeed, this means $\mathcal{F}$ is a contraction mapping on each $\Omega_{x,y}$, so it will have a unique fixed point in $\Omega_{x,y}$ by the Contraction Mapping Theorem.

Let $\gamma_{x,y}$ denote the unique fixed point of $\mathcal{F}$, i.e. $\mathcal{F}(\gamma_{x,y})[t]=\gamma_{x,y}(t)$. Then, $\gamma_{x,y}$ solves the equation
\begin{align*}\gamma_{x,y}(t)&=
(1-t)x+ty+2\alpha t\int_{0}^{1}(1-s)(\gamma_{x,y}(s)-\xi(s))ds-2\alpha\int_{0}^{t}(t-s)(\gamma_{x,y}(s)-\xi(s))ds.
\end{align*}
Assuming $\gamma_{x,y} \in C^2([0,1])$, this is simply the integral formulation of the boundary value problem:
\begin{align*}
\begin{cases}
\ddot{\gamma}(t)= -2\alpha(\gamma(t)-\xi(t)), \\
\gamma(0) = x, \\
\gamma(1) = y.
\end{cases}
\end{align*}

Now, we establish the contraction mapping property of $\mathcal{F}$. In fact, we will show more, we will prove that for any two pair of curves, say $\gamma_{1}, \gamma_{2}\in \Omega_{x,y}$, we get the bound
\begin{align*}
\|\mathcal{F}(\gamma_{1})-\mathcal{F}(\gamma_{2})\|_{\infty}&\leq |\gamma_{1}(0)-\gamma_{2}(0)|+|\gamma_{1}(1)-\gamma_{2}(1)|+\frac{\alpha}{4}\|\gamma_{1}(t)-\gamma_{2}(t)\|_{\infty}.
\end{align*}

To show this assertion, we compute directly from the formula of $\mathcal{F}$ and the triangle inequality, along with the inequalities $0\leq 1-t\leq 1$ for all $0\leq t \leq 1$. Then
\begin{align*}
    \left|\mathcal{F}(\gamma_1)(t)
-\mathcal{F}(\gamma_2)(t)\right|&\leq|\gamma_{1}(0)-\gamma_{2}(0)|+|\gamma_{1}(1)-\gamma_{2}(1)|\\
&+2\alpha\int_{0}^{1}(1-s)\left|\gamma_{1}(s)-\gamma_{2}(s)\right|ds+2\alpha\int_{0}^{t}(t-s)\left|\gamma_{1}(s)-\gamma_{2}(s)\right|ds\\
&\leq |\gamma_{1}(0)-\gamma_{2}(0)|+|\gamma_{1}(1)-\gamma_{2}(1)|\\
&+2\alpha\int_{0}^{1}(1-s)\left|\gamma_{1}(s)-\gamma_{2}(s)\right|ds+2\alpha\int_{0}^{t}(1-s)\left|\gamma_{1}(s)-\gamma_{2}(s)\right|ds.\\
\end{align*}
Notice that the $\xi$'s canceled in the above inequality as $\mathcal{F}(\cdot)$ is merely a functional on $\gamma$'s.
Moreover, the last inequality comes from the integral bound $\int_{0}^{t}(t-s)|u(s)|ds\leq\int_{0}^{t}(1-s)|u(s)|ds$.
Changing the domain of integration, we rewrite the integral expressions of the latter inequality into one integral term
\[
\int_0^1 K(t,s)\left(\gamma_{1}(s)-\gamma_{2}(s)\right)\,ds,
\]
where
\[
K(t,s)
=
\begin{cases}
s(1-t), & 0\leq s\leq t,\\
t(1-s), & t\leq s\leq 1.
\end{cases}
\]
Therefore
\[
\int_0^1 K(t,s)\,ds
=
\int_0^t s(1-t)\,ds
+
\int_t^1 t(1-s)\,ds
=
\frac{t(1-t)}{2}.
\]
Incorporating the above integral calculation into the inequality above yields,
\begin{align*}
\left|\mathcal{F}(\gamma_1)(t)
-\mathcal{F}(\gamma_2)(t)\right|
&\leq |\gamma_{1}(0)-\gamma_{2}(0)|+|\gamma_{1}(1)-\gamma_{2}(1)|+\alpha t(1-t)\|\gamma_{1}(s)-\gamma_{2}(s)\|_{\infty}
\end{align*}
So since $t(1-t)\leq 1/4$, we obtain the following bound
\begin{align}\label{inq: contraction}
\begin{split}
\|\mathcal{F}(\gamma_1)
-\mathcal{F}(\gamma_2)\|_\infty
&\leq |\gamma_{1}(0)-\gamma_{2}|+|\gamma_{1}(1)-\gamma_{2}(1)|+\frac{\alpha}{4}\|\gamma_{1}(s)-\gamma_{2}(s)\|_\infty.
\end{split}
\end{align}
Furthermore, applying the fixed point $\gamma_{1}=\gamma_{x_{1},y_{1}}$ and $\gamma_{2}=\gamma_{x_{2}, y_{2}}$, to the left-hand side of the bound \eqref{inq: contraction} and simplifying, we get that
\begin{align*}
\|\gamma_{1}(t)-\gamma_{2}(t)\|_{\infty}&\leq\frac{1}{1-\frac{\alpha}{4}}\left(|\gamma_{1}(0)-\gamma_{2}(0)|+|\gamma_{1}(1)-\gamma_{2}(1)|\right)\\
&\leq \frac{1}{1-\frac{\alpha}{3}}\left(|\gamma_{1}(0)-\gamma_{2}(0)|+|\gamma_{1}(1)-\gamma_{2}(1)|\right)\\
&\leq \frac{1}{1-\frac{\alpha}{3}}\left(|x_{1}-x_{2}|+|y_{1}-y_{2}|\right).
\end{align*}
since $\alpha/4\leq \alpha/3$. Thusly we may take, and we do, $L=\frac{\alpha}{3}$. Therefore $\mathcal{F}$ is a contraction whenever $\frac{\alpha}{3}<1$, that is, whenever $\alpha<3$. Identical arguments hold for the maximal curve $\xi$ as the Euler-Lagrange equations applied to the functional in Condition \ref{condition 2} for geodesic $\xi$ satisfies $\ddot{\xi}(t)=-2\alpha(\xi(t)-\gamma(t))$, with $\xi(0)=x'$ and $\xi(1)=y'$.
\end{proof}

The standard theory of optimal transport suggests that for the minimax bilinear transport theory, we can consider an action functional. Indeed, for paths $\gamma:[0,1]\to X$ and $\xi:[0,1]\to Y$ having initial and final points, $\gamma(0)=x, \gamma(1)=y$ and $\xi(0)=x^{\prime}, \xi(1)=y^{\prime}$, the \textbf{end-point cost function} in this case is given by,
\begin{align}\label{qot end-point cost}
    c_{e}(x,y,x^{\prime}, y^{\prime}):=\inf_{\gamma}\sup_{\xi}\left\{c(\gamma, \xi):\gamma(0)=x, \gamma(1)=y;\;\xi(0)=x^{\prime}, \xi(1)=y^{\prime}\right\}.
\end{align}
If the pair $\gamma, \xi$ appears in the pair of optimal plans $(\pi_{1}^{*},\pi_{2}^{*})$ satisfying (\ref{inf sup problem}), then we could expect that
\[
c(\gamma,\xi)=c_{e}(\gamma(0),\gamma(1),\xi(0),\xi(1)).
\]
Henceforth, such paths will be called $c$-\textit{minimaximal}. Note that this is merely a heuristic. Section \ref{sec: effective cost and cyclical monotonicity} contains a rigorous account of this notion.

In order to discuss the \textit{twist} condition of (\ref{qot end-point cost}), to ensure that the gradient of the cost $c_{e}$ is one-to-one and apply the standard theory of optimal transport, we draw Lemma \ref{twist condition}  from \cite{cabrera2022optimaltransportationprincipleinteracting}. Indeed this lemma is concerned with the \textit{twist} condition on $c_{e}$. Namely, that $\nabla c_{e}$ is injective in $(x,x^{\prime})$. 

{\begin{lemma}\label{twist condition}
Let $V$ be the interaction from Condition \ref{condition 2} with $\alpha \neq 3$. For all $y, y^{\prime} \in \mathbb{R}^{d}$, and $x_{1}\neq x_{2}$ and $x_{1}^{\prime}\neq x_{2}^{\prime}$, we have 
\begin{align*}
\begin{cases}
    \nabla_{y}c_{\textbf{e}}(x_{1},y,x^{\prime},y^{\prime})\neq\nabla_{y}c_{\textbf{e}}(x_{2}, y, x^{\prime},y^{\prime}) &\forall (x',y') \in \mathbb{R}^{2d},\\
    \nabla_{y^{\prime}}c_{\textbf{e}}(x,y,x_{1}^{\prime},y^{\prime})\neq\nabla_{y^{\prime}}c_{\textbf{e}}(x, y, x_{2}^{\prime},y^{\prime}) &\forall (x,y) \in \mathbb{R}^{2d}.
\end{cases}
\end{align*}
\end{lemma}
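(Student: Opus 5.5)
The plan is to reduce the twist condition to an explicit computation with the endpoint cost \eqref{qot end-point cost}. For the quadratic interaction of Condition \ref{condition 2}, I expect $c_{\textbf{e}}$ to be exactly the function displayed in Theorem \ref{thm:NashBrenier}:
\[
c_{\textbf{e}}(x,y,x',y')=\tfrac12|x-y|^2-\tfrac12|x'-y'|^2+\tfrac{\alpha}{3}\Big[|x-x'|^2+(x-x')\cdot(y-y')+|y-y'|^2\Big].
\]
Once this closed form is available, the lemma follows by differentiating in $y$ and in $y'$ and comparing two different source points. So the proof splits into two steps: first identify $c_{\textbf{e}}$, then run the gradient comparison.

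\emph{Step 1: identifying $c_{\textbf{e}}$.} For fixed endpoints, I would first argue that the inner supremum and the outer infimum in \eqref{qot end-point cost} are attained. The map $\xi\mapsto c(\gamma,\xi)$ is concave, because its kinetic part is $-\tfrac12\int|\dot\xi|^2$. The map $\gamma\mapsto c(\gamma,\xi)$ is convex with fixed endpoints.

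The saddle pair is therefore characterized by the Euler--Lagrange system of Proposition \ref{minimal paths plus potential}, which gives $\ddot\gamma=\ddot\xi$. Existence and uniqueness of this pair for fixed endpoints come from Proposition \ref{boundary value problem}.

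Next I would plug the saddle pair back into the cost to evaluate $c_{\textbf{e}}$. The key computation is the interaction term along the relative path $\gamma-\xi$ between $x-x'$ and $y-y'$. If $\gamma-\xi$ is the straight segment from $x-x'$ to $y-y'$, then
\[
\int_0^1\big|(1-t)(x-x')+t(y-y')\big|^2dt=\tfrac13\big(|x-x'|^2+(x-x')\cdot(y-y')+|y-y'|^2\big),
\]
which produces the bracket in the formula above. The kinetic terms contribute $\tfrac12|x-y|^2-\tfrac12|x'-y'|^2$.

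\emph{Step 2: the gradient comparison.} I would compute
\begin{align*}
\nabla_y c_{\textbf{e}}&=(y-x)+\tfrac{\alpha}{3}\big[(x-x')+2(y-y')\big],\\
\nabla_{y'} c_{\textbf{e}}&=-(y'-x')-\tfrac{\alpha}{3}\big[(x-x')+2(y-y')\big].
\end{align*}
Subtracting the first identity at $x_1$ and at $x_2$, with $y,x',y'$ held fixed, gives
\[
\nabla_y c_{\textbf{e}}(x_1,\cdot)-\nabla_y c_{\textbf{e}}(x_2,\cdot)=\big(\tfrac{\alpha}{3}-1\big)(x_1-x_2).
\]
This is nonzero exactly when $\alpha\neq3$ and $x_1\neq x_2$, uniformly in $(x',y')$.

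Similarly, subtracting the second identity at $x_1'$ and at $x_2'$ gives
\[
\nabla_{y'} c_{\textbf{e}}(\cdot,x_1',\cdot)-\nabla_{y'} c_{\textbf{e}}(\cdot,x_2',\cdot)=\big(1+\tfrac{\alpha}{3}\big)(x_1'-x_2').
\]
This is nonzero since $\alpha>0$, uniformly in $(x,y)$. The exceptional value $\alpha=3$ is exactly where the coefficient in the $y$-comparison degenerates, which matches the hypothesis of the lemma.

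\emph{Main obstacle.} Step 1 is the delicate part, for two reasons. The first is justifying that the $\inf\sup$ is a genuine saddle value attained by the Euler--Lagrange pair. The convexity of $\gamma\mapsto\int\tfrac12|\dot\gamma|^2+\alpha|\gamma-\xi|^2$ is clear. However, the joint structure needs care, and this is where I would invoke a Wirtinger-type inequality $\int|\dot\eta|^2\ge\pi^2\int|\eta|^2$ for $\eta\in H^1_0$; that inequality is the source of the restriction $\alpha<\pi^2/2$. The second is checking that the relative path $\gamma-\xi$ at the saddle is affine, so that the interaction integral reduces to the clean $\tfrac13(\cdots)$ form above. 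I would verify both points by solving the linear boundary-value system explicitly in the variables $\gamma-\xi$ and $\gamma+\xi$.

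If that explicit identification turns out to be messy, the fallback is the envelope theorem. It gives $\nabla_y c_{\textbf{e}}=\dot\gamma(1)$ and $\nabla_{y'}c_{\textbf{e}}=-\dot\xi(1)$ at the saddle pair. The injectivity in $x$ and in $x'$ would then follow from the linear dependence of the saddle pair on its endpoints, using the Lipschitz bounds of Proposition \ref{boundary value problem}, provided the relevant coefficient matrix stays invertible for $\alpha\neq3$.
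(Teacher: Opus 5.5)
Your proof is correct. It lands on exactly the paper's gradient formula, but by a somewhat different route.

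The paper never evaluates $c_{\textbf{e}}$ at this point; the closed form only appears later, in Lemma \ref{thm:explicit cost}. Instead it imports the identity $\nabla_y c_{\textbf{e}}=y-x+2\alpha\int_0^1 t\,(\gamma-\xi)\,dt$ from \cite{cabrera2022optimaltransportationprincipleinteracting}. That identity is just $\nabla_y c_{\textbf{e}}=\dot\gamma(1)=(y-x)+\int_0^1 t\,\ddot\gamma(t)\,dt$ with $\ddot\gamma=2\alpha(\gamma-\xi)$. The paper then inserts the affine difference $d(t)=(1-t)(x-x')+t(y-y')$. This is precisely your fallback. It needs neither the value of $c_{\textbf{e}}$ nor the Lipschitz bounds of Proposition \ref{boundary value problem}, which give upper bounds and so cannot yield injectivity anyway.

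Your main route costs more up front: the saddle identification, then the sum/difference computation of the value. What it buys:
\begin{itemize}
\item Afterwards $c_{\textbf{e}}$ is an explicit quadratic polynomial, so differentiability is automatic. You avoid justifying an envelope theorem for an inf--sup over paths, which the paper tacitly assumes when it applies a lemma from the purely minimizing setting.
\item It makes the primed case transparent. Your coefficient $1+\frac{\alpha}{3}$ is right: $D^2_{x'y'}c_{\textbf{e}}$ is $(1+\frac{\alpha}{3})$ times the identity, because the kinetic sign flips for $\xi$ while the interaction sign does not. So the second twist inequality holds for every $\alpha>0$. The paper's one-line ``symmetric argument'' hides this asymmetry.
\end{itemize}

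Two small repairs.

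First, $\xi\mapsto c(\gamma,\xi)$ is not concave merely because of the term $-\frac12\int_0^1|\dot\xi|^2$. The term $\alpha\int_0^1|\gamma-\xi|^2$ is convex in $\xi$. Concavity holds only because Wirtinger gives $\int_0^1(\alpha|h|^2-\frac12|\dot h|^2)\,dt\le(\alpha-\frac{\pi^2}{2})\int_0^1|h|^2\,dt\le 0$ for $h\in H^1_0$. That is the entire role of $\alpha<\pi^2/2$, which the lemma needs implicitly since otherwise the inner supremum is $+\infty$. It is not a ``joint structure'' issue. Once you have convexity in $\gamma$ and concavity in $\xi$, any critical pair is a saddle point whose value is both the inf--sup and the sup--inf, with no minimax theorem required.

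Second, do not cite Proposition \ref{boundary value problem} for existence and uniqueness of the saddle pair. It freezes $\xi$ and is stated only for $\alpha<3$, so it would miss the range $3<\alpha<\pi^2/2$. Your explicit solution of the linear system ($\ddot d=0$, then $\ddot\gamma=\ddot\xi=2\alpha d$ with the given boundary values) already gives existence and uniqueness for every $\alpha$.
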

\begin{proof} By the lemma in \cite[Lemma 2.5]{cabrera2022optimaltransportationprincipleinteracting}  
we have
\[
\nabla_{y}c_{e}(x,y,x', y')=y-x+2\alpha\int_{0}^{1}t(\gamma(t)-\xi(t))dt.
\] 
Let $(\gamma_{x,y}(t), \xi_{x',y'}(t))$ be the \textit{minimaximal} path which solve the Euler-Lagrange equations for $c_e$. By Proposition \ref{minimal paths plus potential} we have
\begin{gather*}
    d(t) = \gamma(t) - \xi(t) = (1-t)(x-x') + t(y-y').
\end{gather*}
In particular, this implies
\begin{gather*}
    2\alpha \int_0^1 td(t)dt = \frac{\alpha}{3}(x-x') + \frac{2\alpha}{3}(y-y').
\end{gather*}Then for all $y$, $x_{1}\neq x_{2}$, we acquire the following
\begin{align*}
    \nabla_{y}c_{e}(x_{1},y,x', y')-\nabla_{y}c_{e}(x_{2},y,x', y')&=x_{2}-x_{1}+ \frac{\alpha}{3}(x_1-x_2)\\
    &= \left|1 - \frac{\alpha}{3}\right|\left|x_2 - x_1\right| \\
    &> 0
\end{align*}
provided $\alpha \neq 3$. A symmetric argument holds for taking the gradient with respect to $y'$.
\end{proof}

\subsection{Existence to Bilinear Minimax Transport Problem}\label{existence to biliner minimax}
We are interested in the existence and uniqueness to \eqref{inf sup problem} (but not necessarily a NETP). Before we prove the theorem, let us first show that the function given by Condition \ref{condition 2} satisfies Condition \ref{condition 1}. To do this, we first show that it is lower and upper semicontinuous on $\Omega_1$ and $\Omega_2$ respectively.

\begin{proposition}\label{prop: lo/up continuity}
    Given the cost function $c(\gamma,\xi) = \int_0^1 \frac12|\dot{\gamma}(t)|^2 - \frac12|\dot{\xi}(t)|^2 + \alpha |\gamma(t) - \xi(t)|^2\ dt$, then $\gamma \mapsto c(\gamma, \xi)$ is lower semicontinuous for all $\xi \in \Omega_2$ and $\xi \mapsto c(\gamma, \xi)$ is upper semicontinuous for all $\gamma \in \Omega_1$.
\end{proposition}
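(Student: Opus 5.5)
The plan is to split the cost into a kinetic part and a potential part and treat each separately, using the topology on $\Omega_i$ from \eqref{eq:path-metric}. Write
\[
c(\gamma,\xi)=\tfrac12\|\dot\gamma\|_{L^2}^2-\tfrac12\|\dot\xi\|_{L^2}^2+\alpha\int_0^1|\gamma(t)-\xi(t)|^2\,dt=:A(\gamma)-A(\xi)+V(\gamma,\xi).
\]
It suffices to show three things: $A$ is lower semicontinuous on $\Omega_i$, $-A$ is upper semicontinuous, and $V$ is jointly continuous on $\Omega_1\times\Omega_2$. Then, for fixed $\xi$, the map $\gamma\mapsto c(\gamma,\xi)$ is a lower semicontinuous function plus constants plus a continuous function, hence lower semicontinuous. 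Symmetrically, for fixed $\gamma$, the map $\xi\mapsto c(\gamma,\xi)$ is $-A(\xi)$ plus continuous terms, hence upper semicontinuous.

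For the kinetic part, the metric \eqref{eq:path-metric} controls $\|\dot\gamma_n-\dot\gamma\|_{L^2}$. So if $\gamma_n\to\gamma$ in $\Omega_1$, then $\dot\gamma_n\to\dot\gamma$ strongly in $L^2$, and $A(\gamma_n)\to A(\gamma)$ by continuity of the norm. Thus $A$ is in fact continuous, which gives both semicontinuity statements at once. For robustness, I would also note that if one only had weak $L^2$ convergence of derivatives (e.g.\ along sequences in the compact sets $\Omega_M$ of Condition \ref{condition 1}), weak lower semicontinuity of the $L^2$ norm still gives $A(\gamma)\le\liminf A(\gamma_n)$, hence $-A$ is upper semicontinuous. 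That is exactly the one-sided statement the proposition asks for. This is the only place where the lower/upper asymmetry genuinely matters.

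For the potential term, suppose $\gamma_n\to\gamma$ uniformly and $\xi_n\to\xi$ uniformly. All these paths take values in the bounded balls $X,Y$ of radius $R$, so $|\gamma_n-\xi_n|\le 2R$. Using $\bigl||a|^2-|b|^2\bigr|\le(|a|+|b|)|a-b|$, we get
\[
|V(\gamma_n,\xi_n)-V(\gamma,\xi)|\le 4\alpha R\bigl(\|\gamma_n-\gamma\|_\infty+\|\xi_n-\xi\|_\infty\bigr)\to0 .
\]
This is the routine step. The main point requiring care is that the statement does not say which topology is meant. I will commit to the metric \eqref{eq:path-metric}, under which the argument above is immediate. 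I will then remark that in the weaker uniform/weak-$W^{1,2}$ topology the same conclusion holds with the one-sided inequalities, because the potential term only needs uniform convergence and the kinetic terms enter with signs $+$ and $-$ respectively.
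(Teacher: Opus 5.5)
Your proposal is correct and follows the same route as the paper: you split $c$ into the kinetic terms and the interaction term, show the interaction term is continuous through a Lipschitz-type bound that uses the boundedness of the paths, and handle the kinetic terms by semicontinuity of $\gamma\mapsto\tfrac12\|\dot\gamma\|_{L^2}^2$, with the minus sign giving upper semicontinuity in $\xi$. The only difference is that the paper cites \cite{cabrera2022optimaltransportationprincipleinteracting} for the lower semicontinuity of the kinetic term, whereas you observe directly that the metric \eqref{eq:path-metric} makes it continuous; your bound $4\alpha R(\|\gamma_n-\gamma\|_\infty+\|\xi_n-\xi\|_\infty)$ is also a cleaner version of the paper's estimate.
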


\begin{proof} Firstly, notice that the quadratic interaction term, $\alpha|\gamma-\xi|^{2}$ is fully continuous in $\gamma$ and $\xi$. Fix $\xi\in \Omega_{2}$. Furthermore, the spatial derivatives of $\alpha|\gamma-\xi|^{2}$ were computed in \eqref{eqn: separable cost} and are bounded uniformly in $t$. Thus, assuming $\gamma_{n}$ is a sequence in $\Omega_{1}$, and $\gamma_{*}\in \Omega_{1}$ such that $\|\gamma_{n}-\gamma_{*}\|\to 0$ as $n\to \infty$, then
\[
\int_{0}^{1}\left|\alpha|\gamma_{n}-\xi|^{2}-\alpha|\gamma_{*}-\xi|^{2}\right|dt\leq 4\alpha|\gamma_{n}-\gamma_{*}|.
\]
Therefore, 
\[
\lim_{n\to \infty}\int_{0}^{1}\alpha|\gamma_{n}-\xi|^{2}dt= \int_0^1 \alpha|\gamma_{*}-\xi|^{2}dt.
\]
In \cite{cabrera2022optimaltransportationprincipleinteracting}, it is shown that the kinetic term is lower semicontinuous with respect to $\gamma$. Consequently, $c(\gamma, \cdot)$ is lower semicontinuous.

Similarly, fixing $\gamma$, the kinetic term in $\xi$ is upper semicontinuous with respect to $\xi$. Therefore, $c(\cdot, \xi)$ is upper semicontinuous with respect to $\xi$.

\end{proof}

The next proposition ensures the cost function defined in Condition \ref{condition 2} is coercive. Or that Condition \ref{condition 2} implies Condition \ref{condition 1}. Recall that ``$\rightharpoonup$" stands for weak convergence: A sequence $\{\mu_{n}\}\subset \mathcal{P}(X)$ \emph{weakly converges} to $\mu$, denoted $\mu_{n}\rightharpoonup\mu$, provided $\int_{X}f(x)d\mu_{n}(x)\to \int_{X}f(x)d\mu(x)$ for all $f\in C_{b}(X)$, of course for $C_{b}(X)$ denoting the space of continuous and bounded functions on $X$.

\begin{proposition}\label{prop:cond2impliescond1}
    The cost function $c(\gamma,\xi) = \int_0^1 \frac12|\dot{\gamma}(t)|^2 - \frac12|\dot{\xi}(t)|^2 + \alpha |\gamma(t) - \xi(t)|^2\ dt$ satisfies Condition \ref{condition 1}.
\end{proposition}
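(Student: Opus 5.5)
We verify the three requirements of Condition \ref{condition 1} one at a time: boundedness, the two semicontinuity properties, and compactness of the sublevel sets $\Omega_M$. Throughout we use that every $\gamma\in\Omega_1$ takes values in the closed ball $X=\overline{B}_R(0)$ and satisfies $\int_0^1|\dot\gamma|^2\,dt\le K$. Similarly, every $\xi\in\Omega_2$ takes values in $Y=\overline{B}_R(0)$ and satisfies $\int_0^1|\dot\xi|^2\,dt\le L$.

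\textbf{Boundedness.} For $(\gamma,\xi)\in\Omega$ we have
\begin{align*}
-\tfrac{L}{2}\le c(\gamma,\xi)\le \tfrac{K}{2}+\alpha\sup_{t}|\gamma(t)-\xi(t)|^2\le \tfrac{K}{2}+4\alpha R^2 .
\end{align*}
So $c$ is bounded on $\Omega$, and the value $+\infty$ in Condition \ref{condition 2} never occurs there.

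\textbf{Semicontinuity.} This is Proposition \ref{prop: lo/up continuity}. One remark is in order. In the metric \eqref{eq:path-metric}, convergence includes $L^2$-convergence of the derivatives. Hence both kinetic terms are in fact continuous, not merely semicontinuous. The interaction term is continuous under uniform convergence. Therefore $c$ is jointly continuous on $\Omega$.

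\textbf{Compactness of $\Omega_M$.} By boundedness, for $M$ large the constraint $|c|\le M$ is vacuous. Also $\Omega_M$ is closed, since $c$ and $(\gamma,\xi)\mapsto(\gamma(0),\xi(0))$ are continuous. It therefore suffices to show that $\Omega_M$ is relatively compact, or at least to identify it inside a compact set.

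Take a sequence $(\gamma_n,\xi_n)\subset\Omega_M$. The bounds $\|\dot\gamma_n\|_{L^2}^2\le K$ give uniform $\tfrac12$-Hölder continuity, and the paths are uniformly bounded by $R$. By Arzel\`a--Ascoli, a subsequence converges uniformly. Moreover, $\dot\gamma_n\rightharpoonup\dot\gamma$ weakly in $L^2$, and the limit stays in $\Omega_1$ by weak lower semicontinuity of the norm. The same holds for $\xi_n$.

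\textbf{Main obstacle.} The hard step is upgrading weak $L^2$-convergence of the derivatives to the strong convergence demanded by the metric \eqref{eq:path-metric}. Pure energy bounds do not give this. For example, oscillating paths $\gamma_n(t)=\tfrac1n\sin(n t)e_1$ converge uniformly to $0$ while $\|\dot\gamma_n\|_{L^2}\not\to0$.

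We would handle this in one of two ways.
\begin{enumerate}
\item Interpret compactness with respect to the topology actually used in the existence argument: uniform convergence of paths together with weak $W^{1,2}$-convergence of derivatives. In that topology, $\Omega_M$ is closed in the compact set $\Omega_1\times\Omega_2$ (weakly compact in $W^{1,2}$), and lower/upper semicontinuity of the kinetic parts is exactly weak lower semicontinuity of $\int|\dot\gamma|^2$.
\item Show that the relevant sequences, such as minimaximal paths from Proposition \ref{boundary value problem}, are $C^2$ with equicontinuous derivatives. Then Arzel\`a--Ascoli applied to the derivatives yields strong convergence.
\end{enumerate}
We would present the first route, noting explicitly that $\Omega_M$ is sequentially compact under uniform convergence with weakly convergent derivatives. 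That is the form of compactness needed, via Prokhorov's theorem, for the proof of Theorem \ref{thm:minimaxExistence}.
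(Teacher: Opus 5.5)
Your boundedness and semicontinuity steps are fine, and your diagnosis of the compactness step is correct. It is exactly where the paper's own argument breaks. The paper claims that ``the components of $\{\dot\gamma_n\}$ are bounded in $W^{1,2}([0,1])$'' and applies the compact embedding $W^{1,2}\hookrightarrow L^2$ to conclude $\dot\gamma_{n_k}\to\dot\gamma$ strongly in $L^2$. But the hypothesis only bounds $\|\dot\gamma_n\|_{L^2}$, so it bounds $\gamma_n$ in $W^{1,2}$, not $\dot\gamma_n$. Rellich then gives uniform compactness of the paths, while the derivatives converge only weakly. The paper's later steps all rest on this error: convergence of the kinetic energies, $c(\gamma_n,\xi_n)\to c(\gamma,\xi)$, and closedness of $\{|c|\le M\}$. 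Rescale your oscillating sequence to meet the energy and ball constraints and pair it with $\xi_n\equiv 0$. It then has no subsequence converging in \eqref{eq:path-metric}. Moreover, for $M\ge\max\{\sup_\Omega|c|,\sqrt{2}\,R\}$ one even has $\Omega_M=\Omega_1\times\Omega_2$. So the proposition is false for the metric the paper places on $\Omega$, and you were right not to force a proof of it.

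The genuine gap is in your fallback, route (1). You assert that $\Omega_M$ is closed, hence compact, under uniform convergence of paths plus weak $L^2$ convergence of derivatives. Your only closedness argument used continuity of $c$, which holds in the strong metric but fails in this weaker topology; there $c$ is merely lower semicontinuous in $\gamma$ and upper semicontinuous in $\xi$. Concretely, let $\gamma(t)=s\,t\,e_1$ and $\xi_n(t)=\frac{a}{n\pi}\sin(n\pi t)\,e_1$. Then $\xi_n\to 0$ in that topology and $c(\gamma,0)=s^2(\frac12+\frac{\alpha}{3})=:m>0$, while $c(\gamma,\xi_n)=m-\frac{a^2}{4}+O(n^{-2})$. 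Choose $a^2=2m$, with $s$ small so all constraints hold, and $M=\frac34 m$. The pairs $(\gamma,\xi_n)$ then lie in $\Omega_M$ for large $n$, but their limit does not.

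What route (1) really gives is that $\Omega_1\times\Omega_2$ is compact and metrizable in the weak topology. Hence each $\Omega_M$ is relatively compact and every family of measures on $\Omega$ is tight. That is all Prokhorov's theorem needs in Theorem~\ref{thm:minimaxExistence}, but Condition~\ref{condition 1} as worded holds in neither topology.

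Note the trade-off as well. The strong metric keeps $c$ jointly continuous, which Lemma~\ref{lemma: uplo continuity} and the proof of Theorem~\ref{thm:minimax=maximin} use, but it loses compactness. The weak topology restores compactness but leaves only separate semicontinuity, so those continuity arguments would have to be replaced. For instance, separate semicontinuity on compact convex sets is exactly what Sion's theorem requires. Route (2) changes the admissible class of paths and does not address the statement.
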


\begin{proof}
By Proposition \ref{prop: lo/up continuity}, it suffices to prove sequential compactness of $\Omega_M$. Since $\Omega$ is a metric space, this implies compactness.

Let $\{(\gamma_n,\xi_n)\}_n\subset\Omega_M$. By definition of $\Omega$, there exist $K,L>0$ such that
\begin{equation}\label{eq:W22bounds_patched}
\int_0^1|\dot\gamma_n|^2\,dt\leq K,
\qquad
\int_0^1|\dot\xi_n|^2\,dt\leq L,
\quad\forall n.
\end{equation}
In particular, $\|\dot\gamma_n\|_2\leq K^{1/2}$ and $\|\dot\xi_n\|_2\leq L^{1/2}$.

\medskip
We now establish the uniform equicontinuity and boundedness of the sequences.
Fix $0\le t_2<t_1\le 1$. By Cauchy--Schwarz,
\[
|\gamma_n(t_1)-\gamma_n(t_2)|
\leq \int_{t_2}^{t_1}|\dot\gamma_n(t)|\,dt
\leq \|\dot\gamma_n\|_2\,|t_1-t_2|^{1/2}
\leq K^{1/2}|t_1-t_2|^{1/2},
\]
and similarly
\[
|\xi_n(t_1)-\xi_n(t_2)|
\leq \|\dot\xi_n\|_2\,|t_1-t_2|^{1/2}
\leq L^{1/2}|t_1-t_2|^{1/2}.
\]
Thus $\{\gamma_n\}$ and $\{\xi_n\}$ are equicontinuous.

Moreover, since $\sqrt{|\gamma_n(0)|^2+|\xi_n(0)|^2}\leq M$, for all $n\geq 0$, for some constant $M>0$, we have the following bounds $|\gamma_n(0)|,|\xi_n(0)|\leq M$; and hence for any $t\in[0,1]$,

\begin{align}\label{inq: gamma dot}
\begin{split}
&|\gamma_n(t)|\leq |\gamma_n(0)|+\int_0^1|\dot\gamma_n|\ dt
\leq M+\|\dot\gamma_n\|_2
\leq M+K^{1/2},\\
&|\xi_n(t)|\leq |\xi_n(0)|+\int_0^1|\dot\xi_n|\ dt
\leq M+\|\dot\xi_n\|_2\leq M+L^{1/2}
\end{split}
\end{align}
 It follows that
\begin{gather}
    \|(\gamma_n,\xi_n)\|_\Omega \leq C
\end{gather}
where $C$ comes from applying the bounds in Condition \ref{condition 1} and \eqref{inq: gamma dot}.
Therefore $\{(\gamma_n,\xi_n)\}$ is uniformly bounded and equicontinuous in $\Omega$.

Applying the Arzel\`a--Ascoli theorem, there exists a subsequence (not relabeled) and $(\gamma,\xi)\in\Omega$ such that
\begin{equation}\label{eq:uniform_conv_patched}
\gamma_n\to\gamma,\qquad \xi_n\to\xi
\qquad \text{uniformly on }[0,1].
\end{equation}

\medskip
Next we show strong $L^2$ convergence of $\dot\gamma_n$ and $\dot\xi_n$.
From \eqref{eq:W22bounds_patched}, the components of the sequences $\{\dot\gamma_n\}$ and $\{\dot\xi_n\}$ are bounded in $W^{1,2}([0,1])$. According to \cite[Theorem 9.16]{brezis2011functional}
by the compactness of the embedding $W^{1,2}([0,1])\hookrightarrow L^2([0,1])$, there exist subsequences  $\gamma_{n_{k}}$ and $\xi_{n_{k}}$ such that
\begin{equation}\label{eq:strong_derivs_patched}
\dot\gamma_{n_{k}}\to \dot\gamma \quad\text{strongly in }L^2([0,1]),
\qquad
\dot\xi_{n_{k}}\to \dot\xi \quad\text{strongly in }L^2([0,1]).
\end{equation}
Hence, $\{(\dot{\gamma}_{n_k}, \dot{\xi}_{n_k})\}_{k}$ is uniformly integrable in $L^1([0,1])$. 

However, even after obtaining strong convergence, it is not clear that the limit
is actually the derivative of some curve $\gamma$. What we ultimately
obtain is the following: if $\dot{\gamma}^{\,i}_{n_k}$ denotes the $i$-th
component of $\dot{\gamma}_{n_k}$, then there exists a subsequence (not relabeled)
and a function $f_i \in L^2([0,1])$ such that,
\[
\dot{\gamma}^{\,i}_{n_k} \rightharpoonup f_i
\quad \text{in } L^2([0,1]).
\]

Next we show that for some $f=(f_{1},\dots, f_{d})$, $f$ is the (time) derivative of a curve $\gamma$. Let $\dot{\gamma}_{n_k}^i$ and $\dot{\xi}_{n_k}^i$ denote the $i$th components of $\dot{\gamma}_{n_k}$ and $\dot{\xi}_{n_k}$, respectively. Since $\dot{\gamma}_{n_k}^i ,\dot{\xi}_{n_k}^i \in L^2([0,1])$ are uniformly bounded, then by the Banach-Alaoglu theorem we have that up to a subsequence, there exist $f_i,g_i \in L^2[0,1]$ such that $\dot{\gamma}_{n_k}^i \rightharpoonup f_i ,\dot{\xi}_{n_k}^i \rightharpoonup g_i$. This implies that for any test function $\phi \in C_c^\infty([0,1]; \mathbb{R}^{d})$, we have
    \begin{gather*}
        \int_0^1 \dot{\gamma}_{n_k}^i(t)\phi(t)dt \to \int_0^1 f_i(t)\phi(t)dt.
    \end{gather*}
Similarly for $\{\dot{\xi}_{n_k}^i\}$. Note that if we integrate by parts on the left-hand side we obtain
    \begin{gather*}
        \int_0^1 \dot{\gamma}_{n_k}^i(t)\phi(t)dt  = - \int_0^1 \gamma_{n_k}^i(t)\dot{\phi}(t)dt,\quad \forall \phi \in C_c^\infty([0,1]; \mathbb{R}^{d}).
    \end{gather*}
From uniform convergence of $\gamma_{n_k} \to \gamma_0$ and applying integration by parts again we have that
    \begin{gather*}
        - \int_0^1 \gamma_{n_k}^i(t)\dot{\phi}(t)dt \to - \int_0^1 \gamma_0^i(t)\dot{\phi}(t)dt = \int_0^1 \dot{\gamma}_0^i(t)\phi(t)dt,\quad \forall \phi \in C_c^\infty([0,1]; \mathbb{R}^{d})
    \end{gather*}
    It follows that $f_i = \dot{\gamma}_0^i$ in distribution.

    Next, since the path $\gamma_0$ is absolutely continuous, then by the fundamental theorem of calculus we have on the one hand
    \begin{gather*}
        \gamma_0(s) = \gamma_0(0) + \int_0^s \dot{\gamma}_0(t)\ dt.
    \end{gather*}
    From the convergence, we have
    \begin{gather*}
        \int_0^1 \dot{\gamma}_{n_k}(t)g(t)dt \to \int_0^s f(t)g(t)dt\quad \forall g \in [L^2([0,1])]^* \cong L^2([0,1])
    \end{gather*}
    Choosing $g(t) = \chi_{[0,t]}$ yields
    \begin{gather*}
        \int_0^s \dot{\gamma}_{n_k}(t)dt \to \int_0^s f(t)dt.
    \end{gather*}
    Thus, by weak convergence we have
    \begin{gather*}
        \gamma_{n_k}(s) - \gamma_{n_k}(0) = \int_0^s \dot{\gamma}_{n_k}(t) \to \int_0^s f(t).
    \end{gather*}
    On the other hand, uniform convergences gives that the left-hand side converges and
    \begin{gather*}
        \gamma_0(s) - \gamma_0(0) = \int_0^s f(t)dt.
    \end{gather*}
    Therefore,
    \begin{gather*}
        \gamma_0(0) + \int_0^s f(t)dt = \gamma_0(0) + \int_0^s \dot{\gamma}_0(t)dt.
    \end{gather*}
    By the Lebesgue differentiation theorem, we conclude $\dot{\gamma}_0 = f$ a.e. We repeat the same argument for $\dot{\xi}$.

Armed with the above argument and combining it with \eqref{eq:uniform_conv_patched} with
$\gamma_n(s)-\gamma_n(0)=\int_0^s\dot\gamma_n$ and 
$\xi_n(s)-\xi_n(0)=\int_0^s\dot\xi_n$, we
see that $\gamma,\xi$ are absolutely continuous and their derivatives are $\dot\gamma,\dot\xi$ a.e.

\medskip
Finally we establish closedness of the constraint $|c|\leq M$.
By \eqref{eq:strong_derivs_patched},
\[
\int_0^1 \frac12|\dot\gamma_n|^2\,dt \to \int_0^1 \frac12|\dot\gamma|^2\,dt,
\qquad
\int_0^1 \frac12|\dot\xi_n|^2\,dt \to \int_0^1 \frac12|\dot\xi|^2\,dt.
\]
Also, by uniform convergence \eqref{eq:uniform_conv_patched}, we have $\gamma_n-\xi_n\to\gamma-\xi$ uniformly, hence in $L^2$, so
\[
\int_0^1 \alpha|\gamma_n-\xi_n|^2\,dt \to \int_0^1 \alpha|\gamma-\xi|^2\,dt.
\]
Therefore,
\[
c(\gamma_n,\xi_n)\to c(\gamma,\xi).
\]
Since $|c(\gamma_n,\xi_n)|\le M$ for all $n$, taking limits yields $|c(\gamma,\xi)|\le M$.
The endpoint constraint passes to the limit by \eqref{eq:uniform_conv_patched}. Hence $(\gamma,\xi)\in\Omega_M$.

Thus every sequence in $\Omega_M$ has a uniformly convergent subsequence whose limit lies in $\Omega_M$. Therefore $\Omega_M$ is sequentially compact in $\Omega$.
\end{proof}

The following proposition ensures that for any $\mu ,\nu \in \mathcal{P}(X)$, then $\Pi(\mu,\nu) \subset \mathcal{P}(X \times X)$ is weak-* closed.
\begin{proposition}\label{prop:limit}
    Let $\mu,\nu \in \mathcal{P}(X)$. Suppose $\{\pi_k\} \subset \Pi(\mu,\nu)$ such that $\pi_k \rightharpoonup \pi$. Then $\pi \in \Pi(\mu,\nu)$.
\end{proposition}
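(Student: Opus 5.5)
The plan is to check the two marginal conditions in \eqref{transport plan} directly by testing against bounded continuous functions of one variable. Fix $h\in C_b(X)$ and set $H(x,y):=h(x)$. Since the projection $(x,y)\mapsto x$ is continuous, $H$ is continuous and bounded on $X\times X$, with $\|H\|_\infty=\|h\|_\infty$. So $H\in C_b(X\times X)$ is an admissible test function for the weak convergence $\pi_k\rightharpoonup\pi$, and we get
\[
\int_{X\times X} h(x)\,d\pi_k(x,y)\longrightarrow \int_{X\times X} h(x)\,d\pi(x,y).
\]
Because each $\pi_k\in\Pi(\mu,\nu)$, the left-hand side equals $\int_X h\,d\mu$ for every $k$. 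The sequence is therefore constant, and its limit satisfies $\int h(x)\,d\pi=\int h\,d\mu$. The same argument with $H(x,y):=h(y)$ gives $\int h(y)\,d\pi=\int h\,d\nu$.

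Next we need $\pi$ to be a probability measure on $X\times X$. Testing against the constant function $1\in C_b(X\times X)$ gives $\pi(X\times X)=\lim_k\pi_k(X\times X)=1$. Nonnegativity is preserved under weak limits, since nonnegative test functions give nonnegative integrals, so $\pi\in\mathcal{P}(X\times X)$. If the weak limit is only assumed to be a finite signed Radon measure, the Riesz representation theorem on the compact set $X\times X$ ($X=\overline{B}_R(0)$) handles this. Equivalently, one can read the conclusion as $(p_1)_\#\pi=\mu$ and $(p_2)_\#\pi=\nu$, because the pushforward under a continuous map is weakly continuous.

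The only step with real content is moving from equality of integrals against all $h\in C_b(X)$ to equality of measures, i.e.\ $(p_1)_\#\pi(E)=\mu(E)$ for all Borel $E$. This holds because finite Borel measures on a metric space are determined by their integrals against bounded continuous (even bounded Lipschitz) functions. To see it, approximate $\chi_C$ for closed $C$ by $h_j(x)=\max(0,1-j\,\mathrm{dist}(x,C))$, apply dominated convergence, and then use the $\pi$–$\lambda$ theorem, since closed sets generate the Borel $\sigma$-algebra and are closed under finite intersections. However, the definition \eqref{transport plan} of $\Pi(\mu,\nu)$ is already phrased through bounded continuous test functions, so the first paragraph proves the claim as stated. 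The measure-determination remark is needed only to identify it with the setwise marginal condition. I expect no genuine obstacle: the argument is a direct application of the definition of weak convergence.
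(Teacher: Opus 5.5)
Your proof is correct and follows the paper's argument: test each marginal against $h\in C_b(X)$ composed with a continuous map, pass to the limit using weak convergence, and note that the integrals are constantly $\int h\,d\mu$ (resp.\ $\int h\,d\nu$). The paper works with the evaluation maps $e_0,e_1$ on path space rather than coordinate projections, and it simply asserts that agreement on all test functions identifies the measures; your closed-set approximation together with the $\pi$--$\lambda$ theorem supplies that step explicitly.
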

\begin{proof}
    The proof can be found in \cite{ambrosio2013user}. For the convenience of the reader, we supply the proof. For any $\eta \in \Pi(\mu,\nu)$, let $e_t$ be the evaluation map such that $(e_0)_{\#}\eta = \mu$ and $(e_1)_{\#}\eta = \nu$. Then for any test function $f \in C_b(X)$ we have by the change of variables
    \begin{align*}
        \int_{X} f(x)\ d((e_0)_{\#}\pi(x)) = \int_{\Omega_{1}} f(\gamma(0))\ d\pi(\gamma)
        &= \lim_{k \to\infty} \int_{\Omega_{1}} f(\gamma(0))\ d\pi_k(\gamma)\\
        &= \lim_{k\to \infty} \int_{X} f(x)\ d((e_0)_{\#}(\pi_k)(x))\\
        &= \int_{X} f(x)\ d\mu(x).
    \end{align*}
    Since this holds for every test function we must have $(e_0)_\#\pi = \mu$. Using the same proof, we can also show that $(e_1)_\#\pi = \nu$.
\end{proof}

The next result will be crucial for determining that the minimax value of \eqref{sigma functional} coincides with the maximin value of \eqref{sigma functional}. In particular, it is essential to prove Lemma \ref{lemma: uplo continuity}, which is essential to prove the reverse inequality of Proposition \ref{prop: weak duality}, and which both rely on the following proposition. 
\begin{lemma}\label{lemma:limitproduct}
    Suppose $\mathcal{X},\mathcal{Y}$ are polish spaces and $\mu_1,\nu_1\in \mathcal{P}(\mathcal{X})$ and $\mu_2,\nu_2 \in \mathcal{P}(\mathcal{Y})$. Let $\{\pi_{1,i}\}_{i\geq 0}\in \Pi(\mu_1,\nu_1)$ and $\{\pi_{2,i}\}_{i\geq 0}\in \Pi(\mu_2,\nu_2)$ be sequences. If $\pi_{1,i}\otimes \pi_{2,i} \rightharpoonup \rho$, then $\pi_{1,i} \rightharpoonup \pi_1$, $\pi_{2,i} \rightharpoonup \pi_2$ and $\rho = \pi_1 \otimes \pi_2$. 
\end{lemma}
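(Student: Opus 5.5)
The plan has three steps: show the marginal sequences are tight, identify the limit along subsequences, and then upgrade to convergence of the whole sequences.

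\emph{Tightness.} Since $\pi_{1,i}\in\Pi(\mu_1,\nu_1)$ for every $i$, all its marginals are fixed. The set $\Pi(\mu_1,\nu_1)$ is tight, because tightness of $\mu_1$ and $\nu_1$ on the Polish space $\mathcal{X}$ transfers to couplings via $K_1\times K_2$. The same holds for $\Pi(\mu_2,\nu_2)$. (In the path setting of the paper, tightness of the path plans follows from the compactness assumptions and Condition \ref{condition 1}; alternatively, it follows from tightness of the projections of the convergent sequence $\pi_{1,i}\otimes\pi_{2,i}$.) The cleanest route is the latter. The first-coordinate projection $p_1$ is continuous, and $(p_1)_\#(\pi_{1,i}\otimes\pi_{2,i})=\pi_{1,i}$. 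Therefore $\pi_{1,i}\rightharpoonup (p_1)_\#\rho=:\pi_1$ directly by the continuous mapping theorem. Likewise $\pi_{2,i}\rightharpoonup (p_2)_\#\rho=:\pi_2$. This already gives convergence of the full sequences, with no subsequence needed.

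\emph{Identifying $\rho$.} I will test against product functions $f\otimes g$ with $f\in C_b$ and $g\in C_b$ on the respective factors. On one hand,
\[
\int f\otimes g\,d(\pi_{1,i}\otimes\pi_{2,i})=\Big(\int f\,d\pi_{1,i}\Big)\Big(\int g\,d\pi_{2,i}\Big)\to\int f\,d\pi_1\int g\,d\pi_2 .
\]
On the other hand, since $f\otimes g$ is bounded and continuous, the same left-hand side converges to $\int f\otimes g\,d\rho$. Hence $\rho$ and $\pi_1\otimes\pi_2$ agree on all products $f\otimes g$. Choosing bounded Lipschitz approximations of indicators of closed sets shows that they agree on all measurable rectangles $A\times B$ with $A,B$ closed. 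Rectangles form a $\pi$-system generating the product Borel $\sigma$-algebra, which coincides with the Borel $\sigma$-algebra of the product because the spaces are Polish and hence separable. Dynkin's $\pi$-$\lambda$ theorem then yields $\rho=\pi_1\otimes\pi_2$.

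\emph{Main obstacle and optional remark.} The only delicate point is the measure-theoretic step that products of test functions determine a measure on $\mathcal{X}\times\mathcal{Y}$. This is exactly where separability, coming from the Polish assumption, is used, so that $\mathcal{B}(\mathcal{X}\times\mathcal{Y})=\mathcal{B}(\mathcal{X})\otimes\mathcal{B}(\mathcal{Y})$. Finally, if desired, Proposition \ref{prop:limit} shows $\pi_1\in\Pi(\mu_1,\nu_1)$ and $\pi_2\in\Pi(\mu_2,\nu_2)$, although the statement itself does not require this.
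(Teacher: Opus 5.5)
Your proof is correct, and it takes a genuinely different route from the paper in the second step.

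The first step is the same as the paper's. Both define $\pi_j$ as the projections of $\rho$ and get $\pi_{j,i}\rightharpoonup\pi_j$ by composing test functions with the continuous projection. Your opening discussion of tightness of $\Pi(\mu_1,\nu_1)$ is not needed and can be dropped.

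The two arguments diverge when identifying $\rho$. The paper proves directly that $\pi_{1,i}\otimes\pi_{2,i}\rightharpoonup\pi_1\otimes\pi_2$ against every $F\in C_b(\mathcal{X}\times\mathcal{Y})$, and then concludes $\rho=\pi_1\otimes\pi_2$ by uniqueness of weak limits. To do this it localizes to a compact $K_1\times K_2$ using tightness of the convergent factor sequences, and approximates $F$ there by finite sums $\sum f_n(x)g_n(y)$ via Stone--Weierstrass. You instead note that convergence to $\rho$ is already given. So it suffices to check that $\rho$ and $\pi_1\otimes\pi_2$ agree on products $f\otimes g$, which determine a measure on $\mathcal{B}(\mathcal{X})\otimes\mathcal{B}(\mathcal{Y})=\mathcal{B}(\mathcal{X}\times\mathcal{Y})$ by your closed-rectangle and $\pi$-$\lambda$ argument.

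What your route buys: it is shorter and uses only separability. It also avoids a delicate point in the paper's estimate. There, the term $\bigl|\int R\chi_K\,d\varphi_i-\int R\chi_K\,d\varphi\bigr|\to 0$ is not immediate, because $R\chi_K$ is not continuous and the extended $R$ is not controlled off $K$. Justifying it needs an extra cutoff argument.

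What the paper's route buys: it proves the stronger converse implication. Separate convergence $\pi_{1,k}\rightharpoonup\pi_1$ and $\pi_{2,k}\rightharpoonup\pi_2$ already forces $\pi_{1,k}\otimes\pi_{2,k}\rightharpoonup\pi_1\otimes\pi_2$, and this is how the lemma is actually invoked in the proof of Lemma \ref{lemma: uplo continuity}.

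Your approach recovers this converse with one more step:
\begin{enumerate}
\item Weakly convergent sequences on Polish spaces are tight, so the products are tight via $K_1\times K_2$.
\item Hence every subsequence of $\pi_{1,k}\otimes\pi_{2,k}$ has a further convergent subsequence.
\item Your identification step forces each such limit to equal $\pi_1\otimes\pi_2$, so the whole sequence converges.
\end{enumerate}
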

\begin{proof}
Let $\rho_i = \pi_{1,i}\otimes \pi_{2,i}$ and define $\pi_{1} = (\text{proj}_1)_\#\rho$, $\pi_{2} = (\text{proj}_2)_\#\rho$. Clearly, we have $\pi_{j,i}:=\left(\text{proj}_{j}\right)_{\#}\rho_{i}$ for $j=1,2$. To show that $\pi_{1,i} \rightharpoonup \pi_{1}$, we note that for any $f \in C_b(\mathcal{X})$
\begin{align*}
    \int f(x)\ d\pi_{1,i}(x,y) &= \int f(x)\ d(\text{proj}_1)\#\rho_i(x,y) \\
    &= \int f\circ (\text{proj}_1^{-1}(x,y))\ d\rho_i(x,y) \\
    &\to \int f\circ(\text{proj}_1^{-1}(x,y))\ d\rho(x,y)\\
    &= \int f(x)\ d(\text{proj}_1\#(\rho))\\
    &= \int f(x)\ d\pi_1(x,y)
\end{align*}
where the convergence follows as $f \in C_b(\mathcal{X})$ implies $f \in C_b(\mathcal{X}\times \mathcal{Y})$. A similar argument holds for $\pi_{2,i} \rightharpoonup \pi_2$.

Since $\pi_{1,i}\rightharpoonup \pi_{1}\in \mathcal{P}(\mathcal{X})$ and $\pi_{2,i}\rightharpoonup \pi_{2}\in \mathcal{P}(\mathcal{Y})$, take any $ F\in C_{b}(\mathcal{X}\times \mathcal{Y})$. We shall show
\[
\int Fd(\pi_{1,i}\otimes \pi_{2,i})\to \int F d(\pi_{1}\otimes \pi_{2}).
\]
As $\pi_{1,i}\rightharpoonup \pi_{1}$, then the collection $\{\pi_{1,i}\}\cup\{\pi_{1}\}$ is tight. For every $\varepsilon>0$ there exists a $K_{1}\subset \mathcal{X}$ such that $\sup_{i}\pi_{1,i}\left[K_{1}^{c}\right]\leq \varepsilon$  and $\pi_{1,i}\left[K_{1}^{c}\right]\leq \varepsilon$. Similarly choose $K_{2}\subset \mathcal{Y}$ such that $\sup_{i}\pi_{2,i}\left[K_{2}^{c}\right]\leq \varepsilon$  and $\pi_{2,i}\left[K_{2}^{c}\right]\leq \varepsilon$.

Then for $K:=K_{1}\times K_{2}$, we get for all $i$
\[
(\pi_{1,i}\otimes \pi_{2,i})\left[K^{c}\right]\leq \pi_{1,i}\left[K_{1}^{c}\right]+\pi_{2,i}\left[K_{2}^{c}\right]\leq 2\varepsilon.
\]
Similarly, $(\pi_{1}\otimes \pi_{2})\left[K^{c}\right]\leq 2\varepsilon.$

Now, we aim to use separable functions to approximate $F$ uniformly on $K$. On compact sets $K:=K_{1}\times K_{2}$, the algebra
\[
\mathcal{A}:=\left\{\sum_{n=1}^{N}f_{n}(x)g_{n}(y): f_{n}\in C(K_{1}),\; g_{n}\in C(K_{2})\right\}
\]
is uniformly dense in $C(K)$ by the Stone-Weierstrass theorem. Hence, choose $R(x,y)=\sum_{n=1}^{N}f_{n}(x)g_{n}(y)$ such that 
\begin{align}\label{inq: SW}
\sup_{(x,y)\in K} \left|F(x,y)-R(x,y)\right|\leq\varepsilon.
\end{align}
Furthermore, we can extend each $f_{n}, g_{n}$ to continuous  bounded functions on $\mathcal{X}, \mathcal{Y}$.

Next, for each product $f(x)g(y)$, we have
\begin{align*}
    \iint f(x)g(y)d((\pi_{1,i}\otimes\pi_{2,i})(x,y))&=\left(\int f(x)d\pi_{1,i}(x,y)\right)\left(\int g(y) d\pi_{2,i}(x,y)\right)\\
    &\longrightarrow \left(\int f (x)d\pi_{1}(x,y)\right)\left(\int g(y) d\pi_{2}(x,y)\right)\\
    &=\iint f(x)g(y)d((\pi_{1}\otimes \pi_{2})(x,y)),
\end{align*}
since $\pi_{1,i}\rightharpoonup \pi_{1}$ and $\pi_{2,i}\rightharpoonup \pi_{2}$. Therefore,
\[
\iint R(x,y)d((\pi_{1,i}\otimes \pi_{2,i})(x,y))\to \iint R(x,y)d((\pi_{1}\otimes \pi_{2})(x,y)).
\]

Then for any probability measure $\varphi \in P(\mathcal{X}\times \mathcal{Y})$, we have $\int F d\varphi=\int_{K}Fd\varphi+\int_{K^{c}}Fd\varphi$, then we have 

\begin{align*}
    \left|\int Fd\varphi -\int F\chi_{K}d\varphi\right|&\leq \int |F|\chi_{K^{c}}d\varphi\leq \|F\|_\infty\int \chi_{K^{{c}}}d\varphi=\|F\|_{\infty}\varphi[K^{c}].
\end{align*}

Then applying the above to $\pi_{1,i}\otimes \pi_{2,i}$ and $\pi_1\otimes \pi_2$ in place of $\varphi$ we acquire the two estimates
\begin{align}\label{inq: F1}
    \left|\int Fd(\pi_{1,i}\otimes\pi_{2,i})-\int F\chi_{K}d(\pi_{1,i}\otimes\pi_{2,i})\right|\leq 2\|F\|_{\infty}\varepsilon,
\end{align}

\begin{align}\label{inq: F2}
    \left|\int Fd(\pi_{1}\otimes\pi_{2})-\int F\chi_{K}d(\pi_{1}\otimes\pi_{2})\right|\leq 2\|F\|_{\infty}\varepsilon.
\end{align}
Now, on $K$, 
\begin{align}\label{inq: FR}
    \left|\int F\chi_{K}d(\pi_{1,i}\otimes\pi_{2,i})-\int R\chi_{K}d(\pi_{1,i}\otimes\pi_{2,i})\right|\leq \varepsilon.
\end{align}

Finally, we have by the triangle inequality, with $\varphi_{i}:=\pi_{1,i}\otimes \pi_{2,i}$ and $\varphi = \pi_{1}\otimes \pi_{2}$,

\begin{align*}
\left|\int F\,d\varphi_i - \int F\,d\varphi\right|
&\leq 
\left|\int F\,d\varphi_i - \int F\chi_{K}\,d\varphi_i\right|+
\left|\int F\chi_{K}\,d\varphi_i - \int R\chi_{K}\,d\varphi_i\right|\\
&+
\left|\int R\chi_{K}\,d\varphi_i - \int R\chi_{K}\,d\varphi\right|
+
\left|\int R\chi_{K}\,d\varphi - \int F\chi_{K}\,d\varphi\right|\\
&+
\left|\int F\chi_{K}\,d\varphi - \int F\,d\varphi\right|.
\end{align*}
The first two terms on the right-hand side of the above inequality are bounded above by \eqref{inq: F1} and \eqref{inq: FR}, while the third term on the right-hand side of the above inequality has the convergence of separable functions in the integrand, namely,
\[
\left|\int R\chi_{K}d\varphi_{i}-\int R\chi_{K}d\varphi\right|\to 0,\quad\text{as}\; i\to \infty;
\]
and the fourth and fifth terms in the above inequality stem from the inequalities \eqref{inq: SW} and \eqref{inq: F2}, respectively.

Consequently, putting these inequalities together yields

\[
\limsup_{i\to \infty}\left|\int Fd(\pi_{1,i}\otimes \pi_{2,i})-\int Fd(\pi_{1}\otimes\pi_{2})\right|\leq (4\|F\|_{\infty}+2)\varepsilon.
\]
Since $\varepsilon>0$ was arbitrary, the limsup goes to $0$, and so $(\pi_{1,i}\otimes \pi_{2,i})\rightharpoonup (\pi_{1}\otimes \pi_{2})$. By assumption, since $\pi_{1,i}\otimes \pi_{2,i} \rightharpoonup \rho$, then by uniqueness of limits we have $\rho = \pi_{1}\otimes \pi_{2}$.
\end{proof}

\begin{proof}
    [\textit{Proof of Theorem \ref{thm:minimaxExistence}}] We begin by fixing $\pi_1 \in \Pi_1(\mu_1,\nu_1)$, as defined on \eqref{path plans}. Note that we may rewrite the optimization of the cost function as
    \begin{align*}
        \inf_{\pi_1\in \Pi_1(\mu_1,\nu_1)}\sup_{\pi_{2}\in \Pi_2(\mu_2,\nu_2)} \int_{\Omega_2}c(\gamma,\xi)\ d\pi_2& = \inf_{\pi_1\in \Pi_1(\mu_1,\nu_1)}\left[-\inf_{\pi_2\in \Pi_2(\mu_2,\nu_2)} \left(-\int_{\Omega_2}c(\gamma,\xi)\ d\pi_2\right)\right]\\
        &= -\sup_{\pi_1\in \Pi_1(\mu_1,\nu_1)}\inf_{\pi_2\in\Pi_2(\mu_2,\nu_2)}\int_{\Omega_2} p(\gamma,\xi)\ d\pi_2,
    \end{align*}
    where $p(\gamma, \xi):= -c(\gamma, \xi)$.  Note that Condition \ref{condition 1} implies there exists a constant $K > 0$ such that the map satisfies
    \begin{gather}\label{ineq:lowerbound}
        p(\gamma, \xi) \geq -K.
    \end{gather}
    
For fixed $\pi_{1}\in \mathcal{P}(\Omega_{1})$, define functional $\hat{\sigma}_{0}:\mathcal{P}(\Omega_{2})\to \mathbb{R}$ by 
\[\widehat{\sigma}_{0}(\pi_{2}):=\widehat{\sigma}_0(\pi_2)[\pi_{1}]: = \int_{\Omega_1}\int_{\Omega_{2}} p(\gamma,\xi)+K\ d\pi_2(\xi)\;d\pi_{1}(\gamma) .\] 

Thus, we start by exhibiting existence of a minimizer to this problem. It follows that for any $\pi_2 \in \mathcal{P}(\Omega_2)$ then
    \begin{gather*}
        \widehat{\sigma}_0(\pi_2) \geq 0,\quad \forall \pi_1 \in \mathcal{P}(\Omega_1).
    \end{gather*}
 If $\widehat{\sigma}_{0}(\pi_{2})=+\infty$ for all admissible $\pi_2\in\mathcal{P}\left(\Omega_{2}\right)$, then in this case any admissible measure trivially minimizes $\widehat{\sigma}_{0}$. Hence, we may assume without loss of generality that $\widehat{\sigma}_{0}(\pi_2)<+\infty$ for at least some admissible $\pi_2 $ for all $\pi_1 \in \mathcal{P}(\Omega_1)$. Then this implies $\inf_{\pi_2 \in \Pi_2(\mu_2,\nu_2)} \hat{\sigma}_0(\pi_2)$ is finite. Let $\{\tau_k\}_k$ be a minimizing sequence such that $\sigma_0(\pi_1,\tau_k) \to \inf_{\pi_2 \in \Pi_2(\mu_2,\nu_2)} \hat{\sigma}_0(\pi_2)$. Observe that there exists $C > 0$ such that
    \begin{gather*}
        \hat{\sigma}_0(\tau_k) \leq C,\quad \forall k \geq 1.
    \end{gather*}

    From here on end, we apply the same reasoning that was done on the proof of \textbf{Theorem 1.4} in \cite{cabrera2022optimaltransportationprincipleinteracting}, but to our setting as follows. Recall Condition \eqref{condition 1} applied to $p$. That is, given any real number $\ell>0$, we have that the set $\widehat{\Omega}_{\ell}$ under fixed $\pi_1$ is given by
    \begin{gather*}
       \widehat{\Omega}_{\ell}:= \left\{\omega \in \Omega_2: \exists \gamma \in spt(\pi_1), \sqrt{|\gamma(0)|^{2}+|\omega(0)|^{2}} \leq \ell,\ |p(\gamma, \omega)| \leq \ell\right\}.
    \end{gather*}
    Observe that this set is compact. Indeed, since $spt(\pi_1)$ is compact, then $spt(\pi_1)\times \Omega_2$ is a closed subset of the compact set $\Omega_1\times \Omega_2$, hence compact. Next, we have that $\widehat{\Omega}_\ell = \text{proj}_2(spt(\pi_1)\times \Omega_2)$, and since the  continuous image of a compact set is compact, we conclude. 
    
    Suppose that $\ell>0$ is chosen large enough such that $\textit{spt}\;\mu_{1}\subset B_{\ell}(0)$ and $\textit{spt}\;\mu_{2}\subset B_{\ell}(0)$, then  
    \[\textit{spt}\;(\mu_{1}\otimes\mu_{2})\subset \textit{spt}\;\mu_{1}\times\textit{spt}\;\mu_{2}\subset B_{\ell}(0)\times B_{\ell}(0)
    \]
    Define the set $X_{\ell} := \{\omega \in \Omega_2: \exists \gamma \in spt(\pi_1)\ s.t.\ |p(\gamma, \omega)| \leq \ell\}.$
    As $(e_{0},e_{0})_{\sharp}(\pi_{1}\otimes\pi_{2})=\mu_{1}\otimes\mu_{2}$, then $\textit{spt}\;(\pi_{1}\otimes\pi_{2})\subset (e_{0},e_{0})^{-1}\left(\textit{spt}\;(\mu_{1}\otimes\mu_{2})\right)$. So since $\pi_{1}\otimes\pi_{2}$ is supported in the set $ \Omega_{1}\times\hat{\Omega}_{\ell}$, then for every admissible $\pi_2$ we have
    \begin{align*}
       \pi_1 \otimes \pi_2(\Omega_{1}\times X_{\ell}) &= \pi_1 \otimes\pi_2\left(\Omega_{1}\times\widehat{\Omega}_{\ell}\right)\\
       \pi_1(\Omega_{1}) \cdot\pi_2(X_{\ell}) &= \pi_1 (\Omega_{1})\cdot\pi_2\left(\widehat{\Omega}_{\ell}\right)\\
\pi_{2}\left(X_{\ell}\right)&=\pi_{2}\left(\widehat{\Omega}_{\ell}\right).
    \end{align*}
    Consequently,
    \begin{gather*}
        \pi_2(X_{\ell}^c)=\pi_{2}\left(\widehat{\Omega}_{\ell}^{c}\right).
    \end{gather*}
    Since $|p(\gamma, \xi)| > \ell$ on $X_{\ell}^c$ we have, with constant $K$ as in \eqref{ineq:lowerbound},
\begin{align*}
    \pi_{2}\left(X_{\ell}^{c}\right)\leq\frac{1}{\ell}\int_{X_{\ell}^{c}}|p(\gamma, \xi)|d\pi_{2}(\xi)\leq \frac{1}{\ell}\int_{\Omega_{2}}|p(\gamma, \xi)-K+K|\;d\pi_{2}(\xi)\leq\frac{1}{\ell}\int_{\Omega_{2}} p(\gamma, \xi) + 2K\;d\pi_{2}(\xi).
\end{align*}
Integrating both sides with respect to $\pi_1$ gives
\begin{gather*}
    \pi_2(X_\ell^c) \leq \frac{\widehat{\sigma}_0(\pi_2) + K}{\ell}.
\end{gather*}
    
Applying this to each $\tau_{k}$, we acquire
    \begin{align}\label{inq: sequence inq}
        \tau_{k}\left(\widehat{\Omega}_{\ell}^{c}\right)\leq \frac{\widehat{\sigma}_{0}(\tau_{k})+K}{\ell}\leq \frac{\widetilde{C}}{\ell},\;\forall \;k.
    \end{align}
    Using this inequality in (\ref{inq: sequence inq}), for each $\ell>0$, given $\varepsilon:=\frac{\widetilde{C}}{\ell}>0$, we have thus
    \[
\tau_{k}\left(\widehat{\Omega}_{\ell}^{c}\right)\leq\frac{\widetilde{C}}{\ell}=\varepsilon,\; \forall \;k.
    \]
    From Condition \ref{condition 1} we know each $\widehat{\Omega}_{\ell}$ is compact and hence the sequence $\{\tau_k\}_k$ is tight. By Prokhorov's Theorem, up to a subsequence, there exists a Borel probability measure $\tau$ such that $\tau_k \rightharpoonup \tau$ as $k\to \infty$. By Proposition \ref{prop:limit} it follows that $\tau \in \Pi_2(\mu_2,\nu_2)$. 
    
    That the cost function in Condition \ref{condition 2} is lower semi-continuous follows by \cite[Proposition 3.3]{cabrera2022optimaltransportationprincipleinteracting}. Therefore, $p(\gamma, \xi)+K$ \;is lower semi-continuous with respect to $\xi$. From which it follows that it can be written as the limit of non-decreasing bounded continuous functions. Moreover, since $p_{n}(\gamma, \xi) + K \geq 0$ then by the Monotone Convergence Theorem we have
    \begin{align*}
        \hat{\sigma}_0(\tau) &= \iint_{\Omega_1\times \Omega_2}(p(\gamma, \xi) + K)\ d\tau(\xi)d\pi_1(\gamma) = \lim_{n\to \infty}\iint_{\Omega_1\times\Omega_2} (p_n(\gamma,\xi) + K)\ d\tau(\xi)d\pi_1(\gamma) \\
        &= \lim_{n\to \infty}\lim_{k \to \infty} \iint_{\Omega_1\times\Omega_2} (p_n(\gamma, \xi) + K)\ d\tau_k(\xi)d\pi_{1}(\gamma) \leq \liminf_{k\to \infty} \iint_{\Omega_1\times\Omega_2} (p(\gamma,\xi) + K)\ d\tau_k(\xi)d\pi_{1}(\gamma).
    \end{align*}
    Since $\lim_{k\to \infty} \iint_{\Omega_{1}\times\Omega_2} (p(\gamma, \xi) + K)\ d\tau_k(\xi) d\pi_{1}(\gamma)= \inf_{\pi_2 \in \Pi_2(\mu_2,\nu_2)} \hat{\sigma}_0(\pi_2)$, then 
    \[\hat{\sigma}_0(\tau) \leq \inf_{\pi_2 \in \Pi_2(\mu_2,\nu_2)} \hat{\sigma}_0(\pi_2).\]
     Subtracting out the constant $K$ gives
    \begin{gather*}
        \iint_{\Omega_1\times \Omega_2} p(\gamma,\xi)\ d\tau(\xi)d\pi_1(\gamma) \leq \inf_{\pi_2 \in \Pi_2(\mu_2,\nu_2)} \iint_{\Omega_1\times \Omega_2} p(\gamma,\xi)\ d\pi_2(\xi)d\pi_1(\gamma).
    \end{gather*}
    This shows that for each $\pi_1 \in \mathcal{P}(\Omega_1)$ there exists $\tau^{\pi_1} := \tau(\pi_1)$ such that
    \[
     \iint_{\Omega_1\times \Omega_2} p(\gamma,\xi)\ d\tau^{\pi_{1}}(\xi)d\pi_1(\gamma) = \inf_{\pi_2 \in \Pi_2(\mu_2,\nu_2)} \iint_{\Omega_1\times \Omega_2} p(\gamma,\xi)\ d\pi_2(\xi)d\pi_1(\gamma).
    \]
    Since $p(\gamma, \xi)=-c(\gamma,\xi)$ we thus have,
    \begin{gather*}
        \iint_{\Omega_1\times \Omega_2} c(\gamma,\xi)\ d\tau^{\pi_1}(\xi)d\pi_1(\gamma)=\sup_{\pi_2\in \Pi_2(\mu_2,\nu_2)} \iint_{\Omega_1\times \Omega_2} c(\gamma,\xi)\ d\pi_1d\pi_2 . 
    \end{gather*}
    
    Similarly, let $\check{\sigma}_{0}:\mathcal{P}(\Omega_{1})\to \mathbb{R}$ be defined by
    \[
    \check{\sigma}_{0}(\pi_{1}):=\iint_{\Omega_{1}\times\Omega_{2}}c(\gamma, \xi)d\tau^{\pi_{1}}(\xi)d\pi_{1}(\gamma).
    \]
    Let $\{\eta_j\}_{j=1}^\infty$ be a minimizing sequence of $\check{\sigma}_0$ such that $\check{\sigma}_{0}(\eta_{j})\to \inf_{\pi_{1}}\check{\sigma}_{0}(\pi_{1})$. Since $c$ is (uniformly) bounded, then there exists a constant $C > 0$ such that
    \begin{gather*}
        \iint_{\Omega_1\times \Omega_2}c(\gamma,\xi)d\tau^{\eta_j}(\xi)d\eta_j(\gamma) > -C > -\infty.
    \end{gather*}
    Thus, the infimum is finite. We may repeat the same argument as above. That is, we define the set $\Omega_{\ell}$ as in Condition \ref{condition 1}. Consider the following set
    \begin{align}\label{Y ell}
        Y_\ell = \{(\gamma,\xi) \in \Omega_1\times \Omega_2: |c(\gamma,\xi)| \leq \ell\}
    \end{align}
   As before, choose $\ell>0$ large enough so that $\textit{spt}(\mu_{1})$ and $\textit{spt}(\mu_{2})$ are entirely contained in $B_{\ell}(0)$. Then 
    \[
    \textit{spt}(\mu_{1})\otimes \textit{spt}(\mu_{2})\subset B_{\ell}(0)\times B_{\ell}(0). 
    \]
    Therefore, since $(e_{0}, e_{0})_{\sharp}(\pi_{1}\otimes \tau^{\pi_{1}})=\mu_{1}\otimes \mu_{2}$ iff $(e_{0}, e_{0})_{\sharp}\pi_{1}\otimes \pi_{2}=\mu_{1}\otimes \mu_{2}$, then $\textit{spt}(\pi_{1}\otimes\tau^{\pi_{1}})\subset (e_{0}, e_{0})^{-1}(\textit{spt}(\mu_{1}\otimes 
     \mu_{2}))$. As $\pi_{1}\otimes\tau^{\pi_{1}}$ is supported on $\Omega_{\ell}$, we have
    \begin{gather*}
        \pi_1\otimes \tau^{\pi_1}(\Omega_\ell) =\pi_1\otimes \tau^{\pi_1}(Y_\ell)\Longrightarrow \pi_1\otimes \tau^{\pi_1}(\Omega_\ell^{c}) =\pi_1\otimes \tau^{\pi_1}(Y_\ell^{c}) 
    \end{gather*}
    Applying Markov's inequality yields
    \begin{gather*}
        \pi_{1} \otimes \tau^{\pi_{1}}(Y_\ell^c) \leq \frac{\iint_{\Omega_1\times \Omega_2} c(\gamma,\xi)d\pi_1(\gamma)d\tau^{\pi_1}(\xi)}{\ell}\leq \frac{\iint c(\gamma, \xi)+C d\pi_{\gamma}d\tau^{\pi_{1}}(\xi)+C}{\ell}=:\frac{\widetilde{C}}{\ell}.
    \end{gather*}
    Applying this to our minimizing sequence yields
    \begin{gather*}
        \eta_j \otimes \tau^{\eta_j}(\Omega_\ell^c) \leq \frac{\tilde{C}}{\ell} := \varepsilon
    \end{gather*}
    which may be made sufficiently small for $\ell$ large enough. This implies the sequence of measures is tight and hence by Prokhorov's there exists a subsequence and $\rho$ such that $\eta_j\otimes \tau^{\eta_j} \rightharpoonup\rho$. By Proposition \ref{prop:limit} we have $\rho \in \Pi_{\text{path}}(\mu_1,\nu_1)$ and according to Lemma \ref{lemma:limitproduct}, $\rho$ is contained in $\Pi_{\text{path}}(\mu_1,\nu_1)\otimes \Pi_{\text{path}}(\mu_2,\nu_2)$. Defining $\pi_1 = (\text{proj}_1)_\#\rho$ and $\pi_2 = (\text{proj}_2)_\#\rho$, then $\eta_j \otimes \tau^{\eta_j} \rightharpoonup \pi_1 \otimes \pi_2$. Using the same argument, we can show that $\rho$ realizes the infimum, thus completing the proof.
\end{proof}

\begin{corollary}
    Under the same assumptions as Theorem \ref{thm:minimaxExistence}, there then exists a maximin transport plan such that
    \begin{gather*}
        \sup_{\pi_2\in \Pi_2(\mu_2,\nu_2)}\inf_{\pi_1\in \Pi_1(\mu_1,\nu_1)} \iint_{\Omega_1\times \Omega_2} c(\gamma,\xi)d\pi_1(\gamma)d\pi_2(\xi).
    \end{gather*}
\end{corollary}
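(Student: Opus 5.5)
The idea is to obtain the maximin plan from the minimax existence result of Theorem \ref{thm:minimaxExistence} by a sign flip that exchanges the roles of the two players. Set $\tilde c(\xi,\gamma):=-c(\gamma,\xi)$ on $\Omega_2\times\Omega_1$. Then
\[
\sup_{\pi_2}\inf_{\pi_1}\iint c\,d\pi_1 d\pi_2=-\inf_{\pi_2}\sup_{\pi_1}\iint \tilde c(\xi,\gamma)\,d\pi_2(\xi)\,d\pi_1(\gamma).
\]
So it suffices to show that the minimax problem for $\tilde c$ has a solution, with player~2 now the minimizer over $\Pi_{\text{path}}(\mu_2,\nu_2)$ and player~1 the maximizer over $\Pi_{\text{path}}(\mu_1,\nu_1)$. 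I would then apply Theorem \ref{thm:minimaxExistence} to $\tilde c$ with the pairs $(\mu_1,\nu_1)$ and $(\mu_2,\nu_2)$ interchanged. A minimax pair $(\pi_2^*,\pi_1^*)$ for $\tilde c$ is, after negating, exactly a maximin transport plan for $c$.

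\textbf{Check that $\tilde c$ satisfies Condition \ref{condition 1}, with the order of the factors swapped.}
\begin{itemize}
\item Boundedness is preserved under negation.
\item Semicontinuity swaps as required. The map $\xi\mapsto\tilde c(\xi,\gamma)=-c(\gamma,\xi)$ is lower semicontinuous because $c(\gamma,\cdot)$ is upper semicontinuous. Likewise $\gamma\mapsto\tilde c(\xi,\gamma)$ is upper semicontinuous because $c(\cdot,\xi)$ is lower semicontinuous.
\item Coercivity carries over. The set $\widetilde\Omega_M=\{(\xi,\gamma):\sqrt{|\xi(0)|^2+|\gamma(0)|^2}\le M,\ |\tilde c|\le M\}$ is the image of $\Omega_M$ under the coordinate swap $(\gamma,\xi)\mapsto(\xi,\gamma)$. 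This swap is an isometry of the product metric, so $\widetilde\Omega_M$ is compact.
\end{itemize}
The compact support hypotheses on the marginals are unchanged.

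\textbf{Conclude.} Theorem \ref{thm:minimaxExistence} gives $(\pi_2^*,\pi_1^*)$ attaining $\inf_{\pi_2}\sup_{\pi_1}\iint\tilde c$. Negating yields
\[
\iint c\,d\pi_1^*d\pi_2^*=\sup_{\pi_2}\inf_{\pi_1}\iint c\,d\pi_1d\pi_2,
\]
and by Fubini the order of integration is irrelevant because $c$ is bounded.

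The main obstacle is that the proof of Theorem \ref{thm:minimaxExistence} is not purely formal in its two inputs. It uses a lower bound $p\ge -K$ on the negated cost and a Markov-type tightness bound. Under the swap, the inner problem becomes minimizing $\tilde c$ over $\pi_1$ for fixed $\pi_2$, so the relevant lower bound is now on $c$ itself rather than on $-c$. Since Condition \ref{condition 1} assumes $c$ is bounded, both signs are controlled. The tightness argument and the use of Lemma \ref{lemma:limitproduct} for the outer optimization then go through verbatim with the roles of the indices exchanged. The remaining care is to confirm that the monotone-approximation step uses the correct semicontinuity for each variable after the swap, which is exactly what the semicontinuity check above provides.
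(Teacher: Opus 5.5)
Your proposal is correct and uses the same reduction as the paper: negate the cost and invoke Theorem \ref{thm:minimaxExistence}. It is in fact more careful than the paper's one-line proof, because your swap $\tilde c(\xi,\gamma)=-c(\gamma,\xi)$ is genuinely needed: the paper's right-hand side $-\inf_{\pi_1}\sup_{\pi_2}\iint(-c)$ equals $\sup_{\pi_1}\inf_{\pi_2}\iint c$ rather than $\sup_{\pi_2}\inf_{\pi_1}\iint c$, and without the swap Condition \ref{condition 1} for $-c$ would require $c(\cdot,\xi)$ to be upper semicontinuous, which is not assumed.
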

\begin{proof}
    Since
    \begin{align*}
        \sup_{\pi_2\in \Pi_2(\mu_2,\nu_2)}\inf_{\pi_1\in \Pi_1(\mu_1,\nu_1)} &\iint_{\Omega_1\times \Omega_2} c(\gamma,\xi)d\pi_1(\gamma)d\pi_2(\xi) \\
        &= -\inf_{\pi_1 \in \Pi_1(\mu_1,\nu_1)}\sup_{\pi_2\in \Pi_2(\mu_2,\nu_2)} \iint -c(\gamma,\xi)d\pi_1(\gamma)d\pi_2(\xi),
    \end{align*}
    then applying Theorem \ref{thm:minimaxExistence} to the cost function $\tilde{c}(\gamma,\xi) = -c(\gamma, \xi)$ yields the desired result.
\end{proof}

\subsection{Minimax equals Maximin}\label{minimax section} 
In this section we begin to set the stage. In fact, we will show weak duality (Proposition \ref{prop: weak duality}) always holds. Moreover, we establish that equality actually holds (strong duality) under certain conditions (Theorem \ref{thm:minimax=maximin}). It is an interesting question to ask when this problem is exactly equal to the maximin. This implies the existence of a saddle-point equilibrium. This is an extremely subtle question. Classic theorems such as the Kakutani Fixed Point Theorem \cite{kakutani1941generalization} or Glicksberg's Existence Theorem \cite{glicksberg1952further} which deduce the existence of saddle points require the ambient spaces to be compact, closed and/or convex subsets of Euclidean spaces or Banach spaces. Therefore, the minimax theorems cannot be applied directly to this setting. We will now turn to a classical result from minimax theory that will be essential to our setting; namely to show $\inf_{\pi_{1}}\sup_{\pi_{2}}\iint cd\pi_{2}d\pi_{2}=\sup_{\pi_{2}}\inf_{\pi_{1}}\iint cd\pi_{1}d\pi_{2}$.

The following construction is essential in the sequel. For each $\pi_{1}\in \mathcal{P}(\Omega_{1})$, let 
\begin{align}\label{big phi}
    \Phi(\pi_{1})=\argmax\left(\pi_{2}\mapsto \iint cd\pi_{1}d\pi_{2}\right)
\end{align}
be the set of $\pi_{2}\in \Pi_{\texttt{path}}(\mu_{2}, \nu_{2})$ which maximize $\pi_{2}\mapsto \iint cd\pi_{1}d\pi_{2}$. Similarly, for each fixed $\pi_{2}\in\mathcal{P}(\Omega_{2})$, let 
\begin{align}\label{big psi}
    \Psi(\pi_{2})=\argmin\left(\pi_{1}\mapsto \iint cd\pi_{1}d\pi_{2}\right)
\end{align}
be the set of $\pi_{1}\in \Pi_{\text{path}}(\mu_{1}, \nu_{1})$ which minimize $\pi_{1}\mapsto \iint cd\pi_{1}d\pi_{2}$, and where $c$ is as in Condition \ref{condition 2}.

Recall that for $Y\subset \Omega_{i}$, for $i=1,2$, and the evaluation map, $e_{t}:\Omega_{i}\to X$, then we have the set of transport path plans, defined equivalently as in (\ref{path plans}). We first show that the sets $\Omega_1,\Omega_2$ are convex and closed in the path space topology specified by the metric \eqref{eq:path-metric}.

\begin{lemma}\label{lemma: compact/convex sets}
    The sets $\Omega_1$ and $\Omega_2$ are compact and convex in the path space topology.
\end{lemma}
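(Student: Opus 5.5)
Convexity comes first because it is purely algebraic. Let $\gamma_1,\gamma_2\in\Omega_1$ and $\lambda\in[0,1]$, and set $\gamma_\lambda:=(1-\lambda)\gamma_1+\lambda\gamma_2$.
\begin{itemize}
\item Since $W^{1,2}([0,1];\mathbb{R}^d)$ is a vector space, $\gamma_\lambda\in W^{1,2}$ with $\dot\gamma_\lambda=(1-\lambda)\dot\gamma_1+\lambda\dot\gamma_2$ a.e.
\item Since $X=\overline{B}_R(0)$ is convex, $\gamma_\lambda(t)\in X$ for every $t$.
\item By pointwise convexity of $v\mapsto|v|^2$ (equivalently, Minkowski's inequality in $L^2$),
\begin{align*}
\int_0^1|\dot\gamma_\lambda|^2\,dt\leq(1-\lambda)\int_0^1|\dot\gamma_1|^2\,dt+\lambda\int_0^1|\dot\gamma_2|^2\,dt\leq K .
\end{align*}
\end{itemize}
Hence $\gamma_\lambda\in\Omega_1$. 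The same argument, with $Y$ and $L$ in place of $X$ and $K$, handles $\Omega_2$.

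For compactness I would argue sequentially, since the topology is metric, following the scheme of the proof of Proposition \ref{prop:cond2impliescond1}. Take $\{\gamma_n\}\subset\Omega_1$.
\begin{itemize}
\item Uniform boundedness holds because $|\gamma_n(t)|\leq R$.
\item Cauchy--Schwarz gives the uniform H\"older bound $|\gamma_n(t_1)-\gamma_n(t_2)|\leq K^{1/2}|t_1-t_2|^{1/2}$, so the family is equicontinuous.
\item Arzel\`a--Ascoli then yields a subsequence with $\gamma_n\to\gamma$ uniformly, and $\gamma(t)\in X$ since $X$ is closed.
\item Banach--Alaoglu gives a further subsequence with $\dot\gamma_n\rightharpoonup f$ in $L^2$. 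The identification $f=\dot\gamma$ a.e. is exactly the argument already given in Proposition \ref{prop:cond2impliescond1}: test against $\chi_{[0,s]}$ and pass to the limit in $\gamma_n(s)-\gamma_n(0)=\int_0^s\dot\gamma_n$.
\item Weak lower semicontinuity of the $L^2$ norm gives $\int_0^1|\dot\gamma|^2\,dt\leq\liminf_n\int_0^1|\dot\gamma_n|^2\,dt\leq K$, so $\gamma\in\Omega_1$.
\end{itemize}

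The main obstacle is the derivative part of the metric \eqref{eq:path-metric}. The argument above only gives $\|\gamma_n-\gamma\|_{L^\infty}\to0$ and weak $L^2$ convergence of $\dot\gamma_n$. It does not give $\|\dot\gamma_n-\dot\gamma\|_{L^2}\to0$, and in general this fails. For example, $\gamma_n(t)=\frac{\sin(2\pi nt)}{2\pi n}e_1$ lies in $\Omega_1$ for suitable $K$ and converges uniformly to $0$. However, $\|\dot\gamma_n\|_{L^2}^2=\tfrac12$ for every $n$, and the same holds for every subsequence. So $\Omega_1$ is not compact in the strong metric \eqref{eq:path-metric}.

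My plan is therefore to prove the lemma for the topology in which compactness is actually used later: uniform convergence of paths together with weak $L^2$ convergence of derivatives. On the bounded set $\Omega_1$ this topology is metrizable, for instance by $\|\gamma-\xi\|_{L^\infty}+\sum_k2^{-k}\bigl|\langle\dot\gamma-\dot\xi,e_k\rangle_{L^2}\bigr|$ for an orthonormal basis $(e_k)$ of $L^2$. I would first try to check whether the subsequent minimax arguments (Lemma \ref{lemma: uplo continuity} and Theorem \ref{thm:minimax=maximin}) only need this weak-type compactness together with the lower and upper semicontinuity of Proposition \ref{prop: lo/up continuity}. The kinetic terms are lower semicontinuous under weak convergence of derivatives, which is what those results require. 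In that topology the sequential argument above is complete, and convexity is unaffected, so the lemma holds as stated for that topology.
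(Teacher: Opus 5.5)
Your proposal is correct, and where it departs from the paper, it is the paper's argument that fails.

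\textbf{Convexity.} The paper argues as you do, via the triangle inequality for $\|\dot\gamma_s\|_{L^2}$. However, it speaks of Lipschitz paths and of fixed endpoints $x,y$, neither of which is part of the definition of $\Omega_1$. It also never checks that $\gamma_s$ takes values in $X$; your appeal to the convexity of $\overline{B}_R(0)$ supplies that.

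\textbf{Compactness.} The paper only says that the calculation of Proposition \ref{prop:cond2impliescond1} applies. That calculation obtains $\dot\gamma_{n_k}\to\dot\gamma$ strongly in $L^2$ by asserting that the $\dot\gamma_n$ are bounded in $W^{1,2}([0,1])$ and invoking the compact embedding $W^{1,2}\hookrightarrow L^2$. In fact only the $\gamma_n$ are bounded in $W^{1,2}$. The constraint $\int_0^1|\dot\gamma_n|^2\,dt\leq K$ gives no control of $\ddot\gamma_n$, so the embedding yields strong convergence of $\gamma_n$, not of $\dot\gamma_n$. Your oscillating example, suitably rescaled, is precisely a sequence on which that step fails. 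Any limit in the metric \eqref{eq:path-metric} would have to be the uniform limit $0$, while $\|\dot\gamma_n\|_{L^2}$ stays bounded away from $0$. So the lemma is false for the topology it names. Your weakened version is the statement that actually holds, and your proof of it is complete: Arzel\`a--Ascoli, Banach--Alaoglu, identification of the weak limit, and weak lower semicontinuity of the norm.

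\textbf{Simplifying your topology.} You can drop the weak-derivative term from your metric. For a sequence in $\Omega_1$, uniform convergence $\gamma_n\to\gamma$ already forces $\dot\gamma_n\rightharpoonup\dot\gamma$ in $L^2$. Every subsequence has a weakly convergent further subsequence, and your identification argument shows its limit is $\dot\gamma$. Hence on $\Omega_1$ your topology is just the $L^\infty$ topology, and the corrected lemma says that $\Omega_1$ is a compact convex subset of $C([0,1];\mathbb{R}^d)$.

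\textbf{Downstream arguments.} Your expectation that the later minimax arguments need only separate semicontinuity is too optimistic as they are written. Lemma \ref{lemma: uplo continuity} derives joint continuity of $\sigma_0$ from $c\in C_b(\Omega_1\times\Omega_2)$, and the closed-graph step in the proof of Theorem \ref{thm:minimax=maximin} relies on that joint continuity. In the weaker topology, $c$ is only lower semicontinuous in $\gamma$ and upper semicontinuous in $\xi$, so that step would have to be replaced. One option is Sion's minimax theorem, which needs only this separate semicontinuity together with compactness and convexity.
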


\begin{proof}
\noindent That $\Omega_{1}$ is convex follows from the following argument. Suppose $\gamma_{1},\gamma_{2}\in \Omega_{1}$ and let $s\in [0, 1]$. Consider 
\[
\gamma_{s}(t)=s\gamma_{1}(t)+(1-s)\gamma_{2}(t).
\]
Since $\gamma_{1}$ and $\gamma_{2}$ are Lipschitz, then so is $\gamma_{s}$ for all $s$. Next, we have $\gamma_{s}(0)=x$ while $\gamma_{s}(1)=y$. From the derivative bound and the triangle inequality, we have
\[
\|\dot{\gamma}_{s}\|\leq s\|\dot{\gamma}_{1}\|+(1-s)\|\dot{\gamma}_{2}\|\leq K^{1/2}.
\]
Therefore, $\Omega_{1}$ is convex. Arguments for $\Omega_{2}$ are identical.

We will show compactness of $\Omega_{1}$ through the \textit{Arzelà--Ascoli Theorem}. 
Then the claim is that $\Omega_{1}$ is compact in the path space topology. In fact, by the same calculation in Proposition \ref{prop:cond2impliescond1} we have that $\Omega_1$ is sequentially compact in the path space topology, \eqref{eq:path-metric}.
\end{proof}

Now, let us look at probability measures on $\Omega_{1}$ and $\Omega_{2}$.
The functional $\sigma_{0}:\mathcal{P}(\Omega_{1})\times \mathcal{P}(\Omega_{2})\to \mathbb{R}$ defined by (\ref{sigma functional}), namely, 
\begin{align*}
    \sigma_{0}(\pi_{1}, \pi_{2}):=\iint_{\Omega_{1}\times \Omega_{2}} c(\gamma, \xi)d\pi_{1}(\gamma)d\pi_{2}(\xi),
\end{align*}
is \textit{affine} in $\pi_{1}$ and $\pi_{2}$, since the integral is clearly linear in $\pi_{1}, \pi_{2}$. That is, each of $\pi_{1}\mapsto \sigma_{0}(\pi_{1},\pi_{2})$  and $\pi_{2}\mapsto \sigma_{0}(\pi_{1}\pi_{2})$ is affine. 

\begin{lemma}\label{lemma: uplo continuity}
    Suppose $c$ satisfies Condition \ref{condition 2} and let $(\pi_{1, k})_{k\in \mathbb{N}}$ be a sequence of probability measures on $\Omega_{1}$ converging weakly to some $\pi_{1}\in\mathcal{P}(\Omega_{1})$, in such a way that $c\in L^{1}(\pi_{1, k}), c\in L^{1}(\pi_{1})$. Similarly let $(\pi_{2, k})_{k\in \mathbb{N}}$ be a sequence of probability measures on $\Omega_{2}$ converging weakly to some $\pi_{2}\in\mathcal{P}(\Omega_{2})$, in such a way that $c\in L^{1}(\pi_{2, k}), c\in L^{1}(\pi_{2})$. Then the maps, for fixed $\pi_1$, $\pi_2$ respectively,
    \begin{gather*}
        \pi_1 \mapsto \iint c(\gamma,\xi) d\pi_1(\gamma) d\pi_{2}(\xi),\quad \pi_{2}\mapsto \iint c(\gamma, \xi)d\pi_{1}(\gamma)d\pi_{2}(\xi)
    \end{gather*}
    are weakly lower and weakly upper semicontinuous, respectively. In particular,
    \[
    (\pi_{1}, \pi_{2})\mapsto\iint c(\gamma,\xi) d\pi_1 (\gamma)d\pi_{2}(\xi)
    \]
    is jointly continuous.
\end{lemma}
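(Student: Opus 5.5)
The plan is to split the cost of Condition \ref{condition 2} into its three pieces and treat each one separately. Write
\[
c(\gamma,\xi)=A(\gamma)-B(\xi)+V(\gamma,\xi),
\]
with $A(\gamma)=\tfrac12\int_0^1|\dot\gamma|^2\,dt$, $B(\xi)=\tfrac12\int_0^1|\dot\xi|^2\,dt$ and $V(\gamma,\xi)=\alpha\int_0^1|\gamma-\xi|^2\,dt$. Integrating against $\pi_1\otimes\pi_2$ gives
\[
\sigma_0(\pi_1,\pi_2)=\int A\,d\pi_1-\int B\,d\pi_2+\iint V\,d\pi_1\,d\pi_2 .
\]
This splitting is legitimate because on $\Omega_1,\Omega_2$ we have $A\le K/2$ and $B\le L/2$. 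The interaction $V$ is bounded, since $X,Y$ are balls of radius $R$ and hence $V\le 4\alpha R^2$. So every term is finite and the $L^1$ hypotheses hold automatically.

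\textbf{Step 1 (interaction term).} I will show that $V$ is continuous and bounded on $\Omega_1\times\Omega_2$. For this I use the estimate
\[
\bigl||a|^2-|b|^2\bigr|\le(|a|+|b|)\,|a-b|,
\]
which gives $|V(\gamma_n,\xi_n)-V(\gamma,\xi)|\le 8\alpha R\,(\|\gamma_n-\gamma\|_\infty+\|\xi_n-\xi\|_\infty)$. The right-hand side is controlled by the metric \eqref{eq:path-metric}. Since $V\in C_b(\Omega_1\times\Omega_2)$, Lemma \ref{lemma:limitproduct} (applied with $\Omega_1,\Omega_2$ Polish, which they are as compact metric spaces by Lemma \ref{lemma: compact/convex sets}) gives $\pi_{1,k}\otimes\pi_{2,k}\rightharpoonup\pi_1\otimes\pi_2$. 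The Stone--Weierstrass/tightness argument in that proof shows that product weak convergence follows from marginal weak convergence. Hence $\iint V\,d\pi_{1,k}\,d\pi_{2,k}\to\iint V\,d\pi_1\,d\pi_2$, and in particular the convergence also holds with either marginal held fixed.

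\textbf{Step 2 (kinetic terms).} With respect to the metric \eqref{eq:path-metric}, the map $\gamma\mapsto\|\dot\gamma\|_{L^2}^2$ is continuous, because the metric contains $\|\dot\gamma-\dot\eta\|_{L^2}$. On $\Omega_1$ this map is bounded by $K$, so $A\in C_b(\Omega_1)$ and weak convergence gives $\int A\,d\pi_{1,k}\to\int A\,d\pi_1$. The same holds for $B$ on $\Omega_2$. If one only wants semicontinuity (the weaker claim in the statement), it is enough to invoke Proposition \ref{prop: lo/up continuity}. $A$ is lower semicontinuous and bounded below by $0$, so it is a monotone limit of bounded Lipschitz functions, and the Portmanteau theorem yields $\liminf_k\int A\,d\pi_{1,k}\ge\int A\,d\pi_1$. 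Symmetrically, $-B$ is upper semicontinuous and bounded above, which gives the upper semicontinuity in $\pi_2$.

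\textbf{Step 3 (combine).} Adding the three limits gives lower semicontinuity in $\pi_1$ and upper semicontinuity in $\pi_2$. Since in our topology both kinetic terms are in fact continuous, adding the limits gives full joint continuity $\sigma_0(\pi_{1,k},\pi_{2,k})\to\sigma_0(\pi_1,\pi_2)$.

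The main obstacle is justifying the joint statement: weak convergence of $\pi_{1,k}$ and $\pi_{2,k}$ separately must imply convergence of the cross integral $\iint V$. This is exactly what Lemma \ref{lemma:limitproduct}'s proof supplies. A second subtle point is that kinetic energy is merely lower semicontinuous under uniform convergence. The argument therefore relies on the $W^{1,2}$-type metric on $\Omega_i$ together with the uniform energy bounds $K,L$, which make $A$ and $B$ bounded continuous functions. If one worked instead with the uniform topology, only the semicontinuous statements would survive, and joint continuity would fail.
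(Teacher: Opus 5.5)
Your argument is correct and is essentially the paper's. The joint statement follows from $c\in C_b(\Omega_1\times\Omega_2)$ together with the product-convergence half of the proof of Lemma \ref{lemma:limitproduct}, and your splitting $c=A-B+V$ actually verifies $c\in C_b$, which the paper only asserts: the kinetic terms are continuous because \eqref{eq:path-metric} contains $\|\dot\gamma-\dot\eta\|_{L^2}$, and the interaction is Lipschitz because paths stay in balls of radius $R$. The semicontinuity claims then come for free, rather than through the paper's monotone-approximation argument.

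The one thing to fix is your appeal to Lemma \ref{lemma: compact/convex sets}. $\Omega_1$ is not compact for \eqref{eq:path-metric}: for small $\varepsilon>0$ and a unit vector $v$, the paths $\gamma_n(t)=\frac{\varepsilon}{n\pi}\sin(n\pi t)\,v$ lie in $\Omega_1$ but satisfy $\|\dot\gamma_n-\dot\gamma_m\|_{L^2}=\varepsilon$ for $n\neq m$. It is, however, a closed subset of the separable Banach space $W^{1,2}([0,1];\mathbb{R}^d)$, hence Polish, and Polishness is all your argument uses.
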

\begin{proof}
    We may assume without loss of generality that $c \geq 0$. Condition \ref{condition 1} says that $c(\gamma, \cdot)$ is upper semicontinuous and $c(\cdot, \xi)$ is lower semicontinuous, and bounded. Therefore, there exists a nonincreasing sequence of functions $\{c_{n}\}$ which are continuous and bounded such that $c$ can be written as $c(\gamma,\xi)=\sup_{n}c_{n}(\gamma,\xi)$. Fix $\pi_{2}$. According to the Monotone Convergence Theorem and \cite[Lemma 4.2]{cabrera2022optimaltransportationprincipleinteracting}, we have
\[
\iint cd\pi_{1}d\pi_{2}=\lim_{n\to\infty}\iint c_{n}d\pi_{1}d\pi_{2}=\lim_{n\to \infty}\lim_{k\to\infty}\iint c_{n}d\pi_{1,k}d\pi_{2}\leq \liminf_{k\to\infty}\iint cd\pi_{1,k}d\pi_{2}.
\]
The claim now follows.

For the weakly upper semicontinuous case, fix $\pi_{1}\in \mathcal{P}(\Omega_{1})$, then identical arguments lead to the desired conclusion.
Now for the latter statement, we have
\[
\liminf_{k\to\infty}\sigma_{0}(\pi_{1,k}, \pi_{2})\geq \sigma_{0}(\pi_{1}, \pi_{2})\geq \limsup_{k\to\infty}\sigma_{0}(\pi_{1}, \pi_{2,k}).
\]

The goal is to find a version of these inequalities to arrive at the joint continuity of $\sigma_{0}$,
\begin{align}\label{joint cont.}
\lim_{k\to\infty}\sigma_{0}(\pi_{1, k}, \pi_{2,k})=\sigma_{0}(\pi_{1}, \pi_{2}).
\end{align}

So since the preceding semicontinuity statements are only separate in the two
variables, they imply that, for fixed $\pi_1\in\mathcal P(\Omega_1)$,
the map $\pi_2\mapsto \sigma_0(\pi_1,\pi_2)$ is upper semicontinuous,
and for fixed $\pi_2\in\mathcal P(\Omega_2)$, the map
$\pi_1\mapsto \sigma_0(\pi_1,\pi_2)$ is lower semicontinuous. However,
these separate semicontinuity properties do not by themselves imply
joint continuity of $\sigma_0$. To prove
\eqref{joint cont.},
we will prove the required estimates for the diagonal sequence
$(\pi_{1,k},\pi_{2,k})$, namely
\begin{align}\label{joint estimates}
\begin{split}
\limsup_{k\to\infty}\sigma_0(\pi_{1,k},\pi_{2,k})
\leq \sigma_0(\pi_1,\pi_2)\\
\liminf_{k\to\infty}\sigma_0(\pi_{1,k},\pi_{2,k})
\geq \sigma_0(\pi_1,\pi_2).
\end{split}
\end{align}
But by Lemma \ref{lemma:limitproduct} we thus have $\pi_{1,k}\otimes\pi_{2,k}\rightharpoonup\pi_{1}\otimes\pi_{2}$. As $c\in C_{b}(\Omega_{1}\times\Omega_{2})$, then 
\[
\iint_{\Omega_{1}\times\Omega_{2}} c(\gamma, \xi)d(\pi_{1,k}\otimes\pi_{2,k})\to\iint_{\Omega_{1}\times\Omega_{2}}c(\gamma, \xi)d(\pi_{1}\otimes\pi_{2}),
\]
but this is equivalent to $\sigma_{0}(\pi_{1,k},\pi_{2,k})\to\sigma_{0}(\pi_{1}, \pi_{2})$. Therefore, we get the joint estimates \eqref{joint estimates}. Now, we have
\[
\sigma_{0}(\pi_{1},\pi_{2})\leq \liminf_{k\to \infty}\sigma_{0}(\pi_{1,k},\pi_{2,k})\leq \limsup_{k\to \infty}\sigma_{0}(\pi_{1,k},\pi_{2,k})\leq \sigma_{0}(\pi_{1},\pi_{2})
\]
and the result follows at once.
\end{proof}

Now to establish the equality between the  minimaximum and maximinimum. We already have the following.
\begin{proposition} \label{prop: weak duality}
    \begin{align}\label{geq: minimax}
     \inf_{\pi_{1}\in \Pi_{\text{path}}(\mu_{1},\nu_{1})}\sup_{\pi_{2}\in \Pi_{\text{path}}(\mu_{2}, \nu_{2})}\sigma_{0}(\pi_{1}, \pi_{2})\geq \sup_{\pi_{2}\in \Pi_{\text{path}}(\mu_{2}, \nu_{2})}\inf_{\pi_{1}\in \Pi_{\text{path}}(\mu_{1}, \nu_{1})}\sigma_{0}(\pi_{1}, \pi_{2}).
\end{align}
\end{proposition}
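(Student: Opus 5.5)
This is the standard weak duality inequality. It holds for any function of two variables, so I would not use any structure of $\sigma_0$ beyond its being well defined, with values in $\mathbb{R}\cup\{\pm\infty\}$, on $\Pi_{\text{path}}(\mu_1,\nu_1)\times\Pi_{\text{path}}(\mu_2,\nu_2)$. Boundedness of $c$ under Condition \ref{condition 1} makes it real-valued. No compactness, convexity or semicontinuity (Lemma \ref{lemma: compact/convex sets}, Lemma \ref{lemma: uplo continuity}) is needed; those are reserved for the reverse inequality in Theorem \ref{thm:minimax=maximin}.

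The argument is a pointwise comparison followed by taking extrema. Fix arbitrary admissible plans $\widetilde{\pi}_1\in\Pi_{\text{path}}(\mu_1,\nu_1)$ and $\widetilde{\pi}_2\in\Pi_{\text{path}}(\mu_2,\nu_2)$. Since the infimum over $\pi_1$ is at most the value at $\widetilde\pi_1$, and the supremum over $\pi_2$ is at least the value at $\widetilde\pi_2$,
\[
\inf_{\pi_1\in\Pi_{\text{path}}(\mu_1,\nu_1)}\sigma_0(\pi_1,\widetilde{\pi}_2)\;\le\;\sigma_0(\widetilde{\pi}_1,\widetilde{\pi}_2)\;\le\;\sup_{\pi_2\in\Pi_{\text{path}}(\mu_2,\nu_2)}\sigma_0(\widetilde{\pi}_1,\pi_2).
\]
The left-hand side does not depend on $\widetilde\pi_1$, so I take the infimum over $\widetilde{\pi}_1$ of the right-hand side and obtain
\[
\inf_{\pi_1}\sigma_0(\pi_1,\widetilde\pi_2)\le \inf_{\widetilde\pi_1}\sup_{\pi_2}\sigma_0(\widetilde\pi_1,\pi_2).
\]
This upper bound is independent of $\widetilde{\pi}_2$. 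Taking the supremum over $\widetilde{\pi}_2$ on the left then gives \eqref{geq: minimax}.

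The only point needing care is that the admissible sets are nonempty, so that neither side is a degenerate $\pm\infty$ from an empty index set. For this I would note that $\Pi_{\text{path}}(\mu_i,\nu_i)$ contains, for instance, the plan obtained by pushing forward $\mu_i\otimes\nu_i$ under the map sending $(x,y)$ to the straight segment $t\mapsto(1-t)x+ty$. This requires the segment to lie in $\Omega_i$, i.e. $|x-y|^2$ to be within the kinetic bounds $K$, $L$; I would assume this of the data, as it is implicit in the existence statement of Theorem \ref{thm:minimaxExistence}. With nonemptiness in hand, the chain of inequalities holds verbatim, also in the extended reals, and there is no genuine obstacle.
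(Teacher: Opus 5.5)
Your argument is correct and is the same pointwise comparison followed by taking extrema that the paper uses. The paper fixes $\pi_1$ and $\overline{\pi}_2$, bounds $\sup_{\pi_2}\sigma_0(\pi_1,\pi_2)\ge\sigma_0(\pi_1,\overline{\pi}_2)$, takes the infimum in $\pi_1$, and then the supremum in $\overline{\pi}_2$. You can drop the nonemptiness discussion and the extra assumption on $K,L$ it requires: with $\inf\emptyset=+\infty$ and $\sup\emptyset=-\infty$ the inequality holds in every case, and your greatest-lower-bound/least-upper-bound argument already covers this.
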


\begin{proof}
For any fixed $\pi_{1}\in \Pi_{\text{path}}(\mu_{1}, \nu_{1})$ and $\overline{\pi}_{2}\in \Pi_{\text{path}}(\mu_{2}, \nu_{2})$, we have 
\[
\sup_{\pi_{2}\in\Pi_{\text{path}}(\mu_{2}, \nu_{2})}\sigma_{0}(\pi_{1},\pi_{2})\geq\sigma_{0}(\pi_{1},\overline{\pi}_{2}).
\]
Thus, minimizing both sides of the above inequality with respect to $\pi_{1}\in \Pi_{\text{path}}(\mu_{1}, \nu_{1})$ yields,
\[
\inf_{\pi_{1}\in\Pi_{\text{path}}(\mu_{1}, \nu_{1})}\sup_{\pi_{2}\in\Pi_{\text{path}}(\mu_{2}, \nu_{2})}\sigma_{0}(\pi_{1},\pi_{2})\geq\inf_{\pi_{1}\in\Pi_{\text{path}}(\mu_{1}, \nu_{1})}\sigma_{0}(\pi_{1},\overline{\pi}_{2}).
\]
Now since this holds for any fixed $\overline{\pi}_{2}\in \Pi_{\text{path}}(\Omega_2)$, we can maximize the right hand side of the latter inequality over all $\overline{\pi}_{2}\in \Pi_{\text{path}}(\mu_{2}, \nu_{2})$ to get
\begin{align*}
\inf_{\pi_{1}\in\Pi_{\text{path}}(\mu_{1}, \nu_{1})}\sup_{\pi_{2}\in\Pi_{\text{path}}(\mu_{2}, \nu_{2})}\sigma_{0}(\pi_{1},\pi_{2})&\geq\sup_{\overline{\pi}_{2}\in\Pi_{\text{path}}(\mu_{2}, \nu_{2})}\inf_{\pi_{1}\in\Pi_{\text{path}}(\mu_{1}, \nu_{1})}\sigma_{0}(\pi_{1},\overline{\pi}_{2})\\
&=\sup_{\pi_{2}\in\Pi_{\text{path}}(\mu_2,\nu_2)}\inf_{\pi_{1}\in\Pi_{\text{path}}(\mu_1,\nu_1)}\sigma_{0}(\pi_{1},\pi_{2}).
\end{align*} 
\end{proof}

We will show the equality between minimax and maximin of the bilinear functional (\ref{sigma functional}). Namely, Theorem \ref{thm:minimax=maximin}.
\begin{proof}[Proof of Theorem \ref{thm:minimax=maximin}]
\noindent 
In order to prove the reverse inequality of (\ref{geq: minimax}) we will need the following.
The non-emptiness of $\Phi(\pi_{1})$ defined in \eqref{big phi} follows from upper semicontinuity of the affine mapping $\pi_{2}\mapsto \sigma_{0}(\pi_{1},\pi_{2})$ (Lemma \ref{lemma: uplo continuity}) and relative compactness of $\Pi_{\text{path}}(\mu_{2}, \nu_{2})$ in $\Omega_{2}$.  That $\Pi_{\text{path}}(\mu_{2}, \nu_{2})$ is relatively compact follows from identical arguments from Theorem \ref{thm:minimaxExistence}.

The closedness (and hence compactness) of $\Phi(\pi_{1})$ follows from continuity as well. Convexity follows from the affine property. Indeed, if $\pi_{2}^{0}$ and $\pi_{2}^{1}$ belong to $\Phi(\pi_{1})$, then $\sigma_{0}(\pi_{1},\pi_{2}^{0})=\sigma_{0}(\pi_{1},\pi_{2}^{1})=\mathcal{M}:=\sup_{\pi_{2}\in\Pi_{\text{path}}(\mu_{2}, \nu_{2})}\sigma_{0}(\pi_{1},\pi_{2}).$ Then for $t\in [0, 1]$, we have
\begin{align*}
    \sigma_{0}(\pi_{1}, t\pi_{2}^{1}+(1-t)\pi_{2}^{0})&=t\sigma_{0}(\pi_{1},\pi_{2}^{1})+(1-t)\sigma_{0}(\pi_{1},\pi_{2}^{0})=t\mathcal{M}+(1-t)\mathcal{M}=\mathcal{M},
\end{align*}
implying $t\pi_{2}^{1}+(1-t)\pi_{2}^{0}\in \Phi(\pi_{1})$. 

Lastly, we show that the graph of $\Phi(\pi_{1})$ is closed. Suppose that $\pi_{1,k}\rightharpoonup \pi_{1}$ under the weak topology, and $\pi_{2,k}\in \Phi(\pi_{1,k})$ such that $ \pi_{2,k}\rightharpoonup \pi_{2}$. We have for any $\overline{\pi}_{2}$,
\[
\sigma_{0}(\pi_{1,k},\overline{\pi}_{2})\leq \sigma_{0}(\pi_{1,k},\pi_{2,k}).
\]
Taking limits and using upper/lower semicontinuity; thus continuity of $\sigma_{0}$, by Lemma \ref{lemma: uplo continuity}, yields
\[
\sigma_{0}(\pi_{1},\overline{\pi}_{2})\leq \sigma_{0}(\pi_{1}, \pi_{2}),
\]
which, since $\overline{\pi}_{2}$ was arbitrary, means that $\pi_{2}\in \Phi(\pi_{1})$. Identical arguments applied to $\Psi(\pi_{2})$ show the graph of $\Psi(\pi_{2})$ is closed.

Armed with this knowledge and the \textit{Fan-Glicksberg Fixed Point Theorem} \cite{fan1952fixed}, \cite{glicksberg1952further}, we prove the reverse inequality of (\ref{geq: minimax}). Since our path spaces, $\Pi_{\text{path}}(\mu_{1}, \nu_{1})$ and $\Pi_{\text{path}}(\mu_{2}, \nu_{2})$ are  infinite dimensional, by an application of \textit{Fan-Glicksberg Fixed Point Theorem}, there exists $\overline{\pi}_{1}\in \Pi_{\text{path}}(\mu_{1}, \nu_{1}), \overline{\pi}_{2}\in \Pi_{\text{path}}(\mu_{2}, \nu_{2})$ such that $\overline{\pi}_{2}\in \Phi(\overline{\pi}_{1})$ and $\overline{\pi}_{1}\in \Psi(\overline{\pi}_{2})$. Then
\[
\sigma_{0}(\overline{\pi}_{1},\overline{\pi}_{2})=\sup_{\pi_{2}\in \Pi_{\text{path}}(\mu_{2},\nu_{2})}\sigma_{0}(\overline{\pi}_{1},\pi_{2})\geq \inf_{\pi_{1}\in \Pi_{\text{path}}(\mu_{1},\nu_{1})}\sup_{\pi_{2}\in \Pi_{\text{path}}(\mu_{2},\nu_{2})}\sigma_{0}(\pi_{1},\pi_{2})
\]
and
\[
\sigma_{0}(\overline{\pi}_{1},\overline{\pi}_{2})=\inf_{\pi_{1}\in \Pi_{\text{path}}(\mu_{1},\nu_{1})}\sigma_{0}(\pi_{1},\overline{\pi}_{2})\leq \sup_{\pi_{2}\in\Pi_{\text{path}}(\mu_{2}, \nu_{2})}\inf_{\pi_{1}\in \Pi_{\text{path}}(\mu_{1}, \nu_{1})}\sigma_{0}(\pi_{1},\pi_{2}).
\]
Putting together both of these inequalities gives
\[\inf_{\pi_{1}\in \Pi_{\text{path}}(\mu_{1}, \nu_{1})}\sup_{\pi_{2}\in \Pi_{\text{path}}(\mu_{2}, \nu_{2})}\sigma_{0}(\pi_{1},\pi_{2})\leq \sigma_{0}(\overline{\pi}_{1},\overline{\pi}_{2})\leq
\sup_{\pi_{2}\in\Pi_{\text{path}}(\mu_{2},\nu_{2})}\inf_{\pi_{1}\in \Pi_{\text{path}}(\mu_{1},\nu_{1})}\sigma_{0}(\pi_{1},\pi_{2}),
\]
and thus
\[
\inf_{\pi_{1}\in \Pi_{\text{path}}(\mu_{1}, \nu_{1})}\sup_{\pi_{2}\in \Pi_{\text{path}}(\mu_{2}, \nu_{2})}\sigma_{0}(\pi_{1},\pi_{2})\leq 
\sup_{\pi_{2}\in\Pi_{\text{path}}(\mu_{2}, \nu_{2})}\inf_{\pi_{1}\in \Pi_{\text{path}}(\mu_{1}, \nu_{1})}\sigma_{0}(\pi_{1},\pi_{2}).
\]
This inequality together with Proposition \ref{prop: weak duality} yields the desired equality.
\end{proof}

\section{Characterization of Nash-Monge-Kantorovich Transport Maps}\label{NMK maps}
In this section we study the following problem 
\begin{gather*}
    \inf_{\pi_1}\sup_{\pi_2} \sigma_{0}(\pi_{1}, \pi_{2});
\end{gather*}
where $\sigma_{0}$ is defined in \eqref{sigma functional}. Here we specialize the cost function $c$ to be the one in Condition \ref{condition 2}, that is
\begin{gather*}
    c_e(x, y,x',y') = \inf_{\substack{\gamma: \gamma(0) = x\\\gamma(1) = y}}\sup_{\substack{\xi: \xi(0) = x'\\\xi(1) = y'}} c(\gamma, \xi)
\end{gather*}
We will see that the value of $\alpha$ determines the existence of NMK transport maps in subtle ways. This is reflected from the cost function analysis in Section \ref{section: analysis of cost} where the choice of $\alpha$ yields existence of Lipschitz paths and/or the twist condition. However, this particular cost also yields several consequences that follow the progression in proving Brenier's theorem, i.e. that the transport map is given by the gradient of a convex potential.

\subsection{Quadratic Interaction and when Minimax = Maximin}
Recall that we are interested in defining the endpoint functional:
\begin{gather*}
    \inf_{\substack{\gamma(0) = x\\\gamma(1) = y}}\sup_{\substack{\xi(0) = x'\\\xi(1) = y'}}\int_0^1 c_{\alpha}(\gamma,\xi)\ dt
\end{gather*}
where $c_{\alpha}(\gamma,\xi)$ is the Lagrangian defined in \eqref{kinetic potential energy}. Here, the dependence on $\alpha$ is crucial as this determines whether the minimax value is finite, whether it equals the maximin value, and whether it can be obtained using variational calculus. 

In order to use variational methods (e.g. Euler-Lagrange equations) to characterize this value, we need the \emph{strong duality equality}
\begin{gather}\label{eq:minimax}
    \inf\sup c(\gamma,\xi) = \sup\inf c(\gamma, \xi),
\end{gather}
which implies that there exists a \textit{saddle-point} (or \textit{Nash}) equilibrium on the level of curves $\gamma, \xi$.

Since we cannot apply the minimax theorem directly, let us determine whether \eqref{eq: minimax} holds for this particular problem.

Fix an admissible path $\gamma$ such that $\gamma(0) = x$ and $\gamma(1) = y$. Then consider
\begin{gather*}
    \sup_{\substack{ \xi(0) = x'\\\xi(1) = y'}}\int_0^1 \frac12|\dot{\gamma}|^2 - \frac12|\dot{\xi}|^2 + \alpha|\gamma - \xi|^2\ dt.
\end{gather*}
For any curve $\xi$ we may write $\xi(t) = \ell(t) + u(t)$ where $\ell(t) = x' + (y'-x')t$, $u(t)$ is a function which satisfies Dirichlet conditions $u(0) = u(1) = 0$, and $z \in \mathbb{R}^{d}$. Since $\gamma$ is fixed, then we may consider the quadratic approximation on $\xi$:
\begin{gather*}
    \int_0^1 \frac12|\dot{\gamma}|^2 - \frac12|\dot{\xi}|^2 + \alpha|\gamma - \xi|^2\ dt \approx \int_0^1-\frac12|\dot{\xi}|^2 + \alpha |\xi|^2\ dt
\end{gather*}
where we neglect the lower order terms. Now, consider the boundary-value problem
\begin{gather*}
\begin{cases}
    -u'' = \lambda u\\
    u(0) = u(1) = 0
\end{cases}.
\end{gather*}
The eigenfunctions can be expressed as $u_n(t) = \sin(n\pi t)$ with corresponding eigenvalues $\lambda_n = n^2\pi^{2}$, $n=1,2,\ldots$. Then for any $u \in L^2(0,1)$, we write using superposition:
\begin{gather*}
    u(t) = \sum_{i=1}^\infty u_n\sin(n\pi t).
\end{gather*}
Now dropping the lower order terms from the following integral, we obtain
\begin{align*}
    \int_0^1|\xi|^2\ dt &= \int_0^1|\ell(t)|^2 + 2\ell(t)u(t) + |u(t)|^2\ dt\\
    &\approx \sum_{n=1}^\infty \int_0^1 u_n^2 \sin^2(n\pi t)\ dt \\
    &= \sum_{i=1}^\infty\frac{u_n^2}{2}. 
\end{align*}
Moreover, looking at the kinetic term and incorporating the second derivative of the Fourier series above, we acquire the following integral computation,
\begin{align*}
    \int_0^1|\dot{\xi}|^2dt  &= \xi\dot{\xi}\Big|_0^1 - \int_0^1 \xi\ddot{\xi}\ dt= - \int_0^1 [\ell(t) + u(t)]\ddot{u}(t)\ dt\\
   & = -\sum_{n=1}^\infty n^2\pi^{2}u_n\int_0^1\ell(t)\sin(n\pi t)\ dt + \sum_{n=1}^\infty n^2\pi^{2}u_n^2\int_0^1 \sin^2(n\pi t)\ dt\\
    &\approx \sum_{n=1}^\infty \frac{n^2\pi^{2}u_n^2}{2}.
\end{align*}
Thus, the quadratic approximation on $\xi$ is given in the following expression,
\begin{gather*}
    \int_0^1-\frac12|\dot{\xi}|^2 + \alpha |\xi|^2\ dt \approx -\frac12\sum_{n=1}^\infty \frac{n^2\pi^{2}u_n^2}{2} + \alpha \sum_{n=1}^\infty \frac{u_n^2}{2} = \frac12\sum_{i=1}^\infty \left(\alpha - \frac{n^2\pi^{2}}{2} \right)u_n^2.
\end{gather*}
Then each eigenfunction contributes a factor of $\alpha - \frac{n^2\pi^{2}}{2}$. This gives two cases:
\begin{itemize}
    \item[1.] If $0 \leq \alpha < \frac{\pi^{2}}{2}$, then $\alpha - \frac{n^2\pi^{2}}{2} < 0$ for all $n$ and hence the infsup value is finite.

    \item[2.] If $\alpha \geq \frac{\pi^{2}}{2}$, then there exists $k$ such that $\alpha - \frac{k^2\pi^{2}}{2} > 0$. Consequently, scaling the $u_k(t)$ by a factor $c > 0$ gives that $\xi(t) = \ell(t) + cu_k(t)$ is admissible and yields
    \begin{gather*}
        \sup_{\substack{\xi(0) = x'\\
        \xi(1) = y'}}\int_0^1 \frac12|\dot{\gamma}|^2 - \frac12|\dot{\xi}|^2 + \alpha|\gamma - \xi|^2\ dt = +\infty.
    \end{gather*}
\end{itemize}

For the interested reader we include an alternative way of seeing this, which can be found by using Poincar\'e's inequality \cite{brezis2011functional}. That is, fixing $\gamma$, we expand the higher order term and consider when the quadratic in $\xi$ is bounded above. That is,
\begin{align*}
    \int_0^1 \frac12|\dot{\gamma}|^2 - \frac12|\dot{\xi}|^2 + \alpha|\gamma - \xi|^2\ dt &= \int_0^1 -\frac12|\dot{\xi}|^2 + \alpha|\xi|^2 + \frac12|\dot{\gamma}|^2 - 2\alpha \gamma \cdot \xi + \alpha |\gamma|^2\ dt.
\end{align*}
Since the linear function in $\xi$ is convex, the terms in $\gamma$ are constant with respect to $\xi$, and any admissible path $\xi$ can be decomposed as $\xi = \ell + u$ for $\ell$ a linear path from $x'$ to $y'$ and $u \in H^1_0([0,1])$, then it is sufficient to consider the functional
\begin{gather*}
    J[\xi] = \int_0^1 \alpha |\xi|^2 - \frac12|\dot{\xi}|^2\ dt,\quad \xi \in H_0^1([0,1]).
\end{gather*}
The second variation is given by
\begin{gather*}
    \delta^2J[h] = \int_0^1 2\alpha |h|^2 - |\dot{h}|^2\ dt
\end{gather*}
and a sufficient condition for a maximizer to exist is
\begin{gather*}
    \delta^2J[h] < 0\quad \forall h \in H_0^1([0,1]).
\end{gather*}
This yields the condition
\begin{gather*}
    \alpha\int_0^1 |h|^2\ dt < \frac{1}{2}\int_0^1 |\dot{h}|^2\ dt.
\end{gather*}
From Poincar\'e's inequality we have
\begin{gather*}
    \alpha \int_0^1 |h|^2\ dt \leq \frac{\alpha}{\pi^{2}}\int_0^1 |\dot{h}|^2\ dt. 
\end{gather*}
Then comparing the two inequalities, we require $\frac{\alpha}{\pi^{2}} < \frac{1}{2}$ which gives $\alpha < \frac{\pi^{2}}{2}$.

Conversely, we show that the ``supinf" is always finite. We first exhibit a bound from below. Let $\xi_0(t) = (1-t)x' + ty'$ and consider the functional
\begin{gather*}
    \int_0^1 \frac12|\dot{\gamma}|^2 - \frac12|\dot{\xi}_0|^2 + \alpha |\gamma - \xi_0|^2\ dt.
\end{gather*}
As this functional is convex in $\gamma$ and lower-semicontinuous on an affine translate of $H_0^1([0,1])$, then there exists a minimizer $\gamma^*$ and the value of this functional is bounded. In particular
\begin{gather*}
    \sup_{\xi}\inf_\gamma \int_0^1 c_{\alpha}(\gamma, \xi)\ dt \geq \int_0^1 c_{\alpha}(\gamma^*,\xi_0 )\ dt > -\infty.
\end{gather*}

Next, to obtain an upper bound let us define for any given admissible $\xi$ the path $\gamma(t) = \xi(t) + (1-t)a + tb$ where $a = x'-x$ and $b = y' - y$. Then $\gamma$ is also an admissible path whenever $\xi$ is one. Plugging these into the integral give
\begin{align*}
    \int_0^1 \frac12|\dot{\gamma}|^2 - \frac12|\dot{\xi}|^2 + \alpha |\gamma - \xi|^2\ dt &= \int_0^1 \frac12|\dot{\xi} + b-a|^2 - \frac12|\dot{\xi}|^2 + \alpha |(1-t)a+tb|^2\ dt\\
    &= \int_0^1 \dot{\xi}\cdot (b-a) + \frac12|b-a|^2 + \alpha(1-t)|a|^2 + 2\alpha (1-t)ta\cdot b + \alpha t|b|^2\ dt\\
    &= \xi\cdot (b-a)\Big|_0^1 + \frac12|b-a|^2 + \frac12\alpha |a|^2 + \frac13\alpha a\cdot b + \frac12\alpha |b|^2 \\
    &= (y'-x')\cdot (b-a)+ \frac12|b-a|^2 + \frac12\alpha |a|^2 + \frac13\alpha a\cdot b + \frac12\alpha |b|^2\\
    &= C(x,y,x',y')
\end{align*}
where $C:=C(x,y,x',y')$ is some constant that only depends on the points $x,y,x'$ and $y'$. Thus,
\begin{gather*}
    \inf_{\gamma}\int_0^1 c_{\alpha}(\gamma,\xi)\ dt \leq C < \infty;
\end{gather*}
which implies after taking a supremum that
\begin{gather*}
    \sup_{\xi}\inf_{\gamma}\int_0^1 c_{\alpha}(\gamma, \xi)\ dt \leq C < \infty.
\end{gather*}
Combining the previous inequality shows that the supinf value is always finite.

Finally, letting $\ell$ and $\ell'$ denote the line segments connecting $x$ to $y$ and $x'$ to $y'$ respectively, the functional $\mathcal{F}: H_0^1([0,1])\times H_0^1([0,1]) \to \mathbb{R}$ defined by
\begin{gather*}
    \mathcal{F}[u,v] = \int_0^1 \frac12|\ell + u|^2 - \frac12|\ell' + v|^2 + \alpha |\ell + u - v - \ell'|^2\ dt
\end{gather*}
is continuous in each variable. Moreover, it is convex in $u$ and concave in $v$ provided $\alpha < \frac{\pi^{2}}{2}$. By the Sion minimax theorem \cite{sion1958general}-\cite{komiya1988elementary}, we conclude that, on the level of curves, we get strong duality.
\begin{lemma}
    If $\alpha < \frac{\pi^{2}}{2}$, then
    \begin{gather*}
        \inf_{\gamma}\sup_{\xi} \int_0^1 c(\gamma, \xi,\alpha)\ dt = \sup_{\xi}\inf_{\gamma}\int_0^1 c(\gamma,\xi,\alpha)\ dt.
    \end{gather*}
\end{lemma}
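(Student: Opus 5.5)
The plan is to reduce the path-level saddle problem to a finite-dimensional-in-structure convex–concave problem on a Hilbert space and then invoke a minimax theorem. For fixed endpoints $x,y,x',y'$, write every admissible $\gamma=\ell+u$ and $\xi=\ell'+v$ with $\ell(t)=(1-t)x+ty$, $\ell'(t)=(1-t)x'+ty'$ and $u,v\in H^1_0([0,1];\mathbb{R}^d)$. The path cost becomes
\[
\mathcal{F}[u,v]=\int_0^1 \tfrac12|\dot\ell+\dot u|^2-\tfrac12|\dot\ell'+\dot v|^2+\alpha|\ell-\ell'+u-v|^2\,dt .
\]
We then check the hypotheses of Sion's theorem directly on $H^1_0\times H^1_0$ with the weak topology.

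\emph{Convexity in $u$.} The map $u\mapsto\mathcal{F}[u,v]$ is a sum of $\tfrac12\|\dot\ell+\dot u\|_2^2$ and $\alpha\|\cdot\|_2^2$ of an affine function of $u$, with $\alpha>0$. It is therefore convex, and in fact strongly convex in the $H^1_0$ norm. Being continuous and convex, it is weakly lower semicontinuous.

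\emph{Concavity in $v$.} The second variation is $\delta^2_v\mathcal{F}[h]=\int_0^1 2\alpha|h|^2-|\dot h|^2\,dt$. By the sharp Poincar\'e inequality $\int_0^1|h|^2\le \pi^{-2}\int_0^1|\dot h|^2$, we get
\[
\delta^2_v\mathcal{F}[h]\le-(1-2\alpha/\pi^2)\|\dot h\|_2^2 .
\]
Since $\mathcal{F}$ is exactly quadratic in $v$, this shows that $v\mapsto\mathcal{F}[u,v]$ is uniformly concave whenever $\alpha<\pi^2/2$. Again by continuity plus concavity, it is weakly upper semicontinuous.

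\emph{Compactness.} Sion's theorem needs a compact convex set in one variable, but $H^1_0$ is not compact. We address this by restriction. Strong concavity gives, for fixed $u$,
\[
\mathcal{F}[u,v]\le \mathcal{F}[u,0]+\langle D_v\mathcal{F}[u,0],v\rangle-\tfrac{\kappa}{2}\|v\|^2, \qquad \kappa=1-2\alpha/\pi^2>0 .
\]
The linear term involves $u$ only through $\int 2\alpha(\ell-\ell'+u)\cdot v$, which is bounded by $C(1+\|u\|)\|v\|$. Hence the maximizer in $v$ lies in a ball of radius $C(1+\|u\|)$.

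To remove the dependence on $\|u\|$, we restrict $u$ as well. The upper bound argument already given in the text (take $\gamma=\xi+(1-t)a+tb$) together with strong convexity in $u$ shows that near-optimal $u$ can be taken in a fixed ball $B_R$ for the $\inf\sup$. Concretely, we apply Sion on $B_R\times H^1_0$, where $B_R$ is weakly compact and convex, to get
\[
\inf_{B_R}\sup_{H^1_0}\mathcal{F}=\sup_{H^1_0}\inf_{B_R}\mathcal{F},
\]
and then let $R\to\infty$. The left side is nonincreasing in $R$ and converges to $\inf\sup$. For the right side, we use that for $v$ in a bounded set the inner infimum is attained inside $B_R$ for $R$ large, by coercivity in $u$. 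Combined with weak duality (the curve-level analogue of Proposition \ref{prop: weak duality}), this gives equality.

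The main obstacle is this compactness and exchange-of-limits step. The coercivity estimates must be uniform enough to pass $R\to\infty$ on the $\sup\inf$ side, and this uses the quantitative margin $\kappa=1-2\alpha/\pi^2$ together with the finiteness of the $\sup\inf$ value shown above.

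An alternative route avoids the limit altogether. Strict convexity–concavity and coercivity in both variables give a unique solution of the coupled Euler–Lagrange system $\ddot\gamma=2\alpha(\gamma-\xi)$, $\ddot\xi=2\alpha(\gamma-\xi)$ with the prescribed boundary values. This system is solvable because $d=\gamma-\xi$ is affine by Proposition \ref{minimal paths plus potential}. The solution is then a saddle point by first-order optimality in each variable, which yields the equality immediately. I would present this saddle-point verification as the primary argument and keep Sion as the conceptual justification.
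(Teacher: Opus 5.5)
Your proposal is correct. Its Sion half follows the paper, and fills a gap in it. The paper uses the same splitting $\gamma=\ell+u$, $\xi=\ell'+v$. It gets convexity in $u$ and concavity in $v$ for $\alpha<\pi^2/2$ from a sine expansion, which amounts to the sharp Poincar\'e constant $\pi^{-2}$ in your second-variation bound. It then cites Sion's theorem directly on $H^1_0\times H^1_0$, but never exhibits the compact factor Sion requires. Your restriction to the weakly compact ball $B_R$ supplies that missing hypothesis.

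To close your $R\to\infty$ step, use $0\in B_R$:
\[
\inf_{B_R}\mathcal{F}[\cdot,v]\le\mathcal{F}[0,v]\le A+B\|\dot v\|_2-\tfrac{\kappa}{2}\|\dot v\|_2^2 .
\]
This drops below $\inf_{H^1_0}\mathcal{F}[\cdot,0]$, a lower bound for $\sup_v\inf_{B_R}\mathcal{F}$, once $\|\dot v\|_2$ exceeds a threshold independent of $R$. On the remaining bounded set of $v$, the unconstrained minimizer in $u$ lies in $B_R$ for $R$ large. Hence $\sup_v\inf_{B_R}\mathcal{F}=\sup_v\inf_{H^1_0}\mathcal{F}$ eventually. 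Your near-optimality remark on the $\inf\sup$ side is not needed, since $\inf_{B_R}\sup_v\mathcal{F}$ decreases to $\inf\sup$ trivially.

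Your primary argument is a genuinely different route. You solve the coupled Euler--Lagrange system explicitly; this is possible for every $\alpha$, because $d=\gamma-\xi$ is affine and $\ddot\gamma=2\alpha d$ then integrates twice. You then use that a critical point of a convex (respectively concave) G\^ateaux-differentiable functional is a global minimizer (respectively maximizer). This gives three things:
\begin{itemize}
\item an actual saddle point and its value, which is exactly what Lemma~\ref{thm:explicit cost} takes for granted when it says ``at the saddle point, the Euler--Lagrange equations must be satisfied'';
\item an argument with no topology or compactness at all;
\item coverage of $\alpha=\pi^2/2$, since only non-strict concavity is used.
\end{itemize}
The Sion route, once compactness is supplied, does not depend on solving anything in closed form. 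That makes it the argument that could carry over to the other positive-type interactions mentioned in Section~\ref{section: path formulation}, provided convexity--concavity still holds there.
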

This result suggests that upon defining an ``end-point" cost function, the order of infsup or supinf that we may consider is immaterial. 

\subsection{Stationary Problem Associated to Quadratic Interaction}\label{sec: stationary}
This section is devoted to explicitly calculating the end-point cost function. Consider the end-point cost function defined by
\begin{align}\label{eqn: end-point cost function}
    c_e(x,y,x',y') = \inf_{\substack{\gamma(0) = x\\
    \gamma(1) = y}}\sup_{\substack{\xi(0) = x'\\
    \xi(1) = y'}}\int_0^1 \frac12 |\dot{\gamma}(t)|^2 - \frac12|\dot{\xi}(t)|^2 + \alpha |\gamma(t)-\xi(t)|^2\ dt
\end{align}
where $\alpha < \frac{\pi^{2}}{2}$; which is another way of explicitly writing \eqref{qot end-point cost} incorporating Condition \eqref{condition 2}. The end-point cost function defines a cost function in $\mathbb{R}^d\times \mathbb{R}^d$ from which we proceed to minimaximize over couplings in $\mathbb{R}^d\times \mathbb{R}^d$.

\begin{lemma}\label{thm:explicit cost}
    Let $c(\gamma,\xi) = \int_0^1 \frac12|\dot{\gamma}|^2 - \frac12|\dot{\xi}|^2 + \alpha |\gamma - \xi|^2\ dt$ and define the infsup (and respectively the supinf) cost functions. If $\alpha < \frac{\pi^{2}}{2}$, then
    \begin{gather*}
        \inf_{\substack{\gamma(0) = x\\\gamma(1) = y}}\sup_{\substack{\xi(0) = x'\\\xi(1) = y'}} c(\gamma,\xi) = \sup_{\substack{\xi(0) = x'\\\xi(1) = y'}}\inf_{\substack{\gamma(0) = x\\\gamma(1) = y}} c(\gamma,\xi) \\
        =  \frac12|x-y|^2 - \frac12|x'-y'|^2 + \frac\alpha3\left[(x-x')^2 + (x-x')(y-y') + (y-y')^2 \right].
    \end{gather*}
\end{lemma}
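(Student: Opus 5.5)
The approach is to exhibit an explicit saddle point $(\gamma^*,\xi^*)$ of $c$ among admissible paths. Once we have
\[
c(\gamma^*,\xi)\le c(\gamma^*,\xi^*)\le c(\gamma,\xi^*)
\]
for all admissible $\gamma,\xi$, both the infsup and the supinf equal $c(\gamma^*,\xi^*)$. It then only remains to evaluate this number. This also reproves the preceding lemma directly, without Sion's theorem.

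\emph{Candidate.} The Euler–Lagrange equations are $-\ddot\gamma+2\alpha(\gamma-\xi)=0$ and $\ddot\xi-2\alpha(\gamma-\xi)=0$, so $\ddot\gamma=\ddot\xi=2\alpha d$ with $d=\gamma-\xi$. As in Proposition \ref{minimal paths plus potential}, $\ddot d=0$, so
\[
d^*(t)=(1-t)a+tb,\qquad a=x-x',\quad b=y-y' .
\]
I then define $\gamma^*$ as the solution of $\ddot\gamma^*=2\alpha d^*(t)$, which has an explicit right-hand side and boundary data $\gamma^*(0)=x$, $\gamma^*(1)=y$; it is a cubic polynomial. I set $\xi^*:=\gamma^*-d^*$. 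Then $\xi^*(0)=x'$, $\xi^*(1)=y'$, and $\ddot\xi^*=\ddot\gamma^*=2\alpha d^*$, so both equations hold.

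\emph{Saddle inequalities.} Since $c$ is quadratic, for $h\in H^1_0([0,1])$ the expansions are exact:
\begin{align*}
c(\gamma^*+h,\xi^*)-c(\gamma^*,\xi^*)&=\int_0^1\tfrac12|\dot h|^2+\alpha|h|^2\,dt\ \ge 0,\\
c(\gamma^*,\xi^*+h)-c(\gamma^*,\xi^*)&=\int_0^1-\tfrac12|\dot h|^2+\alpha|h|^2\,dt .
\end{align*}
The first-order terms vanish by the Euler–Lagrange equations after integrating by parts, using $h(0)=h(1)=0$. For the second line I will use the sharp Poincaré inequality $\int_0^1|\dot h|^2\,dt\ge\pi^2\int_0^1|h|^2\,dt$ on $H^1_0$. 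This gives the bound $(\alpha-\pi^2/2)\int_0^1|h|^2\,dt\le 0$ precisely because $\alpha<\pi^2/2$. This is the one place the hypothesis enters, and it is the step I expect to carry the real content. Everything else is exact algebra.

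\emph{Evaluation.} I will write the kinetic difference as
\[
\tfrac12|\dot\gamma|^2-\tfrac12|\dot\xi|^2=\tfrac12\,\dot d\cdot\dot s,\qquad s=\gamma+\xi .
\]
Since $\dot d^*\equiv b-a$ is constant, integrating in $t$ gives
\[
\tfrac12(b-a)\cdot\bigl(s(1)-s(0)\bigr)=\tfrac12\bigl((y-x)-(y'-x')\bigr)\cdot\bigl((y-x)+(y'-x')\bigr)=\tfrac12|x-y|^2-\tfrac12|x'-y'|^2 .
\]
For the interaction term, $\alpha\int_0^1|(1-t)a+tb|^2\,dt=\frac\alpha3\bigl(|a|^2+a\cdot b+|b|^2\bigr)$, since $\int_0^1(1-t)^2\,dt=\int_0^1t^2\,dt=\frac13$ and $\int_0^1 2t(1-t)\,dt=\frac13$. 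Adding the two pieces gives the stated formula, reading $(x-x')^2$ as $|x-x'|^2$ and the product as a dot product.
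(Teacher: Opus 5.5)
Your proof is correct. It obtains $\inf\sup=\sup\inf$ by a different route from the paper.

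\textbf{How the paper argues.} The paper gets the equality of values from the preceding lemma: convexity in $\gamma$ and, via the Poincar\'e second-variation argument, concavity in $\xi$ for $\alpha<\pi^2/2$, followed by Sion's theorem. In the proof of this lemma it then simply posits that the Euler--Lagrange system holds ``at the saddle point,'' solves it for explicit cubic paths, and evaluates the cost. The evaluation uses $S=\tfrac12(\gamma+\xi)$, $D=\tfrac12(\gamma-\xi)$ and an integration by parts with $\ddot D=0$.

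\textbf{What is the same.} Your evaluation is the same computation: your $\tfrac12\dot d\cdot\dot s$ is the paper's $2\dot S\cdot\dot D$. You correctly observe that only the affine form of $d^*$ and the endpoint data enter, so the explicit cubic is never needed.

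\textbf{What is different.} You \emph{verify} that the Euler--Lagrange solution is a global saddle point. You use that the second-order expansion of the quadratic cost is exact, together with the sharp Poincar\'e inequality on $H^1_0$. This gives, in one stroke:
\begin{itemize}
\item the existence of a saddle point;
\item the equality of the two values;
\item the fact that the common value is attained at $(\gamma^*,\xi^*)$, which is precisely what the paper takes for granted.
\end{itemize}
It also sidesteps Sion's theorem, whose compactness hypothesis on one of the strategy sets is not available on an affine translate of $H^1_0$ without further work. The Sion route would in principle give equality of values without solving anything. Here, however, the critical point is explicit, so your argument is both shorter and more complete.

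\textbf{Two small remarks.}
\begin{itemize}
\item The inequality $\int_0^1\tfrac12|\dot h|^2+\alpha|h|^2\,dt\ge 0$ tacitly uses $\alpha\ge0$. That is the paper's standing assumption $\alpha>0$, and Poincar\'e would extend it to $\alpha\ge-\pi^2/2$.
\item Both saddle inequalities are non-strict, so your argument also covers the endpoint case $\alpha=\pi^2/2$. The hypothesis does not enter ``precisely'' as a strict inequality.
\end{itemize}
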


\begin{proof} 
Note that at the saddle point, the Euler-Lagrange equations must be satisfied, that is
\begin{gather*}
    \begin{cases}
        \frac{\partial}{\partial t}\frac{\partial L}{\partial \dot\gamma} - \frac{\partial L}{\partial \gamma} = 0\\
        \frac{\partial}{\partial t}\frac{\partial L}{\partial \dot\xi} - \frac{\partial L}{\partial \xi} = 0
    \end{cases}.
\end{gather*}
Here $L$ is the Lagrangian given by \eqref{kinetic potential energy}.
Therefore, upon substitution of $L$ the above coupled equation is given as follows
\begin{gather*}
\begin{cases}
    \ddot{\gamma} - 2\alpha(\gamma - \xi) = 0\\
    -\ddot{\xi} - 2\alpha(\xi - \gamma) = 0 
\end{cases} \Longrightarrow\quad  \begin{cases}
    \ddot{\gamma} - 2\alpha(\gamma - \xi) = 0\\
    \ddot{\xi} - 2\alpha(\gamma - \xi) = 0 
\end{cases}.
\end{gather*}
Subtracting both equations and setting $d = \gamma - \xi$ we have
\begin{gather*}
    \begin{cases}
        \ddot{d} = 0\\
        d(0) = x - x'\\
        d(1) = y - y'
    \end{cases}.
\end{gather*}
The solution is therefore given by
\begin{gather*}
    d(t) = (x-x')(1-t) + (y-y')t.
\end{gather*}
In particular, substituting this into the coupled equation above and applying the boundary conditions, we get
\begin{gather*}
    \ddot{\gamma} = 2\alpha d = 2\alpha(x-x')(1-t) + 2\alpha(y-y')t.
\end{gather*}
 Integrating twice we can solve for $\gamma$,
\begin{gather*}
    \gamma(t) = C_1 + C_2t + \alpha(x - x')t^2 +  \frac{\alpha}{3}(y - y' + x' - x)t^3.
\end{gather*}
Applying the boundary condition $\gamma(0) = x$ yields
\begin{gather*}
    C_1 = x.
\end{gather*}
Next, applying the other boundary condition $\gamma(1) = y$ yields
\begin{gather*}
    y = x + C_2 + \alpha(x-x') + \frac{\alpha}{3}(y - y'+x' - x).
\end{gather*}
Thus, we can solve for $C_{2}$,
\begin{gather*}
    C_2 = y - x - \alpha\left[\frac13(y - y') + \frac23(x-x') \right].
\end{gather*}
It follows that $\gamma$ is given by
\begin{gather*}
    \gamma(t) = x + \left[y - x - \alpha \left(\frac23(x-x') + \frac13(y-y') \right) \right]t + \alpha(x-x')t^2 + \frac{\alpha}{3}[(y-y') - (x - x')]t^3.
\end{gather*}
Since $\xi = \gamma - d$ we have
\begin{gather*}
    \xi(t) = x' + \left[y' - x' - \alpha\left(\frac23(x-x') + \frac13(y - y') \right) \right]t + \alpha(x-x')t^2 + \frac{\alpha}{3}[(y-y') - (x-x')]t^3.
\end{gather*}
We must now substitute these into the end-point point cost function \eqref{qot end-point cost} to get the infsup/supinf value. To this end, we define new variables:
\begin{gather*}
\begin{cases}
    D = \frac12(\gamma - \xi)\\
    S = \frac12(\gamma + \xi)
\end{cases} \Longleftrightarrow\quad
\begin{cases}
    \gamma = S + D\\
    \xi = S -D
\end{cases}
\end{gather*}
Then observe that the difference in kinetic energies of $\gamma$ and $\xi$ gives
\begin{gather*}
    \frac12|\dot{\gamma}|^2 - \frac12|\dot\xi|^2 = \frac12|\dot{S} + \dot{D}|^2 - \frac12|\dot{S} - \dot{D}|^2 = 2\dot{S}\dot{D},
\end{gather*}
while
\begin{gather*}
    \alpha|\gamma - \xi|^2 = 4\alpha |D|^2.
\end{gather*}
Then the saddle-point value is given by
\begin{gather*}
    c_{e}(x,y,x',y') = \int_0^1 2\dot{S}(t)\dot{D}(t) + 4\alpha |D(t)|^2\ dt.
\end{gather*}
We integrate by parts the first expression to obtain
\begin{align*}
    \int_0^1 2\dot{S}\dot{D}\ dt &= 2S\dot{D}\Big|_{0}^1 - 2\int S\ddot{D}\ dt = 2S\dot{D}\Big|_{0}^1 = 2\left(\frac{y +y'}{2} - \frac{x + x'}{2} \right)\left(\frac{y - y' - x + x'}{2} \right)\\
    &= \frac12\left((y - x) + (y' - x') \right)\left((y-x) - (y' - x') \right) = \frac12|x-y|^2 - \frac12|x'-y'|^2.
\end{align*}
where, according to Proposition \ref{minimal paths plus potential}, $\ddot{D}=0$.

For the second expression, we have
\begin{align*}
    \int_0^1 4\alpha |D(t)|^2\ dt &= \alpha \int_0^1\left|(x-x')(1-t) + (y-y')t \right|^2\ dt = \alpha \int_0^1 \left|(x-x') + (y-y'-x + x')t \right|^2\\
    &= \alpha \int_0^1 |x-x'|^2 + 2(x-x')\cdot(y-y' - x + x')t + |y-y'-x+x'|^2t^2\ dt \\
    &= \alpha|x-x'|^2 + \alpha(x-x')\cdot(y-y'-x+x') + \frac\alpha3|y-y'-x + x'|^2\\
    &= \alpha(x-x')\cdot (y - y') + \frac\alpha3(|y-y'|^2 - 2(y-y')\cdot(x-x') + |x - x'|^2)\\
    &= \frac\alpha3\left[|x-x'|^2 + (x-x')\cdot(y-y') + |y-y'|^2 \right].
\end{align*}
Finally, the end-point cost function is given as
\begin{align}\label{eqn:new cost}
    c_e(x,y,x',y') = \frac12|x-y|^2 - \frac12|x'-y'|^2 + \frac\alpha3\left[|x-x'|^2 + (x-x')\cdot(y-y') + |y-y'|^2 \right].
\end{align}
\end{proof}
A plot of several trajectories which realize the inf-sup/sup-inf are given in Figure \ref{fig:Trajectories}. Note that when $\alpha = 0$, the cost function becomes fully separable in the infimum and supremum arguments and thus the trajectories becomes straight line geodesics as expected from the classic theory. As $\alpha$ increases, the trajectories begin to curve either toward (or away) with respect to the other trajectory which represent the incentives of each agent to either pursue or evade the other one.

In the next section we apply Lemma \ref{thm:explicit cost} in order to obtain a Nash-Monge-Kantorovich map which corresponds to the stationary cost function. 

\begin{figure}
     \centering
     \begin{subfigure}[b]{0.49\textwidth}
         \centering
         \includegraphics[width=\textwidth]{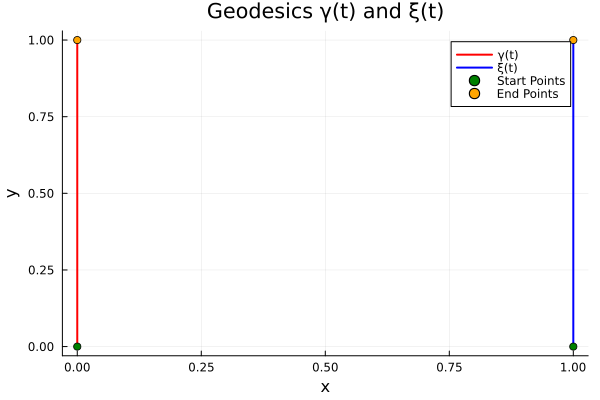}
         \caption{$\alpha = 0$}
         \label{fig:alpha0straight}
     \end{subfigure}
     \hfill
     \begin{subfigure}[b]{0.49\textwidth}
         \centering
         \includegraphics[width=\textwidth]{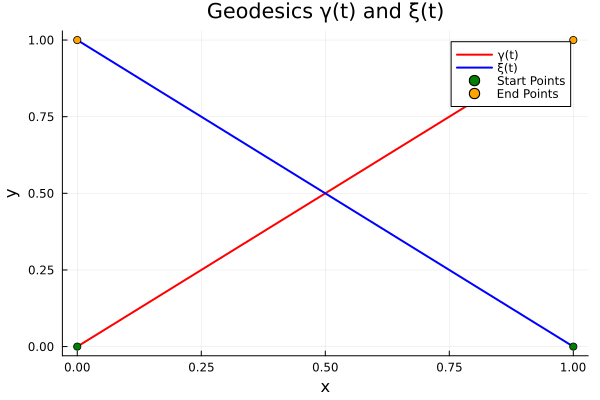}
         \caption{$\alpha = 0$}
         \label{fig:alpha0cross}
     \end{subfigure}
     \hfill
     \begin{subfigure}[b]{0.49\textwidth}
         \centering
         \includegraphics[width=\textwidth]{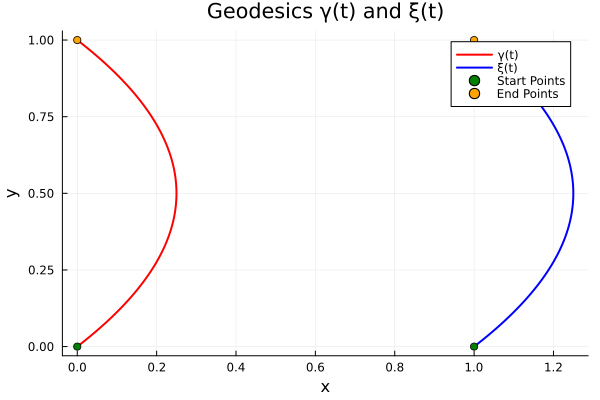}
         \caption{$\alpha = 1$}
         \label{fig:alpha1straight}
     \end{subfigure}
     \begin{subfigure}[b]{0.49\textwidth}
         \centering
         \includegraphics[width=\textwidth]{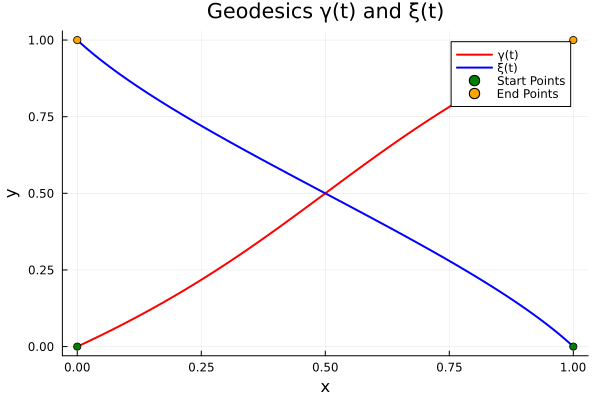}
         \caption{$\alpha = 1$}
         \label{fig:alpha1cross}
     \end{subfigure}
     \hfill
     \begin{subfigure}[b]{0.49\textwidth}
         \centering
         \includegraphics[width=\textwidth]{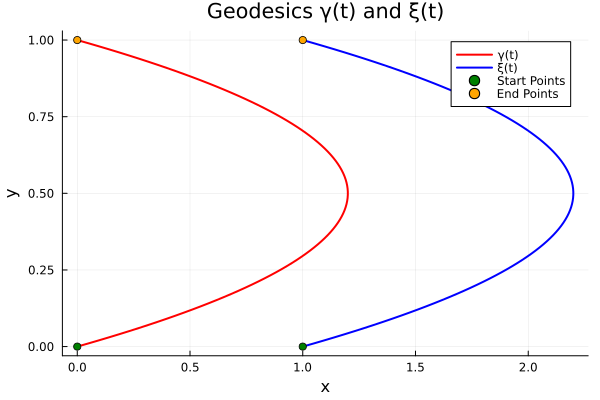}
         \caption{$\alpha = 4.8$}
         \label{fig:alpha4.8straight}
     \end{subfigure}
     \hfill
     \begin{subfigure}[b]{0.49\textwidth}
         \centering
         \includegraphics[width=\textwidth]{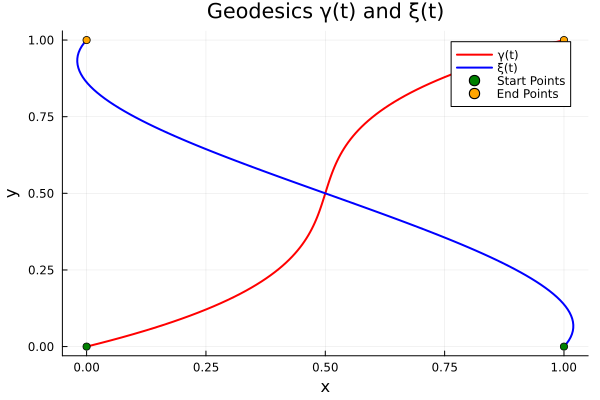}
         \caption{$\alpha = 4.8$}
         \label{fig:alpha4.8cross}
     \end{subfigure}
        \caption{Plot of representative saddle paths which solve the coupled Euler-Lagrange equations where initial and terminal points for each agent lie parallel (Top Row) or across (Bottom Row). \textcolor{red}{Pursuers} (left curves)  and \textcolor{blue}{Evaders} (right curves) are depicted for various interaction strengths $\alpha$.}
        \label{fig:Trajectories}
\end{figure}

\subsection{Nash-Monge-Kantorovich Transport Map}\label{section: Nash-Monge-Kantorovich Transport Map}
We will now study the stationary minimax problem (i.e. in Euclidean spaces) but we will later relate the solution to this problem to the solution to the original dynamical bilinear transport problem in Section \ref{section: dual}. By definition we have that the NETP induces a solution to
\begin{align*}
    \inf_{\pi_1\in \Pi_{\text{path}}(\mu_1,\nu_1)}&\sup_{\pi_2\in \Pi_{\text{path}}(\mu_2, \nu_2)} \int_{(\mathbb{R}^d \times \mathbb{R}^d)^2} c_e(x,y,x',y')\ d\pi_1d\pi_2\\
    &= \sup_{\pi_2\in \Pi_{\text{path}}(\mu_2, \nu_2)} \inf_{\pi_1\in \Pi_{\text{path}}(\mu_1,\nu_1)}\int_{(\mathbb{R}^d \times \mathbb{R}^d)^2} c_e(x,y,x',y')\ d\pi_1d\pi_2
\end{align*}
where $c_e$ is defined as in \eqref{qot end-point cost}. By Lemma \ref{thm:explicit cost} in the case of the cost function satisfying Condition \ref{condition 2} we can show that the transport plans are characterized by the existence of transport maps. Observe that transport plans in this setting exist due to Glicksberg's Theorem \cite{glicksberg1952further}.

\begin{proof}[Proof of Theorem \ref{thm:NashBrenier} for stationary case]
In this proof we compute the maps $T_{1}$ and $T_{2}$ for the stationary case with cost $c_{e}(x,y,x',y')$. In Section \ref{sec: bilinear transport maps} we take a rigorous approach to incorporate paths. This proof illustrates what the maps $T_{1}, T_{2}$ actually are in terms of convex/ concave potentials. The proof of Section \ref{sec: bilinear transport maps} demonstrates where these maps come from, namely, the minimal and maximal paths $\gamma, \xi$, respectively with associated NETP solutions $(\pi_{1}^{*}, \pi_{2}^{*})$.

Define the following ``effective" cost functions: for each fixed $\pi_2$,
    \begin{gather*}
        c_1(x,y;\pi_2) = \frac12|x-y|^2 - \int_{\mathbb{R}^d\times \mathbb{R}^d}\frac12|x'-y'|^2 -\frac\alpha3\left[|x-x'|^2 + (x-x')\cdot(y-y') + |y-y'|^2 \right]d\pi_2(x',y').
    \end{gather*}
    Similarly, for each fixed $\pi_1$
    \begin{gather*}
        c_2(x',y';\pi_1) = -\frac12|x'-y'|^2 + \int_{\mathbb{R}^d\times \mathbb{R}^d}\frac12|x-y|^2 +\frac\alpha3\left[|x-x'|^2 + (x-x')\cdot(y-y') + |y-y'|^2 \right]d\pi_1(x,y).
    \end{gather*}
    At equilibrium, the saddle-point $(\pi_1,\pi_2)$ satisfies the system
    \begin{gather*}
        \begin{cases}
            \inf_{\pi_1} \int_{\mathbb{R}^d\times \mathbb{R}^d} c_1(x,y;\pi_2)d\pi_1(x,y)\\
            \sup_{\pi_2} \int_{\mathbb{R}^d\times \mathbb{R}^d} c_2(x',y';\pi_1)d\pi_2(x',y')
        \end{cases}
    \end{gather*}
    Let us write $c_1(x,y;\pi_2)$ explicitly:
    \begin{align*}
        c_1(x,y;\pi_2) &= \frac12|x|^2 + \frac12|y|^2 - x\cdot y + \int_{\mathbb{R}^{d}\times\mathbb{R}^{d}} \Big(x'\cdot y' - \frac12|x'|^2 - \frac12|y'|^2 + \frac{\alpha}{3}|x|^2 + \frac{\alpha}{3}|x'|^2 - \frac{2\alpha}{3}x\cdot x'  \\ 
        &+ \frac{\alpha}{3} x\cdot y - \frac{\alpha}{3}x\cdot y' - \frac{\alpha}{3}x'\cdot y + \frac{\alpha}{3} x'\cdot y' + \frac{\alpha}{3}|y|^2 + \frac{\alpha}{3}|y'|^2 - \frac{2\alpha}{3}y\cdot y'\ \Big)d\pi_2(x',y') \\
        &= \frac{3+2\alpha}{6}(|x|^2 + |y|^2) + \frac{\alpha - 3}{3}x\cdot y - \frac{\alpha}{3}(2m_{x'} + m_{y'})\cdot x - \frac{\alpha}{3}(m_{x'} + 2m_{y'})\cdot y + K
    \end{align*}
    where $K:=\int_{\mathbb{R}^{d}\times\mathbb{R}^{d}}x'\cdot y'd\pi_{2}(x', y')$, which is a constant independent of $x$ and $y$, and furthermore
    \begin{gather*}
        m_{x'} := \int_{\mathbb{R}^d\times \mathbb{R}^d} x'\ d\pi_2(x',y') = \int_{\mathbb{R}^d} x' d\mu_2(x');\quad
        m_{y'} := \int_{\mathbb{R}^d\times \mathbb{R}^d} y'\ d\pi_2(x',y') = \int_{\mathbb{R}^d} y' d\nu_2(y').
    \end{gather*}
    Set $a = \frac{3+2\alpha}{6}$, $b = \frac{\alpha - 3}{3}$, $\ell_{x} = - \frac{\alpha}{3}(2m_{x'} + m_{y'})$, and $\ell_{y} = - \frac{\alpha}{3}(m_{x'} + 2m_{y'})$, and where the latter $\ell_{x}$, $\ell_{y}$ are the coefficients of $x$ and $y$, respectively. Then
    \begin{align*}
        c_1(x,y;\pi_2) &= a|x|^2 + a|y|^2 + (bx+\ell_y)\cdot y + \ell_x \cdot x + K\\
        &= a\left|y + \frac{bx + \ell_y}{2a} \right|^2 - \left(\frac{bx + \ell_y}{2a} \right)^2 + a|x|^2 + \ell_x\cdot x + K.
    \end{align*}
    
    Thus, we have that the minimization problem is equivalent to
    \begin{gather*}
        \inf_{\pi_1 \in \Pi(\mu_1,\nu_1)} a\int_{\mathbb{R}^d \times \mathbb{R}^d} \left|y + \frac{bx + \ell_y}{2a} \right|^2d\pi_1(x,y) = \inf_{\tilde{\pi}_1 \in \Pi(\tilde{\mu}_1,\nu_1)} a\int_{\mathbb{R}^d \times \mathbb{R}^d} \left|\tilde{x} - y \right|^2d\tilde{\pi}_1(\tilde{x},y) 
    \end{gather*}
    where $\tilde{x} = -\frac{b}{2a}x - \frac{\ell_y}{2a}$ and $\tilde{\mu}_1 = \tilde{x}_\#(\mu_1)$. 

    Next, fix $\pi_1\in\Pi(\mu_1,\nu_1)$ and recall the effective cost
    \begin{gather*}
        c_2(x',y';\pi_1)
        =-\frac12|x'-y'|^2+\int_{\mathbb{R}^d\times\mathbb{R}^d}
        \frac12|x-y|^2+\frac{\alpha}{3}\Big(|x-x'|^2+(x-x')\cdot(y-y')+|y-y'|^2\Big)\,d\pi_1(x,y).
    \end{gather*}
    For the supremum problem, we define
    \begin{gather*}
        \tilde c_2(x',y';\pi_1):=-c_2(x',y';\pi_1).
    \end{gather*}
    Then $\sup_{\pi_2}\int c_2\,d\pi_2$ is equivalent to $\inf_{\pi_2}\int \tilde c_2\,d\pi_2$, and the optimizer $\pi_2^\ast$ is unchanged. Writing $\tilde c_2$ explicitly and expanding the quadratic terms and collecting those that depend on $(x',y')$, we obtain
    \begin{gather*}
        \tilde c_2(x',y';\pi_1) =\frac12|x'|^2+\frac12|y'|^2-x'\cdot y' -\int_{\mathbb{R}^d\times\mathbb{R}^d}
        \frac12|x|^2+\frac12|y|^2-x\cdot y
        +\frac{\alpha}{3}|x|^2+\frac{\alpha}{3}|x'|^2-\frac{2\alpha}{3}x\cdot x'
        \\+\frac{\alpha}{3}x\cdot y-\frac{\alpha}{3}x\cdot y'
        -\frac{\alpha}{3}x'\cdot y+\frac{\alpha}{3}x'\cdot y' +\frac{\alpha}{3}|y|^2+\frac{\alpha}{3}|y'|^2-\frac{2\alpha}{3}y\cdot y'
        \,d\pi_1(x,y).
    \end{gather*}
    All terms involving only $(x,y)$ integrate to a constant independent of $(x',y')$. Using the first moments
    \begin{gather*}
        m_x:=\int_{\mathbb{R}^d\times\mathbb{R}^d}x\,d\pi_1(x,y)=\int_{\mathbb{R}^d}x\,d\mu_1(x),
        \qquad
        m_y:=\int_{\mathbb{R}^d\times\mathbb{R}^d}y\,d\pi_1(x,y)=\int_{\mathbb{R}^d}y\,d\nu_1(y),
    \end{gather*}
    we simplify the $(x',y')$-dependent part of $\tilde c_2$ to
    \begin{gather*}
        \tilde c_2(x',y';\pi_1)
        =\frac{3+2\alpha}{6}\big(|x'|^2+|y'|^2\big)+\frac{\alpha-3}{3}\,x'\cdot y'
        +\frac{\alpha}{3}(2m_x+m_y)\cdot x'
        +\frac{\alpha}{3}(m_x+2m_y)\cdot y'
        +K',
    \end{gather*}
    where $K'$ is a constant independent of $x'$ and $y'$. Set
    \begin{gather*}
        a:=\frac{3+2\alpha}{6},
        \qquad
        b:=\frac{\alpha-3}{3},
        \qquad
        \ell_{x'}:=\frac{\alpha}{3}(2m_x+m_y),
        \qquad
        \ell_{y'}:=\frac{\alpha}{3}(m_x+2m_y).
    \end{gather*}
    Then
    \begin{gather*}
        \tilde c_2(x',y';\pi_1)
        =a|x'|^2+a|y'|^2+(bx'+\ell_{y'})\cdot y'+\ell_{x'}\cdot x' +K'.
    \end{gather*}
    Completing the square in $y'$ gives
    \begin{gather*}
        \tilde c_2(x',y';\pi_1)
        =a\left|y'+\frac{bx'+\ell_{y'}}{2a}\right|^2
        -\left|\frac{bx'+\ell_{y'}}{2a}\right|^2
        +a|x'|^2+\ell_{x'}\cdot x' +K'.
    \end{gather*}
    Hence the minimization problem
    \begin{gather*}
        \inf_{\pi_2\in\Pi(\mu_2,\nu_2)}\int_{\mathbb{R}^d\times\mathbb{R}^d}\tilde c_2(x',y';\pi_1)\,d\pi_2(x',y')
    \end{gather*}
    is equivalent (up to $x'$-dependent terms and constants) to
    \begin{gather*}
        \inf_{\pi_2\in\Pi(\mu_2,\nu_2)}
        a\int_{\mathbb{R}^d\times\mathbb{R}^d}
        \left|y'+\frac{bx'+\ell_{y'}}{2a}\right|^2\,d\pi_2(x',y').
    \end{gather*}
    Define the affine change of variables
    \begin{gather*}
        \tilde x'
        :=-\frac{b}{2a}x'-\frac{\ell_{y'}}{2a},
        \qquad
        \tilde\mu_2:=\tilde x'_{\#}\mu_2.
    \end{gather*}
    Then the above minimization problem is equivalent to a standard linear transport  problem:
    \begin{gather*}
        \inf_{\tilde\pi_2\in\Pi(\tilde\mu_2,\nu_2)}
        a\int_{\mathbb{R}^d\times\mathbb{R}^d}
        \left|\tilde x'-y'\right|^2\,d\tilde\pi_2(\tilde x',y').
    \end{gather*}
    Observe first that $a > 0$ when $\alpha > 0$ and that this new cost is the standard quadratic cost from optimal transport theory. As $\tilde{\mu}_1$ is absolutely continuous since it is the push-forward by an affine mapping, then by Brenier's Theorem \cite{brenier1991polar} there exists a Monge map given by $\tilde{T}_1(\tilde{x}) = \nabla p_1(\tilde{x})$ where $p_1$ is convex. In particular, pulling back gives
    \begin{align*}
        T_1(x) &= \tilde{T}_1(\tilde{x}(x)) = \nabla p_1\left(\frac{(3-\alpha)/3}{2(3+2\alpha)/6}x + \frac{\alpha(m_{x'} + 2m_{y'})/3)}{2(3+2\alpha)/6}\right)\\
        &= \nabla p_1\circ \left(\frac{3-\alpha}{3+2\alpha}x + \frac{\alpha(m_{x'} + 2m_{y'})}{3+2\alpha}\right).
    \end{align*}
    Define $u_1 = \frac{3+2\alpha}{3-\alpha}p_1(\tilde{x}(x))$. Since $p_1$ is convex and $\tilde{x}$ is affine, then if $\alpha < 3$ it follows that $u_1$ is convex. Moreover
    \begin{gather*}
        \nabla u_1(x) = \frac{3-\alpha}{3+2\alpha}\cdot \frac{3+2\alpha}{3-\alpha}\nabla p_1(\tilde{x}(x)) = \nabla p_1(\tilde{x}(x))= T_1(x).
    \end{gather*}
    Therefore, the optimal map $T_1$ is the gradient of a convex function.

    Similarly by Brenier's Theorem, the optimizer $\tilde\pi_2^\ast$ is induced by a map
    \begin{gather*}
        y'=\tilde{T}_2(x')=\nabla p_2(\tilde x')\quad\text{for some convex }p_2,
    \end{gather*}
    and $\pi_2^\ast$ is the pushforward of $\tilde\pi_2^\ast$ under $(x',y')\mapsto(\tilde x'(x'),y')$. In particular, pulling back yields
    \begin{gather*}
        T_2(x')
        =\tilde{T}_2(\tilde{x}'(x'))
        =\nabla p_2\left(
            \frac{(3-\alpha)/3}{2(3+2\alpha)/6}x'
            -\frac{\alpha(m_x+2m_y)/3}{2(3+2\alpha)/6}
        \right)\\
        =\nabla p_2\circ
        \left(
            \frac{3-\alpha}{3+2\alpha}x'
            -\frac{\alpha(m_x+2m_y)}{3+2\alpha}
        \right).
    \end{gather*}
    Define
    \[
        u_2(x')
        :=\frac{3+2\alpha}{3-\alpha}\,
        p_2(\tilde{x}'(x')).
    \]
    Since $p_2$ is convex and $\tilde{x}'$ is affine, it follows that $u_2$ is convex whenever $\alpha<3$. Moreover,
    \begin{gather*}
        \nabla u_2(x')
        =\frac{3-\alpha}{3+2\alpha}\cdot
        \frac{3+2\alpha}{3-\alpha}\,
        \nabla p_2(\tilde{x}'(x'))
        =\nabla p_2(\tilde{x}'(x'))
        =T_2(x').
    \end{gather*}
    Therefore, for $\alpha<3$, the optimal map $T_2$ is also the gradient of a convex function. A similar argument shows that for $\alpha > 3$, the optimal maps are given by gradients of concave functions.
\end{proof}

\begin{remark}
    
When $\alpha = 3$, then observe that the effective cost functions are of the form
\begin{gather*}
    a|x|^2 + a|y|^2 + \ell_y\cdot y + \ell_x \cdot x + K \quad \text{and}\quad a'|x'|^2 + a'|y'|^2 + \ell_{y'}\cdot y' + \ell_{x'} \cdot x' + K'.
\end{gather*}
In this case, the bilinear transport plan is not unique as every coupling is admissible and yields the same value.
\end{remark}

Finally, we conclude this section with the following corollary which follows from the standard derivations as in \cite{villani2008optimal}:
\begin{corollary}
    Under the same conditions as Theorem \ref{thm:NashBrenier}, when $\alpha \neq 3$ the optimal maps $p_1$, $p_2$ satisfy the following coupled Monge-Amp\`ere system:
    \begin{gather*}
    \begin{cases}
        \left|\operatorname{det}D^2p_1\left(\frac{3-\alpha}{3+2\alpha}x + \frac{\alpha(m_{x'} + 2m_{y'})}{3+2\alpha} \right)\right| = \left(\frac{3+2\alpha}{3-\alpha}\right)^d\frac{\mu_1(x)}{\nu_1\left(\frac{3-\alpha}{3+2\alpha}x + \frac{\alpha(m_{x'} + 2m_{y'})}{3+2\alpha} \right)}\\
        \left|\operatorname{det}D^2p_2\left(\frac{3-\alpha}{3+2\alpha}x' - \frac{\alpha(m_{x} + 2m_{y})}{3+2\alpha} \right)\right| = \left(\frac{3+2\alpha}{3-\alpha}\right)^d\frac{\mu_2(x')}{\nu_2\left(\frac{3-\alpha}{3+2\alpha}x' - \frac{\alpha(m_{x} + 2m_{y})}{3+2\alpha} \right)}\\
        m_y = \int_{\mathbb{R}^d} \nabla_xp_1\left(\frac{3-\alpha}{3+2\alpha}x + \frac{\alpha(m_{x'} + 2m_{y'})}{3+2\alpha} \right) \ d\mu_1(x)\\
        m_{y'} = \int_{\mathbb{R}^d} \nabla_{x'}p_2\left(\frac{3-\alpha}{3+2\alpha}x' - \frac{\alpha(m_{x} + 2m_{y})}{3+2\alpha} \right) \ d\mu_2(x')\\
        p_1,p_2\ convex
    \end{cases}
    \end{gather*}
    where the coupling occurs through the moments $m_{x'}$ and $m_{y'}$.
\end{corollary}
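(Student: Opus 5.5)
The plan is to derive the corollary from the stationary part of the proof of Theorem \ref{thm:NashBrenier}. Fix the Nash pair $(\pi_1^*,\pi_2^*)$. There, player $1$'s problem, for $\pi_2^*$ fixed, was reduced by completing the square to a quadratic optimal transport problem between $\tilde\mu_1=\tilde x_\#\mu_1$ and $\nu_1$. The affine map is $\tilde x(x)=\frac{3-\alpha}{3+2\alpha}x+\frac{\alpha(m_{x'}+2m_{y'})}{3+2\alpha}$. Brenier's theorem gives $T_1=\nabla p_1\circ\tilde x$ with $p_1$ convex and $(\nabla p_1)_\#\tilde\mu_1=\nu_1$. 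Player $2$'s problem is handled in the same way, with $\tilde x'(x')=\frac{3-\alpha}{3+2\alpha}x'-\frac{\alpha(m_x+2m_y)}{3+2\alpha}$.

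The Monge--Amp\`ere equations then come from the standard change of variables for Brenier maps, as in \cite{villani2008optimal}. First I compute the density of $\tilde\mu_1$. Since $\tilde x$ is affine with linear part $\lambda\,\mathrm{Id}$, where $\lambda=\frac{3-\alpha}{3+2\alpha}\neq 0$ because $\alpha\neq 3$, we have $\tilde\mu_1(\tilde x(x))=|\lambda|^{-d}\mu_1(x)$. This is exactly where the factor $\left(\frac{3+2\alpha}{3-\alpha}\right)^d$ appears, taken in absolute value, which is why the statement carries $|\det|$. Next, McCann's theorem applies because $\tilde\mu_1\ll\mathcal L^d$: the convex potential $p_1$ satisfies, $\tilde\mu_1$-a.e. in the Aleksandrov sense,
\[
\det D^2p_1(z)\,\nu_1(\nabla p_1(z))=\tilde\mu_1(z).
\]
Evaluating at $z=\tilde x(x)$ gives the first equation of the system. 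Doing the same for $p_2$ and $\tilde\mu_2$ gives the second.

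The moment equations come from the marginal constraints. Because $(\mathrm{Id},T_1)_\#\mu_1=\pi_1^*$ has second marginal $\nu_1$, testing against $y$ gives $m_y=\int y\,d\nu_1=\int T_1\,d\mu_1=\int\nabla p_1(\tilde x(x))\,d\mu_1(x)$, and similarly for $m_{y'}$. These identities show how the two equations are coupled. The constants $m_{x'},m_{y'}$ inside player $1$'s affine change of variables are moments of $\pi_2^*$, and $m_x,m_y$ are moments of $\pi_1^*$. Since both plans are simultaneously optimal at the NETP, all four moment relations hold at once, which produces the coupled system. Convexity of $p_1,p_2$ comes directly from Brenier's theorem applied to the transformed problems. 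For $\alpha>3$ the maps $T_i$ are gradients of concave functions $u_i$, but the Brenier potentials $p_i$ themselves remain convex, consistent with the last line of the system.

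The main subtlety I expect is the formulation of the Monge--Amp\`ere equation. Since $\nu_i$ is only assumed absolutely continuous, the equation holds only a.e. in the Aleksandrov/McCann sense and requires the density of $\nu_i$ to be positive where it is evaluated. I would state the result in that weak sense rather than attempt classical regularity in the style of Caffarelli. The nonlinear fixed-point dependence through the moments needs no additional argument: existence of the NETP is already given by Theorem \ref{thm:minimax=maximin}, so the moments are simply those of that equilibrium.
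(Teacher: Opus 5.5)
Your route (complete the square, apply Brenier to $(\tilde\mu_1,\nu_1)$, transform the density under the affine map, invoke McCann's a.e.\ Jacobian equation, and read the moment lines off the marginal constraints) is exactly the ``standard derivation'' the paper only cites. The gap is the step where you match it to the displayed system: that step is false. Evaluating $\det D^2p_1(z)\,\nu_1(\nabla p_1(z))=\tilde\mu_1(z)$ at $z=\tilde x(x)$ gives
\[
\det D^2p_1(\tilde x(x))=\Bigl|\frac{3+2\alpha}{3-\alpha}\Bigr|^{d}\frac{\mu_1(x)}{\nu_1\bigl(\nabla p_1(\tilde x(x))\bigr)}=\Bigl|\frac{3+2\alpha}{3-\alpha}\Bigr|^{d}\frac{\mu_1(x)}{\nu_1(T_1(x))}.
\]
So $\nu_1$ is evaluated at $T_1(x)$, not at $\tilde x(x)$ as printed, and the two really differ. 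Take $d=1$, $0<\alpha<3$, $m_{x'}=m_{y'}=0$, $\mu_1$ uniform on $[0,1]$ and $\nu_1(y)=2y$ on $[0,1]$. Then $p_1''(z)=1/(2\sqrt{\lambda z})$ on $(0,\lambda)$, whereas the printed line demands $1/(2\lambda z)$. The printed equation is therefore a misprint; you should have said so and proved the corrected one, instead of asserting that your computation ``gives the first equation''. Likewise the factor is $|\lambda|^{-d}$, not $\lambda^{-d}$. The absolute value on $\det D^2p_1$ does no work, since $p_1$ is convex, and for $\alpha>3$ with $d$ odd the printed right-hand side is negative.

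The second line has a more serious problem, which you inherit by saying player $2$ is ``handled in the same way''. In the stationary proof, the expansion of $\tilde c_2=-c_2$ applies the minus sign in front of the $\pi_1$-integral to the linear terms but not to the quadratic ones. The correct coefficients are $\frac{3-2\alpha}{6}$ for $|x'|^2,|y'|^2$ and $-\frac{3+\alpha}{3}$ for $x'\cdot y'$. The cleanest way to see the structure is that, for fixed marginals, every term of $c_e$ other than the two cross terms integrates to a constant:
\[
\iint c_e\,d\pi_1\,d\pi_2=C(\mu_1,\nu_1,\mu_2,\nu_2)+\Bigl(\frac{\alpha}{3}-1\Bigr)\int x\cdot y\,d\pi_1+\Bigl(\frac{\alpha}{3}+1\Bigr)\int x'\cdot y'\,d\pi_2 .
\]
So the game decouples. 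Player $1$'s plan is monotone or anti-monotone according as $\alpha<3$ or $\alpha>3$, consistent with your first line. Player $2$'s equilibrium plan, however, is the monotone Brenier coupling $T_2=\nabla\phi_2$ with $\phi_2$ convex, for every $\alpha>0$. If $T_2=\nabla p_2\circ\tilde x'$ with $\tilde x'(x')=\lambda x'+c'$, then $D^2p_2=\lambda^{-1}D^2\phi_2\circ(\tilde x')^{-1}$, which is negative semidefinite when $\alpha>3$.

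Hence for $\alpha\in(3,\pi^2/2)$ the claim ``$p_2$ convex'' fails, as does your remark that $T_2$ is then a gradient of a concave function. There, ``Brenier applied to the transformed problem'' does not describe player $2$'s optimum. For $\alpha<3$ the second line does hold, again with $\nu_2(T_2(x'))$ in the denominator and the factor in absolute value, because precomposing with an affine map of positive slope preserves convexity of the potential. The same identity shows the moment lines are automatic, since $m_x,m_y,m_{x'},m_{y'}$ are just means of the prescribed marginals.
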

From this, we can now apply Caffarelli's regularity theory \cite{Caffarelli1991, Caffarelli1992a, Caffarelli1992b, Caffarelli2000} to deduce the smoothness of the transport maps. In particular, if there exist $\alpha_1,\alpha_2>1$ such that $\mu_1,\nu_1 \in C^{\alpha_1 - 1}(\Omega_1)$ and $\mu_2,\nu_2 \in C^{\alpha_2 - 1}(\Omega_2)$ are absolutely continuous with respect to the Lebesgue measure and there exist constants $\lambda_1,\lambda_2 > 0$ such that
\begin{gather*}
    \lambda_1^{-1} \leq \mu_1,\nu_1\leq \lambda_1,\quad \forall x \in \Omega_1^{\mathrm{o}}
\end{gather*}
and
\begin{gather*}
    \lambda_2^{-1} \leq \mu_2,\nu_2\leq \lambda_2,\quad \forall x \in \Omega_2^{\mathrm{o}}
\end{gather*}
then we have the interior regularity estimates such that $p_1 \in C^{1+\alpha_1}(\Omega_1^{\mathrm{o}})$, $p_2 \in C^{1+\alpha_2}(\Omega_2^\mathrm{o})$. Under suitable assumptions on the boundaries of the supports, we may obtain similar global regularity of the potentials. Corresponding Calder\'on-Zygmund type estimates also hold \cite{dephilippis2013} for appropriate right-hand sides. A more comprehensive survey of regularity results on Monge-Amp\`ere equations can be found in \cite{figalli2017monge}.

\section{Dual problem to Minimax Bilinear Transport Problem}\label{section: dual} We aim to develop a dual problem to our minimax, or maximin, bilinear transport problem, (\ref{inf sup problem}), in particular when the cost is given as in Condition \ref{condition 2}. One aspect which we will discuss in this section that differentiates the dual formulation here from the classical optimal transport problem is that in fact we will have coupled dual problems. Moreover, we can arrive at the same conclusion of the existence of NMK (Definition \ref{def: NMK}) transport maps through this formulation. In addition, we recover classical results from optimal transport in the setting of our bilinear transport problem. In other words, we acquire a coupled solution to the dual problem, the characterization of minimax (or maximin) solutions through cyclical monotonicity of Knott and Smith \cite{villani2021topics}, and a construction to a coupled classical Kantorovich solution with respect to a certain pair of marginals with respect to a certain coupled \emph{effective cost}. 

In the subsequent sections, we closely parallel the standard theory of optimal transport, as well as the developments in the geometry of paths and congestion, within the framework of bilinear transport theory.

\subsection{Effective Cost and Cyclical Monotonicity} \label{sec: effective cost and cyclical monotonicity} In this section, we consider what will be called an effective cost function. Let $c$ be defined as in Condition \ref{condition 2} and recall the end-point cost function \eqref{eqn: end-point cost function} from Sections \ref{sec: stationary} and \ref{section: Nash-Monge-Kantorovich Transport Map}. 
Furthermore, recall the action functional \eqref{Lagrangian cost} with its corresponding Lagrangian \eqref{kinetic potential energy}. The pair of \emph{effective cost functions}
on $\gamma$ and $\xi$ for fixed $\pi_{1}$ and $\pi_{2}$ respectively, are defined as follows. If $\pi_{2}^{*}$ is a maximizer of $\eqref{inf sup problem}$ and if $\pi_{1}^{*}$ is a minimizer of \eqref{inf sup problem}, then the \emph{effective costs} of $\pi_{2}^{*}$ and $\pi_{1}^{*}$, respectively, are
\begin{align}\label{eqn: eff path}
\begin{split}
    c_{\pi_{2}^{*}}(\gamma)&:=\int_{\Omega_{2}}c(\gamma, \xi)d\pi_{2}^{*}(\xi),\quad\text{for fixed}\quad\gamma\in\Omega_{1},\;\forall\xi\\
    c_{\pi_{1}^{*}}(\xi)&:=\int_{\Omega_{1}}c(\gamma, \xi)d\pi_{1}^{*}(\gamma),\quad\text{for fixed}\quad\xi\in\Omega_{2},\;\forall\gamma.
    \end{split}
\end{align}
Furthermore, their \textit{end-point cost} functions will be defined by
\begin{align}\label{eqn: effective costs}
\begin{split}
    c_{e, \pi_{2}^{*}}(x,y)&:=\inf\left\{c_{\pi_{2}^{*}}(\gamma):\;\gamma(0)=x, \gamma(1)=y\right\}\\
    c_{e, \pi_{1}^{*}}(x',y')&:=\sup\left\{c_{\pi_{1}^{*}}(\xi):\;\xi(0)=x', \xi(1)=y'\right\}.
    \end{split}
\end{align}
The \emph{effective cost function} is associated to the Lagrangian \eqref{Lagrangian cost}-\eqref{kinetic potential energy}, $c(\gamma, \xi)$ (see Lemma \ref{lemma:saddlepoint-endpoint}).
Then its corresponding collective \emph{end-point cost function}, $c_{e}(x,y,x',y')$, is given by \eqref{eqn: end-point cost function} as before.

The next result allows us to construct a pair of cost functions that will be used to formulate a dual problem. See Theorem \ref{thm:coupled duality}.

\begin{lemma}\label{lemma:saddlepoint-endpoint}
Let $c_{e}$ be the end-point cost function given by \eqref{qot end-point cost}. For a fixed pair of NETP's $(\pi_{1}^{*}, \pi_{2}^{*})$ to \eqref{inf sup problem}, define the following Euclidean measures through the coupled evaluation map, $(e_{0}, e_{1})_{\sharp}\pi_{1}^{*}:=\check{\pi}_{1}$ and $(e_{0}, e_{1})_{\sharp}\pi_{2}^{*}:=\check{\pi}_{2}$.  
We have the following equalities
\begin{align}\label{euc integral}
    c_{e, \pi_{2}^{*}}(x, y)=\int_{\mathbb{R}^{d}\times\mathbb{R}^{d}}c_{e}(x,y,x',y')d\check{\pi}_{2}(x',y'),
\end{align}
and
\[
\iint_{(\mathbb{R}^{d}\times\mathbb{R}^{d})^{2}}c_{e}(x,y,x',y')d\check{\pi}_{1}(x,y)d\check{\pi}_{2}(x',y')=\int_{\mathbb{R}^{d}\times\mathbb{R}^{d}}\inf_{\gamma}c_{\pi_{2}^{*}}(\gamma)d\check{\pi}_{1}(x,y).
\]
Similar results hold for $c_{e, \pi_{1}^{*}}(x', y')$ and
\[
\iint_{(\mathbb{R}^{d}\times\mathbb{R}^{d})^{2}}c_{e}(x,y,x',y')d\check{\pi}_{1}(x,y)d\check{\pi}_{2}(x',y')=\int_{\mathbb{R}^{d}\times\mathbb{R}^{d}}\sup_{\xi}c_{\pi_{1}^{*}}(\xi)d\check{\pi}_{2}(x',y').
\]
\end{lemma}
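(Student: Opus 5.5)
The plan is to reduce both identities to explicit computations with the quadratic structure of Condition \ref{condition 2}, using that the NETP property pins down which paths carry mass. Throughout, $\alpha<\pi^2/2$, so Lemma \ref{thm:explicit cost} applies and $c_e$ is given by \eqref{eqn:new cost}.

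\textbf{Step 1: $\pi_2^*$ is carried by maximizing paths.} I would first show that $\pi_2^*$ is concentrated on paths $\xi$ with
\[
c_{\pi_1^*}(\xi)=c_{e,\pi_1^*}(\xi(0),\xi(1)).
\]
Since $\xi\mapsto c_{\pi_1^*}(\xi)$ is a strictly concave quadratic functional on each affine class of paths with fixed endpoints (by the Poincaré argument before Lemma \ref{thm:explicit cost}), it has a unique maximizer $\Xi(x',y')$. This maximizer solves $\ddot\xi=2\alpha(\bar\gamma-\xi)$, where $\bar\gamma(t)=\int\gamma(t)\,d\pi_1^*$ is the mean path. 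The map $\Xi$ depends continuously, indeed affinely, on $(x',y')$, so it is measurable.

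The plan $\Xi_\#\check\pi_2$ is admissible in $\Pi_{\text{path}}(\mu_2,\nu_2)$ and satisfies $\sigma_0(\pi_1^*,\Xi_\#\check\pi_2)\ge\sigma_0(\pi_1^*,\pi_2^*)$. Maximality of $\pi_2^*$ (Theorem \ref{thm:minimax=maximin}) then forces equality, hence $c_{\pi_1^*}(\xi)=c_{\pi_1^*}(\Xi(\xi(0),\xi(1)))$ for $\pi_2^*$-a.e. $\xi$. Uniqueness of the maximizer gives $\pi_2^*=\Xi_\#\check\pi_2$. The symmetric argument, now with strict convexity in $\gamma$, gives $\pi_1^*=\Gamma_\#\check\pi_1$, where $\Gamma(x,y)$ minimizes $c_{\pi_2^*}$.

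\textbf{Step 2: prove \eqref{euc integral} by the mean-path reduction.} Expanding $c(\gamma,\xi)$ from Condition \ref{condition 2} and integrating in $\xi$ against $\pi_2^*$ gives
\[
c_{\pi_2^*}(\gamma)=\int_0^1 \tfrac12|\dot\gamma|^2+\alpha|\gamma|^2-2\alpha\,\gamma\cdot\bar\xi\,dt+C(\pi_2^*),
\qquad \bar\xi(t)=\int\xi(t)\,d\pi_2^*,
\]
so $\gamma$ interacts with $\pi_2^*$ only through the mean path $\bar\xi$. By Step 1, $\bar\xi(t)=\int\Xi(x',y')(t)\,d\check\pi_2$, which is an explicit cubic-in-$t$ path whose coefficients are linear in the moments $m_{x'},m_{y'}$ and in the moments of $\pi_1^*$.

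I then solve the Euler--Lagrange equation $\ddot\gamma=2\alpha(\gamma-\bar\xi)$ with $\gamma(0)=x$, $\gamma(1)=y$. Its solution is unique by strict convexity, as in Proposition \ref{boundary value problem}. Substituting this solution yields $c_{e,\pi_2^*}(x,y)$ as an explicit quadratic polynomial in $(x,y)$.

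On the other side, $\int c_e(x,y,x',y')\,d\check\pi_2$ computed from \eqref{eqn:new cost} is also a quadratic polynomial in $(x,y)$. Its $(x,y)$-dependence is again only through $m_{x'},m_{y'}$, exactly as in the expansion of $c_1(x,y;\pi_2)$ in the proof of Theorem \ref{thm:NashBrenier}. I would match the two polynomials coefficient by coefficient; the constant terms match by integrating $C(\pi_2^*)$.

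\textbf{Step 3: the two integrated identities.} Integrating \eqref{euc integral} against $\check\pi_1$ gives
\[
\iint c_e\,d\check\pi_1\,d\check\pi_2=\int c_{e,\pi_2^*}\,d\check\pi_1,
\]
and $c_{e,\pi_2^*}(x,y)=\inf_\gamma c_{\pi_2^*}(\gamma)$ over paths from $x$ to $y$ by definition \eqref{eqn: effective costs}. The statements for $c_{e,\pi_1^*}$ follow in the same way with $\sup_\xi$, using the concavity from Step 1.

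\textbf{Main obstacle.} The hard point is Step 2. The identity is an exchange of $\inf_\gamma$ with $\int d\check\pi_2$, and such an exchange generally fails: one only has $\inf\int\ge\int\inf$. Here it survives only because the interaction is quadratic, so $\gamma$ couples to $\pi_2^*$ through $\bar\xi$ alone. One must also check that the cross terms coming from the self-consistent $\bar\xi$ built from the saddle paths $\Xi$ of Step 1 reproduce precisely the coefficients $\tfrac\alpha3[\cdots]$ of \eqref{eqn:new cost}. I would verify this by direct computation, reusing the $S,D$ change of variables from the proof of Lemma \ref{thm:explicit cost}.
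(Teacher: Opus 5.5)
Your Step 2 is where the proposal breaks, and it cannot be completed: the coefficient matching you defer fails, so \eqref{euc integral} is false.

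Write $\omega=\sqrt{2\alpha}$. The minimizer of $c_{\pi_2^*}$ from $x$ to $y$ is $p+g$. Here $p$ is the minimizer from $0$ to $0$, and $g$ solves $\ddot g=\omega^2 g$ with $g(0)=x$, $g(1)=y$. Using the equation for $p$, the cross terms reduce to the boundary term $\dot p(1)\cdot y-\dot p(0)\cdot x$. Hence
\[
c_{e,\pi_2^*}(x,y)=\frac{\omega}{2\sinh\omega}\Big[(|x|^2+|y|^2)\cosh\omega-2\,x\cdot y\Big]+\text{(affine in $(x,y)$)}.
\]
On the other side, $\int c_e\,d\check\pi_2=\frac12|x-y|^2+\frac\alpha3\big(|x|^2+x\cdot y+|y|^2\big)+\text{(affine)}$. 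The $x$-Hessians are $\omega\coth\omega\,\mathbb{I}_d$ and $(1+\frac{\omega^2}{3})\mathbb{I}_d$. Since $\omega\coth\omega=1+\frac{\omega^2}{3}-\frac{\omega^4}{45}+\cdots<1+\frac{\omega^2}{3}$ for every $\omega>0$, they never agree.

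So the mismatch sits in the part that does not depend on $\pi_2^*$ at all, not in the $\bar\xi$ cross terms you were worried about. No NETP can repair it; the two sides agree only to first order in $\alpha$. Structurally, $c_e$ comes from a saddle in which $\xi$ re-optimizes against each $\gamma$, which forces $\gamma-\xi$ to be affine. In $c_{e,\pi_2^*}$ the opponent is frozen, and $\gamma$ feels a genuine harmonic potential $\alpha|\gamma-\bar\xi|^2$. Your observation that $\gamma$ sees $\pi_2^*$ only through $\bar\xi$ is correct, but it does not justify exchanging $\inf_\gamma$ with $\int d\check\pi_2$.

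The failure persists on $\text{spt}\,\check\pi_1$ and after integration, so Step 3 fails as well. Take $d=1$, $0<\alpha<\pi^2/2$, $\mu_1=\nu_1=\delta_0$, $\mu_2=\nu_2=\frac12(\delta_{-1}+\delta_1)$, with $R,K,L$ large. Let $\xi_\pm(t)=\pm\cos(\omega(t-\frac12))/\cos\frac\omega2$.

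Then $\pi_1^*=\delta_{0}$ (the constant path) and $\pi_2^*=\frac12(\delta_{\xi_+}+\delta_{\xi_-})$ are mutual best responses:
\begin{itemize}
\item Since $\bar\xi\equiv0$, $c_{\pi_2^*}(\gamma)=\int_0^1\frac12\dot\gamma^2+\alpha\gamma^2\,dt+c(0,\xi_+)$ is uniquely minimized by $\gamma\equiv0$.
\item $c_{e,\pi_1^*}(x',y')=-\frac{\omega}{2\sin\omega}\big[(x'^2+y'^2)\cos\omega-2x'y'\big]$ is attained on oscillator arcs and is maximized over couplings by the identity coupling.
\end{itemize}
So this pair is a NETP, yet
\[
\int\inf_\gamma c_{\pi_2^*}\,d\check\pi_1=\int\sup_\xi c_{\pi_1^*}\,d\check\pi_2=c(0,\xi_+)=\omega\tan\tfrac{\omega}{2}>\tfrac{\omega^2}{2}=\alpha=c_e(0,0,\pm1,\pm1)=\iint c_e\,d\check\pi_1\,d\check\pi_2 .
\]

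The paper's own argument does not get around this. It asserts that every pair in $\text{spt}(\pi_1^*)\times\text{spt}(\pi_2^*)$ is a pointwise saddle. It then uses $\gamma^*=\argmin c(\cdot,\xi^*)$ as a single competitor, although $\gamma^*$ depends on $\xi^*$. In the example, $\gamma\equiv0$ is a best response to $\pi_2^*$ but not to $\xi_+$ alone.

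What does hold is your Step 1 and the definitional half of Step 3. The correct effective cost is the hyperbolic expression above, not $\int c_e\,d\check\pi_2$.
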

\begin{proof}
Indeed, to prove \eqref{euc integral}, by Lemma \ref{thm:explicit cost} there exists a unique $\gamma^{*}$ that minimizes the first equation of \eqref{eqn: effective costs}, and a unique $\xi^{*}$ that maximizes the second equation of \eqref{eqn: effective costs}. According to Lemma \ref{lem:support_saddle}, which will be proved later, for any  pairs $(\gamma, \xi) \subset \text{spt}(\pi_{1}^{*})\otimes\text{spt}(\pi_{2}^{*})$,  then $\gamma^{*}=\gamma$ and $\xi^{*}=\xi$. 

We will first show that 
\[
\inf_{\gamma}\int_{\Omega_{2}}c(\gamma, \xi^{*})d\pi^{*}_{2}(\xi^{*})=\int_{\Omega_{2}}\inf_{\gamma}c(\gamma, \xi^{*})d\pi_{2}^{*}(\xi^{*}).
\]
To that end, let $m = \inf_\gamma \int c(\gamma,\xi) d\pi_2(\xi)$. By monotonicity of the integral we have the following inequality
\begin{gather*}
    m = \inf_\gamma \int_{\Omega_2} c(\gamma, \xi^*) d\pi^{*}_2(\xi^*) \geq \int_{\Omega_2} \inf_\gamma c(\gamma,\xi^*)d\pi_2^*(\xi^*) =: \hat{m}
\end{gather*}
So since $\gamma^* = \operatorname{argmin} c(\gamma, \xi^*)$ is admissible, choose $\gamma_n = \gamma^*$ for all $n$.
Then
\begin{gather*}
    \hat{m} = \int \inf_\gamma c(\gamma, \xi^{*}) d\pi^{*}_2(\xi^*) = \int c(\gamma^*, \xi^{*}) d\pi^{*}_2(\xi^*) = \int c(\gamma_n, \xi^{*})\ d\pi^{*}_2(\xi^*),\quad \forall n \in \mathbb{N}.
\end{gather*}
Taking limits and applying lower semicontinuity implies that since $\gamma_n \to \gamma^*$ trivially we have
\begin{gather*}
    \hat{m} = \int_{\Omega_2} c(\gamma^*,\xi^*) d\pi_2^*(\xi^*) \geq \inf_\gamma \int_{\Omega_2} c(\gamma, \xi^*) d\pi^*_2(\xi^*) = m,
\end{gather*}
achieving the desired equality.

Now to complete the proof, compute the right hand side of \eqref{euc integral}, while we apply what we just proved, and apply \eqref{qot end-point cost} to get
\begin{align*}
    \int_{\mathbb{R}^{d}\times\mathbb{R}^{d}}c_{e}(x,y,x',y')d\check{\pi}_{2}(x',y')&=\int_{\mathbb{R}^{d}\times\mathbb{R}^{d}}\left(\inf_{\gamma}\sup_{\xi}c(\gamma, \xi)\right)d(e_{0}, e_{1})_{\sharp}\pi_{2}^{*}(\xi) \\
    &=\int_{\Omega_{2}} \inf_\gamma c(\gamma, \xi^*) d\pi_2^*(\xi^*) \\
    &=\inf_{\gamma}\int_{\Omega_{2}}c(\gamma, \xi^{*})d\pi_{2}^{*}(\xi^{*}).
\end{align*}
Then we have
\begin{align*}
    \iint_{(\mathbb{R}^{d}\times\mathbb{R}^{d})^{2}}c_{e}(x,y,x',y')d\check{\pi}_{1}(x,y)d\check{\pi}_{2}(x',y')&=\int_{\mathbb{R}^{d}\times\mathbb{R}^{d}}\int_{\mathbb{R}^{d}\times\mathbb{R}^{d}}\left(\inf_{\gamma}\sup_{\xi}c(\gamma,\xi)\right)d{\check{\pi}}_{2}(x',y')d\check{\pi}_{1}(x,y)\\
    &=\int_{\mathbb{R}^{d}\times\mathbb{R}^{d}}\inf_{\gamma}\left(\int_{\Omega_{2}}c(\gamma, \xi^{*})d\pi_{2}^{*}(\xi^{*})\right)d\check{\pi}_{1}(x,y)\\
    &=\int_{\mathbb{R}^{d}\times\mathbb{R}^{d}}\inf_{\gamma}c_{\pi_{2}^{*}}\;d\check{\pi}_{1}(x,y).
\end{align*}
The first equation follows form the definition of the saddle point cost function \eqref{Lagrangian cost}-\eqref{kinetic potential energy}. The second equation follows from the push forward, and the equality $m=\hat{m}$. 
Identical arguments give the other case with respect to $c_{e, \pi_{1}^{*}}$. 

\end{proof}

We now recall a classical construction known as \textit{cyclical monotonicity} and adapt a standard notion of \emph{cyclical anti-monotonicity} of which Rockafeller laid the groundwork for these concepts in \cite{rockafellar1970convex}. Namely, the latter concept stems from the following: A mapping $S$ is \emph{anti-monotone} if $-S$ is monotone.  In particular, the following definition will be applied to effective costs \eqref{eqn: effective costs}.
\begin{definition}\label{def. cyclical monotone}
Fix $(\pi_{1}, \pi_{2})\in \Pi_{\text{path}}(\mu_{1}, \nu_{1})\otimes\Pi_{\text{path}}(\mu_{2}, \nu_{2})$, not necessarily a NETP. A finite set $\{(x_i,y_i)\}_{i=1}^n$ is called \emph{$c_{e, \pi_{2}^{*}}$-cyclically monotone}  if the following inequality
\[
\sum_{i=1}^n c_{e, \pi_{2}^{*}}(x_i, y_i)
\;\le\;
\sum_{i=1}^n c_{e, \pi_{2}^{*}}\bigl(x_i,y_{i+1}\bigr)
\]
holds, and a finite set $\{(x_{i}^{\prime}, y_{i}^{\prime})\}^{n}_{i=1}$ is called $c_{e, \pi_{1}^{*}}$-\emph{cyclically anti-monotone} if the following holds
\[
\sum_{i=1}^n c_{e, \pi_{1}^{*}}(x_{i}^{\prime}, y_{i}^{\prime})
\;\geq\;
\sum_{i=1}^n c_{e, \pi_{1}^{*}}\bigl(x'_i,y'_{i+1}\bigr)
\]

Here $y_{n+1}=y_{1}$ and $y'_{n+1}=y'_{1}$.
\end{definition}

In the classical theory of optimal transport, when the cost is quadratic $c_{Q}=|x-y|^{2}$, a necessary and sufficient condition for an optimal solution to exist is that of $c_{Q}$-cyclical monotonicity \cite[Ch 1.2]{ambrosio2013user}. The first author in \cite{cabrera2022optimaltransportationprincipleinteracting} extends this result to include paths and congestion. In the current manuscript we apply the latter to coupled Kantorovich potentials.

\subsection{Potentials} The effective costs \eqref{eqn: effective costs} will prove essential to define coupled potentials with corresponding $c_{e, \pi_{i}^{*}}$-transform ($i=1,2$).
We recall from standard transport that the Monge relaxation problem (which we call the primal problem) has an associated dual problem known as the Kantorovich dual problem. The dual problem typically considers an alternative formulation involving supremums over potential functions $\varphi,\psi$. In the case of bilinear minimax transport, the corresponding ``dual" problem will involve Kantorovich potentials $\varphi_1,\psi_1,\varphi_2,\psi_2$. 

To each pair of \textit{Monge-Kantorovich bilinear potentials}, $\varphi_{1}, \psi_{1}$ and $\varphi_{2}, \psi_{2}$ we can, and we do, apply their respective $c_{e, \pi_{2}^{*}}$-transform and $c_{e, \pi_{1}^{*}}$-transform. Indeed, any function $\psi_{1}:X\to\mathbb{R}\cup\{\infty\}$ induces its $c_{e, \pi_{2}^{*}}$-transform $\psi_{1}^{c_{e, \pi_{2}^{*}}}: X\to \mathbb{R}\cup\{\infty\}$  which is defined by
\begin{align}\label{eqn:c-transform x}
    \psi_{1}^{c_{e, \pi_{2}^{*}}}(x)=\inf_{y\in X}\{c_{e, \pi_{2}^{*}}(x, y)-\psi_{1}(y)\}.
\end{align}

Similarly, any function $\varphi_{1}:X\to\mathbb{R}\cup\{\pm\infty\}$ induces its $c_{e, \pi_{2}^{*}}$-transform $\varphi_{1}^{c_{e, \pi_{2}^{*}}}:X\to\mathbb{R}\cup\{\infty\}$ defined by
\begin{align}\label{eqn:c-transform y}
    \varphi_{1}^{c_{e, \pi_{2}^{*}}}(y)=\inf_{x\in X}\{c_{e, \pi_{2}^{*}}(x, y)-\varphi_{1}(x)\}.
\end{align}

Analogously, given any function $\psi_{2}: X\to \mathbb{R}\cup\{\pm\infty\}$, the $c_{e, \pi_{1}^{*}}$-transform of $\psi_{2}$ is $\psi_{2,c_{e, \pi_{1}^{*}}}:X\to\mathbb{R}\cup\{\infty\}$ defined by 
\begin{align}\label{eqn:c-transform 1}
    \psi_{2,c_{e, \pi_{1}^{*}}}(x')=\sup_{y\in X}\{-c_{e, \pi_{1}^{*}}(x^{\prime}, y^{\prime})-\psi_{2}(y')\}.
\end{align}
In the same way, one can define the $c_{e, \pi_{1}^{*}}$-transform functions of $\varphi_{2}$ on $X$.

\begin{definition}\label{c-convex} 
    A function $\varphi_{1}:X\to \mathbb{R}\cup\{-\infty\}$ is $c_{e, \pi_{2}^{*}}$-\textit{concave} if there exists $\psi_{1}:X\to\mathbb{R}\cup\{-\infty\}$ such that $\varphi_{1}=\psi_{1}^{c_{e, \pi_{2}^{*}}}$.

    By symmetry, $\varphi_{2}:X\to \mathbb{R}\cup\{+\infty\}$ is $c_{e, \pi_{1}^{*}}$-\textit{convex}, if there exists $\psi_{2}:X\to\mathbb{R}\cup\{+\infty\}$ such that $\varphi_{2}=\psi_{2,c_{e, \pi_{1}^{*}}}$.
\end{definition}
We now apply the standard theory of optimal transport from  Ambrosio's and Gigli's guide in \cite{ambrosio2013user} associated to \textit{superdifferentiability} to the pair $(\varphi_1,\psi_1)$. Let us write and study such properties. For $\varphi_{1}\in L^{1}(d\mu_{1})$ and $\psi_{1}\in L^{1}(d\nu_{1})$, we have the inequality $\varphi_{1}(x)+\psi_{1}(y)\leq c_{e, \pi_{2}^{*}}(x, y)$. Then the ``concavity" \textit{transforms} from Definition \ref{c-convex} that we will now compute, namely \eqref{eqn:c-transform y} and \eqref{eqn:c-transform x} maximize the linear functional,
\[
\mathcal{J}(\varphi_{1},\psi_{1}):=\int_{\mathbb{R}^{d}}\varphi_{1}(x)d\mu_{1}(x)+\int_{\mathbb{R}^{d}}\psi_{1}(y)d\nu_{1}(y).
\] 
Given a pair $(\varphi_{1}, \psi_{1}) \in C_{b}(X\times Y)$, for all $x$ and $y$ we have the standard inequality
\begin{align*}
    \psi_{1}(y)\leq c_{e, \pi_{2}^{*}}(x, y)-\varphi_{1}(x).
    \end{align*}
  Taking the infimum of the above inequality with respect to $x$ gives,
  \begin{align}\label{ineq: inf psi aux}
          \psi_{1}(y)\leq \inf_{x\in X}[c_{e, \pi_{2}^{*}}(x, y)-\varphi_{1}(x)]:=\varphi_{1}^{c_{e, \pi_{2}^{*}}}(y)
      \end{align}
  Similarly, for all $x$ and $y$
    \begin{align*}
          \varphi_{1}(x)\leq c_{e, \pi_{2}^{*}}(x, y)-\psi_{1}(y),
      \end{align*}
  taking the infimum on the right of the above inequality with respect to $y$, we get
  \begin{align}\label{ineq: inf phi aux}
          \varphi_{1}(x)\leq \inf_{y \in X}[c_{e, \pi_{2}^{*}}(x, y)-\psi_{1}(y)]:=\psi_{1}^{c_{e, \pi_{2}^{*}}}(x).
      \end{align}

Then we have        
  \begin{align*}
          \mathcal{J}(\varphi_{1},\varphi_{1}^{c_{e, \pi_{2}^{*}}})\geq \mathcal{J}(\varphi_{1}, \psi_{1}).
      \end{align*}
 From (\ref{ineq: inf psi aux}) and (\ref{ineq: inf phi aux}) we have
  \begin{align*}
      (\varphi_{1}^{c_{e, \pi_{2}^{*}}})^{c_{e, \pi_{2}^{*}}}(x)=:\inf_{y\in X}[c_{e, \pi_{2}^{*}}(x, y)-\varphi_{1}^{c_{e, \pi_{2}^{*}}}(y)]\geq \varphi_{1}(x),
  \end{align*}
  thus
  \begin{align*}
          \mathcal{J}((\varphi_{1}^{c_{e, \pi_{2}^{*}}})^{c_{e, \pi_{2}^{*}}}, \varphi_{1}^{c_{e, \pi_{2}^{*}}})\geq \mathcal{J}(\varphi_{1},\varphi_{1}^{c_{e, \pi_{2}^{*}}})\geq \mathcal{J}(\varphi_{1}, \psi_{1}).
      \end{align*}     
Therefore, the pair $\left((\varphi_{1}^{c_{e, \pi_{2}^{*}}})^{c_{e, \pi_{2}^{*}}}, \varphi_{1}^{c_{e, \pi_{2}^{*}}}\right)$ maximizes $\mathcal{J}(\varphi, \psi)$.     
This shows that in looking for a maximizing pair for the dual problem \eqref{eqn:new dual pi_2} we can focus on pairs $\varphi_{1},\psi_{1}$ that satisfy Definition \ref{c-convex}. Identical computations also hold for the pair $(\varphi_{2}, \psi_{2})\in C_{b}(X'\times Y')$ with cost $c_{e, \pi_{1}^{*}}(x^{\prime}, y^{\prime})$ defined in \eqref{eqn:cost pi_1} satisfying the constraint $-\varphi_{2}(x')-\psi_{2}(y')\geq c_{e, \pi_{1}^{*}}(x^{\prime}, y^{\prime})$ for its corresponding dual problem \eqref{eqn:new dual pi_1}. 
 
 The \textit{$c_{e, \pi_{2}^{*}}$-superdifferential} set defined for a $c_{e, \pi_{2}^{*}}$-concave function $\varphi_{1}$ is:
 \begin{align*}
          \partial^{c_{e, \pi_{2}^{*}}}\varphi_{1}:=\left\{(x,y) \in X\times X:\; \varphi_{1}(x)+\varphi_{1}^{c_{e, \pi_{2}^{*}}}(y)=c_{e, \pi_{2}^{*}}(x, y)\right\}.
      \end{align*}

 Identical definitions of $c_{e, \pi_{1}^{*}}$-convex functions $\varphi_{2}$ corresponding to the other dual problem \eqref{eqn:new dual pi_1} follow. Indeed,     
the \emph{$c_{e, \pi_{1}^{*}}$-subdifferential} $\partial_{c_{e, \pi_{1}^{*}}}$ of a $c_{e, \pi_{1}^{*}}$-convex function
$\varphi_{2} : X \to \mathbb{R} \cup \{+\infty\}$ is defined analogously by
\[
\partial_{c_{e, \pi_{1}^{*}}}\varphi_{2}
:= \left\{ (x',y') \in X \times X \;:\;
\varphi_{2}(x') + \varphi_{2}^{c_{e, \pi_{1}^{*}}}(y') = -c_{e, \pi_{1}^{*}}(x^{\prime}, y^{\prime}) \right\}.
\]
An equivalent characterization of the $c_{e, \pi_{2}^{*}}$-superdifferential (and similarly for the $c_{e, \pi_{1}^{*}}$-subdifferential) is the following:
\[
y \in \partial^{c_{e, \pi_{2}^{*}}} \varphi_{1}(x)
\quad \Longleftrightarrow \quad
\begin{cases}
\varphi_{1}(x) = c_{e, \pi_{2}^{*}}(x, y) - \varphi_{1}^{c_{e, \pi_{2}^{*}}}(y), &(x,y) \in \operatorname{spt}(\mu_1)\times \operatorname{spt}(\nu_1)\\
\varphi_{1}(x) \le c_{e, \pi_{2}^{*}}(x, y) - \varphi_{1}^{c_{e, \pi_{2}^{*}}}(y),
 &\forall x,y\in X\times X.
\end{cases}
\]
 We will see that these potentials are connected to the constraints of the pair of dual problems of Theorem \ref{thm:coupled duality}.

\subsection{Coupled Duality}
We furnish a dual problem corresponding to the ``primal problem" (\ref{inf sup problem}). For concreteness we shall use the cost (\ref{eqn:new cost}). The coupling of the dual problem is obtained through the following. 

Fix $\pi_{2}\in \Omega_{2}$. Thanks to Lemma \ref{lemma:saddlepoint-endpoint}, the cost function corresponding to this $\pi_{2}$ is given by
\begin{align}\label{eqn:cost pi_2}
    c_{e, \pi_{2}^{*}}(x, y)=\int_{\mathbb{R}^{d}\times\mathbb{R}^{d}} c_{e}(x,y,x^{\prime}, y^{\prime})d\pi_{2}(x^{\prime}, y^{\prime}),
\end{align}
where $c_{e}$ is given by (\ref{eqn:new cost}). Expanding (\ref{eqn:cost pi_2}), we acquire
\begin{align*}
    c_{e, \pi_{2}^{*}}(x, y)=\tfrac{1}{2}|x-y|^{2}+\mathcal{I}_{\pi_{2}}[x^{\prime},y^{\prime}] (x, y; \alpha).
\end{align*}
Here, for $\alpha>0$, $\mathcal{I}_{\pi_{2}}$ is defined by the integral
\begin{align}\label{integrtal cost 1}
\mathcal{I}_{\pi_{2}}[x^{\prime}, y^{\prime}](x,y; \alpha)=\int_{\mathbb{R}^{d}\times\mathbb{R}^{d}}\tfrac{\alpha}{3}\left(|x-x^{\prime}|^{2}+(x-x^{\prime})(y-y^{\prime})+|y-y^{\prime}|^{2}\right)-\tfrac{1}{2}|x^{\prime}-y^{\prime}|^{2}d\pi_{2}(x^{\prime},y^{\prime}).
\end{align}

From the standard theory of optimal transport, the \textit{dual problem} corresponding to this cost is thus,
\begin{align}\label{eqn:new dual pi_2}
    \sup_{\varphi_{1}, \psi_{1}\in L^{1}(d\mu_{1}\times d\nu_{1})}\int_{\mathbb{R}^{d}}\varphi_{1}(x)d\mu_{1}(x)+\int_{\mathbb{R}^{d}}\psi_{1}(y)d\nu_{1}(y)
\end{align}
subject to the constraint,
\begin{align*}
    \varphi_{1}(x)+\psi_{1}(y)\leq c_{e, \pi_{2}^{*}}(x, y).
\end{align*}
This is reminiscent of the classical dual problem corresponding to the standard primal problem (\ref{Kantorovich problem}). The goal of this section is to establish \textit{strong duality} (which is different from the \textit{minimax strong duality} in Section \ref{minimax section}). Observe that the cost function $c_{e, \pi_{2}^{*}}$ satisfies the conditions for a corresponding cost function in standard optimal transport, that is, it satisfies the regularity conditions (see \cite{villani2008optimal}). Thus we have

\begin{align*}
    \inf_{\pi_{1}}\int_{\mathbb{R}^{d}\times\mathbb{R}^{d}}c_{e, \pi_{2}^{*}}(x, y)d\pi_{1}(x,y)=\sup_{\varphi_{1}, \psi_{1}}\int_{\mathbb{R}^{d}}\varphi_{1}(x)d\mu_{1}(x)+\int_{\mathbb{R}^{d}}\psi_{1}(y)d\nu_{1}(y).
\end{align*}
This follows at once by an application of the standard optimal transport theory, \cite{villani2008optimal}.

In fact, according to the classical theory of optimal transport, we ``solve" the \textit{coupled dual} problem. Indeed, for a coupled pair $(\varphi_{1}, \psi_{1}), (\varphi_{2}, \psi_{2})$, we already saw that the pair $(\varphi_{1}, \psi_{1})$ solves strong duality for the above dual problem (\ref{eqn:new dual pi_2}). While the other pair $(\varphi_{2}, \psi_{2})$, solves another strong duality. Namely,
\begin{align}\label{eqn:new dual pi_1}
    \sup_{\pi_{2}}\int_{\mathbb{R}^{d}\times\mathbb{R}^{d}}c_{e, \pi_{1}^{*}}(x^{\prime}, y^{\prime})d\pi_{2}(x',y')=\inf_{\varphi_{2}, \psi_{2}\in L^{1}(d\mu_{2}\times d\nu_{2})}\left(\int_{\mathbb{R}^{d}}-\varphi_{2}(x')d\mu_{2}(x')+\int_{\mathbb{R}^{d}}-\psi_{2}(y')d\nu_{2}(y')\right).
\end{align}
The latter is minimized subject to the constraint,
\[
c_{e, \pi_{1}^{*}}(x^{\prime}, y^{\prime})\leq -\varphi_{2}(x')-\psi_{2}(y').
\]
We prove this strong duality.

Fixing $\pi_{1}\in \Omega_{1}$, the cost function corresponding to $\pi_{1}$ is
\begin{align}\label{eqn:cost pi_1}
    c_{e, \pi_{1}^{*}}(x^{\prime}, y^{\prime}):=-\tfrac{1}{2}|x^{\prime}-y^{\prime}|^{2}+\mathcal{I}_{\pi_{1}}[x, y](x^{\prime}, y^{\prime}; \alpha),
\end{align}
where
\begin{align}\label{integral cost 2}
\mathcal{I}_{\pi_{1}}[x,y](x^{\prime},y^{\prime};\alpha):=\int_{\mathbb{R}^{d}\times\mathbb{R}^{d}}\tfrac{\alpha}{3}\left(|x-x^{\prime}|^{2}+(x-x^{\prime})(y-y^{\prime})+|y-y^{\prime}|^{2}\right)+\tfrac{1}{2}|x-y|^{2}d\pi_{1}(x,y).
\end{align}
Its dual problem is
\begin{align}
    \inf_{\varphi_{2}, \psi_{2}\in L^{1}(d\mu_{2}\times d\nu_{2})}\int_{\mathbb{R}^{d}}\varphi_{2}(x^{\prime})d\mu_{2}(x^{\prime})+\int_{\mathbb{R}^{d}}\psi_{2}(y^{\prime})d\nu_{2}(y^{\prime})
\end{align}
subject to the constraint
\[
-\varphi_{2}(x^{\prime})-\psi_{2}(y^{\prime})\geq c_{e, \pi_{1}^{*}}(x^{\prime}, y^{\prime})
\]

Corresponding to this dual, according to the standard optimal transport theory \cite{villani2008optimal}, we have strong duality
\begin{align*}
\sup_{\pi_{2}}\int_{\mathbb{R}^{d}\times\mathbb{R}^{d}}c_{e, \pi_{1}^{*}}(x^{\prime}, y^{\prime})d\pi_{2}(x',y')&=-\inf_{\pi_{2}}\int_{\mathbb{R}^{d}\times\mathbb{R}^{d}}\widetilde{c}_{e, \pi_{1}^{*}}(x', y')d\pi_{2}(x',y')\\
(\text{OT})&=-\sup_{\varphi_{2}, \psi_{2}\in L^{1}(d\mu_{2}\times d\nu_{2})}\left(\int_{\mathbb{R}^{d}}\varphi_{2}(x')d\mu_{2}(x')+\int_{\mathbb{R}^{d}}\psi_{2}(y')d\nu_{2}(y')\right)\\
&=\inf_{\varphi_{2}, \psi_{2}\in L^{1}(d\mu_{2}\times d\nu_{2})}\left(\int_{\mathbb{R}^{d}}-\varphi_{2}(x')d\mu_{2}(x')+\int_{\mathbb{R}^{d}}-\psi_{2}(y')d\nu_{2}(y')\right).
\end{align*}
Where, $\widetilde{c}_{e, \pi_{1}^{*}}(x', y')=-c_{e, \pi_{1}^{*}}(x^{\prime}, y^{\prime}).$ 

In summation, we have proved the \textit{Monge-Kantorovich-bilinear coupled duality problem}:
\begin{theorem}\label{thm:coupled duality}
    Let $\mu_{1},\nu_{1}$ and $\mu_{2}, \nu_{2}$ be compactly supported probability measures in $\mathbb{R}^{d}$ that absolutely continuous with respect to Lebesgue, and let $c:\left(\mathbb{R}^{d}\times\mathbb{R}^{d}\right)^{2}\to \mathbb{R}$ be a lower semicontinuous cost function on $c(\cdot, \cdot, x', y')$ and upper semicontinuous cost function on $c(x,y,\cdot,\cdot)$. Then whenever $(\pi_{1}, \pi_{2})\in \Pi(\mu_{1},\nu_{1},\mu_{2}, \nu_{2})$ and $(\varphi_{1}, \psi_{1}), (\varphi_{2}, \psi_{2})$ belong to $L^{1}\left(d\mu_{1}\times d\nu_{1}\right), L^{1}\left(d\mu_{2}\times d\nu_{2}\right),$ respectively, such that
    \begin{align*}
        \varphi_{1}(x)+\psi_{1}(y)\leq c_{e, \pi_{2}^{*}}(x, y)\qquad\text{and}\quad-\varphi_{2}(x')-\psi_{2}(y')\geq c_{e, \pi_{1}^{*}}(x^{\prime}, y^{\prime}),
    \end{align*}
    then the pair of equalities hold
    \begin{align*}
\inf_{\pi_{1}}\int_{\mathbb{R}^{d}\times\mathbb{R}^{d}}c_{e, \pi_{2}^{*}}(x, y)d\pi_{1}(x,y)&=\sup_{\varphi_{1}, \psi_{1}}\int_{\mathbb{R}^{d}}\varphi_{1}(x)d\mu_{1}(x)+\int_{\mathbb{R}^{d}}\psi_{1}(y)d\nu_{1}(y),\\
\sup_{\pi_{2}}\int_{\mathbb{R}^{d}\times\mathbb{R}^{d}}c_{e, \pi_{1}^{*}}(x^{\prime}, y^{\prime})d\pi_{2}(x',y')
&=\inf_{\varphi_{2}, \psi_{2}\in L^{1}(d\mu_{2}\times d\nu_{2})}\left(\int_{\mathbb{R}^{d}}-\varphi_{2}(x')d\mu_{2}(x')+\int_{\mathbb{R}^{d}}-\psi_{2}(y')d\nu_{2}(y')\right).
\end{align*}
Here the cost functionals $c_{e, \pi_{2}^{*}}(x, y)$ and $c_{e, \pi_{1}^{*}}(x^{\prime}, y^{\prime})$ are defined in \eqref{eqn:cost pi_2}-\eqref{integrtal cost 1} and  \eqref{eqn:cost pi_1}-\eqref{integral cost 2}.
\end{theorem}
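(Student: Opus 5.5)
The plan is to reduce each of the two claimed equalities to classical Kantorovich duality for a single fixed cost. Once the opposing plan is frozen at its equilibrium value, the bilinear structure disappears. We fix the NETP $(\pi_1^*,\pi_2^*)$ given by Theorem \ref{thm:minimax=maximin}, and push it forward by $(e_0,e_1)$ to Euclidean plans. By Lemma \ref{lemma:saddlepoint-endpoint}, the effective end-point costs $c_{e,\pi_2^*}$ and $c_{e,\pi_1^*}$ coincide with the integrated expressions \eqref{eqn:cost pi_2}–\eqref{integrtal cost 1} and \eqref{eqn:cost pi_1}–\eqref{integral cost 2}. These are functions of $(x,y)$ alone, respectively of $(x',y')$ alone. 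For the quadratic cost \eqref{eqn:new cost} they are explicit quadratic polynomials, computed in the proof of Theorem \ref{thm:NashBrenier} for the stationary case. In particular they are continuous, and bounded on the compact supports of $\mu_i,\nu_i$.

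For the first equality, the inequality ``$\geq$'' is the trivial weak duality. Integrate the constraint $\varphi_1(x)+\psi_1(y)\le c_{e,\pi_2^*}(x,y)$ against any $\pi_1\in\Pi(\mu_1,\nu_1)$ and use the marginal conditions. For ``$\leq$'' we invoke the standard Kantorovich duality theorem for lower semicontinuous costs bounded below on Polish spaces \cite{villani2008optimal}. Its hypotheses follow from the continuity and compact-support remarks above. We also sketch the concrete route the paper already set up: take a $c_{e,\pi_2^*}$-cyclically monotone support of an optimal plan (Definition \ref{def. cyclical monotone}), build $\varphi_1$ by the Rockafellar–Rüschendorf construction, and replace the pair by $(\varphi_1^{c c},\varphi_1^{c})$ using the $c$-transform computations following Definition \ref{c-convex}. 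This shows equality on $\partial^{c_{e,\pi_2^*}}\varphi_1\supset\operatorname{spt}\pi_1$, so $\mathcal J$ attains the primal value.

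For the second equality, we set $\widetilde c=-c_{e,\pi_1^*}$. The supremum over $\pi_2$ of $\int c_{e,\pi_1^*}\,d\pi_2$ becomes $-\inf\int\widetilde c\,d\pi_2$. The constraint $-\varphi_2-\psi_2\ge c_{e,\pi_1^*}$ becomes $\varphi_2+\psi_2\le\widetilde c$. Applying the first step to $\widetilde c$ and negating gives the stated inf-formula, exactly as in the display preceding the theorem. The $L^1$ integrability of the potentials is automatic: we may restrict to $c$-concave pairs, which are bounded on compact sets because the cost is.

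The main obstacle is justifying that the effective costs are genuinely given by \eqref{eqn:cost pi_2}, that is, that Lemma \ref{lemma:saddlepoint-endpoint} applies, since it relies on support statements (Lemma \ref{lem:support_saddle}) proved later. The theorem's semicontinuity hypotheses on a general $c$ also need to transfer to $c_{e,\pi_2^*}$. For a general $c$ we would first establish this: lower semicontinuity in $(x,y)$ of $\int c(\cdot,\cdot,x',y')\,d\pi_2$ follows by Fatou, since $c$ is bounded below on compacts. Upper semicontinuity of the $\pi_1$-integral in $(x',y')$ follows symmetrically. This is all the classical duality theorem requires, so that transfer step is where care is needed. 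The remainder is routine.
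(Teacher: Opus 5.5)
Your proposal is correct and is essentially the paper's argument: once the opposing plan is frozen, $c_{e,\pi_2^*}$ and $-c_{e,\pi_1^*}$ are fixed continuous quadratic costs on compact supports. The first equality is then classical Kantorovich duality \cite{villani2008optimal}, and the second follows by applying it to $-c_{e,\pi_1^*}$ and negating, exactly as in the display preceding the theorem. The steps you flag as the main obstacle (identifying the effective costs via Lemma \ref{lemma:saddlepoint-endpoint}, and transferring semicontinuity from a general $c$) are not needed, because the theorem defines the costs as the explicit expressions \eqref{eqn:cost pi_2} and \eqref{eqn:cost pi_1}. This also spares you a weak point: your Fatou argument for a general $c$ needs a lower bound uniform in $(x',y')$, and separate lower semicontinuity in $(x,y)$ alone does not provide one.
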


\begin{corollary}\label{corollary: zero sum dual}
    Let $\pi_{1}^{*}$ and $\pi_{2}^{*}$ be solutions to the primal problem of \eqref{eqn:inf sup euclidean} in inf and sup, respectively. That is
    \[
    \int c_{e, \pi_{2}^{*}}(x, y)d\pi_{1}^{*}=\inf_{\pi_{1}}\int c_{e, \pi_{2}^{*}}(x, y)d\pi_{1};\qquad \int c_{e, \pi_{1}^{*}}(x^{\prime}, y^{\prime})d\pi_{2}^{*}=\sup_{\pi_{2}}\int c_{e, \pi_{1}^{*}}(x^{\prime}, y^{\prime})d\pi_{2}.
    \]
    If there exists a quadruple $(\varphi_1^*,\psi_1^*,\varphi_2^*,\psi_2^*)$ of $c$-concave functions which satisfy the coupled dual problem in Theorem \ref{thm:coupled duality}, then the following identity holds
    \begin{gather*}
        \sup_{\varphi_1,\psi_1} \int \varphi_1(x)\ d\mu_1(x) + \int \psi_1(y)\ d\nu_1(y) = \inf_{\varphi_2,\psi_2} \int -\varphi_2(x')\ d\mu_2(x') + \int -\psi_2(y')\ d\nu_2(y')
    \end{gather*}
    or
    \begin{gather*}
        \int \varphi_1^*(x)\ d\mu_1(x) + \int \psi_1^*(y)\ d\nu_1(y) + \int \varphi_2^*(x')\ d\mu_2(x') + \int \psi_2^*(y')\ d\nu_2(y')=0.
    \end{gather*}
\end{corollary}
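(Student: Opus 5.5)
The plan is to show that both sides of the claimed identity equal the common saddle value
\[
V:=\iint_{(\mathbb{R}^{d}\times\mathbb{R}^{d})^{2}} c_{e}(x,y,x',y')\,d\pi_{1}^{*}(x,y)\,d\pi_{2}^{*}(x',y').
\]
The second form of the identity then follows by subtraction.

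\emph{First dual problem.} By the first equality of Theorem \ref{thm:coupled duality}, the supremum over $(\varphi_{1},\psi_{1})$ equals $\inf_{\pi_{1}}\int c_{e,\pi_{2}^{*}}\,d\pi_{1}$. By hypothesis this infimum is attained at $\pi_{1}^{*}$. Using \eqref{eqn:cost pi_2}, so that $c_{e,\pi_{2}^{*}}(x,y)=\int c_{e}(x,y,x',y')\,d\pi_{2}^{*}(x',y')$, together with Fubini--Tonelli, we get
\[
\int c_{e,\pi_{2}^{*}}\,d\pi_{1}^{*}=V .
\]
Fubini applies because $c_{e}$ in \eqref{eqn:new cost} is a continuous polynomial and all four marginals are compactly supported, so $c_{e}$ is bounded on the relevant supports and integrable against $\pi_{1}^{*}\otimes\pi_{2}^{*}$.

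\emph{Second dual problem.} By the second equality of Theorem \ref{thm:coupled duality}, the infimum over $(\varphi_{2},\psi_{2})$ equals $\sup_{\pi_{2}}\int c_{e,\pi_{1}^{*}}\,d\pi_{2}$, which by hypothesis is attained at $\pi_{2}^{*}$. Expanding \eqref{eqn:cost pi_1}--\eqref{integral cost 2}, the effective cost is
\[
c_{e,\pi_{1}^{*}}(x',y')=-\tfrac12|x'-y'|^{2}+\int\Big(\tfrac12|x-y|^{2}+\tfrac{\alpha}{3}[\cdots]\Big)\,d\pi_{1}^{*},
\]
and this equals $\int c_{e}(x,y,x',y')\,d\pi_{1}^{*}(x,y)$, since $\pi_{1}^{*}$ is a probability measure and the term $-\tfrac12|x'-y'|^{2}$ can be moved inside the integral. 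Integrating against $\pi_{2}^{*}$ and applying Fubini again gives $V$. This proves the first displayed identity: both the supremum and the infimum equal $V$.

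\emph{Second form.} If the quadruple $(\varphi_{1}^{*},\psi_{1}^{*},\varphi_{2}^{*},\psi_{2}^{*})$ attains the two dual problems, then
\[
\int\varphi_{1}^{*}\,d\mu_{1}+\int\psi_{1}^{*}\,d\nu_{1}=V
\quad\text{and}\quad
-\int\varphi_{2}^{*}\,d\mu_{2}-\int\psi_{2}^{*}\,d\nu_{2}=V .
\]
Subtracting the second equation from the first gives the zero-sum relation. This is exactly the zero-sum character of the game: player one's dual payoff plus player two's dual payoff vanishes.

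\emph{Main obstacle.} The delicate point is consistency. The two effective costs must be built from the \emph{same} pair $(\pi_{1}^{*},\pi_{2}^{*})$, and each must be optimal against the other. This requires the primal solutions to form an equilibrium pair for \eqref{eqn:inf sup euclidean}, which I would take from the hypothesis together with Lemma \ref{lemma:saddlepoint-endpoint} and Theorem \ref{thm:minimax=maximin}. Beyond that, the sign conventions for $c_{e,\pi_{1}^{*}}$ and its dual constraint must be tracked carefully. Attainment of the dual optimizers is assumed in the statement, so no compactness argument for the potentials is needed.
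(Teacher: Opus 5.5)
Your proposal is correct and follows the paper's proof: both invoke the two strong-duality identities of Theorem \ref{thm:coupled duality}, identify each primal optimal value with the common integral $\iint c_e\,d\pi_1^*\,d\pi_2^*$, and subtract to get the zero-sum identity. The paper merely asserts that identification, whereas you justify it with Fubini and the probability normalization; the equilibrium consistency you flag as the main obstacle is already given by the two displayed hypotheses, so your appeal to Theorem \ref{thm:minimax=maximin} (which the paper also makes) is not needed.
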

\begin{proof} According to Theorem \ref{thm:coupled duality}, we have 
\begin{align}\label{eqns:coupled system}
\begin{split}
\inf_{\pi_{1}}\int c_{e, \pi_{2}^{*}}(x, y)d\pi_{1}(x,y)&=\sup_{\varphi_{1},\psi_{1}}\int \varphi_{1}(x)d\mu_{1}(x)+\int\psi_{1}(y)d\nu_{1}(y),\\
\sup_{\pi_{2}}\int c_{e, \pi_{1}^{*}}(x^{\prime}, y^{\prime})d\pi_{2}(x',y')&=\inf_{\varphi_{2},\psi_{2}}\int -\varphi_{2}(x')d\mu_{2}(x')+\int -\psi_{2}(y)d\nu_{2}(y').
\end{split}
\end{align}
There exists a NETP $(\pi_{1}^{*}, \pi_{2}^{*})$  such that the left-hand sides of \eqref{eqns:coupled system} are achieved jointly, which follows from Theorem \ref{thm:minimax=maximin}. Observe that we have
\begin{gather*}
    \int c_{e, \pi_{2}^{*}}(x, y)d\pi_1^*(x,y) = \int c_{e, \pi_{1}^{*}}(x', y')d\pi_2^*(x',y') = \iint c(x,y,x',y')d\pi_1^*(x,y)d\pi_2^*(x',y').
\end{gather*}
Therefore, there exists $(\varphi_1^*,\psi_1^*,\varphi_2^*,\psi_2^*)$ achieving their sup and inf, respectively, such that according to \eqref{eqns:coupled system} we get
\[
\int \varphi_1^* (x)d\mu_{1}(x)+\int \psi_1^*(y)d\nu_{1}(y)+\int\varphi_2^*(x')d\mu_{2}(x')+\int\psi_2^*)(y')d\nu_{2}(y')=0.
\]

\end{proof}
\begin{remark}
    Corollary \ref{corollary: zero sum dual} is a statement on the zero-sum game structure of the problem. To understand what this means, we refer back to the classic interpretation of the dual problem. That is, at equilibrium the agent who plays $\pi_1$ has associated Kantorovich potentials $\varphi_1,\psi_1$ which represent a profit. However at the same time, the agent who plays $\pi_2$ has corresponding potentials $\varphi_2,\psi_2$ which in this context represents a cost. Thus, one player seeks to maximize their profit while the other is minimizing their costs. The identity encodes the fact that at equilibrium, the potentials must sum up to zero on their support -- otherwise the system is not in equilibrium and one agent has incentive to further profit (or lesser cost).
\end{remark}

As a consequence of Theorem \ref{thm:coupled duality}, we obtain the following result heuristically. However in the proof of Theorem \ref{thm:NashBrenier}, in Section \ref{sec: bilinear transport maps}, we give a rigorous treatment of this consequence. In particular, we show where the mappings $T_{1}$ and $T_{2}$ come from.
\begin{corollary}\label{cor: maps T}
    For cost function $c$ given as in Condition \ref{condition 2} with $\alpha<3$, and assuming the hypotheses from Theorem \ref{thm:coupled duality}, with $m_{x'}:=\int_{\mathbb{R}^{d}}x'd\mu_{2}(x')$ and $m_{y'}:=\int_{\mathbb{R}^{d}}y'd\nu_{2}(y')$. Then the maps $y=T_{1}(x)$ and $y'=T_{2}(x')$ are given as  gradients of some $c_{e, \pi_{2}^{*}},c_{e, \pi_{1}^{*}}$-concave functions
    \begin{align*}
    T_{1}(x)&=\nabla_{x}\left(\frac{3}{\alpha-3}\varphi_{1}(x)-\frac{3+2\alpha}{2(\alpha-3)}|x|^{2}\right)+\frac{2\alpha}{\alpha-3}m_{x'}+\frac{\alpha}{\alpha-3}m_{y'},\\
    T_{2}(x')&=\nabla_{x'}\left(\frac{3}{\alpha+3}\varphi_{2}(x')-\frac{3-2\alpha}{2(\alpha+3)}|x'|^{2}\right)-\frac{2\alpha}{\alpha+3}m_{x}+\frac{2}{\alpha+3}m_{y}.
    \end{align*}
\end{corollary}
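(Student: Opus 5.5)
The plan is to run the classical Brenier/Gangbo–McCann argument separately for each player, treating the other player's equilibrium plan as frozen. This is possible because, once $\pi_2^*$ (resp. $\pi_1^*$) is fixed, the effective costs $c_{e,\pi_2^*}$ and $c_{e,\pi_1^*}$ of \eqref{eqn:cost pi_2} and \eqref{eqn:cost pi_1} are ordinary two-point costs.

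\emph{Step 1: optimal plans live on $c$-superdifferentials.} By Theorem \ref{thm:coupled duality} and Corollary \ref{corollary: zero sum dual}, there are maximizing potentials. By the discussion following Definition \ref{c-convex}, these may be taken to be a $c_{e,\pi_2^*}$-concave pair $(\varphi_1,\varphi_1^{c_{e,\pi_2^*}})$ and a $c_{e,\pi_1^*}$-convex pair $(\varphi_2,\varphi_2^{c_{e,\pi_1^*}})$. Strong duality then forces $\varphi_1(x)+\varphi_1^{c_{e,\pi_2^*}}(y)=c_{e,\pi_2^*}(x,y)$ $\pi_1^*$-a.e. Together with the pointwise inequality $\varphi_1+\varphi_1^{c_{e,\pi_2^*}}\le c_{e,\pi_2^*}$, this gives $\operatorname{spt}\pi_1^*\subset\partial^{c_{e,\pi_2^*}}\varphi_1$. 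The same holds for $\pi_2^*$ and $\partial_{c_{e,\pi_1^*}}\varphi_2$.

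\emph{Step 2: regularity of the potentials.} Since all measures are compactly supported and $c_{e,\pi_2^*}$ is a quadratic polynomial in $(x,y)$, the potential $\varphi_1$ is an infimum of uniformly $C^2$ functions of $x$. Hence it is semiconcave and locally Lipschitz, and by Rademacher it is differentiable Lebesgue-a.e. Because $\mu_1\ll\mathcal{L}^d$, it is therefore differentiable $\mu_1$-a.e. The same argument applies to $\varphi_2$ with respect to $\mu_2$.

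\emph{Step 3: first-order condition and the twist.} Fix $(x,y)\in\operatorname{spt}\pi_1^*$ with $\varphi_1$ differentiable at $x$. The function $z\mapsto c_{e,\pi_2^*}(z,y)-\varphi_1(z)$ is minimized at $z=x$, so
\[
\nabla\varphi_1(x)=\nabla_x c_{e,\pi_2^*}(x,y).
\]
Using the expansion of $c_1(\cdot\,;\pi_2)$ from the stationary proof of Theorem \ref{thm:NashBrenier}, written as $a(|x|^2+|y|^2)+b\,x\cdot y+\ell_x\cdot x+\dots$ with $a=\tfrac{3+2\alpha}{6}$, $b=\tfrac{\alpha-3}{3}$ and $\ell_x=-\tfrac{\alpha}{3}(2m_{x'}+m_{y'})$, this reads
\[
\nabla\varphi_1(x)=\tfrac{3+2\alpha}{3}x+\tfrac{\alpha-3}{3}y-\tfrac{\alpha}{3}(2m_{x'}+m_{y'}).
\]
Since $\alpha\neq 3$, which is exactly the twist of Lemma \ref{twist condition}, we can solve for $y$:
\[
y=\tfrac{3}{\alpha-3}\nabla\varphi_1(x)-\tfrac{3+2\alpha}{\alpha-3}x+\tfrac{2\alpha}{\alpha-3}m_{x'}+\tfrac{\alpha}{\alpha-3}m_{y'}.
\]
This is the stated formula for $T_1$. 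It also shows that $\pi_1^*$ is concentrated on a graph, hence $\pi_1^*=(\mathrm{Id},T_1)_\#\mu_1$.

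For $T_2$ I repeat the argument with $\widetilde c_{e,\pi_1^*}=-c_{e,\pi_1^*}$ and the moments $m_x,m_y$, using the coefficients $\ell_{x'},\ell_{y'}$ computed for $\tilde c_2$ in the stationary proof. The point $x'$ maximizes $z\mapsto -c_{e,\pi_1^*}(z,y')-\varphi_2(z)$, i.e.\ it minimizes $\widetilde c_{e,\pi_1^*}(z,y')+\varphi_2(z)$. Differentiating, then solving the resulting linear relation for $y'$ and collecting terms, should produce an expression of the form ``gradient of ($\varphi_2$ minus a quadratic) plus constant vector.''

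\emph{Main obstacle.} The main obstacle is this last bookkeeping for $T_2$. The signs and normalizations of $\varphi_2$ (convex versus concave, with the constraint $-\varphi_2-\psi_2\ge c_{e,\pi_1^*}$) have to be tracked carefully. My direct computation gives the coefficient $\tfrac{3}{\alpha-3}$ again, up to a sign convention on $\varphi_2$. Matching the displayed constants $\tfrac{3}{\alpha+3}$ and $\tfrac{3-2\alpha}{2(\alpha+3)}$ may require reinterpreting $\varphi_2$, for instance by absorbing a quadratic and a linear term into it. I would first carry out the computation transparently and then reconcile it with the displayed form, flagging any discrepancy in the constants.

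A secondary issue is that Step 1 uses the equilibrium moments. I need to check that $m_{x'},m_{y'}$ depend only on the marginals $\mu_2,\nu_2$, as noted in the stationary proof, so that no circularity arises from freezing $\pi_2^*$.
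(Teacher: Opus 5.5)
Your argument for $T_1$ is the paper's proof. You use $\mathrm{spt}\,\pi_1^*\subset\partial^{c_{e,\pi_2^*}}\varphi_1$, the first-order condition $\nabla\varphi_1(x)=\nabla_x c_{e,\pi_2^*}(x,y)=\tfrac{3+2\alpha}{3}x+\tfrac{\alpha-3}{3}y-\tfrac{2\alpha}{3}m_{x'}-\tfrac{\alpha}{3}m_{y'}$, and inversion using $\alpha\neq 3$. Your Step 2 (semiconcavity plus Rademacher) is, if anything, cleaner than the paper's differentiability discussion. The paper only says $T_2$ is ``done similarly,'' and that is exactly where your proposal goes wrong.

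\textbf{Where the $T_2$ computation fails.} The coefficient $\tfrac{3}{\alpha-3}$ you expect for $T_2$ comes from reusing the expansion of $\tilde c_2$ in the stationary proof of Theorem \ref{thm:NashBrenier}. That expansion is wrong in its quadratic coefficients: $\tfrac{3+2\alpha}{6}$ and $\tfrac{\alpha-3}{3}$ should be $\tfrac{3-2\alpha}{6}$ and $-\tfrac{3+\alpha}{3}$. The two players are not symmetric. In \eqref{eqn:cost pi_1} the kinetic term $-\tfrac12|x'-y'|^2$ contributes $+x'\cdot y'$ and the interaction contributes another $+\tfrac{\alpha}{3}x'\cdot y'$, so
\[
\nabla_{x'}c_{e,\pi_1^*}(x',y')=\tfrac{2\alpha-3}{3}\,x'+\tfrac{\alpha+3}{3}\,y'-\tfrac{\alpha}{3}(2m_x+m_y).
\]
Several consequences follow:
\begin{itemize}
\item The prefactor $\tfrac{3}{\alpha+3}$ in the statement is correct.
\item Player 2's twist condition is $\alpha\neq-3$, automatic for $\alpha>0$; the condition $\alpha\neq3$ belongs to player 1 only.
\item Your proposed repair could not work. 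The factor multiplying $\nabla\varphi_2$ is the reciprocal of the mixed coefficient, and absorbing quadratic or linear terms into $\varphi_2$ does not change it.
\end{itemize}

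\textbf{Extremality claim.} From $-\varphi_2-\psi_2\ge c_{e,\pi_1^*}$ with equality on the support, $x'$ \emph{minimizes} $z\mapsto-\varphi_2(z)-c_{e,\pi_1^*}(z,y')$. Your ``i.e.'' restatement is not equivalent to this. The correct condition gives $\nabla\varphi_2(x')=-\nabla_{x'}c_{e,\pi_1^*}(x',y')$, hence
\[
T_2(x')=\nabla_{x'}\Bigl(-\tfrac{3}{\alpha+3}\varphi_2(x')+\tfrac{3-2\alpha}{2(\alpha+3)}|x'|^2\Bigr)+\tfrac{2\alpha}{\alpha+3}m_x+\tfrac{\alpha}{\alpha+3}m_y .
\]

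\textbf{What to flag instead.} This formula disagrees with the displayed $T_2$ in three places:
\begin{itemize}
\item the sign of the quadratic term;
\item the sign of the $m_x$ term;
\item the $m_y$ coefficient, $\tfrac{\alpha}{\alpha+3}$ versus $\tfrac{2}{\alpha+3}$.
\end{itemize}
No sign convention on $\varphi_2$ reconciles them, because $\varphi_2\mapsto-\varphi_2$ only flips the first term. As a check, as $\alpha\to0$ the displayed formula gives $\nabla\varphi_2(x')-x'+\tfrac23m_y$ instead of the quadratic-cost relation $x'-\nabla\varphi_2(x')$. These are the discrepancies to report, not the denominator.

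\textbf{Consistency check you can include.} The $x'\cdot y'$ coefficient $\tfrac{3+\alpha}{3}$ is positive, and every other term integrates to a constant fixed by the marginals. So player 2's problem is just $\sup\int x'\cdot y'\,d\pi_2$, and $T_2$ is the Brenier map from $\mu_2$ to $\nu_2$ for every $\alpha>0$.
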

\begin{proof}
    We compute one map, $T_{1}$; the second one is done similarly. Fix $\pi_{2}\in\mathcal{P}(\Omega_{2})$, and suppose $\pi_{1}$ is the minimum of  $\inf_{\widetilde{\pi_{1}}}\int c_{e, \pi_{2}^{*}}(x, y)d\widetilde{\pi}_{1}(x,y)$. Since the support of $\pi_{1}$ is contained in the $c_{e, \pi_{2}^{*}}$-superdifferential of $\varphi_{1}^{c_{e, \pi_{2}^{*}}}$, we have $y\in \partial^{c_{e, \pi_{2}^{*}}}\varphi_{1}(x)$. This implies that the function $\overline{x}\mapsto c_{e, \pi_{2}^{*}}(\overline{x}, y)=\varphi_{1}(\overline{x})$ is superdifferentiable at $x$. Moreover, it is clear that the cost $c_{e, \pi_{2}^{*}}(x, y)$ given through \eqref{eqn:new cost} is differentiable everywhere, hence superdifferentiable everywhere. Thus, $\varphi_{1}^{c_{e, \pi_{2}^{*}}}$ is supperdifferentiable at $x$. Consequently, the combination of these items imply that $c_{e, \pi_{2}^{*}}$ is both upper and lower differentiable
at $\overline{x}=x$. Therefore it is differentiable at $x$. Since $x$ was an arbitrary point where $\varphi_{1}^{c_{e, \pi_{2}^{*}}}$ is differentiable,
this proves that, on the support of $\pi_{1}$,  
    \begin{align*}
    \nabla_{x}\varphi_{1}(x)&=\nabla_{x}c_{e, \pi_{2}^{*}}(x, y)\\
    &=\nabla_{x}\left(\tfrac{1}{2}|x-y|^{2}+\mathcal{I}_{\pi_{2}}[x',y'](x,y;\alpha)\right)
    \end{align*}
   Since the functional $\mathcal{I}_{\pi_{2}}$ is given by \eqref{integrtal cost 1}, the gradient of the right hand side of the above is thus
    \begin{align*}
        \frac{3+2\alpha}{3}x+\frac{\alpha-3}{3}y-\frac{2\alpha}{3}m_{x'}-\frac{\alpha}{3}m_{y'}.
    \end{align*}
    Solving for $y$ gives the formula 
    \[
    y=\left[\nabla_{x}\varphi_{1}(x)-\frac{3+2\alpha}{3}x+\frac{2\alpha}{3}m_{x'}+\frac{\alpha}{3}m_{y'}\right]\frac{3}{\alpha-3}:=T_{1}(x).
    \]
    In particular, $y$ is a function of $x$, and we get the formula in the corollary.
\end{proof}

\subsection{Minimaximal Bilinear plans} \label{Minimaximal Bilinear plans are maps}
This section provides the foundation to establish the uniqueness result of the coupled NETP $(\pi_{1}^{*}, \pi_{2}^{*})$ for \eqref{inf sup problem}. In particular, we show that a NETP solution of \eqref{inf sup problem} in the path space projects to a solution in Euclidean space.  The lemmas in this section are essential in establishing that the coupled $(\pi_{1}^{*}, \pi_{2}^{*})$ are given by maps, which are uniquely determined by a gradient of a convex (respectively, concave) function. This section provides a foundational account for obtaining the maps in Corollary \ref{cor: maps T} -- see Section \ref{sec: bilinear transport maps}.

\begin{definition}\label{def: min and max paths}
   Recall the pair of effective cost functions given in \eqref{eqn: eff path}. Given a pair of admissible measures $(\pi_{1}, \pi_{2})\in \Pi_{\text{path}}(\mu_{1}, \nu_{1})\otimes \Pi_{\text{path}}(\mu_{2},\nu_{2})$, let $\gamma_{0}, \xi_{0}: [0, 1]\to \mathbb{R}^{d}$ be a pair of continuous paths from $x$ to $y$ and $x'$ to $y'$, respectively. Then $\gamma_{0}$ is a \textit{minimal path} with respect to $c_{\pi_{2}^{*}}$ if $c_{\pi_{2}^{*}}(\gamma_0)\leq c_{\pi_{2}^{*}}(\gamma)$ for all $\gamma\in \Omega_{1}$, and $\xi_{0}$ is a \textit{maximal path} with respect to $c_{\pi_{1}^{*}}(\xi)$ if $c_{\pi_{1}^{*}}(\xi_{0})\geq c_{\pi_{1}^{*}}(\xi)$ for all $\xi\in\Omega_{2}$.
\end{definition}

The proof of the following lemma follows similar arguments as in \cite[Lemma 3.11]{cabrera2022optimaltransportationprincipleinteracting}. The following will prove essential. For a Borel set $B$, let $\pi\!\restriction B$ denote the restriction of $\pi$ to $B$, that is, the measure defined by
\[
(\pi\!\restriction B)[A] = \pi[B \cap A],
\]
for every Borel set $A$. 

\begin{lemma}[Support of minimax plans]\label{lem:support_saddle}
Let $c_{\pi_{2}^{*}}$ be a lower semicontinuous cost function and $c_{\pi_{1}^{*}}$ an upper semicontinuous cost function, and $\alpha<\pi^{2}/4$.
Let $\pi_{1}^{*}, \pi_{2}^{*}$ be a NETP plan for the bilinear functional \eqref{inf sup problem} with cost $c: \Omega_{1}\times \Omega_{2}\to \mathbb{R}$ given as in Condition \ref{condition 2}. If $\gamma_{0}\in\text{spt}(\pi_{1}^{*})$ and $\xi_{0}\in \text{spt}(\pi_{2}^{*})$, then $\gamma_{0}$ is a minimal path with respect to $c_{\pi_{2}^{*}}$ and $\xi_{0}$ is a maximal path with respect to $c_{\pi_{1}^{*}}(\xi)$ as defined in \eqref{eqn: eff path}.
\end{lemma}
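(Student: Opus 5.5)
The plan is a proof by contradiction that uses only the best-response property of a NETP. By Theorem \ref{thm:minimax=maximin} and Definition \ref{def: NETP}, $\pi_1^*\in\Psi(\pi_2^*)$, that is, $\pi_1^*$ minimizes $\pi_1\mapsto\sigma_0(\pi_1,\pi_2^*)=\int_{\Omega_1}c_{\pi_2^*}\,d\pi_1$ over $\Pi_{\text{path}}(\mu_1,\nu_1)$. Symmetrically, $\pi_2^*\in\Phi(\pi_1^*)$ maximizes $\int_{\Omega_2}c_{\pi_1^*}\,d\pi_2$.

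I read ``minimal'' in Definition \ref{def: min and max paths} as minimal among paths of $\Omega_1$ with the same endpoints $x=\gamma_0(0)$, $y=\gamma_0(1)$. I argue for $\gamma_0$; the statement for $\xi_0$ is identical with $c$ replaced by $-c$ and $\inf$ by $\sup$. The hypothesis $\alpha<\pi^2/4$ is used only to guarantee, through the convexity and concavity computation of the previous section, that $c_{\pi_2^*}$ is convex and coercive in $\gamma$ on each endpoint class. This gives a well-defined minimal value, and $c_{\pi_1^*}$ is concave and bounded above in $\xi$.

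\emph{Step 1: continuity of the effective cost.} Write $c_{\pi_2^*}(\gamma)=\tfrac12\int_0^1|\dot\gamma|^2\,dt+\int_{\Omega_2}\int_0^1\bigl(\alpha|\gamma-\xi|^2-\tfrac12|\dot\xi|^2\bigr)\,dt\,d\pi_2^*(\xi)$. The metric \eqref{eq:path-metric} controls $\|\dot\gamma\|_{L^2}$, so the kinetic term is continuous on $\Omega_1$, and not merely lower semicontinuous. By the bound from \eqref{eqn: separable cost} on the compact set $\Omega_2$ (Lemma \ref{lemma: compact/convex sets}), the interaction term is Lipschitz in $\|\gamma\|_\infty$ uniformly in $\xi$. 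Hence $c_{\pi_2^*}$ is continuous on $\Omega_1$.

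\emph{Step 2: competitor and shift map.} Suppose $\gamma_0\in\operatorname{spt}(\pi_1^*)$ is not minimal. Then there are $\tilde\gamma\in\Omega_1$ from $x$ to $y$ and $\varepsilon>0$ with $c_{\pi_2^*}(\tilde\gamma)<c_{\pi_2^*}(\gamma_0)-3\varepsilon$. Define the endpoint-matching map
\[
\Theta(\gamma)(t)=\tilde\gamma(t)+(1-t)\bigl(\gamma(0)-x\bigr)+t\bigl(\gamma(1)-y\bigr).
\]
This map is continuous in the path metric, satisfies $\Theta(\gamma_0)=\tilde\gamma$, and preserves endpoints: $\Theta(\gamma)(0)=\gamma(0)$ and $\Theta(\gamma)(1)=\gamma(1)$. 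By Step 1 there is a ball $U$ around $\gamma_0$ on which $c_{\pi_2^*}(\gamma)>c_{\pi_2^*}(\gamma_0)-\varepsilon$ and $c_{\pi_2^*}(\Theta\gamma)<c_{\pi_2^*}(\tilde\gamma)+\varepsilon$. Hence $c_{\pi_2^*}(\Theta\gamma)<c_{\pi_2^*}(\gamma)-\varepsilon$ on $U$, and $\pi_1^*(U)>0$ because $\gamma_0$ lies in the support.

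\emph{Step 3: improved plan.} Set $\pi_1'=\pi_1^*-\pi_1^*\!\restriction U+\Theta_\sharp(\pi_1^*\!\restriction U)$. Since $\Theta$ preserves endpoints, $(e_0)_\sharp\pi_1'=\mu_1$ and $(e_1)_\sharp\pi_1'=\nu_1$. Moreover
\[
\sigma_0(\pi_1',\pi_2^*)\le\sigma_0(\pi_1^*,\pi_2^*)-\varepsilon\,\pi_1^*(U),
\]
which contradicts $\pi_1^*\in\Psi(\pi_2^*)$.

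The main obstacle is admissibility: $\Theta(\gamma)$ must lie in $\Omega_1$, so its energy must satisfy $\int_0^1|\dot{\Theta\gamma}|^2\,dt\le K$. I would handle this in one of two ways. The first is to pick $\tilde\gamma$ with strict slack in the energy constraint, replacing it if necessary by a convex combination $s\tilde\gamma+(1-s)\gamma_0$ with $s$ small. By convexity of $\Omega_1$ and of $c_{\pi_2^*}$ (here $\alpha<\pi^2/4$ is used), this combination still strictly decreases $c_{\pi_2^*}$ whenever $\gamma_0$ itself has slack. The second, used if $\gamma_0$ saturates the constraint, is to shrink $U$ until the added affine correction keeps the energy below $K$. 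Checking the second variant carefully is where I expect most of the technical work to lie.
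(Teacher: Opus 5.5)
Your argument is correct in its essentials, and it takes a genuinely different route from the paper.

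\emph{The paper's route.} The paper writes out only the $\xi_0$ case and defers $\gamma_0$ to \cite[Lemma 3.11]{cabrera2022optimaltransportationprincipleinteracting}. It takes the endpoint tube $\mathcal{T}=(e_0,e_1)^{-1}(B_1(x_0')\times B_1(y_0'))$ and subtracts $\varepsilon_0\overline{\pi}_2$, where $\overline{\pi}_2$ is the normalized restriction of $\pi_2^*$ to $\mathcal{T}$. It then adds back $\varepsilon_0$ times the push-forward of the endpoint marginal of $\overline{\pi}_2$ under the map $h$, which sends $(x',y')$ to the optimal path of Proposition~\ref{boundary value problem}. The gain comes from the pointwise inequality $c_{\pi_1^*}(\xi)\le c_{e,\pi_1^*}(\xi(0),\xi(1))$, which is strict near $\xi_0$.

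That route needs several ingredients:
\begin{itemize}
\item an optimal path for \emph{every} endpoint pair;
\item a measurable selection $h$;
\item semicontinuity of $c_{e,\pi_1^*}\circ(e_0,e_1)-c_{\pi_1^*}$;
\item the range of $\alpha$ in which Proposition~\ref{boundary value problem} applies;
\item that $h(x',y')$ actually lies in $\Omega_2$, which the paper never checks.
\end{itemize}
In return, it builds the tube-and-$h$ machinery that is reused in Lemma~\ref{lemma: monotonicity}.

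\emph{Your route.} Your local rewiring uses a single competitor and the affine shift $\Theta$. It needs only continuity of the effective cost in the path metric. It also makes you confront membership of the new paths in $\Omega_1$ directly, which is where the real difficulty sits.

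\emph{First repair: drop the second variant.} Your second variant does not work. Suppose $\int_0^1|\dot{\tilde\gamma}|^2dt=K$ and set $w=(\gamma(1)-y)-(\gamma(0)-x)$. Then
\[
\int_0^1\bigl|\tfrac{d}{dt}\Theta(\gamma)\bigr|^2dt=K+2w\cdot(y-x)+|w|^2 .
\]
This exceeds $K$ for every small $w$ in a half-space. Shrinking $U$ gives no control over whether $\pi_1^*\restriction U$ charges such paths.

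\emph{The first variant works in every case.} It does not matter whether $\gamma_0$ saturates the constraint. For $s\in(0,1)$ and $\gamma_s=s\tilde\gamma+(1-s)\gamma_0$,
\[
\int_0^1|\dot\gamma_s|^2dt=s\int_0^1|\dot{\tilde\gamma}|^2dt+(1-s)\int_0^1|\dot\gamma_0|^2dt-s(1-s)\int_0^1|\dot{\tilde\gamma}-\dot\gamma_0|^2dt<K .
\]
The last integral is positive because $\tilde\gamma\neq\gamma_0$ have the same endpoints.
\begin{itemize}
\item Take $s$ close to $1$ and use Step 1; this keeps the strict cost gain.
\item So you need neither convexity of $c_{\pi_2^*}$ nor concavity of $c_{\pi_1^*}$. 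In any case, $c_{\pi_2^*}$ is convex in $\gamma$ for every $\alpha\ge 0$, not because of $\alpha<\pi^2/4$.
\item Your proof in fact never uses the hypothesis $\alpha<\pi^2/4$.
\item With $\gamma_s$ in place of $\tilde\gamma$, the energy formula above holds with $K$ replaced by $\int_0^1|\dot\gamma_s|^2dt<K$. So a sufficiently small $U$ now keeps the energy of $\Theta(\gamma)$ below $K$.
\end{itemize}

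\emph{Second repair: the range constraint.} Admissibility also requires $\Theta(\gamma)$ to take values in the ball $X$, which you do not address. This is automatic under the paper's convention that $X$ is large, e.g.\ if $\operatorname{dist}(\operatorname{spt}\mu_1,\mathbb{R}^d\setminus X)\ge\sqrt{K}$. Indeed, $|\Theta(\gamma)(t)-\gamma(0)|\le\sqrt{K}$, and $\gamma(0)\in\operatorname{spt}\mu_1$ for $\pi_1^*$-a.e.\ $\gamma$.
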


\begin{proof} 
The former case was dealt with in \cite[Lemma 3.11]{cabrera2022optimaltransportationprincipleinteracting}. The idea is to argue by contradiction: assume that $\pi_{1}^{*}$ is minimal, but $\gamma_{0}\in \text{spt}(\pi_{1}^{*})$ is not minimal path. In this case one constructs a tubular neighborhood of paths and a measure given by a unique map associating the minimal path in which it connects the end points of $\gamma$. Through this measure one contradicts the optimality of $\pi_{1}^{*}$. For the latter, we apply similar arguments to show that if $\pi_{2}^{*}$ is a maximal plan and if $\xi_{0}\in \text{spt}(\pi_{2}^{*})$, then $\xi_{0}$ is a maximal path.

Fix $\pi_{1}\in \mathcal{P}(\Omega_{1})$, and suppose $\pi_{2}^{*}\in \Pi_{\text{path}}(\mu_{2}, \nu_{2})$ is maximal with respect to $c_{\pi_{1}^{*}}(\xi)$ (defined in \eqref{eqn: eff path}) and $\xi_{0}\in \text{spt}(\pi_{2}^{*})$ is not a maximal path. In particular,
\[
\int_{\Omega_{2}} c_{\pi_{1}^{*}}(\xi)d\pi_{2}(\xi)\leq\int_{\Omega_{2}}c_{\pi_{1}^{*}}(\xi)d\pi_{2}^{*}(\xi)\quad\forall\;\xi\in\Omega_{2},\quad c_{\pi_{1}^{*}}(\xi)>c_{\pi_{1}^{*}}(\xi_{0})\quad \pi_{2}\text{-a.e.}
\]
 Let $W:=B_{1}(x_{0}')\times B_{1}(y_{0}')$, $(e_{0}, e_{1}):\Omega_{2}\to X\times X$ defined by $\xi\mapsto (\xi(0), \xi(1))$, and define a \textit{tubo} by the set
 \[
 \mathcal{T}:=(e_{0}, e_{1})^{-1}(W).
 \]
 This is an open subset of $\Omega_{2}$ containing the path $\xi_{0}$. Consider the measure $\overline{\pi}_{2}:=(\pi_{2}^{*}\restriction\mathcal{T})/\pi_{2}^{*}[\mathcal{T}]$. Note that $\pi_{2}^{*}[\mathcal{T}]>0$ since $\xi_{0}$ is in the support of $\pi_{2}^{*}$, and that $\mathcal{T}$ is open and $\xi_{0}\in\mathcal{T}$.

 Now construct a new measure $\widetilde{\pi}_{2}\in \Pi_{\text{path}}(\nu_{x'},\nu_{y'})$ as follows. Let $h_{t}:X\times X\to \Omega_{2}$ be a map that associates to any two points $x', y'$ the maximal path connecting them; namely, 
 \[
 h_{t}(x',y')=\xi_{x',y'}(t).
 \]
 According to Proposition \ref{boundary value problem}, $h_{t}$ is well defined. Set
 \[
 \widetilde{\pi}_{2}:=(h_{t})_{\sharp}\left((e_{0}, e_{1})_{\sharp}\overline{\pi}_{2}\right).
 \]
In other words, $\widetilde{\pi}_{2}$ is the probability measure corresponding to all the maximal paths connecting the end points of the path in the support of $\overline{\pi}_{2}$. Then for any $\varepsilon_{0}\in \left(0, \pi_{2}^{*}[\mathcal{T}]\right)$ define 
\[
\pi_{2}^{\prime}:=\pi_{2}^{*}-\varepsilon_{0}\overline{\pi}_{2}+\varepsilon_{0}\widetilde{\pi}_{2}.
\]
That $\pi_{2}^{\prime}$ is positive follows from the fact that $\pi_{2}^{*}-\varepsilon_{0}\overline{\pi}_{2}$ is positive, which follows from the fact that $\varepsilon_{0}<\pi_{2}^{*}[\mathcal{T}]$. The marginals of $\pi_{2}^{\prime}$ share the same marginals of $\pi_{2}^{*}$. Indeed, we have
\[
(e_{0})_{\sharp}\pi_{2}^{\prime}=\mu_{2}-\varepsilon_{0}(e_{0})_{\sharp}\overline{\pi}_{2}+\varepsilon_{0}(e_{0})_{\sharp}\widetilde{\pi}_{2}=\mu_{2},
\]
as $(e_{0})_{\sharp}\overline{\pi}_{2}=(e_{0})_{\sharp}\widetilde{\pi}_{2}$ since each measure is the projection under the evaluation map $e_{0}$ of $(e_{0}, e_{1})_{\sharp}(\overline{\pi}_{2})$. Similarly, the second marginal gives $\nu_{2}$. Therefore $\pi_{2}^{\prime}$ is an admissible plan. 

Finally, we will show $\int c_{\pi_{1}^{*}}(\xi)d\pi_{2}^{*}(\xi)-\int c_{\pi_{1}^{*}}(\xi)d\pi_{2}^{\prime}(\xi)<0$, thereby contradicting the maximality of $\pi_{2}^{*}$. By definition
\[
c_{\pi_{1}^{*}}(\xi)\leq c_{e, \pi_{1}^{*}}(\xi(0), \xi(1))\quad\forall\xi\in \Omega_{2}
\]
with equality  if and only if $\xi\in \Omega_{\text{2,max}}$; the set of all maximal paths satisfying Definition \ref{def: min and max paths} which is given by 
\begin{align}\label{max set}
\Omega_{2, \text{max}}:=\left\{\xi\in \Omega_{2}| \;c_{\pi_{1}^{*}}(\xi)=c_{e, \pi_{1}^{*}}(\xi(0), \xi(1))\right\}.
\end{align}
That the function $\xi\mapsto c_{e, \pi_{1}^{*}}(\xi(0), \xi(1))-c_{\pi_{1}^{*}}(\xi)$ is upper semicontinuous in $\xi$ follows from Condition \ref{condition 2} through Proposition \ref{boundary value problem}. By our assumption $\xi_{0}\in \text{spt}(\pi_{2}^{*})$ and $\xi_{0}\notin\Omega_{2,\text{max}}$, and so $c_{\pi_{1}^{*}}(\xi)<c_{e, \pi_{1}^{*}}(\xi(0), \xi(1))$ in an open set intersecting the support of $\pi_{2}^{*}$, and hence of $\overline{\pi}_{2}$, implying the strict inequality
\[
\int_{\Omega_{2}}c_{\pi_{1}^{*}}(\xi)d\overline{\pi}_{2}(\xi)<\int_{\Omega_{2}}c_{e, \pi_{1}^{*}}(\xi(0), \xi(1))d\overline{\pi}_{2}(\xi).
\]
Note that the integrand on the right is a function of only $(\xi(0), \xi(1))$. This last integral equals the integral with respect to the joint marginal $(e_{0}, e_{1})_{\sharp}\overline{\pi}_{2}$, so from the definition of $\widetilde{\pi}_{2}$ through the push-forward of $(e_{0}, e_{1})$, we have
\[
\int_{\Omega_{2}}c_{\pi_{1}^{*}}(\xi)d\overline{\pi}_{2}(\xi)<\int_{X}c_{e, \pi_{1}^{*}}(x^{\prime}, y^{\prime})d\left((e_{0}, e_{1})_{\sharp}\overline{\pi}_{2}\right)(x',y')=\int_{\Omega_{2}}c_{\pi_{1}^{*}}(\xi)d\widetilde{\pi}_{2}(\xi).
\]
Putting this together with the definition of $\pi_{2}^{\prime}$ leads to the desired strict inequality
\begin{align*}
\int_{\Omega_{2}}c_{\pi_{1}^{*}}(\xi)d\pi_{2}^{*}(\xi)-\int_{\Omega_{2}}c_{\pi_{1}^{*}}(\xi)d\pi_{2}^{\prime}(\xi)&=\varepsilon_{0}\int_{\Omega_{2}}c_{\pi_{1}^{*}}(\xi)d\overline{\pi}_{2}(\xi)-\varepsilon_{0}\int_{\Omega_{2}}c_{\pi_{1}^{*}}(\xi)d\widetilde{\pi}_{2}(\xi)\\
&<0.
\end{align*}
Therefore $\pi_{2}^{\prime}$ is an admissible plan with smaller total cost, contradicting the maximality of $\pi_{2}^{*}$.
\end{proof}

The next lemma is vital in ensuring that minimal and maximal plans in $\Omega$ have $c_{e, \pi_{i}^{*}}$-cyclical monotone support, for $i\neq j$. Which will be crucial in establishing that the support of $\pi_{i}^{*},$ for $ i=1,2$, is contained in the $c_{e, \pi_{2}^{*}}$-superdifferential and $c_{e, \pi_{1}^{*}}$-subdifferential, of $\varphi_{1}$ and $\varphi_{2}$, respectively. This guarantees that the (minimaximal/maximinimal) plans are thus given by maps $\Gamma^{1}, \Gamma^{2}$ which are uniquely determined by gradients of convex/concave functions, Corollary \ref{cor: maps T}.

\begin{lemma}\label{lemma: monotonicity}Let $\mu_{i}, \nu_{i}$, $i=1,2$, be compactly supported and absolutely continuous with respect to Lebesgue measure. Suppose $c: \Omega:=\Omega_{1}\times\Omega_{2} \to \mathbb{R}$ is a continuous cost function  satisfying Condition \ref{condition 2}, and $(\pi^{*}_{1}, \pi_{2}^{*})\in \Pi_{\text{path}}(\mu_{1},\nu_{1})\otimes \Pi_{\text{path}}(\mu_{2}, \nu_{2})$ a NETP (Definition \ref{def: NETP}) with respect to $c$. Let $\check{\pi}_{1}:=(e_{0},e_{1})_{\sharp}\pi_{1}^{*} \in \Pi(\mu_{1},\nu_{1})$ and $\check{\pi}_{2}:=(e_{0},e_{1})_{\sharp}\pi_{2}^{*} \in \Pi(\mu_{2},\nu_{2})$. Stipulate further that $c_{e, \pi_{2}^{*}}$ defined by \eqref{euc integral}; through  \eqref{eqn:new cost}, satisfies the hypothesis of Theorem \ref{thm:coupled duality} (resp. for $c_{e, \pi_{1}^{*}}$). Then the support of $\check{\pi}_{1}$, spt$\;(\check{\pi}_{1})$, is $c_{e, \pi_{2}^{*}}$-cyclically monotone, while the support of $\check{\pi}_{2}$ is $c_{e, \pi_{1}^{*}}$-cyclically anti-monotone (see Def. \ref{def. cyclical monotone}). Moreover, $\check{\pi}_{1}$ is minimal while $\check{\pi}_{2}$ is maximal with respect to $c_{e, \pi_{2}^{*}}$ and $c_{e, \pi_{1}^{*}}$, respectively defined by \eqref{eqn: effective costs}.
\end{lemma}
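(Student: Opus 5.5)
The plan is to reduce the statement to two classical single-plan optimal transport facts: one for $\check{\pi}_{1}$ with the frozen cost $c_{e,\pi_{2}^{*}}$, and one for $\check{\pi}_{2}$ with the frozen cost $c_{e,\pi_{1}^{*}}$. The NETP property of Definition \ref{def: NETP} means that $\pi_{1}^{*}$ minimizes $\pi_{1}\mapsto\sigma_{0}(\pi_{1},\pi_{2}^{*})=\int c_{\pi_{2}^{*}}\,d\pi_{1}$ over $\Pi_{\text{path}}(\mu_{1},\nu_{1})$. Likewise, $\pi_{2}^{*}$ maximizes $\pi_{2}\mapsto\int c_{\pi_{1}^{*}}\,d\pi_{2}$. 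So we should first push these path-level optimality statements down to the endpoint level and then run the standard argument on the endpoint plans.

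\textbf{Step 1 (minimality of $\check{\pi}_{1}$, maximality of $\check{\pi}_{2}$).} By Lemma \ref{lem:support_saddle}, every $\gamma\in\text{spt}(\pi_{1}^{*})$ is a minimal path for $c_{\pi_{2}^{*}}$. Hence $c_{\pi_{2}^{*}}(\gamma)=c_{e,\pi_{2}^{*}}(\gamma(0),\gamma(1))$ holds $\pi_{1}^{*}$-a.e., and therefore
\[
\int c_{\pi_{2}^{*}}\,d\pi_{1}^{*}=\int c_{e,\pi_{2}^{*}}\,d\check{\pi}_{1}.
\]
For an arbitrary $\eta\in\Pi(\mu_{1},\nu_{1})$, lift it to paths by $\widetilde{\eta}:=(h)_{\sharp}\eta$. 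Here $h(x,y)$ is the minimal path for $c_{\pi_{2}^{*}}$ joining $x$ to $y$; it exists and is measurable by Proposition \ref{boundary value problem} and the explicit computation of Lemma \ref{thm:explicit cost}. Then $\widetilde{\eta}\in\Pi_{\text{path}}(\mu_{1},\nu_{1})$ and
\[
\int c_{e,\pi_{2}^{*}}\,d\eta=\int c_{\pi_{2}^{*}}\,d\widetilde{\eta}\geq\int c_{\pi_{2}^{*}}\,d\pi_{1}^{*}=\int c_{e,\pi_{2}^{*}}\,d\check{\pi}_{1}.
\]
So $\check{\pi}_{1}$ is optimal for the Kantorovich problem with cost $c_{e,\pi_{2}^{*}}$. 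By Lemma \ref{lemma:saddlepoint-endpoint} this cost equals $\int c_{e}\,d\check{\pi}_{2}$, which is explicit and continuous. The same argument, using maximal paths and the reversed inequalities, shows that $\check{\pi}_{2}$ maximizes $\int c_{e,\pi_{1}^{*}}\,d\pi_{2}$ over $\Pi(\mu_{2},\nu_{2})$.

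\textbf{Step 2 (cyclical (anti-)monotonicity).} To show that optimality implies $c_{e,\pi_{2}^{*}}$-cyclical monotonicity of the support, I will use the classical perturbation argument from \cite{ambrosio2013user}. Suppose points $(x_{i},y_{i})\in\text{spt}(\check{\pi}_{1})$, $i=1,\dots,n$, violate the inequality of Definition \ref{def. cyclical monotone}. Since $c_{e,\pi_{2}^{*}}$ is continuous (it is a quadratic polynomial in $(x,y)$ by \eqref{eqn:new cost}), the strict violation persists on small balls $U_{i}\times V_{i}$ around each point, and each of these balls has positive $\check{\pi}_{1}$-mass. Let $\lambda_{i}$ be the normalized restrictions of $\check{\pi}_{1}$ to $U_{i}\times V_{i}$. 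Define
\[
\eta:=\check{\pi}_{1}+\tfrac{\varepsilon}{n}\sum_{i=1}^{n}\Big((\text{proj}_{1})_{\sharp}\lambda_{i}\otimes(\text{proj}_{2})_{\sharp}\lambda_{i+1}-\lambda_{i}\Big),
\]
with $\varepsilon$ small. This $\eta$ is nonnegative and has the same marginals as $\check{\pi}_{1}$, and it strictly lowers the cost, contradicting Step 1. For $\check{\pi}_{2}$ the identical construction applies with all inequalities reversed, which yields anti-monotonicity.

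\textbf{Main obstacle.} The delicate point is Step 1, specifically the lifting. It requires the pointwise identity $c_{\pi_{2}^{*}}(\gamma)=c_{e,\pi_{2}^{*}}(\gamma(0),\gamma(1))$ on the support, which needs Lemma \ref{lem:support_saddle} together with the interchange of $\inf_{\gamma}$ and $\int d\pi_{2}^{*}$ from Lemma \ref{lemma:saddlepoint-endpoint}. It also requires a measurable selection $h$ of minimal and maximal paths. I would obtain $h$ directly from the explicit cubic formulas for $\gamma$ and $\xi$ in the proof of Lemma \ref{thm:explicit cost}. These formulas are affine in the endpoints, hence continuous. The frozen problem $\min_\gamma c_{\pi_2^*}(\gamma)$ is a strictly convex quadratic problem for $\alpha<\pi^2/2$, so the Euler--Lagrange solution is the unique minimizer.
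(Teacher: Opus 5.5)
Your proof is correct, but it runs in the opposite order from the paper's.

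\textbf{How the two routes differ.} The paper first proves cyclical (anti-)monotonicity of $\operatorname{spt}(\check\pi_i)$ by a perturbation carried out in path space. Assuming a violating cycle, it restricts $\pi_2^*$ to tubes $\mathcal{T}_i=(e_0,e_1)^{-1}(B_r(x_i')\times B_r(y_i'))\cap\Omega_{2,\text{max}}$. It replaces that mass by $(h_t)_\sharp(\nu^i_{x'}\otimes\nu^{\iota(i)}_{y'})$ and so contradicts maximality of $\pi_2^*$; the case of $\check\pi_1$ is delegated to \cite[Lemma 3.12]{cabrera2022optimaltransportationprincipleinteracting}. Only afterwards does it deduce optimality of $\check\pi_i$ from monotonicity. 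That step uses the fundamental theorem of optimal transport \cite[Theorem 2.13]{ambrosio2013user}, which requires integrability bounds of the form $c\le f+g$.

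You obtain optimality first, by lifting an arbitrary $\eta\in\Pi(\mu_1,\nu_1)$ to $h_\sharp\eta$. You then get monotonicity from the elementary direction of the classical theory: an optimal plan for a continuous cost has cyclically monotone support.

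\textbf{What your route buys.} Both routes rest on the same inputs: the saddle-point property of the NETP and a measurable selection $h$ of minimal/maximal paths. Yours uses $h$ globally rather than on local product measures, and keeps every perturbation in $\mathbb{R}^{2d}$. It never needs continuity of $c_{\pi_1^*}$ on path space, relative openness of tubes in $\Omega_{2,\text{max}}$, or the harder ``monotone $\Rightarrow$ optimal'' theorem.

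It also makes Lemma \ref{lem:support_saddle} unnecessary. Since $c_{\pi_2^*}(\gamma)\ge c_{e,\pi_2^*}(\gamma(0),\gamma(1))$ for every $\gamma$, comparing $\pi_1^*$ with $h_\sharp\check\pi_1$ gives $\int c_{\pi_2^*}\,d\pi_1^*\ge\int c_{e,\pi_2^*}\,d\check\pi_1=\int c_{\pi_2^*}\,d(h_\sharp\check\pi_1)\ge\int c_{\pi_2^*}\,d\pi_1^*$. Hence equality holds and $\pi_1^*$-a.e.\ path is minimal. For the same reason you do not need the interchange of $\inf_\gamma$ and $\int d\pi_2^*$ from Lemma \ref{lemma:saddlepoint-endpoint}.

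\textbf{A correction to your last paragraph.} The cubic formulas in the proof of Lemma \ref{thm:explicit cost} cannot serve as $h$. They solve the \emph{coupled} two-path system with both endpoint pairs fixed, and they depend on $(x',y')$.

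The minimal path of the frozen cost $c_{\pi_2^*}$ instead solves the linear problem $\ddot\gamma=2\alpha(\gamma-\bar\xi)$, $\gamma(0)=x$, $\gamma(1)=y$, where $\bar\xi(t):=\int\xi(t)\,d\pi_2^*(\xi)$. Its homogeneous solutions are $\cosh$ and $\sinh$ of $\sqrt{2\alpha}\,t$. The maximal path of $c_{\pi_1^*}$ solves $\ddot\xi=2\alpha(\bar\gamma-\xi)$, with $\cos$ and $\sin$ in place of $\cosh$ and $\sinh$.

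The first problem is strictly convex for every $\alpha>0$. The second is strictly concave exactly when $\alpha<\pi^2/2$, by the sharp Poincar\'e inequality. So the threshold enters through the maximization, not the minimization as you wrote. In both cases the unique solution depends affinely on the endpoints. This gives the measurability of $h$ and the continuity of $c_{e,\pi_i^*}$ that your two steps need.

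Derive that continuity this way rather than from \eqref{eqn:new cost}. The frozen minimal value is a quadratic polynomial in $(x,y)$, but its $x\cdot y$-coefficient is $-\sqrt{2\alpha}/\sinh\sqrt{2\alpha}$, not $\alpha/3-1$. So it is not the polynomial obtained by integrating \eqref{eqn:new cost} against $\check\pi_2$.
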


\begin{proof}
We reprise the proof in \cite[Lemma 3.12]{cabrera2022optimaltransportationprincipleinteracting} for $c_{e, \pi_{2}^{*}}$-cyclical monotone, which is based on a classical proof from standard optimal transport which may be found in \cite[Theorem 1.38]{santambrogio2015optimal}. Indeed, let $\pi_{1}^{*}$ be the minimal plan of $\inf_{\Pi_{\text{path}}(\mu_{1}, \nu_{1})}\int_{\Omega_{1}}c_{\pi_{2}^{*}}d\pi_{1}(\gamma)$, and apply the proof in \cite[Lemma 3.12]{cabrera2022optimaltransportationprincipleinteracting} with the following data: $c_{e, \pi_{2}^{*}}$ in place of $c_{e}(x,y)$ and $\check{\pi}_{1}$ in place of $\check{\pi}$, and $c_{\pi_{2}^{*}}$ in place of $c(\gamma)$ while keeping $\pi_{2}^{*}$ and $\xi$ fixed.
    
The subtlety comes from  $c_{e, \pi_{1}^{*}}$-cyclical \emph{anti-monotonicity}. This is because  $c_{e, \pi_{1}^{*}}$ corresponds to a maximal plan $\pi_{2}^{*}$. To this end, let $\pi_{1}^{*}$ and $\gamma$ be fixed and suppose $\pi_{2}^{*}$ solves \[\sup_{\pi_{2}\in\Pi_{\text{path}}(\mu_{2}, \nu_{2})}\int_{\Omega_{2}}c_{\pi_{1}^{*}}(\xi)d\pi_{2}(\xi)=-\inf_{\pi_{2}\in\Pi_{\text{path}}(\mu_{2}, \nu_{2})}\int_{\Omega_{2}}-c_{\pi_{1}^{*}}(\xi)d\pi_{2}(\xi),
\]
and that $\text{spt}\;(\pi_{2}^{*})\subset \Omega_{2,\text{max}}$. This latter subset is defined in \eqref{max set} in the proof of Lemma \ref{lem:support_saddle}. According to Lemma \ref{lem:support_saddle}, $\xi$ in the support of $\pi_{2}^{*}$ is maximal. Let $\check{\pi}_{2}=(e_{0}, e_{1})_{\sharp}\pi_{2}^{*}$ be a transport plan from $\mu_{2}$ to $\nu_{2}$ obtained by pushing forward $\pi_{2}^{*}$ through the coupled evaluation mapping $(e_{0}, e_{1}): \Omega_{2}\to X\times X$.

Let $\widetilde{c}_{\pi_{1}^{*}}=-c_{\pi_{1}^{*}}$. Then by the classical theory of optimal transport, we have that $\pi_{2}^{*}$ is optimal for $\sup\int c_{\pi_{1}^{*}}d\pi_{2}$ if and only if it is optimal for $\inf \int -c_{\pi_{1}^{*}}d\pi_{2}$, if and only if its support is $\widetilde{c}_{\pi_{1}^{*}}$-cyclically monotone, that is $\sum_{i=1}^{n}\widetilde{c}_{\pi_{1}^{*}}(\xi_{i})\leq \sum_{i=1}^{n}\widetilde{c}_{\pi_{1}^{*}}(\widetilde{\xi}_{i})$ if and only if $\sum_{i=1}^{n}c_{\pi_{1}^{*}}(\xi_{i})\geq \sum_{i=1}^{n}c_{\pi_{1}^{*}}(\widetilde{\xi}_{i})$. Here $\xi_{i}$ is a maximal path from $\xi_{i}(0)=x_{i}^{\prime}$ to $\xi_{i}(1)=y_{i}^{\prime}$ and $\widetilde{\xi}_{i}$ a  maximal path from $\widetilde{\xi}_{i}(0)=x_{i}^{\prime}$ to $\widetilde{\xi}_{i}(1)=y_{\iota(i)}^{\prime}$ for some permutation $\iota$. 

Suppose towards a contradiction that the support of $\pi_{2}^{*}$ is not $c_{e, \pi_{1}^{*}}$-cyclically \emph{anti-monotone}. Then there exist an $n\geq 0$, a cyclical permutation $\iota$, and maximal paths $\xi_{i}$ from $x_{i}^{\prime}$ to $y_{i}^{\prime}$, and $\widetilde{\xi}_{i}$ from $x_{i}^{\prime}$ to $y_{\iota(i)}^{\prime}$ in $\text{spt}\;(\pi_{2}^{*})$, respectively, and $\{(x_{i}^{\prime}, y_{i}^{\prime})\}\subset \text{spt}\;(\check{\pi}_{2})$, such that 
\[
\sum_{i=1}^{n}c_{\pi_{1}^{*}}\left(\xi_{i}\right)<\sum_{i=1}^{n}c_{\pi_{1}^{*}}\left(\widetilde{\xi}_{i}\right).
\]
The latter inequality is due to Definition \ref{def. cyclical monotone}.
Here the \emph{shifted paths} are $\widetilde{\xi}_{i}(t):=\xi_{i}(t)+th_{i}(t),$ $0\leq t\leq 1,\;h_{i}(t)\neq 0\;\forall i,$ and $\widetilde{\xi}_{i}(0)=x_{i}^{\prime}$, $\xi_{i}(1)+h_{i}(1):=\xi_{i+1}(t)$, with the convention $\xi_{n+1}(1)=\xi_{1}(1)$. In other words, $\widetilde{\xi}_{i}(0)=\xi_{i}(0)$ while $\widetilde{\xi}_{i}(1)=\xi_{\iota(i)}(1)$ for all $i=1,2,.\dots, n$. So since $\widetilde{\xi}_{i}$ and $\xi_{i}$ are in the support of $\pi_{2}^{*}$, then they are both maximal, and therefore, $c_{\pi_{1}^{*}}(\xi_{i})=c_{e, \pi_{1}^{*}}(x_{i}^{\prime}, y_{i}^{\prime})$ and $c_{\pi_{1}^{*}}(\widetilde{\xi}_{i})=c_{e, \pi_{1}^{*}}(x_{i}^{\prime}, y_{\iota(i)}^{\prime})$. Thus, the above strict inequality is equivalent to 
\[
\sum_{i=1}^{n}c_{e, \pi_{1}^{*}}(x_{i}^{\prime}, y_{i}^{\prime})<\sum_{i=1}^{n}c_{e, \pi_{1}^{*}}(x_{i}^{\prime}, y_{\iota(i)}^{\prime}).
\]

Given $\varepsilon>0$, take 
\begin{align}\label{inq: continuous}
    \varepsilon<\frac{1}{2n}\left(\sum_{i=1}^{n}c_{e, \pi_{1}^{*}}(x_{i}^{\prime}, y_{\iota(i)}^{\prime})-c_{e, \pi_{1}^{*}}(x_{i}^{\prime}, y_{i}^{\prime})\right).
\end{align}
By continuity of $c_{\pi_{1}^{*}}$, there exists an $r>0$ such that 
\[
c_{\pi_{1}^{*}}(\xi)<c_{\pi_{1}^{*}}(\xi_{i})+\varepsilon \quad\text{for all }\quad \xi\in \mathcal{T}_{i}, \; i=1,\ldots, n,
\]
where for each $i$, the set $\mathcal{T}_{i}$, called a \emph{tubo}, is defined by 
\[
\mathcal{T}_{i}:=(e_{0}, e_{1})^{-1}\left(B_{r}(x_{i}^{\prime})\times B_{r}(y_{i}^{\prime})\right)\cap \Omega_{2,\text{max}}.
\]
Similarly for each $i$ (and the same $r$ to be chosen later sufficiently small) and by Proposition \ref{boundary value problem}, $\xi_{x', y'}$ is continuous in $(x',y')$. So we have that
\[
c_{\pi_{1}^{*}}(\xi)>c_{\pi_{1}^{*}}(\widetilde{\xi}_{i})-\varepsilon\quad\text{for all }\quad \xi\in\widetilde{\mathcal{T}_{i}},
\]
where $\widetilde{\mathcal{T}}_{i}:=(e_{0}, e_{1})^{-1}\left(B_{r}\left(x_{i}^{\prime}\right)\times B_{r}\left(y_{\iota(i)}^{\prime}\right)\right)\cap \Omega_{2, \text{max}}$. By definition, the sets $\mathcal{T}_{i}$ and $\widetilde{\mathcal{T}}_{i}$ are non-empty and relatively open subsets of $\Omega_{2, \text{max}}$.

Next we define new measures, 
\[\pi_{2}^{i}:=(\pi_{2}^{*}\restriction\mathcal{T}_{i})/\pi_{2}^{*}[\mathcal{T}_{i}],\]
\[
\nu_{x'}^{i}=(e_{0})_{\sharp}\pi_{2}^{i},\quad\text{and}\quad \nu_{y'}^{i}=(e_{1})_{\sharp}\pi_{2}^{i}.
\]
Observe that  since $\mathcal{T}_{i}$ is relatively open with respect to $\Omega_{2,\text{max}}$, and $\xi_{i}\in \mathcal{T}_{i}$ is contained in the support of $\pi_{2}^{*}$, $\pi_{2}^{*}\left[\mathcal{T}_{i}\right]$ positive. This is equivalent to $(x_{i}^{\prime}, y_{i}^{\prime})\in \text{spt}\;\left(\check{\pi}_{2}\right)$. Take $0<\varepsilon_{0}<\frac{\text{min}_{i}\pi_{2}^{*}[\mathcal{T}_{i}]}{n}$.

Construct a measure $\widetilde{\pi}_{2}^{i}\in \Pi\left(\nu_{x'}^{i}, \nu_{y'}^{\iota(i)}\right)$, for every $i$, as follows. As in the previous proof, let $h: X\times X\to \Omega_{2}$ be a map defined by $(x',y')\mapsto \xi_{x',y'}$. That is, $h_{t}(x' ,y')=\xi_{x', y'}(t)$ is the \emph{maximal path} connecting $\xi_{x',y'}(0)=x'$ to $\xi_{x', y'}(1)=y'$. Then the estimates $c_{\pi_{1}^{*}}(\xi)<c_{\pi_{1}^{*}}(\xi_{i})+\varepsilon$ for all $\xi\in \mathcal{T}_{i}$ and $c_{\pi_{1}^{*}}(\xi)>c_{\pi_{1}^{*}}(\widetilde{\xi}_{i})-\varepsilon$ for all $\xi\in \widetilde{\mathcal{T}}_{i}$  coincide with the following estimates $c_{e, \pi_{1}^{*}}(x^{\prime}, y^{\prime})<c_{e, \pi_{1}^{*}}(x_{i}^{\prime}, y_{i}^{\prime})+\varepsilon$ and $c_{e, \pi_{1}^{*}}(x^{\prime}, y^{\prime})>c_{e, \pi_{1}^{*}}(x_{i}^{\prime}, y_{\iota(i)}^{\prime})-\varepsilon$, respectively, for all pairs $(x', y')\in B_{r}(x_{i}^{\prime})\times B_{r}(y_{i}^{\prime})$ and all $(x', y')\in B_{r}(x_{i}^{\prime})\times B_{r}(y_{\iota(i)}^{\prime})$. Now take $\widetilde{\pi}_{2}^{i}:=(h_{t})_{\sharp}\left(\nu_{x'}^{i}\otimes\nu_{y'}^{\iota(i)}\right)$.

Now define 
\[
\widetilde{\pi}_{2}:=\pi_{2}^{*}-\varepsilon_{0}\sum_{i=1}^{n}\pi_{2}^{i}+\varepsilon_{0}\sum_{i=1}^{n}\widetilde{\pi}_{2}^{i}.
\]
That $\widetilde{\pi}_{2}$ is positive follows from the fact that $\pi_{2}^{*}-\varepsilon_{0}\sum_{i=1}^{n}\pi_{2}^{i}$ is positive as $\varepsilon_{0}<\text{min}_{i}\pi_{2}^{*}[\mathcal{T}_{i}]/n$. To be explicit, checking $\widetilde{\pi}_{2}$ is positive, it suffices to check $\pi_{2}^{*}-\varepsilon_{0}\sum_{i=1}^{n}\pi_{2}^{i}>0$. Indeed, the condition $\varepsilon_{0}\pi_{2}^{i}<\pi_{2}^{*}/n$ is sufficient to check the latter strict inequality as $\varepsilon_{0}\pi_{2}^{i}=\frac{\varepsilon_{0}}{\pi_{2}^{*}[\mathcal{T}_{i}]}(\pi_{2}^{*}\restriction\mathcal{T}_{i})$ and $\varepsilon_{0}/\pi_{2}^{*}[\mathcal{T}_{i}]<1/n$.

That the marginals of $\widetilde{\pi}_{2}$ share the same marginals of $\pi_{2}^{*}$ follows by identical arguments applied in the proof of Lemma \ref{lem:support_saddle}. To that end, we have 

\begin{align*}
    (e_{0})_{\sharp}\widetilde{\pi}_{2}&=\mu_{2}-\varepsilon_{0}\sum_{i=1}^{n}\nu_{x'}^{i}+\varepsilon_{0}\sum_{i=1}^{n}(e_{0})_{\sharp}(h_{t})_{\sharp}\left(\nu_{x'}^{i}\otimes\nu_{y'}^{\iota(i)}\right)\\
    &=\mu_{2}-\varepsilon_{0}\sum_{i=1}^{n}\nu_{x'}^{i}+\varepsilon_{0}\sum_{i=1}^{n}\nu_{x'}^{i}\\
    &=\mu_{2};
\end{align*}
where we used $(\nu_{x'}^{i}\otimes\nu_{y'}^{\iota(i)})\left[h_{t}^{-1}(e_{0}^{-1}(B))\right]=\nu_{x'}^{i}[B]$ for all Borel sets $B$, which follows from the fact that  $(e_{0}\circ h_{t})(x',y')=x'$ for all pairs of points $(x', y')$. On the other hand, 
\begin{align*}
    (e_{1})_{\sharp}\widetilde{\pi}_{2}&=\nu_{2}-\varepsilon_{0}\sum_{i=1}^{n}\nu_{y'}^{i}+\varepsilon_{0}\sum_{i=1}^{n}(e_{1})_{\sharp}(h_{t})_{\sharp}\left(\nu_{x'}^{i}\otimes\nu_{y'}^{\iota(i)}\right)\\
    &=\nu_{2}-\varepsilon_{0}\sum_{i=1}^{n}\nu_{y'}^{i}+\varepsilon_{0}\sum_{i=1}^{n}\nu_{y'}^{\iota(i)}\\
    &=\nu_{2},
\end{align*}
where the last equality follows from the permutation $\iota$ and that  $(\nu_{x'}^{i}\otimes\nu_{y'}^{\iota(i)})\left[h^{-1}_{t}(e_{1}^{-1}(A))\right]=\nu_{y'}^{\iota(i)}[A]$ for all Borel sets $A$, which follows from $(e_{1}\circ h_{t})(x', y')=y'$ for all pairs of points $x', y'$.

Finally, we prove the strict inequality $\int c_{\pi_{1}^{*}}d\pi_{2}^{*}-\int c_{\pi_{1}^{*}}d\widetilde{\pi}_{2}<0,$ thereby contradicting the maximality of $\pi_{2}^{*}$. To see this substitute $\widetilde{\pi}_{2}$,
\begin{align*}
    \int c_{\pi_{1}^{*}}(\xi)d\pi_{2}^{*}(\xi)-\int c_{\pi_{1}^{*}}(\xi)d\widetilde{\pi}_{2}(\xi)&=\varepsilon_{0}\sum_{i=1}^{n}\int c_{\pi_{1}^{*}}(\xi)d\pi_{2}^{i}(\xi)-\varepsilon_{0}\sum_{i=1}^{n}\int c_{\pi_{1}^{*}}(\xi)d\widetilde{\pi}_{2}^{i}(\xi)\\
    &\leq \varepsilon_{0}\sum_{i=1}^{n}\left(c_{\pi_{1}^{*}}(\xi_{i})+\varepsilon\right)-\varepsilon_{0}\sum_{i=1}^{n}\left(c_{\pi_{1}^{*}}(\widetilde{\xi}_{i})-\varepsilon\right)\\
    &=\varepsilon_{0}\left(\sum_{i=1}^{n}c_{e, \pi_{1}^{*}}(x_{i}^{\prime}, y_{i}^{\prime})-c_{e, \pi_{1}^{*}}(x_{i}^{\prime}, y_{\iota(i)}^{\prime})+2n\varepsilon\right)\\
    &<0.
\end{align*}
The last inequality is due to the strict inequality \eqref{inq: continuous}. But this contradicts the maximality of $\pi_{2}^{*}$.

To conclude the proof, we first apply the classical theory of optimal transport to $\check{\pi}_{1}$. Namely, since the support of $\check{\pi}_{1}$ is $c_{e, \pi_{2}^{*}}$-cyclically monotone on the end points of the minimal path $\gamma$, $(x_{i}, y_{i})$, contained in $\text{spt}\;(\check{\pi}_{1})$, and since we can bound the function as follows, $c_{e, \pi_{2}^{*}}(x, y)\leq f(x)+g(y)$ for some $f\in L^{1}(d\mu_{1}), g\in L^{1}(d\nu_{1})$, then according to the \emph{fundamental theorem of optimal transport}, $\check{\pi}_{1}$ is optimal with respect to $c_{e, \pi_{2}^{*}}(x, y)$ by \cite[Theorem 2.13]{ambrosio2013user}.

For the maximal case of $\check{\pi}_{2}$, let $\widetilde{c}_{e, \pi_{1}^{*}}:=-c_{e, \pi_{1}^{*}}$. We apply the fundamental theorem of optimal transport to $\widetilde{c}_{e, \pi_{1}^{*}}$ \cite[Theorem 2.13]{ambrosio2013user}. Indeed, since we showed that the support of $\check{\pi}_{2}$ is $c_{e, \pi_{1}^{*}}$-cyclically antimonotone on the end points $(x_{i}^{\prime}, y_{i}^{\prime})$ contained in the support of $\check{\pi}_{2}$, and since we can bound $\widetilde{c}_{e, \pi_{1}^{*}}$ as follows, $\widetilde{c}_{e, \pi_{1}^{*}}(x', y')<a(x')+b(y')$ for some $a\in L^{1}(d\mu_{2}), b\in L^{1}(d\nu_{2})$; then, $c_{e, \pi_{1}^{*}}(x^{\prime}, y^{\prime})>-a(x')-b(y')$. Then $\check{\pi}_{2}$ is optimal for $\sup\int\widetilde{c}_{e, \pi_{1}^{*}}(x', y')d\check{\widetilde{\pi}}_{2}(x', y')$, if and only if it is optimal for $\inf\int -c_{e, \pi_{1}^{*}}(x^{\prime}, y^{\prime})d\check{\widetilde{\pi}}_{2}(x', y')$. Thus, $\check{\pi}_{2}$ is \emph{maximal} with respect to $c_{e, \pi_{1}^{*}}(x^{\prime}, y^{\prime})$.
\end{proof}

Having knowledge of the previous lemmas allows us to prove Theorem \ref{thm:pathtostationary}.

\begin{proof}[\textit{Proof of Theorem \ref{thm:pathtostationary}}] Lemma \ref{lem:support_saddle} establishes the first item \emph{(1)} of the theorem, while
Lemma \ref{lemma: monotonicity} establishes that any pair of NETP plans $(\pi_{1}^{*},\pi_{2}^{*} )\in \Pi_{\mathrm{path}}(\mu_1,\nu_1)\otimes\Pi_{\mathrm{path}}(\mu_2,\nu_2) $ project onto a  coupled solution, $\check{\pi}_{1}, \check{\pi}_{2}$, of the Nash-Monge--Kantorovich problem, \eqref{eqn:inf sup euclidean}-\eqref{min max monge}, with cost $c_{e, \pi_{i}^{*}}$ via the pairs of couplings
\[
\check{\pi}_{i} := (e_0,e_1)_{\#}\pi_{i}^{*}, \; i=1,2.
\]
\end{proof}
\begin{remark}
Recall also the notion of dynamical couplings introduced in Section \ref{section: path formulation}. Moreover, as shown in Villani~\cite[Theorem 7.21]{villani2008optimal}, a dynamical optimal coupling can be interpreted as a minimizing path in the space of probability measures; respectively, as a maximizing path of measures. In both constructions, one obtains the same dynamical optimal coupling; in particular, the present framework provides an alternative proof of the result in~\cite{villani2008optimal}. Our novelty is to include the maximinimal/minimaximal frame work to the bilinear transport problem.
\end{remark}

\subsection{Bilinear transport maps}\label{sec: bilinear transport maps} That the bilinear plans from the previous section are given by maps follows from the following construction in this section. Moreover, the coupled plans are given by maps, $\Gamma^{1}(x)$ and $\Gamma^{2}(x')$, provided $\mu_{1}, \mu_{2}$ are absolutely continuous with respect to Lebesgue measure. This uses and extends results from the more classical setting of Brenier \cite{brenier1991polar}, Gangbo-McCann \cite{gangbo1998geometry}, and the more recent work of the first author \cite{cabrera2022optimaltransportationprincipleinteracting}.  In this case we apply the classical theory of optimal transport to acquire uniquely determined maps $T_{1}$ and $T_{2}$ mapping $\mu_{1}$ to $\nu_{1}$ and $\mu_{2}$ to $\nu_{2}$, respectively; which solve the Nash-Monge-Kantorovich bilinear transport problems \eqref{inf sup problem}, \eqref{eqn:inf sup euclidean}-\eqref{min max monge}. In other words, the NETP plans $(\pi_{1}^{*}, \pi_{2}^{*})$ are given by the  maps $\Gamma^{i}$ for $i=1,2$, from Definition \ref{def: NMK}, and these maps solve the Nash-Monge-Kantorovich bilinear problem ( see Proposition \ref{prop:NE} ). Furthermore, these maps are given by minimal and maximal paths, $\Gamma^{1}(x,t)=\gamma_{x,T_{1}(x)}(t)$ and $\Gamma^{2}(x', t)=\xi_{x', T_{2}(x')}(t)$, respectively. 

We look for mappings
  \begin{equation}\label{Gamma maps}
      \begin{split}
          \Gamma^{i}: \mathbb{R}^{d} \times [0, 1] \to \mathbb{R}^{d},\; i=1,2,
      \end{split}
  \end{equation}
of the form $\Gamma^{i}(x,t)$ for every $x$ with the following properties
\begin{equation} \label{eqn: gammamap}
\begin{split}
\left\{ \begin{array}{rcl}
\Gamma^{1}(x,0)& =x \mbox{ for all }
& x \in \mathbb{R}^{d}\\ \Gamma^{1}(x,1) & =T_{1}(x)  \mbox{ for } &  (T_{1})_{\sharp}\mu_{1}=\nu_{1} ,
\end{array}\right. \quad \left\{ \begin{array}{rcl}
\Gamma^{2}(x',0)& =x \mbox{ for all }
& x' \in \mathbb{R}^{d}\\ \Gamma^{2}(x',1) & =T_{2}(x')  \mbox{ for } &  (T_{2})_{\sharp}\mu_{2}=\nu_{2} ,
\end{array}\right.
\end{split} 
\end{equation}
where, for each $i=1,2$, $T_{i}: \mathbb{R}^{d} \to \mathbb{R}^{d}$ is a measurable map pushing $\mu_{i} \mapsto \nu_{i}$. Note that the mappings in \eqref{Gamma maps} are in one-to-one correspondence with the mappings $\Gamma^{i}:\mathbb{R}^{d}\to\Omega_{i}$.

Instead of considering the Euclidean space $\mathbb{R}^{d}$, we will restrict to $X$ a simply connected, bounded domain. For all intents and purposes, $X$ can be thought of as a sufficiently large and closed ball.
We attain these maps through the compositions: 
\begin{center}
\begin{tikzcd}
 X\ar[r,"(\text{Id}\times T_{1})"]\ar[rr,out=-30,in=210,swap,"\Gamma^{1}"] & X\times X\ar[r,"\gamma_{x,y}"] & \Omega_{1,\text{min}}
\end{tikzcd},\quad\begin{tikzcd}
 X\ar[r,"(\text{Id}\times T_{2})"]\ar[rr,out=-30,in=210,swap,"\Gamma^{2}"] & X\times X\ar[r,"\xi_{x',y'}"] & \Omega_{2,\text{max}}
\end{tikzcd}
\end{center}
by first applying $(\text{Id}, T_{1}):X\to X\times X$, then $\gamma_{x,y}:X\times X\to \Omega_{1,\text{min}}$ to acquire,
\[
\Gamma^{1}(x,t)=\gamma_{x, T_{1}(x)}(t)\quad \mu_{1}\;\text{a.e.}\; x
\]
This composition can be thought of as the following mapping,
$\Gamma^{1}: X\to \Omega_{1, \text{min}}$. Identical calculations provide the second composition, $\Gamma^{2}(x', t)=\xi_{x', T_{2}(x')}(t)$ for $\mu_{2}$ a.e. $x'$. 

That these mappings $\Gamma^{i}$ map $\mu_{1}, \mu_{2}$ to an admissible measure in the path space follows from \cite[Lemma 3.14]{cabrera2022optimaltransportationprincipleinteracting}. In particular, if $T_{1}$ and $T_{2}$ map $\mu_{1}\mapsto \nu_{1}$ and $\mu_{2}\mapsto\nu_{2}$, respectively, then  for 
\begin{align}\label{monge plans}
    \pi_{\Gamma^{1}}:=(\Gamma^{1})_{\sharp}\mu_{1};\quad \pi_{\Gamma^{2}}:=(\Gamma^{2})_{\sharp}\mu_{2}, \quad\text{we have}\quad \pi_{\Gamma^{1}}\otimes\pi_{\Gamma^{2}}\in \Pi_{\text{path}}(\mu_{1}, \nu_{1})\otimes\Pi_{\text{path}}(\mu_{2}, \nu_{2}).
\end{align}
Thus, these probability measures are projected from the path space, $\Omega$.

In addition, an application of Proposition \ref{boundary value problem} to $\gamma_{x,y}$  and $\xi_{x', y'}$, respectively,  tell us that we have the following equalities, $c_{\pi_{2}^{*}}(\gamma_{x,y})=c_{e, \pi_{2}^{*}}(x, y)$ and $c_{\pi_{1}^{*}}(\xi_{x',y'})=c_{e, \pi_{1}^{*}}(x^{\prime}, y^{\prime})$, for fixed $\pi_{2}^{*}; \;\pi_{1}^{*}$, respectively. Armed with this knowledge we are prepared to prove Theorem \ref{thm:NashBrenier}.

\begin{proof}[\textit{Proof of Theorem  \ref{thm:NashBrenier}}(Latter statement)]
    Theorems  \ref{thm:minimaxExistence},  \ref{thm:pathtostationary}, and \ref{thm:minimax=maximin} show there is a minimaximal, or maximinimal solution pair of NETP $(\pi_{1}^{*}, \pi_{2}^{*})$ in the path space to \eqref{inf sup problem}.  Furthermore, according to Lemma \ref{lemma: monotonicity}, the pair of solutions $(\pi_{1}^{*}, \pi_{2}^{*})$ projects to the pair of coupled solutions $(\check{\pi}_{1}, \check{\pi}_{2})$, in Euclidean space, with respect to $c_{e, \pi_{2}^{*}}(x,y)$ and $c_{e, \pi_{1}^{*}}(x', y')$, respectively. That is, the pairs $\check{\pi}_{1}=(e_{0}, e_{1})_{\sharp}\pi_{1}^{*}, \check{\pi}_{2}=(e_{0}, e_{1})_{\sharp}\pi_{2}^{*}$ solve the Nash-Monge-Kantorovich bilinear transport problem \eqref{eqn:inf sup euclidean}. Indeed, for $\gamma\in\Omega_{1,\text{min}}$, Proposition \ref{boundary value problem} and Lemma \ref{lemma:saddlepoint-endpoint} apply to show that integrating  \eqref{euc integral} with respect to $\check{\pi}_{1}$, we get 
    \begin{align*}
        \int_{\Omega_{1}}c_{\pi_{2}^{*}}(\gamma)d\pi_{1}^{*}(\gamma)&=\int_{\Omega_{1}}c_{e, \pi_{2}^{*}}(\gamma(0), \gamma(1))d\pi_{1}^{*}(\gamma)\\
        &=\int_{\mathbb{R}^{2d}} c_{e, \pi_{2}^{*}}(x,y)d\check{\pi}_{1}(x,y)\\
       (\;\text{Lemma \ref{lemma:saddlepoint-endpoint}}\;) &=\iint_{\mathbb{R}^{2d}\times\mathbb{R}^{2d}} c_{e}(x,y,x',y')d\check{\pi}_{2}(x',y')d\check{\pi}_{1}(x,y)\\
        &=\iint_{\Omega} c_{e}(\gamma(0), \gamma(1), \xi(0), \xi(1))d\pi_{2}^{*}(\xi)d\pi_{1}^{*}(\gamma).
    \end{align*}
Similar computations are done for $c_{e, \pi_{1}^{*}}$. 

According to \cite[Lemma 2.6]{cabrera2022optimaltransportationprincipleinteracting}, $c_{e, \pi_{2}^{*}}(x,y)$, which is given by \eqref{eqn:cost pi_2}-\eqref{integrtal cost 1}, is differentiable with gradient
\[
\nabla_{y}c_{e, \pi_{2}^{*}}(x,y)=x-y+\frac{\alpha}{3}\int_{\mathbb{R}^{d}\times\mathbb{R}^{d}}(x-x')+2y\;d\pi_{2}^{*}(x', y').
\]
So since, $c_{e, \pi_{2}^{*}}(x,y)$  is given through $c_{e}$ via \eqref{integrtal cost 1}, and as $\alpha<3$, then Lemma \ref{twist condition} says $c_{e,\pi_{2}^{*}}$ satisfies the twist condition.
As $\pi_{1}^{*}$ is minimal with respect to $\inf_{\widetilde{\pi}_{1}\in \Pi_{\text{path}}(\mu_{2}, \nu_{2})}\int_{\Omega_{1}}c_{\pi_{2}^{*}}(\gamma)d\widetilde{\pi}_{1}(\gamma)$, Lemma \ref{lemma: monotonicity} applies to show that the support of $\check{\pi}_{1}$ is $c_{e, \pi_{2}^{*}}$-cyclically monotone. Moreover, the classical theory of optimal transport \cite[Theorem 2.13]{ambrosio2013user} says that  $\text{spt}\;\check{\pi}_{1}$ is contained in $\partial^{c_{e,\pi_{2}^{*}}}\varphi_{1}$ $\mu_{1}$-a.e. In addition the potential $\varphi_{1}$ is locally Lipschitz, and an application of Rademacher's theorem with $\mu_{1}\ll\;dx$ show $\varphi_{1}$ is differentiable $\mu_{1}$-a.e.

All of the above show $c_{e,\pi_{2}^{*}}$ satisfies the hypothesis of Theorem 10.28 in Villani's book \cite{villani2008optimal}, and so it applies to give a unique transport map $T_{1}$ pushing $\mu_{1}$ forward to $\nu_{1}$, solving the bilinear transport problem  \eqref{eqn:inf sup euclidean} with respect to $c_{{e}, \pi_{2}^{*}}(x,y)$ through the equality \eqref{integrtal cost 1}. Now we will incorporate the geometry of paths.

The paragraph preceding this proof guarantees the existence of a mapping $\Gamma^{1}(x,t)$ $\mu_{1}$-a.e. containing the data on $T_{1}$. An application of Proposition \ref{boundary value problem} gives the unique minimal path    $\gamma_{x,y}(t)$ connecting $x$ to $y$. As we have seen before, $c_{\pi_{2}^{*}}(\gamma_{x,y})=c_{e, \pi_{2}^{*}}(x,y)$ and $\Gamma^{1}(x,t)=\gamma_{x,T_{1}(x)}(t)$ only defined for $\mu_{1}$-a.e. $x$, which is given by the composition above. Lemma \ref{lem:support_saddle} says $\text{spt}\;\pi_{1}^{*}$ lies in $\Omega_{\text{min}}$.

Next we need show that $\text{spt}\;\pi_{1}^{*}$ is concentrated on the graph of $\Gamma^{1}$. Previously, we already demonstrated that $\varphi_{1}$ is differentiable for $\mu_{1}$-a.e. $x$ and the support of $\check{\pi}_{1}$ is contained in $\partial^{c_{e, \pi_{2}^{*}}}\varphi_{1}$. Then at each point of differentiability of $\varphi_{1}$; applying one of the coupled ``slackness" constraints of Theorem \ref{thm:coupled duality}, namely $\varphi_{1}(x)+\psi_{1}(y)=c_{e,\pi_{2}^{*}}(x,y)$ for $\pi_{2}^{*}$-a.e. $\gamma$, we have
\begin{align*}
    \nabla_{x}\varphi_{1}(x)=\nabla_{x}c_{e, \pi_{2}^{*}}(x,y)=x-y+\frac{\alpha}{3}\int_{\mathbb{R}^{d}\times\mathbb{R}^{d}}2(x-x')+(y-y')d\pi_{2}^{*}(x',y').
\end{align*}
For any such $y$, keeping $x',$ and $y'$ fixed, such that $(x,y)\in \text{spt}\;\check{\pi}_{1}$, the twist condition on $c_{e}$, and therefore on $c_{e, \pi_{2}^{*}}$, this uniquely defines $y$ as a function of $x$. That is, $y=T_{1}(x)$, and for $\mu_{1}$-a.e. $x$, there exists a unique $y=T_{1}(x)$ such that $(x,y)\in \text{spt}\;\check{\pi}_{1}$. Equivalently, $\gamma_{x, T_{1}(x)}(t)\in \text{spt}\;\pi_{1}^{*}$, and since  $\Gamma^{1}(1, x)=T_{1}(x)=y$, $\pi_{1}^{*}$ is concentrated on the graph of $\Gamma^{1}$. The maps $\Gamma^{i}$ are given by the maps $T_{i}$ in Corollary \ref{cor: maps T}.

The same arguments applied to $c_{e, \pi_{1}^{*}}(x', y')=c_{\pi_{1}^{*}}(\xi_{x',y'})$ with $\check{\pi}_{2}:=(e_{0}, e_{1})_{\sharp}\pi_{2}^{*}$, acquires a map $T_{2}$ and a composition $\Gamma^{2}(x', t)=\xi_{x', T_{2}(x')}(t)\in \text{spt}\;\pi_{2}^{*}$ such that $y'=T_{2}(x')$ is uniquely determined, and that thus $\pi_{2}^{*}$ is concentrated on the graph of $\Gamma^{2}$. Consequently, the coupled bilinear plans $\pi_{\Gamma^{1}}^{*}=(\text{Id}, \Gamma^{1})_{\sharp}\mu_{1}$ and $\pi_{\Gamma^{2}}^{*}=(\text{Id}, \Gamma^{2})_{\sharp}\mu_{2}$ defined in \eqref{monge plans}  are contained in $\Pi_{\text{path}}(\mu_{1}, \nu_{1})\otimes\Pi_{\text{path}}(\mu_{2}, \nu_{2})$ and induce a \emph{Monge} solution \eqref{min max monge}. That the bilinear plans are unique follows from the \emph{proof of Theorem 1.5} in \cite{cabrera2022optimaltransportationprincipleinteracting} applied to each plan, $\check{\pi}_{1}, \check{\pi}_{2}$.
\end{proof}

\begin{remark}
    In the above proof, the regime $0 < \alpha < 4$ ($\alpha \neq 3$) was necessary and sufficient to acquire uniquely determined maps. However, Theorem \ref{thm:NashBrenier} for the stationary case was proven for the more broader regime $\alpha < \frac{\pi^2}{2}$. This is slightly more general than the above result. It would be prudent to investigate why this regime exhibits a discrepancy in the parameter $\alpha$. We leave this for future work.
\end{remark}

As a consequence of the proof of Theorem \ref{thm:NashBrenier}, the result in Corollary \ref{cor: maps T} applies for the maps $\Gamma^{1}, \Gamma^{2}$. Moreover, the dual problem \eqref{eqn:new dual pi_2} on paths is recovered, and we obtain Theorem \ref{thm:coupled duality} and its Corollary \ref{corollary: zero sum dual} associated to paths.  

\begin{corollary}\label{thm: dual on paths}
Let the following cost functions $c_{\pi_{1}^{*}}(\gamma)$ and $c_{\pi_{2}^{*}}(\xi)$ be given by \eqref{eqn: eff path}.  The coupled measures $(\pi_{1}^{*}, \pi_{2}^{*})$ is a NETP for the bilinear transport problem \eqref{inf sup problem} if and only if there exist potentials $\varphi_{i}. \psi_{i}:\mathbb{R}^{d}\to\mathbb{R}$ such that
\begin{align*}
\varphi_{1}(\gamma(0))+\psi_{1}(\gamma(1))&\leq c_{\pi_{2}^{*}}(\gamma)\quad \forall \gamma\in \Omega_{1}\\
\varphi_{1}(\gamma(0))+\psi_{1}(\gamma(1))&=c_{\pi_{2}^{*}}(\gamma)\quad \text{for}\; \pi_{2}^{*}-a.e.\;\gamma,
\end{align*}
and similarly,
\begin{align*}
-\varphi_{2}(\xi(0))-\psi_{2}(\xi(1))&\geq c_{\pi_{1}^{*}}(\xi)\quad \forall \xi\in \Omega_{2}\\
-\varphi_{2}(\xi(0))-\psi_{2}(\xi(1))&=c_{\pi_{1}^{*}}(\xi)\quad \text{for}\; \pi_{1}^{*}-a.e.\;\xi.
\end{align*}
\end{corollary}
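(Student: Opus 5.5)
The plan is to prove the two implications separately, reducing each to the Euclidean coupled duality of Theorem \ref{thm:coupled duality} via the endpoint reduction of Lemma \ref{lemma:saddlepoint-endpoint} and the support results of Lemmas \ref{lem:support_saddle} and \ref{lemma: monotonicity}. The slackness statements are read as holding for $\pi_1^*$-a.e.\ $\gamma$ and $\pi_2^*$-a.e.\ $\xi$ respectively; that is the natural reading of the a.e.\ clauses.

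\emph{($\Rightarrow$).} Assume $(\pi_1^*,\pi_2^*)$ is a NETP and set $\check\pi_i=(e_0,e_1)_\sharp\pi_i^*$.

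First, by Lemma \ref{lemma: monotonicity}, $\check\pi_1$ is optimal for $\inf\int c_{e,\pi_2^*}\,d\pi_1$ and $\check\pi_2$ is optimal for $\sup\int c_{e,\pi_1^*}\,d\pi_2$. Theorem \ref{thm:coupled duality}, together with attainment of the dual (the costs are continuous on compact sets, so Kantorovich potentials exist), gives $c_{e,\pi_2^*}$-concave potentials $(\varphi_1,\psi_1)$ with $\varphi_1(x)+\psi_1(y)\le c_{e,\pi_2^*}(x,y)$ everywhere and equality $\check\pi_1$-a.e. The same theorem gives potentials $(\varphi_2,\psi_2)$ with $-\varphi_2-\psi_2\ge c_{e,\pi_1^*}$ everywhere and equality $\check\pi_2$-a.e.

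Second, I lift these to paths. By the definition \eqref{eqn: effective costs}, $c_{e,\pi_2^*}(\gamma(0),\gamma(1))\le c_{\pi_2^*}(\gamma)$ for every $\gamma\in\Omega_1$, which yields the inequality constraint on all of $\Omega_1$. Lemma \ref{lem:support_saddle} says every $\gamma\in\operatorname{spt}\pi_1^*$ is minimal, so $c_{\pi_2^*}(\gamma)=c_{e,\pi_2^*}(\gamma(0),\gamma(1))$ there. Pulling the $\check\pi_1$-null set back through $(e_0,e_1)$ then gives equality for $\pi_1^*$-a.e.\ $\gamma$. The $\xi$-side is identical, with reversed inequalities, maximality, and the set $\Omega_{2,\max}$.

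\emph{($\Leftarrow$).} Suppose the potentials exist. For any admissible $\pi_1$, integrating the inequality and using the marginal constraints gives
\[
\int c_{\pi_2^*}\,d\pi_1\ge \int\varphi_1\,d\mu_1+\int\psi_1\,d\nu_1 .
\]
The equality case shows this lower bound is attained by $\pi_1^*$. Hence $\pi_1^*\in\Psi(\pi_2^*)$. Symmetrically, $\int c_{\pi_1^*}\,d\pi_2\le -\int\varphi_2\,d\mu_2-\int\psi_2\,d\nu_2$, with equality at $\pi_2^*$, so $\pi_2^*\in\Phi(\pi_1^*)$. Since $\sigma_0(\pi_1,\pi_2^*)=\int c_{\pi_2^*}\,d\pi_1$ by Fubini, this mutual best-response property is exactly a saddle point of $\sigma_0$. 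The two chains of inequalities at the end of the proof of Theorem \ref{thm:minimax=maximin} then show that the pair attains both the minimax and the maximin, i.e.\ it is a NETP.

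\emph{Main obstacle.} The delicate step is in ($\Rightarrow$): the equality must hold on a full-measure set of paths, not just endpoints. This requires the inequality $c_{\pi_2^*}\ge c_{e,\pi_2^*}\circ(e_0,e_1)$ to be an equality on $\operatorname{spt}\pi_1^*$. That is exactly what Lemma \ref{lem:support_saddle} provides, and it is where $\alpha<\pi^2/4$ and the uniqueness of minimaximal paths (Proposition \ref{boundary value problem}) enter.

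A secondary technical point is integrability. Both directions need $\varphi_i,\psi_i\in L^1$ so the integrals are finite and Fubini applies. I would secure this from compact supports and the local Lipschitz bounds on $c$-concave potentials for the explicit quadratic cost \eqref{eqn:new cost}.
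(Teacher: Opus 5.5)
Your proof is correct, but it takes a different route from the paper's.

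\textbf{The paper's route.} The paper proves only the forward implication. It uses a formal Lagrange-multiplier (KKT) computation carried out directly on path space. It adjoins multipliers $\varphi_1,\psi_1$ for the two marginal constraints and $\lambda\ge 0$ for positivity, and differentiates along a curve $s\mapsto\pi_1^*(s)$. It then reads off $c_{\pi_2^*}(\gamma)+\lambda(\gamma)=\varphi_1(\gamma(0))+\psi_1(\gamma(1))$ with $\lambda=0$ on $\operatorname{spt}\pi_1^*$, and declares the $\xi$-side identical. That argument never passes through the endpoint problem. However, it takes for granted that multipliers exist for an infinite-dimensional constrained problem.

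\textbf{Your route.} You project to $\check\pi_i$ and get endpoint optimality from Lemma~\ref{lemma: monotonicity}. You take genuine Kantorovich potentials from classical duality with dual attainment. You then lift them back using $c_{e,\pi_2^*}\circ(e_0,e_1)\le c_{\pi_2^*}$ together with equality on the support. This is rigorous modulo results already in the paper, and it yields $c$-concave (hence bounded and Lipschitz) potentials. You also read the a.e.\ clauses with respect to the correct measures. Finally, you prove the converse by integrating the constraints and invoking the saddle-point inequalities, which the paper does not address at all.

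\textbf{Refinements.}
\begin{itemize}
\item Apply Kantorovich duality directly to the path-infimum cost \eqref{eqn: effective costs}. Your lifting inequality is immediate only for that definition, not for the integral form \eqref{eqn:cost pi_2} used in Theorem~\ref{thm:coupled duality}.
\item You do not really need Lemma~\ref{lem:support_saddle}. Lift $\check\pi_1$ by a measurable selection of minimizers of $c_{\pi_2^*}$ with prescribed endpoints. Optimality of $\pi_1^*$ against this lifted plan shows that the nonnegative function $c_{\pi_2^*}-c_{e,\pi_2^*}\circ(e_0,e_1)$ has zero $\pi_1^*$-integral, hence vanishes $\pi_1^*$-a.e.
\item In the converse, the given potentials are arbitrary functions, so your appeal to $c$-concavity for integrability needs one more step. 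First replace them by their double $c_{e,\pi_2^*}$-transforms. These preserve both the inequality and the equality set, and they are bounded and Borel because the endpoint cost is bounded and continuous.
\end{itemize}
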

\begin{proof}
The proof is based on a paper of the first author \cite[Corollary 4.5]{cabrera2022optimaltransportationprincipleinteracting}, which was based on his dissertation \cite{Cabrera2022}. Thus, a detailed proof can be found there, but in order for this manuscript to be self contained, we provide a proof. 

Suppose $\pi_{2}^{*}$ is a maximizer of \eqref{inf sup problem}. Using Lagrangian multipliers, $\varphi_{1}, \psi_{1}\in C_{c}^{0}(X)\times C_{c}^{0}(X)$ and $\pi_{1}^{*}$ in the set of nonnegative Borel measures, consider the Lagrangian function
\begin{align*}
\Lambda(\pi_{1}^{*}, \varphi_{1}, \psi_{1}, \lambda)&:=\int_{\Omega_{1}}c_{\pi_{2}^{*}}(\gamma)d\pi_{1}^{*}(\gamma)+\int_{X}\varphi_{1}(x)d\mu_{1}(x)-\int_{X}\varphi_{1}(\gamma(0))d\pi_{1}^{*}(\gamma)\\
&+\int_{\Omega_{1}}\psi_{1}(y)d\nu_{1}(y)-\int_{\Omega_{1}}\psi_{1}(\gamma(1))d\pi_{1}^{*}(\gamma)+\int_{\Omega_{1}}\lambda(\gamma)d\pi_{1}^{*}(\gamma).
\end{align*}
Rearranging, we get
\begin{align*}
    \Lambda(\pi_{1}^{*}, \varphi_{1}, \psi_{1}, \lambda)&:=\int_{\Omega_{1}}c_{\pi_{2}^{*}}(\gamma)d\pi_{1}^{*}(\gamma)+\int_{\Omega_{1}}\lambda(\gamma)-\left(\varphi_{1}(\gamma(0))+\psi_{1}(\gamma(1))\right)d\pi_{1}^{*}(\gamma)\\
    &+\int_{X}\varphi_{1}(x)d\mu_{1}(x)+\int_{X}\psi_{1}(y)d\nu_{1}(y)\\
  \eqref{eqn: eff path}&=\iint_{\Omega_{1}\times\Omega_{2}}c(\gamma, \xi)d\pi_{1}^{*}(\gamma)d\pi_{2}^{*}(\xi)+\int_{\Omega_{1}}\lambda(\gamma)-\left(\varphi_{1}(\gamma(0))+\psi_{1}(\gamma(1))\right)d\pi_{1}^{*}(\gamma)\\
  &+\int_{X}\varphi_{1}(x)d\mu_{1}(x)+\int_{X}\psi_{1}(y)d\nu_{1}(y),
\end{align*}
where we included the marginal constraints $(e_{0})_{\sharp}\pi_{1}^{*}=\mu_{1}$ and $(e_{1})_{\sharp}\pi_{1}^{*}=\nu_{1}$, by \cite{cabrera2022optimaltransportationprincipleinteracting}, and the nonnegativity constraint due to the measure, $\int \lambda(\gamma) d\pi_{1}^{*}(\gamma)$.  Then we compute the gradient with respect to $\pi_{1}^{*}$ in $\gamma$. Note that we get linear conditions on $\pi_{1}^{*}$. So according to \cite[Corollary 4.5]{cabrera2022optimaltransportationprincipleinteracting}, we take a ``smooth" curve $\pi_{1}^{*}(s)$ and take derivatives to get an expression regarding  tangent vectors $\dot{\pi}_{1}^{*}$ and an expression in the bilinear term regarding an integral against $\pi_{2}^{*}$, namely, 
\begin{align*}
    \frac{d}{ds}\Big|_{s=0}\Lambda(\pi_{1}^{*}(s), \varphi_{1}, \psi_{1}, \lambda)&=\iint_{\Omega_{1}\times\Omega_{2}}c(\gamma,\xi)d\pi_{2}^{*}(\xi)\dot{\pi}_{1}^{*}(\gamma)\\
    &+\int_{\Omega_{1}}\lambda(\gamma)-\left(\varphi_{1}(\gamma(0))+\psi_{1}(\gamma(1))\right)d\dot{\pi}_{1}^{*}(\gamma).
\end{align*}
Therefore, since we have a \emph{critical point}, this indicates that we get a zero functional. Then minimality---the KKT conditions \cite{rockafellar1970convex}, actually---there exist a pair $\varphi_{1}, \psi_{1}$ and $\lambda\geq 0 $ such that 
\begin{align*}
\int_{\Omega_{2}}c(\gamma, \xi)d\pi_{2}^{*}(\xi)+\lambda(\gamma)-\left(\varphi_{1}(\gamma(0))+\psi_{1}(\gamma(1))\right)&=0 \\
\Longrightarrow c_{\pi_{2}^{*}}(\gamma)+\lambda(\gamma)-\left(\varphi_{1}(\gamma(0))+\psi_{1}(\gamma(1))\right)&=0.
\end{align*}
Moreover, $\lambda\equiv 0$ in the support of $\pi_{1}^{*}$.

Notice that from the fact that $\lambda$ is positive in the above equality, one can see then that the sum of the $\varphi_{1}$ and $\psi_{1}$ is less than the integral term, $c_{\pi_{2}^{*}}(\gamma)$, everywhere, and exactly equal wherever $\lambda$ vanishes in the support of $\pi_{1}^{*}$.

Identical arguments hold for the maximality case.
\end{proof}

The calculation of the last proof helps recover Theorem \ref{thm:coupled duality} in the path spaces. Moreover, since $(\pi_{1}^{*}, \pi_{2}^{*})$ is a NETP the conclusion of Corollary \ref{thm: dual on paths} implies the conclusion of the \emph{zero-sum identity on paths}  of Corollary \ref{corollary: zero sum dual},
\[
\int_{\Omega_{1}}\varphi_{1}(\gamma(0))d\pi_{1}^{*}(\gamma)+\int_{\Omega_{1}}\psi_{1}(\gamma(1))d\pi_{1}^{*}(\gamma)+\int_{\Omega_{2}}\varphi_{2}(\xi(0))d\pi_{2}^{*}(\xi)+\int_{\omega_{2}}\psi_{2}(\xi(1))d\pi_{2}^{*}(\xi)=0,
\]
using Fubini-Tonelli and the fact that $\pi_{1}^{*}, \pi_{2}^{*}$ are probability measures. Furthermore,
this is seen from the definitions of the effective and end-point cost functions defined on \eqref{eqn: eff path}-\eqref{eqn: effective costs}, and an application of Lemma \ref{lemma:saddlepoint-endpoint}. Indeed, integration against $\pi_{1}^{*}$ on the one hand and integration against $\pi_{2}^{*}$, on the other hand, of the  effective costs, are equal, $\int_{\Omega_{2}}c_{\pi_{1}^{*}}(\xi)d\pi_{2}^{*}(\xi)=\int_{\Omega_{1}}c_{\pi_{2}^{*}}(\gamma)d\pi_{1}^{*}(\gamma)$; for $\pi_{1}^{*}$-a.e. $\xi$ (resp. for $\pi_{2}^{*}$-a.e. $\gamma$), and hence 
\[\iint_{\Omega}c_{e, \pi_{1}^{*}}(\xi(0), \xi(1))d\pi_{2}^{*}(\xi)d\pi_{1}^{*}(\gamma)=\iint_{\Omega}c_{e, \pi_{2}^{*}}(\gamma(0), \gamma(1))d\pi_{1}^{*}(\gamma)d\pi_{2}^{*}(\xi),\] which follows from Theorem \ref{thm:minimax=maximin}.


\bibliography{biblio}
\bibliographystyle{plain}
\end{document}